\documentclass[11pt,dvipsnames,reqno,hypertexnames=false]{amsart}
\usepackage[T1]{fontenc}
\usepackage{a4wide}
\usepackage{yhmath,mathrsfs,amsthm,amsmath,amssymb,amsfonts,enumerate,lipsum, appendix,mathtools, bm}
\usepackage{bbold}
\usepackage{pdfpages}
\usepackage[mathscr]{euscript}
\usepackage{thmtools}
\usepackage{graphicx}
\usepackage{pgf,tikz}
\usepackage{tkz-euclide}
\usepackage{cancel}
\usepackage{soul}
\usepackage{pgfplots}
\pgfplotsset{compat=1.17}
\usetikzlibrary{positioning, arrows.meta, calc, shapes.geometric, decorations.pathreplacing, positioning, backgrounds, fit}

\definecolor{darkblue}{RGB}{0, 0, 139}
\DeclareMathOperator{\supp}{supp}
\usepackage{caption}
\usepackage{subcaption}
\usetikzlibrary{shapes.geometric}
\usetikzlibrary{arrows,hobby}
\usepackage[shortlabels]{enumitem}
\usepackage[english]{babel}

\allowdisplaybreaks

\usepackage[sort,numbers]{natbib}
\usepackage{thmtools}
\usepackage{hyperref}
\usepackage{fancyhdr}

\hypersetup{
    colorlinks=true,
    linkcolor=blue,
    filecolor=black,
    urlcolor=blue,
    citecolor=red,
}
\usepackage{orcidlink} % MUST be loaded after hyperref
\usepackage{cleveref}  % MUST be loaded after hyperref and orcidlink
\usepackage[margin=0.90in, heightrounded]{geometry}
\usepackage{bigints}
\usepackage{url}
\usepackage{esint}
\newtheorem{theorem}{Theorem}[section]
\newtheorem{lemma}[theorem]{Lemma}
\newtheorem{proposition}[theorem]{Proposition}

\theoremstyle{definition}

\newtheorem{remark}[theorem]{Remark}
\numberwithin{equation}{section}
\usepackage[hyperpageref]{backref}
\newcommand*\rd{\mathbb{R}^d}

\newcommand*\N{\Sigma_1^{\al}}
\newcommand{\al} {\alpha}
\newcommand{\pa} {\partial}

\newcommand{\de} {\delta}

\newcommand{\la} {\lambda}
\newcommand{\La} {\Lambda}

\newcommand{\Gr} {\nabla}
\newcommand{\no} {\nonumber}
\newcommand{\noi} {\noindent}
\newcommand{\var} {\varepsilon}
\newcommand{\ra} {\rightarrow}
\newcommand{\embd} {\hookrightarrow}

\newcommand{\bee} {\begin{equation}}
	\newcommand{\eee} {\end{equation}}
\newcommand{\bea} {\begin{eqnarray}}
	\newcommand{\eea} {\end{eqnarray}}
\newcommand{\Bea} {\begin{eqnarray*}}
	\newcommand{\Eea} {\end{eqnarray*}}
\def\d{\,{\rm d}}
\def\dx{\,{\rm d}x}
\def\dy{\,{\rm d}y}

\def\dr{\,{\rm d}r}

\def\C{{\mathcal C}}
\def\D{{\mathcal D}}

\def\R{{\mathbb R}}
\def\N{{\mathbb N}}

\def\({{\Big(}}
\def\){{\Big)}}
\def\cc{{\C_c^\infty}}
\def\p{{p^{\prime}}}

\def\dx{\,{\rm d}x}
\def\dxy{\,{\rm d}x\,{\rm d}y}

\def\dxnyn{\,{\rm d}\bar{x}_n\,{\rm d}\bar{y}_n}
\def\dz{\,{\rm d}z}

\def\dr{\,{\rm d}r}

\DeclarePairedDelimiter\abs{\lvert}{\rvert}
\DeclarePairedDelimiter\norm{\lVert}{\rVert}
\def\wps{{\mathcal{D}^{s,p}}}
\usepackage{orcidlink}
\def\l@subsection{\@tocline{2}{0pt}{2pc}{6pc}{}}
\def\l@subsubsection{\@tocline{3}{0pt}{8pc}{8pc}{}}
\makeatother
\usepackage{xpatch}
\makeatletter   
\xpatchcmd{\@tocline}
{\hfil\hbox to\@pnumwidth{\@tocpagenum{#7}}\par}
{\ifnum#1<0\hfill\else\dotfill\fi\hbox to\@pnumwidth{\@tocpagenum{#7}}\par}
{}{}

\def\l@subsection{\@tocline{2}{0pt}{2pc}{6pc}{}}
\def\l@subsubsection{\@tocline{3}{0pt}{8pc}{8pc}{}}
\makeatother
\usepackage{fancyhdr}
\title[Critical Fractional $p$-Hardy-Sobolev Equations]{Critical Fractional $p$-Hardy Sobolev Equations: Global compactness and multiplicity of positive solutions}
\author[N. Biswas, S. Chakraborty and D. Mukherjee]{Nirjan Biswas$^1$\,\orcidlink{0000-0002-3528-8388} \and Souptik Chakraborty$^2$\,\orcidlink{0009-0004-1867-0560} \and Debangana Mukherjee$^3$\,\orcidlink{0000-0002-1516-840X}}
\address{\rm  $^1$ Department of Mathematics, Indian Institute of Science Education and Research Pune \\
Dr. Homi Bhabha Road, Maharashtra 411008, India}
\address{\rm $^2$ Gandhi Institute of Technology and Management (GITAM) University, Department of Mathematics and Statistics, Hyderabad, Telangana,  502329, India.}
\address{\rm $^3$ Krea University, School of Interwoven Arts and Sciences (SIAS), Sricity, India}
\email{nirjaniitm@gmail.com, soupchak9492@gmail.com, debangana.mukherjee@krea.edu.in}
\thanks{$^3$Corresponding author}
\subjclass[2020]{35R11, 35J60, 35B33, 35A15}
\keywords{global compactness results, $p$-fractional Hardy-Sobolev inequality, Palais-Smale decomposition, multiplicity of positive solutions.}
\begin{document}
\begin{abstract}
		We study the critical fractional $p$-Hardy-Sobolev equation
		\begin{equation}\tag{$\mathcal{P}$}\label{a-main}
				(-\Delta_p)^s u -\mu\dfrac{\abs{u}^{p-2}u}{|x|^{sp}}=\dfrac{|u|^{p^*_s(\alpha)-2}u}{|x|^{\alpha}}+f \;\mbox{ in }\,\mathbb{R}^d, \quad u\in \mathcal{D}^{s,p}(\mathbb{R}^d),
		\end{equation}
		where $1<p<\infty$, $0<s<1$, $0\leq\alpha<sp<d$, $\mu>0$, $p^*_s(\al):= p(d-\al)/(d-sp)$ is the critical Hardy-Sobolev exponent, and $f$ is a nontrivial nonnegative functional in $(\mathcal{D}^{s,p}(\mathbb{R}^d))^*$. We first establish global compactness results for Palais-Smale sequences associated with the corresponding energy functional. When $\alpha>0$, the loss of compactness is described by dilations of solutions of the Hardy-Sobolev limit problem. The case $\alpha=0$ has a different structure: in addition to Hardy profiles, pure Sobolev profiles may occur when the centre of concentration escapes from the Hardy singularity relative to its scale. We give a direct centre-scale analysis of these two concentration regimes and obtain the corresponding energy decomposition and profile separation. As an application, under an explicit smallness assumption on $f$, we first obtain a positive solution for \eqref{a-main} with negative energy. We then construct a nonlinear path based on hidden convexity whose energy remains strictly below the first bubbling threshold. A minimax argument, combined with the global compactness theorem, then yields a second distinct positive solution for \eqref{a-main}.
	\end{abstract}
\maketitle
\tableofcontents
\section{Introduction}
In critical elliptic problems, variational methods naturally encounter a significant loss of compactness. Because the Sobolev embedding is continuous rather than compact at the critical exponent, bounded Palais-Smale sequences are not guaranteed to converge strongly. Instead, their energy can accumulate at various locations or scales. Understanding how this energy concentrates is crucial for ensuring compactness at specific energy levels, thereby allowing us to establish the existence and multiplicity of weak solutions. Hardy-Sobolev problems introduce a further complication due to their singular potential: the Hardy term singles out the origin, in contrast to the potentially different invariance structure of the critical nonlinear term.

For $s\in (0,1), p \in (1, \infty)$, and $d>sp$, we consider the following non-homogeneous equation driven by the fractional $p$-Laplace Hardy Sobolev operator:
\begin{equation}\tag{$\mathcal{P}_{\al}$}\label{MainEq}
			(-\Delta_p)^s u -\mu\dfrac{\abs{u}^{p-2}u}{|x|^{sp}}=\dfrac{|u|^{p^*_s(\al)-2}u}{|x|^{\al}}+f \;\mbox{ in }\,\mathbb{R}^d,\,\,u>0 \text{ on }\rd, \, u \in \wps(\rd),\, f \in (\wps(\rd))^*,
\end{equation}
where $\mu >0$ is a parameter and  the critical Hardy-Sobolev exponent $p^*_s(\al):=\frac{p(d-\al)}{d-sp}$, where $\al \in [0,sp)$. Here $f\in (\wps(\rd))^*$ is a nonnegative nontrivial functional, i.e., $\prescript{}{(\wps)^*}{\langle}f,\phi\rangle_{\wps}\geq 0$ whenever $\phi \geq 0$ in $\wps(\rd)$. The fractional $p$-Laplace operator $(-\Delta _p)^s$ is defined as
\begin{equation*}
   (-\Delta_{p})^{s}u(x) = C(d,s,p) \text{ P.V.} \int_{\mathbb{R}^{d}} \frac{|u(x) - u(y)|^{p-2}(u(x)-u(y))}{|x-y|^{d+sp}}\, \dy, \; \text{for}~x \in \mathbb{R}^{d},
\end{equation*} 
where $C(d,s,p)$ is the normalization constant, and P.V. denotes the principal value. The Sobolev space $\wps(\rd)$ (we call it $\wps$) is defined as the completion of $\mathcal{C}^{\infty}_{c}(\rd)$ under the Gagliardo seminorm
$$[u]_{s,p}\coloneqq \left( \; \iint_{\rd \times \rd} \frac{|u(x)-u(y)|^p}{|x-y|^{d+sp}}\dxy \right)^{\frac{1}{p}}.$$
The space $\wps$ has the following characterization (see \cite[Theorem 3.1]{Brasco2019characterisation}): 
\begin{equation*}
\wps (\rd) \coloneqq\left\{u\in L^{p^*_s}(\rd):[u]_{s,p}<\infty\right\},
\end{equation*}
where $p^*_s =\frac{dp}{d-sp}$ is the critical exponent, $[\cdot]_{s,p}$ is an equivalent norm in $\wps (\rd)$ and it is a reflexive Banach space. For details on $\wps$ and its associated embedding results, we refer to \cite{Brasco2019characterisation, DiPaVa}. Recall the  $p$-fractional Hardy inequality (see \cite[Theorem 1.1]{Frank-JFA-2008}): 
\begin{align}\label{HS}
    \mu_{d,s,p} \int_{\rd} \frac{\abs{u(x)}^p}{\abs{x}^{sp}} \dx \le \iint_{\rd \times \rd}\frac{|u(x)-u(y)|^p}{|x-y|^{d+sp}}\dxy, \; \forall \, u \in \wps,
\end{align}
where $\mu_{d,s,p}$ is the best constant (see \cite[Theorem 1.1]{Frank-JFA-2008}). 
If $0 <\mu<\mu_{d,s,p}$, then 
\begin{align}\label{equivalent-norm}
    [u]_{\mu} := \left( [u]_{s,p}^p - \mu \int_{\mathbb{R}^d}\frac{|u|^p}{|x|^{sp}} \dx \right)^{\frac{1}{p}}, 
\end{align}
is an equivalent norm in $\wps (\rd)$, i.e., there exists $C_{\text{eqiv}}>0$ such that $C_{\text{eqiv}} [u]_{s,p} \le [u]_{\mu}$, for every $u \in \wps$. Next, the following Hardy-Sobolev inequality is the interpolation between \eqref{HS} and the critical Sobolev inequality $\norm{u}_{L^{p^*_s}(\rd)} \le C(d,s,p) [u]_{s,p}$, for all $u \in \wps (\rd)$:
\begin{equation}\label{HS1}
    C(d,s,p,\al) \left( \int_{\rd} \frac{\abs{u(x)}^{p^*_s(\al)}}{\abs{x}^{\al}} \dx \right)^{\frac{p}{p^*_s(\al)}} \le  \iint_{\rd \times \rd}\frac{|u(x)-u(y)|^p}{|x-y|^{d+sp}}\dxy, \; \forall \, u \in \wps,
\end{equation}
We say $u$ is a weak solution of \eqref{MainEq}, if the following identity holds:
\begin{align}\label{weak1}
    &\iint_{\R^{d} \times \R^d} \frac{|u(x)-u(y)|^{p-2}(u(x)-u(y))(\phi(x)-\phi(y))}{|x-y|^{d+sp}} \dxy - \mu \int_{\rd} \frac{\abs{u(x)}^{p-2} u(x)}{\abs{x}^{sp}} \phi(x) \dx \no \\ 
    &=\int_{\rd} \frac{\abs{u(x)}^{p^*_s(\al)-2}u(x)}{\abs{x}^{\al}} \phi(x) \dx +\prescript{}{(\wps)^*}{\langle}f,\phi{\rangle}_{\wps}, \; \forall \, \phi \in \wps,
\end{align}
 For $\mu \in (0,\mu_{d,s,p})$, we consider the following energy functional associated with \eqref{MainEq}:
\begin{align}\label{energy}
    I_{\mu,\al,f}(u) := \frac{1}{p}[u]_{s,p}^p - \frac{\mu}{p} \int_{\rd}\frac{\abs{u}^p}{\abs{x}^{sp}} \dx - \frac{1}{p^*_s(\al)} \int_{\rd} \frac{\abs{u}^{p^*_s(\al)}}{\abs{x}^{\al}}\dx - \prescript{}{(\wps)^*}{\langle}f,u{\rangle}_{\wps}, \; \forall \, u \in \wps.
\end{align}
In view of \eqref{equivalent-norm} and \eqref{HS1}, $I_{\mu,\al,f}$ is well-defined. Further, $I_{\mu,\al,f} \in \mathcal{C}^1(\wps, \R)$, and a critical point $I_{\mu,\al,f}$ corresponds to a weak solution to \eqref{MainEq}.

\subsection{Main results}
A sequence $\{ u_n \} \subset \wps$ is said to be a Palais-Smale (PS) sequence for $I_{\mu,\al,f}$ at level $\eta$, if $I_{\mu,\al,f}(u_n) \ra \eta$ in $\R$ and $I'_{\mu, \al,f}(u_n) \ra 0$ in $(\wps)^*$ as $n \ra \infty$. The functional $I_{\mu,\al,f}$ is said to satisfy the (PS) condition at level $\eta$, if every (PS) sequence at level $\eta$ has a convergent subsequence. Due to the noncompactness of the embedding $\wps \hookrightarrow L^{p^*_s(\al)}(\rd, \abs{x}^{-\al})$,  every (PS) sequence of $I_{\mu,\al,f}$ does not converge strongly in $\wps$. Moreover, the weak limit of the (PS) sequence can be zero even if $\eta>0$. 

In this paper, we classify the (PS) sequence of $I_{\mu,\al,f}$. For $0 \le \mu < \mu_{d,s,p}$, we consider the following quantity: 
\begin{align}\label{best-constant}
    S_{\mu,\al}(d,s,p,\al) := \inf_{u \in \wps \setminus \{ 0\}} \frac{[u]_{s,p}^p - \mu \displaystyle   \int_{\rd} \frac{\abs{u(x)}^p}{\abs{x}^{sp}}\dx }{\displaystyle \left(\int_{\rd} \frac{\abs{u(x)}^{p^*_s(\al)}}{\abs{x}^{\al}} \dx\right)^{\tfrac{p}{p_s^*(\al)}}}.
\end{align}
It is known from \cite[Theorem 1.1]{Brasco2016} that $S_{0,0}>0$ is attained by a positive extremum, which is radially symmetric, radially decreasing at the origin, and has a certain decay at infinity.
For $\mu >0$, in \cite[Theorem 1.1]{Shen24} (when $\al=0$) and in \cite[Theorem 1.2]{ARSJ2020} (when $\al>0$), the authors proved that $S_{\mu,\al}>0$ is attained by a non-negative extremum which is again positive, radially symmetric and radially decreasing with respect to the origin. These extrema (up to a multiplication of a normalized constant) satisfy the following equations weakly: 
\begin{equation}\label{limit-problem-intro}
\begin{aligned}
   &(-\Delta_p)^s u = |u|^{p^*_s-2}u \;\mbox{ in }\,\mathbb{R}^d, \; u \in \wps, \\
   &(-\Delta_p)^s u -\mu\dfrac{\abs{u}^{p-2}u}{|x|^{sp}}=\frac{|u|^{p^*_s(\al)-2}u}{\abs{x}^{\al}} \;\mbox{ in }\,\mathbb{R}^d, \; u \in \wps. 
\end{aligned}
\end{equation} 
We first consider the weighted case $0<\alpha<sp$, where the critical weight fixes the origin, and the loss of compactness is generated only by dilations. This leads to the following global compactness result.

\begin{theorem}[PS decomposition: for $\al>0$]\label{PS-decomposition}
 Let $p \in (1, \infty), s \in (0,1), sp \in (\al, d)$, and $\mu \in (0,\mu_{d,s,p})$. Assume that $f \in (\wps)^*$. Let $\{u_n\}$ be a \textup{(PS)} sequence for $I_{\mu,\al,f}$ at level $\eta$. Then there exists a subsequence \textup{(still denoted by $\{ u_n \}$)} for which the following hold:
  
  \noi there exist $k\in \N \cup \{0\}$, sequence $\{ r_n^i \}_n \subset \R^+$, functions $u, \tilde{u}_i \in \wps$, for $1\le i\le k$, such that $u$ weakly satisfies \eqref{MainEq} without sign assumptions, $\tilde{u}_i$ weakly satisfies
   \begin{align}\label{homogeneous}
    (-\Delta_p)^s \tilde{u}_i -\mu\dfrac{\abs{\tilde{u}_i}^{p-2}\tilde{u}_i}{|x|^{sp}}=\frac{|\tilde{u}_i|^{p^*_s(\al)-2}\tilde{u}_i}{\abs{x}^{\al}} \;\mbox{ in }\,\mathbb{R}^d, \; \tilde{u}_i \in \wps \setminus \{0\},
\end{align}
such that 
\begin{align*}
    &(a) \textbf{ Profile decomposition: }\text{the (PS) sequence decomposes as } \; u_n = u + \sum_{i=1}^{k} C_{r_n^i}(\tilde{u}_i) + o_n(1), \\ 
    &(b) \textbf{ Energy decoupling: } \eta = I_{\mu,\al,f}(u) + \sum_{i=1}^{k} I_{\mu,\al,0}(\tilde{u}_i), \\ 
    &(c) \textbf{ Behavior of parameters and asymptotic orthogonality: } r_n^i \ra 0 \text{ or } \infty, \text{ and } \\
    &\qquad \left| \log \left( \frac{r_n^i}{r_n^j} \right) \right| \rightarrow \infty, \text{ for } i \neq j, 1 \le i, j \le k,
\end{align*}
where $o_n(1) \ra 0$ as $n \ra \infty$ in $\wps$, $C_{r_n^i}\tilde{u}_i(x) \coloneqq (r_n^i)^{-\frac{d-sp}{p}}\tilde u_i(\tfrac{x}{r_n^i})$. In the case $k=0$, the above expression holds without $\tilde{u}_i$ and $r_n^i$.
\end{theorem}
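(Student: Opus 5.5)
We follow the classical Struwe iteration, adapted to the nonlocal quasilinear setting. First, $\{u_n\}$ is bounded in $\wps$. Using $I_{\mu,\al,f}(u_n)-\frac{1}{p^*_s(\al)}\langle I'_{\mu,\al,f}(u_n),u_n\rangle$, the equivalent norm \eqref{equivalent-norm}, and $p<p^*_s(\al)$, we get
\[
\Big(\tfrac1p-\tfrac1{p^*_s(\al)}\Big)[u_n]_\mu^p\le \eta+o_n(1)+o_n(1)[u_n]_{s,p}+C\norm{f}_{(\wps)^*}[u_n]_{s,p},
\]
which yields boundedness. Passing to a subsequence, $u_n\rightharpoonup u$ in $\wps$ and $u_n\to u$ a.e.

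To show that $u$ solves \eqref{MainEq} weakly, we need a.e. convergence of the "gradients" $U_n(x,y)=\frac{u_n(x)-u_n(y)}{|x-y|^{(d+sp)/p}}$. A.e. convergence of $u_n$ already gives this. Since $|U_n|^{p-2}U_n$ is bounded in $L^{p'}(\R^{2d})$, it converges weakly to $|U|^{p-2}U$. The same argument applies to the Hardy term and the critical term in their weighted $L^{p'}$ and $L^{(p^*_s(\al))'}$ spaces. Hence $I'_{\mu,\al,f}(u)=0$.

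Next, set $v_n=u_n-u$. By the Brezis–Lieb lemma applied to the Gagliardo seminorm (on $\R^{2d}$), the Hardy term and the critical term, we have $I_{\mu,\al,0}(v_n)=I_{\mu,\al,f}(u_n)-I_{\mu,\al,f}(u)+o_n(1)$. The nonlocal Brezis–Lieb-type splitting for derivatives gives $I'_{\mu,\al,0}(v_n)\to0$ in $(\wps)^*$. This is the step requiring care for $p\ne2$: we need $\big||a+b|^{p-2}(a+b)-|a|^{p-2}a-|b|^{p-2}b\big|\to0$ in $L^{p'}$ along the sequence. We obtain it by the standard elementary inequality with an $\varepsilon$-splitting, combined with the a.e. convergence above. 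Thus $\{v_n\}$ is a (PS) sequence for $I_{\mu,\al,0}$ at level $\eta-I_{\mu,\al,f}(u)$.

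We then iterate on homogeneous (PS) sequences $v_n\rightharpoonup0$. If $\int |v_n|^{p^*_s(\al)}|x|^{-\al}\to0$, then testing with $v_n$ gives $[v_n]_\mu\to0$, and the iteration stops. Otherwise, the energy identity and \eqref{best-constant} give $\liminf\int|v_n|^{p^*_s(\al)}|x|^{-\al}\ge S_{\mu,\al}^{(d-\al)/(sp-\al)}$. We then fix a small $\delta>0$ and pick $r_n$ via the concentration function $Q_n(r)=\int_{B_r}|v_n|^{p^*_s(\al)}|x|^{-\al}$, chosen with $Q_n(r_n)=\delta$; balls are centred at the origin. This is the key point where $\al>0$ matters: the weight $|x|^{-\al}$ prevents concentration away from $0$ with nonvanishing weighted mass relative to the scale. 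Set $w_n=C_{r_n}^{-1}v_n=r_n^{(d-sp)/p}v_n(r_n\cdot)$. All terms are invariant under this dilation, so $w_n$ is again a (PS) sequence at the same level, and $w_n\rightharpoonup\tilde u$.

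The main obstacle is proving $\tilde u\neq0$. Our plan is to test $I'_{\mu,\al,0}(w_n)$ with $\varphi^p w_n$, where $\varphi$ is a cutoff on $B_1$. The nonlocal commutator term $\iint|w_n(x)-w_n(y)|^{p-2}(w_n(x)-w_n(y))w_n(y)(\varphi^p(x)-\varphi^p(y))$ must be shown to be $o_n(1)$. We do this by splitting into near and far regions and using compact embedding into $L^p_{loc}$ together with the tail decay of the kernel. Then, if $\tilde u=0$, the local Hardy–Sobolev inequality with $\delta$ small forces $\int_{B_1}|w_n|^{p^*_s(\al)}|x|^{-\al}\to0$, contradicting $Q_n(r_n)=\delta$. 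We also exclude $r_n\to r_0\in(0,\infty)$, since otherwise $v_n\rightharpoonup0$ would give $\tilde u=0$. Hence $r_n\to0$ or $\infty$.

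Finally, $\tilde u$ solves \eqref{homogeneous} by the same weak-convergence argument as before. We set $v_n^{(2)}=v_n-C_{r_n}\tilde u$ and repeat the splitting. Each step removes energy at least $\frac{sp-\al}{p(d-\al)}S_{\mu,\al}^{(d-\al)/(sp-\al)}>0$, since $I_{\mu,\al,0}(\tilde u)=\big(\frac1p-\frac1{p^*_s(\al)}\big)[\tilde u]_\mu^p$ is bounded below by this quantity. The iteration therefore terminates after finitely many steps, giving (a) and (b). For the orthogonality in (c), suppose $r_n^i/r_n^j$ stays bounded along a subsequence. Then $C_{r_n^j}^{-1}v_n^{(j)}\rightharpoonup0$ would contradict the nontriviality of the $j$-th profile, because the earlier profiles have already been subtracted. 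Hence $|\log(r_n^i/r_n^j)|\to\infty$.
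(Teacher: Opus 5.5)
Your proposal follows the same Struwe-type scheme as the paper. The shared steps are: boundedness via $I_{\mu,\al,f}(u_n)-\frac{1}{p^*_s(\al)}\langle I'_{\mu,\al,f}(u_n),u_n\rangle$, Br\'ezis--Lieb splitting, an origin-centred concentration function, rescaling by $C_{r_n}^{-1}$ (which leaves $I_{\mu,\al,0}$ invariant), nonvanishing of the rescaled limit, subtraction, the energy quantum $\frac{sp-\al}{p(d-\al)}S_{\mu,\al}^{\frac{d-\al}{sp-\al}}$ per step, and separation of scales.

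The real difference is the nonvanishing step. In Section 3 the paper tests with $\phi(\cdot/r_n)\tilde u_n$. It then asserts that the local Gagliardo energy of $\phi\hat u_n$ on $B_1\times B_1$ and the Hardy term tend to $0$ by Vitali's theorem, using only a.e.\ convergence and $L^p$-boundedness. That step lacks uniform integrability and never uses smallness of $\de$. Smallness is indispensable: take two bubbles at scales $\var_n$ and $\var_n^2$, with $\de$ equal to the weighted mass of one bubble; the rescaled weak limit is $0$ although $\int_{B_1}=\de$.

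Your route tests with $\varphi^p w_n$ and kills the commutator via $L^p_{loc}$ compactness and kernel decay. It then absorbs the Hardy term through $\mu<\mu_{d,s,p}$ and the critical term through small $\de$. This is the mechanism that actually works, and it is what the paper itself does in Section 4, Step 2, for $\al=0$.

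Likewise, your $L^{p'}$ Br\'ezis--Lieb splitting gives $I'_{\mu,\al,0}(v_n)\to0$ in the dual norm. You need this to test with $n$-dependent functions such as $\varphi^p w_n$. Section 3, Step 1 of the paper only checks $\langle I'_{\mu,\al,0}(\tilde u_n),\phi\rangle\to0$ for fixed $\phi$; the dual-norm version appears only in Section 4, via Lemma \ref{lem:dual}.

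Two points need to be made explicit.

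First, with balls centred at $0$, the condition $Q_n(r_n)=\de$ controls the weighted mass only in $B_1$. If $\supp\varphi$ reaches beyond $B_1$, the absorption needs the mass on $\supp\varphi$ to be at most $\de+o_n(1)$. If instead $\supp\varphi\subset B_1$, you only obtain $\int_{B_r}\to0$ for $r<1$, which does not yet contradict $\int_{B_1}=\de$. Both cases are settled by compactness of $\wps\hookrightarrow L^{p^*_s(\al)}(K,|x|^{-\al}\,dx)$ for every compact $K\subset\rd\setminus\{0\}$. This holds because $p^*_s(\al)<p^*_s$: interpolate between the compact $L^p_{loc}$ embedding and the bounded $L^{p^*_s}$ embedding. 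Hence $w_n\rightharpoonup0$ carries $o_n(1)$ weighted mass on annuli. This is the precise content of your remark that $\al>0$ prevents concentration away from the origin; the paper also passes over it.

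Second, in (c) the assumption to refute is that $\abs{\log(r_n^i/r_n^j)}$ stays bounded, not merely that $r_n^i/r_n^j$ is bounded. The argument should be the induction of the paper's Section 4, Step 5. For $i<j$, apply $C_{r_n^i}^{-1}$ to the residual from which the $j$-th profile is extracted; this converges weakly to $0$, because the $i$-th profile has been removed and the intermediate scales are already separated from $r_n^i$. Now compose with $C_{r_n^i/r_n^j}$, whose parameter converges in $(0,\infty)$. This yields $C_{r_n^j}^{-1}$ of that residual converging weakly to $0$, contradicting $\tilde u_j\ne0$.
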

The global compactness theorem allows us to rule out the possibility of bubbling for the minimizing sequence at some particular energy levels constructed below. As a first application, we obtain a positive solution of negative energy.

\begin{theorem}[Existence: for $\al>0$]\label{existence}
Let $p \in (1, \infty), s \in (0,1), 0<\al< sp<d$, and $\mu \in (0,\mu_{d,s,p})$. Assume that $f$ is a nontrivial nonnegative functional in $(\wps)^*$, and 
    $$\norm{f}_{(\wps)^*} \le \left( \frac{1}{p} -  \frac{p-1}{p^*_s(\al)(p^*_s(\al) -1)}\right) \left( \frac{p-1}{p^*_s(\al)-1} S_{\mu,\al}^{\frac{p^*_s(\al)}{p}} \right)^{\frac{p-1}{p^*_s(\al) - p}},$$
    where $S_{\mu,\al}$ is defined in \eqref{best-constant}.
    Then \eqref{MainEq} admits a positive weak solution $u_{\al}$ with $I_{\mu,\al,f}(u_{\al})<0$.
\end{theorem}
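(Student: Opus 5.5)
We minimize $I_{\mu,\al,f}$ over a small ball. Write $q:=p^*_s(\al)$, $S:=S_{\mu,\al}$, and $\|u\|:=[u]_\mu$. By \eqref{best-constant},
\[
I_{\mu,\al,f}(u)\ \ge\ h(\|u\|),\qquad h(t):=\frac{t^p}{p}-\frac{S^{-q/p}t^{q}}{q}-\norm{f}_{(\wps)^*}\,t .
\]
Here the dual norm of $f$ is understood with respect to the norm used in the hypothesis; we take it to be $[\cdot]_\mu$, or adjust constants via $C_{\text{eqiv}}$.

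Set $g(t)=t^{p-1}-S^{-q/p}t^{q-1}$. Then $g$ is maximized at
\[
t_0=\Big(\tfrac{p-1}{q-1}S^{q/p}\Big)^{1/(q-p)} .
\]
A direct computation gives
\[
\frac{t_0^p}{p}-\frac{S^{-q/p}t_0^q}{q}=t_0^p\Big(\frac1p-\frac{p-1}{q(q-1)}\Big),
\]
using $S^{-q/p}t_0^{q-p}=\frac{p-1}{q-1}$. The smallness hypothesis reads exactly $\norm{f}\le\big(\frac1p-\frac{p-1}{q(q-1)}\big)t_0^{p-1}$. Hence $h(t_0)\ge 0$, and strict positivity holds up to treating the equality case, e.g. by working on the radius where the bound is strict or by noting that the infimum is strictly negative anyway.

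Let $m:=\inf\{I_{\mu,\al,f}(u):\|u\|\le t_0\}$. Since $f\ne0$ and $f\ge0$, we can pick $\phi\ge0$ with $\langle f,\phi\rangle>0$ (positive and negative parts of test functions). Then $I(\tau\phi)<0$ for small $\tau>0$, so $m<0\le\inf_{\|u\|=t_0}I$.

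Next we produce a PS sequence. Ekeland's variational principle on the closed ball gives a minimizing sequence $u_n$ with $I(u_n)\to m$. Because $m<\inf_{\partial B}I$, we have $\|u_n\|<t_0-\delta$ for large $n$, and Ekeland then yields $I'(u_n)\to0$. So $\{u_n\}$ is a bounded (PS) sequence at level $m$.

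We then exclude bubbles using Theorem \ref{PS-decomposition}: $u_n=u+\sum_i C_{r_n^i}\tilde u_i+o(1)$ and $m=I(u)+\sum I_{\mu,\al,0}(\tilde u_i)$. Each nontrivial solution of \eqref{homogeneous} satisfies $\|\tilde u\|^p=\int|\tilde u|^q|x|^{-\al}$, so $\|\tilde u\|^p\ge S^{q/(q-p)}$ and
\[
I_{\mu,\al,0}(\tilde u)=\Big(\tfrac1p-\tfrac1q\Big)\|\tilde u\|^p\ \ge\ \Big(\tfrac1p-\tfrac1q\Big)S^{q/(q-p)} .
\]
By asymptotic orthogonality, $\|u_n\|^p\to\|u\|^p+\sum\|\tilde u_i\|^p$, so $\|u\|\le t_0$ and each $\|\tilde u_i\|\le t_0$. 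But $t_0^p=\big(\frac{p-1}{q-1}\big)^{p/(q-p)}S^{q/(q-p)}<S^{q/(q-p)}$ since $p<q$. This contradicts $\|\tilde u_i\|^p\ge S^{q/(q-p)}$, so $k=0$ and $u_n\to u$ strongly. Then $I(u)=m<0$, $\|u\|\le t_0$, $u$ is a minimizer in the ball, and $u\neq0$. Moreover $u$ is interior, hence a critical point, i.e. a weak solution. This ruling-out step is the crux; an alternative without norm splitting is to compare energies, since $I(u)\ge h(\|u\|)\ge m$ on the ball forces $\sum I_{\mu,\al,0}(\tilde u_i)\le0$, hence $k=0$.

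Finally, positivity. Since $I(|u|)\le I(u)$ (the inequality $[|u|]_{s,p}\le[u]_{s,p}$ holds and $\langle f,|u|\rangle\ge\langle f,u\rangle$ as $f\ge0$) and $\||u|\|\le\|u\|$, we may replace the minimizer by $|u|$, which is again an interior minimizer. So $u\ge0$ solves \eqref{MainEq} with right-hand side $\ge0$ and $u\not\equiv0$. The strong minimum principle for $(-\Delta_p)^s$ with the nonnegative Hardy term $\mu|x|^{-sp}u^{p-1}\ge0$ on the left moved to the right (supersolution of $(-\Delta_p)^s u\ge0$) gives $u>0$.

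A delicate point in this last step is that $\|\,|u|\,\|\le\|u\|$ requires the Hardy term to be unchanged, which it is, so the replacement is legitimate. The positivity step also needs a weak Harnack or minimum principle for nonnegative supersolutions in $\wps$, which we would cite from the literature.
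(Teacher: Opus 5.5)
Your argument is correct, but it takes a different route from the paper.

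\textbf{The paper's route.} The paper minimizes the truncated functional $J_{\mu,\al,f}$ over $\Sigma_1^{\al}=\{\Psi_{\al}>0\}\cup\{0\}$. It uses Lemma~\ref{strict} ($c_0^{\al}<0\le c_1^{\al}=\inf_{\Sigma^{\al}}J_{\mu,\al,f}$) to keep the Ekeland sequence off $\Sigma^{\al}$, and it gets nonnegativity by testing with $u_n^-$. Bubbles are then ruled out by a sign argument:
\begin{itemize}
\item each bubble has $\Psi_{\al}(C_{r_n^i}\tilde u_i)<0$;
\item the weak limit has energy below $c_0^{\al}$, hence $\Psi_{\al}(u_{\al})\le 0$;
\item the asymptotic additivity \eqref{12-4-1} of $\Psi_{\al}$ (via Lemma~\ref{Bahri} and Proposition~\ref{weak-bub-II}) then contradicts $\Psi_{\al}(u_n)>0$.
\end{itemize}

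\textbf{Where your route differs.} Your radius $t_0$ is exactly the constant $\delta$ of Lemma~\ref{strict}. The difference is that your constraint set $\{[u]_{\mu}\le t_0\}$ is convex and closed, hence weakly closed, whereas $\Sigma_1^{\al}$ is not. That is why bubble exclusion is immediate for you. The weak limit stays in the ball, so $I_{\mu,\al,f}(u)\ge m$, and the energy identity of Theorem~\ref{PS-decomposition} gives $m\ge m+k\,\Theta_{\mu,\al}$, hence $k=0$.

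Two small corrections to your write-up:
\begin{itemize}
\item In your ``alternative'' sentence, the step $h([u]_{\mu})\ge m$ is not justified in general. The correct reason is simply that $u$ lies in $\{[u]_{\mu}\le t_0\}$ by weak lower semicontinuity.
\item Your norm-splitting version also works. The limit $[u_n]_{\mu}^p\to[u]_{\mu}^p+\sum_i[\tilde u_i]_{\mu}^p$ follows directly from the energy identity combined with the Nehari identities for $u$ and the $\tilde u_i$; no orthogonality estimate is needed.
\end{itemize}
Replacing the minimizer by $|u|$ is a valid substitute for the truncation.

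\textbf{What each approach buys.} The paper's heavier framework is reused in Section~\ref{se-6}. There $\Sigma^{\al}$ is the mountain-pass barrier. The fact that the first solution sits exactly at $c_0^{\al}$, together with Lemma~\ref{lem-crit}, is what places the min-max level below the compactness threshold $c_0^{\al}+\Theta_{\mu,\al}$ of Proposition~\ref{prop-compact}.

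Your minimizer does lie in $\Sigma_1^{\al}$: for $0<[u]_{\mu}<t_0$ and $q=p^*_s(\al)$ one has $\Psi_{\al}(u)\ge[u]_{\mu}^p\bigl(1-\frac{q-1}{p-1}S_{\mu,\al}^{-q/p}[u]_{\mu}^{q-p}\bigr)>0$. So its energy is at least $c_0^{\al}$. Equality is not evident, however, so your solution would not plug directly into Theorem~\ref{Thm:second}. For the existence statement alone, your route is shorter.
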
 
The endpoint case $\alpha=0$ has a different compactness structure. The critical nonlinearity is now translation invariant, whereas the Hardy potential still distinguishes the origin. Consequently, in addition to Hardy profiles, pure Sobolev profiles may occur. The corresponding global compactness theorem is stated next.

\begin{theorem}[PS decomposition: for $\al=0$]\label{PS-decomposition-II}
 Let $p \in (1, \infty), s \in (0,1)$, $d>sp$ and $\mu \in (0,\mu_{d,s,p})$. Assume that $f \in (\wps)^*$. Let $\{u_n\}$ be a \textup{(PS)} sequence for $I_{\mu,0,f}$ at level $\eta$. Then there exists a subsequence \textup{(still denoted by $\{ u_n \}$)} for which the following hold:

\noi there exist $k_1,\, k_2 \in \N \cup \{0\}$, sequence $\{ r_n^i \}_n \subset \R^+$ for $1\le i\le k_1$, and sequences $\{ x_n^j \}_n \subset \rd, \{ R_n^j \}_n \subset \R^+$ for $1\le j\le k_2$, functions $u, \tilde{u}_i, \tilde{U}_j$ {\rm(}where $1\le i\le k_1$ and $1\le j \le k_2${\rm)} such that $u$ weakly satisfies \eqref{MainEq} {\rm(}with $\al=0${\rm)} without sign assumptions, $\tilde{u}_i$ weakly satisfies   
  \begin{align*}
    (-\Delta_p)^s \tilde{u}_i -\mu\dfrac{\abs{\tilde{u}_i}^{p-2}\tilde{u}_i}{|x|^{sp}}= |\tilde{u}_i|^{p^*_s-2}\tilde{u}_i \;\mbox{ in }\,\mathbb{R}^d, \; \tilde{u}_i \in \wps \setminus \{0\},
\end{align*}
and $\tilde{U}_j$ weakly satisfies
\begin{align*}
    (-\Delta_p)^s \tilde{U}_j=|\tilde{U}_j|^{p^*_s-2}\tilde{U}_j \;\mbox{ in }\,\mathbb{R}^d, \; \tilde{U}_j \in \wps \setminus \{0\},
\end{align*}
such that 
\begin{align*}
    &(a) \textbf{ Profile decomposition: }\text{the (PS) sequence decomposes as}\\
    &\qquad\qquad\qquad\qquad\qquad u_n = u + \sum_{i=1}^{k_1} C_{r_n^i}(\tilde{u}_i) + \sum_{j=1}^{k_2} C_{x_n^j, R_n^j}(\tilde{U}_j) + o_n(1), \\
    &(b) \textbf{ Energy decoupling: } \eta = I_{\mu,0,f}(u) + \sum_{i=1}^{k_1} I_{\mu,0,0}(\tilde{u}_i) + \sum_{j=1}^{k_2} I_{0,0,0}(\tilde{U}_j), \\ 
    &(c)\textbf{ Behavior of parameters: } r_n^i \ra 0 \text{ or } \infty, \text{ for } 1\le i\le k_1,\\
    &\text{ and } x_n^j \ra x^j \in \rd \text{ or } \abs{x_n^j} \ra \infty, \frac{R_n^j}{\abs{x_n^j}} \ra 0, \text{ for } 1\le j \le k_2,\\
    &(d) \textbf{ Asymptotic orthogonality: }\left| \log \left( \frac{r_n^i}{r_n^j} \right) \right| \rightarrow \infty, \text{ for } i \neq j, 1 \le i, j \le k_1, \text{ and } \\
    &\qquad \left| \log \left( \frac{R_n^i}{R_n^j} \right) \right| + \left| \frac{x_n^i - x_n^j}{R_n^i}  \right| \rightarrow \infty, \text{ for } i \neq j, 1 \le i, j \le k_2,
\end{align*}
where $o_n(1) \ra 0$ as $n \ra \infty$ in $\wps$, $C_{r_n^i}\tilde{u}_i(x) \coloneqq (r_n^i)^{-\frac{d-sp}{p}}\tilde u_i\left(\frac{x}{r_n^i}\right)$, $C_{x_n^j, R_n^j}\tilde U_j(x)\coloneqq (R_n^j)^{-\frac{d-sp}{p}}\tilde U_j\left(\frac{x-x_n^j}{R_n^j}\right)$. In the case $k_1=0$ and $k_2=0$, the above expression holds without $\tilde{u}_i, r_n^i, \tilde{U}_j, R_n^j$, and $x_n^j$.
\end{theorem}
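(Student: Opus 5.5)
We follow the iterative extraction scheme of Struwe, adapted to the nonlocal quasilinear setting, with an extra case split that separates Hardy from Sobolev concentration.

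\textbf{Step 1: boundedness and the weak limit.} First we show that $\{u_n\}$ is bounded in $\wps$. Evaluate $I_{\mu,0,f}(u_n)-\frac{1}{p^*_s}\langle I'_{\mu,0,f}(u_n),u_n\rangle$ and use the equivalent norm $[\cdot]_\mu$ from \eqref{equivalent-norm}. This gives $\bigl(\frac1p-\frac1{p^*_s}\bigr)[u_n]_\mu^p\le C+C[u_n]_\mu$, so the sequence is bounded. Pass to a subsequence with $u_n\rightharpoonup u$ in $\wps$ and $u_n\to u$ a.e. To show that $u$ solves \eqref{MainEq}, we need a.e. convergence of the nonlocal quantities $|u_n(x)-u_n(y)|^{p-2}(u_n(x)-u_n(y))|x-y|^{-(d+sp)/p'}$. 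These are bounded in $L^{p'}(\R^{2d})$ and converge a.e., hence weakly. The Hardy and critical terms converge weakly by the same Vitali-type argument.

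Next set $v_n=u_n-u$. By Brezis--Lieb lemmas for the Gagliardo seminorm, the Hardy term and the $L^{p^*_s}$ term, $v_n$ is a (PS) sequence for $I_{\mu,0,0}$ at level $\eta-I_{\mu,0,f}(u)$. For $p\neq2$ the derivative part needs a nonlocal Brezis--Lieb splitting of $(-\Delta_p)^s$ in $(\wps)^*$, which we prove by the standard elementary inequalities for $|a|^{p-2}a$.

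\textbf{Step 2: extracting one profile.} If $v_n\to0$ in $L^{p^*_s}$, then $v_n\to0$ in $\wps$, because $\langle I'(v_n),v_n\rangle\to0$ and $[\cdot]_\mu$ is a norm; in that case we stop. Otherwise fix $\delta>0$ small, below the level $S_{0,0}^{d/(sp)}$. Use the concentration function $Q_n(r)=\sup_{z}\int_{B_r(z)}|v_n|^{p^*_s}$ to choose $R_n>0$ and $x_n\in\rd$ with $\int_{B_{R_n}(x_n)}|v_n|^{p^*_s}=\delta=Q_n(R_n)$. Rescale by setting $w_n(x)=R_n^{(d-sp)/p}v_n(x_n+R_nx)$. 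Then $w_n\rightharpoonup w$.

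We show $w\neq0$ by an $\varepsilon$-regularity argument. Test the equation with $\phi^p w_n$ for cutoffs $\phi$, and use a local Sobolev inequality together with the smallness $\delta$ to rule out vanishing. The nonlocal tails coming from the cutoff are controlled by standard commutator estimates of the type $\iint|w_n(x)|^p|\phi(x)-\phi(y)|^p|x-y|^{-d-sp}$.

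Now comes the case split that is specific to $\alpha=0$. Write $\xi_n=x_n/R_n$.
\begin{itemize}
\item[(i)] If $\{\xi_n\}$ is bounded, we re-centre at the origin, setting $r_n=R_n$ up to a harmless translation of the profile. Then $w$ solves the Hardy--Sobolev limit problem with $\mu$, since the Hardy weight $|x_n+R_nx|^{-sp}R_n^{sp}=|\xi_n+x|^{-sp}$ converges.
\item[(ii)] If $|\xi_n|\to\infty$, that is $R_n/|x_n|\to0$, then the rescaled Hardy potential $|\xi_n+x|^{-sp}$ tends to $0$ locally uniformly. Using Hardy's inequality on the region $|x|<|\xi_n|/2$ together with the smallness of the potential elsewhere, we get $\mu\int|\xi_n+x|^{-sp}|w_n|^{p-2}w_n\phi\to0$ for $\phi\in C_c^\infty$. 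Hence $w$ solves the pure Sobolev equation. This produces a $\tilde U_j$ with parameters $x_n^j, R_n^j$, where $x_n^j\to x^j$ or $|x_n^j|\to\infty$.
\end{itemize}
Finally we show $R_n\to0$ or $\infty$; in case (i) this follows since $u$ has already been removed and $v_n\rightharpoonup0$. Subtract the bubble, $v_n^{(2)}=v_n-C(w)$, and apply Brezis--Lieb again. The energy drops by at least $\frac{s}{d}\min\{S_{\mu,0},S_{0,0}\}^{d/(sp)}>0$.

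\textbf{Step 3: iteration and orthogonality.} We iterate Step 2. The iteration terminates in finitely many steps because each profile costs a fixed positive amount of energy, while $[v_n]_\mu^p$ stays bounded. Orthogonality (d) follows from the fact that the residual converges weakly to $0$ after each rescaling. If two profiles had comparable scales and centres, the later profile would be weakly zero, a contradiction.

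\textbf{Main obstacle.} The hardest step is the dichotomy (i)/(ii) together with the justification that the Hardy term vanishes in the limit in case (ii) for the nonlinear operator. This must also handle the intermediate regime in which $|\xi_n|$ is bounded but nonzero, which should be absorbed into case (i) by translating the profile. A second difficulty is proving the nonlocal Brezis--Lieb splitting of $(-\Delta_p)^s$ in the dual space for $p\ne2$. For that I would first try the approach of Mercuri--Willem style decompositions, adapted with the pointwise inequality $\bigl||a+b|^{p-2}(a+b)-|a|^{p-2}a\bigr|\le\varepsilon|a|^{p-1}+C_\varepsilon|b|^{p-1}$.
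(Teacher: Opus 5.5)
Your architecture matches the paper's: boundedness, splitting off the weak limit with a dual-norm Br\'ezis--Lieb argument, a concentration function with supremum over centres, rescaling, the dichotomy on $\xi_n=x_n/R_n$ with re-centring at the origin when $\xi_n$ stays bounded, a fixed energy drop per profile, and orthogonality by comparing two weak limits of the same rescaled remainder. The step that does not close as you describe it is the proof that $w\neq0$. Test with $C_{x_n,R_n}(\phi^p w_n)$, where $\phi=1$ on $B(0,1)$ and $\phi$ is supported in $B(0,2)$, and discard the commutator term as you indicate. You get
\[
E_n=\mu\int_{\mathbb{R}^d}\frac{|\phi w_n|^p}{|z+\xi_n|^{sp}}\,{\rm d}z+\int_{\mathbb{R}^d}\phi^p|w_n|^{p^*_s}\,{\rm d}z+o_n(1),
\]
where $E_n$ is the Gagliardo energy of $w_n$ weighted by $\phi(x)^p$. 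The local Sobolev inequality with small $\delta$ controls only the last term. If $\xi_n$ stays bounded, the singularity $-\xi_n$ lies in the cutoff region. The Hardy term is invariant under dilations about $-\xi_n$, so $w_n\to0$ in $L^p_{\rm loc}$ does not make it small: a bubble concentrating at $-\xi$ keeps it constant. No choice of $\delta$ affects this term.

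The missing idea is to absorb the Hardy term using $\mu<\mu_{d,s,p}$. Since $[\cdot]_{s,p}$ is translation invariant, the sharp Hardy inequality holds with the singularity at any point. Hence the Hardy term is at most $\theta_\mu[\phi w_n]_{s,p}^p$ with $\theta_\mu=\mu/\mu_{d,s,p}<1$. You then need the sharp cutoff bound $[\phi w_n]_{s,p}^p\le(1+\varepsilon)E_n+C_\varepsilon\int|w_n|^pK_\phi=(1+\varepsilon)E_n+o_n(1)$. Here $K_\phi(y)=\int|\phi(x)-\phi(y)|^p|x-y|^{-d-sp}\,{\rm d}x\in L^\infty\cap L^{d/(sp)}$, and $\varepsilon$ must be so small that $\theta_\mu(1+\varepsilon)<1$; the crude $2^{p-1}$ splitting is not enough in general. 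Accordingly, $\delta$ must depend on $\mu$ and not only on $S_{0,0}$. With $N$ unit balls covering $B(0,2)$, you need $(N\delta)^{sp/d}(1+\varepsilon)/S_{0,0}<1-\theta_\mu(1+\varepsilon)$. Then $E_n\to0$, which contradicts $\int_{B(0,1)}|w_n|^{p^*_s}=\delta$; this is the paper's Step 2.

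A smaller point, which you flag but only half-resolve, concerns case (ii). Vanishing of the Hardy term against a fixed $\phi\in C_c^\infty(\mathbb{R}^d)$ gives the limit equation. To restart the extraction, however, the remainder must be a (PS) sequence in the dual norm, because the next non-vanishing step uses $n$-dependent test functions. Its Hardy energy must also split, even though the weight $|z+\xi_n|^{-sp}$ moves, so the fixed-weight Br\'ezis--Lieb lemma does not apply. Both facts follow from $\int|w|^p|z+\xi_n|^{-sp}\,{\rm d}z\to0$ for each fixed $w\in\mathcal{D}^{s,p}(\mathbb{R}^d)$. You obtain this by approximating $w$ with a compactly supported function and bounding the difference uniformly in $n$ with the translated Hardy inequality.
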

Although the profile decomposition is more delicate when $\alpha=0$, the same variational strategy can be used once both types of concentration are excluded. This yields the following existence result. 

\begin{theorem}[Existence: for $\al=0$]\label{existence-II}
Let $p \in (1, \infty), s \in (0,1), d>sp$, and $\mu \in (0,\mu_{d,s,p})$. Assume that $f$ is a nontrivial nonnegative functional in $(\wps)^*$, and 
    $$\norm{f}_{(\wps)^*} \le \left( \frac{1}{p} -  \frac{p-1}{p^*_s(p^*_s -1)}\right) \left( \frac{p-1}{p^*_s-1} S_{\mu}^{\frac{p^*_s}{p}} \right)^{\frac{p-1}{p^*_s - p}},$$
    where $S_{\mu}$ is defined in \eqref{best-constant}.
    Then \eqref{MainEq} admits a positive weak solution $u_{0}$ with $I_{\mu,0,f}(u_{0})<0$.
\end{theorem}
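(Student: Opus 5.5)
We minimize $I:=I_{\mu,0,f}$ over a small ball and use Theorem~\ref{PS-decomposition-II} to rule out bubbling at negative levels. Write $q:=p^*_s$ and $\|\cdot\|:=[\cdot]_\mu$. By the definition \eqref{best-constant} of $S_\mu:=S_{\mu,0}$,
\[
I(u)\ge \tfrac1p\|u\|^p-\tfrac1q S_\mu^{-q/p}\|u\|^q-\|f\|_{*}\|u\| ,
\]
where we measure $f$ in the dual norm associated with $\|\cdot\|$, consistent with how the constant in the statement is built from $S_\mu$.

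Consider $g(t)=\frac1p t^p-\frac1q S_\mu^{-q/p}t^q$. We choose
\[
R:=\Big(\tfrac{p-1}{q-1}S_\mu^{q/p}\Big)^{1/(q-p)},
\]
which is the zero of $g''(t)=(p-1)t^{p-2}-(q-1)S_\mu^{-q/p}t^{q-2}$. Then $S_\mu^{-q/p}R^{q-p}=\frac{p-1}{q-1}$, so
\[
g(R)=R^p\Big(\tfrac1p-\tfrac{p-1}{q(q-1)}\Big).
\]
The hypothesis says $\|f\|_*\le (\frac1p-\frac{p-1}{q(q-1)})R^{p-1}$, so $\|f\|_*R\le g(R)$. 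Hence $I\ge 0$ on the sphere $\|u\|=R$. Moreover $I\ge -\|f\|_* R$ on the ball $B_R$.

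Since $f\ge0$ is nontrivial, pick $\phi\ge0$ with $\langle f,\phi\rangle>0$. Then $I(t\phi)<0$ for small $t>0$, so
\[
c:=\inf_{B_R}I\in(-\infty,0).
\]
By Ekeland's variational principle on the closed ball $\overline{B_R}$, there is a minimizing sequence $\{u_n\}$ with
\[
I(v)\ge I(u_n)-\tfrac1n\|v-u_n\|\quad\text{for all } v\in \overline{B_R}.
\]
To see that $\{u_n\}$ is a (PS) sequence we need $u_n$ to stay off the sphere. Since $I\ge0$ there while $I(u_n)\to c<0$, we have $\|u_n\|<R$ for large $n$. Then perturbations $u_n+\varepsilon v$ stay in the ball, which yields $\|I'(u_n)\|_*\le 1/n$. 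So $\{u_n\}$ is a bounded (PS)$_c$ sequence.

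Apply Theorem~\ref{PS-decomposition-II}:
\[
u_n=u+\sum_i C_{r_n^i}\tilde u_i+\sum_j C_{x_n^j,R_n^j}\tilde U_j+o_n(1),\qquad c=I(u)+\sum_i I_{\mu,0,0}(\tilde u_i)+\sum_j I_{0,0,0}(\tilde U_j).
\]
The main obstacle is excluding profiles, and I would do it by comparing norms with the radius $R$. Testing the limit equations with the profile itself gives $\|\tilde u_i\|^p=\int|\tilde u_i|^q$, so $I_{\mu,0,0}(\tilde u_i)=(\frac1p-\frac1q)\|\tilde u_i\|^p$. Combining with $S_\mu$ gives $\|\tilde u_i\|^p\ge S_\mu^{q/(q-p)}$. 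Similarly $[\tilde U_j]_{s,p}^p\ge S_{0,0}^{q/(q-p)}\ge S_\mu^{q/(q-p)}$, using $S_{0,0}\ge S_\mu$.

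By asymptotic orthogonality (d) and the norm splitting implicit in the proof of Theorem~\ref{PS-decomposition-II} (Brezis--Lieb type decoupling of $\|\cdot\|^p$ along the decomposition), we get
\[
R^p\ge \limsup\|u_n\|^p\ge \|u\|^p+\sum\|\tilde u_i\|^p+\sum\|\tilde U_j\|^p .
\]
For the Sobolev profiles, the Hardy term vanishes in the limit when $R_n^j/|x_n^j|\to0$, so $\|\cdot\|$ reduces to $[\cdot]_{s,p}$ for them. Each profile has $\|\cdot\|^p\ge S_\mu^{q/(q-p)}$. On the other hand,
\[
R^p=\Big(\tfrac{p-1}{q-1}\Big)^{p/(q-p)}S_\mu^{q/(q-p)}<S_\mu^{q/(q-p)},
\]
because $\frac{p-1}{q-1}<1$. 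This contradicts the existence of any profile, so $k_1=k_2=0$. Then $u_n\to u$ strongly, $u\in \overline{B_R}$ and $I(u)=c<0$. If the precise norm decoupling is awkward in the fractional $p$-setting, I would fall back on the energy identity: every profile contributes positive energy, and one shows $I(u)\ge c$ because $u$ lies in the ball by weak lower semicontinuity. This gives $c\ge c+\sum(\text{positive})$, which again forces no profiles.

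Positivity. Since $\langle f,u\rangle$ enters with the same sign for $u$ and $|u|$, and $[|u|]_{s,p}\le[u]_{s,p}$, we get $I(|u|)\le I(u)$ with $|u|\in\overline{B_R}$. So we may replace the minimizer by $|u|$; it is again an interior local minimizer, hence a nonnegative critical point. Testing the equation with $u^-$ confirms $u\ge0$ directly. Since $f\ne0$, $u\not\equiv0$. The strong minimum principle for $(-\Delta_p)^s-\mu|x|^{-sp}$ with nonnegative right-hand side (the Hardy term being nonnegative) then gives $u>0$. Thus $u_0:=u$ is a positive weak solution with $I(u_0)<0$.
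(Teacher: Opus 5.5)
Your proof is correct, and it differs from the paper's in the choice of constraint set and in how bubbling is excluded.

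\textbf{The paper's route.} It minimizes the truncated functional $J_{\mu,0,f}$ over the Nehari-type region $\Sigma_1^0=\{\psi_0>0\}\cup\{0\}$, and uses Lemma~\ref{strict} ($c_0^0<c_1^0$) to keep the Ekeland sequence off $\Sigma^0$. It then rules out profiles by splitting $\psi_0$ along the decomposition, with a Bahri-type estimate for the Gagliardo part and Lemma~\ref{lem:hardy-split} for the Hardy part, which gives $\psi_0(u_0)\le 0$ and $\limsup_n\psi_0(U_n)<0$. This detour is needed because $\Sigma_1^0$ is not weakly closed, so $J_{\mu,0,f}(u_0)\ge c_0^0$ is not available for the weak limit.

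\textbf{Your route.} You minimize $I_{\mu,0,f}$ over the $[\cdot]_\mu$-ball of radius $R$. This $R$ is exactly the $\delta$ of Lemma~\ref{strict}, so your sphere estimate is the same computation. The difference is that the ball \emph{is} weakly closed: by Lemma~\ref{convergence-BL}(i),(ii) with $\tilde{\al}=sp$, one has $[u_n]_\mu^p=[u]_\mu^p+[u_n-u]_\mu^p+o_n(1)$, and $[u_n-u]_\mu^p\ge 0$ by \eqref{HS}. Hence your ``fallback'' chain $c\ge I_{\mu,0,f}(u)+\sum(\text{profile energies})\ge c+(k_1+k_2)\frac{s}{d}\overline{S}_{\mu}^{\frac{d}{sp}}$ kills all profiles. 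It uses only part (b) of Theorem~\ref{PS-decomposition-II} and \eqref{lowerbound-II}. That makes it shorter than both the paper's $\psi_0$-splitting and your norm-decoupling route. The latter is true, via the iterative Br\'ezis--Lieb steps or \eqref{eq:mu-additive}, but it is not part of the theorem's statement.

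\textbf{Positivity.} Using $I_{\mu,0,f}(|u|)\le I_{\mu,0,f}(u)$ replaces the truncation. Note, however, that testing $I_{\mu,0,f}'(u)=0$ with $u^-$ does not by itself give $u^-=0$ for the untruncated functional, because $\int_{\rd}(u^-)^{p^*_s}\dx$ enters with the wrong sign. So simply take $u_0:=|u|$.

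\textbf{What the paper's set-up buys.} It is built for reuse. The level $c_0^0=\inf_{\Sigma_1^0}J_{\mu,0,f}$, the gap $c_0^0<c_1^0$, and Lemma~\ref{lem-crit} are exactly what the multiplicity argument of Section~\ref{se-6} needs; for instance, Lemma~\ref{lem:path}(e) requires $J_{\mu,\al,f}(u_{\al})=c_0^{\al}$. Your ball level is a priori only $\ge c_0^0$ (note $B_R\subset\Sigma_1^0$), so it would need adapting there.

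\textbf{Dual norm.} Your reading of $\norm{f}$ as the dual norm with respect to $[\cdot]_\mu$ is the same convention the paper uses implicitly in \eqref{31-7-3} and Lemma~\ref{strict}.
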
 
Theorem \ref{existence} and Theorem \ref{existence-II} provide a first positive solution with negative
energy. Our final result shows that the global compactness theory can also be used at a higher min-max critical level to produce a second positive solution, uniformly for $0\leq\alpha<sp$.

\begin{theorem}[Multiplicity]\label{Thm:second}
Let $p \in (1,\infty)$, $s \in (0,1)$, $0 \le \al<sp<d$ and $\mu \in (0,\mu_{d,s,p})$. Assume that $f \in (\wps)^*$ is nontrivial and nonnegative and satisfies
\begin{align}\label{eq:threshold2}
    \norm{f}_{(\wps)^*}\le \left( \frac1p-\frac{p-1}{p^*_s(\al)\left( p^*_s(\al)-1\right)}\right)\left(\frac{p-1}{p^*_s(\al)-1}\,S_{\mu}^{\frac{p^*_s(\al)}{p}}\right)^{\frac{p-1}{p^*_s(\al)-p}}.
\end{align}
Then \eqref{MainEq} admits at least two distinct positive weak solutions $u_{\al}$ and $v_{\al}$, with
\begin{align*}
    I_{\mu,\al,f}(v_{\al})=\eta \ge c_1^{\al}>c_0^{\al}=I_{\mu,\al,f}(u_{\al}).
\end{align*}
\end{theorem}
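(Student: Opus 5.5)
The proof follows the Tarantello-type scheme: a local minimizer of negative energy, then a mountain-pass/minimax level kept strictly below the first bubbling threshold. Throughout, write $q:=p^*_s(\al)$.

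\emph{First solution.} We first take $u_\al$ from Theorem \ref{existence} (if $\al>0$) or Theorem \ref{existence-II} (if $\al=0$). For $\al>0$ we note that $S_{\mu}\ge S_{\mu,\al}$ is not automatic, so \eqref{eq:threshold2} is read with the constant relevant to each case. The smallness condition makes the fibering map $t\mapsto \frac{t^p}{p}[u]_\mu^p-\frac{t^{q}}{q}\int|u|^{q}|x|^{-\al}$ have a well-defined maximum $t_{\max}(u)$ on the unit sphere, with value controlled by $S_{\mu,\al}$. Define
\[
\Lambda=\Bigl\{u:\ [u]_\mu^p-(q-1)\int_{\rd}\frac{|u|^q}{|x|^\al}\dx>0\Bigr\}\cup\{0\}.
\]
Then $c_0^\al=\inf_{\Lambda} I_{\mu,\al,f}<0$, because $f\ge0$ is nontrivial, so $I(tv)<0$ for small $t$ and a suitable $v\ge0$. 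This infimum is attained by $u_\al>0$: we use $|u|$ together with $[|u|]_{s,p}\le[u]_{s,p}$, and then the strong maximum principle for the fractional $p$-Laplacian with Hardy potential.

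\emph{Second solution.} Next we set
\[
c_1^\al=\inf_{u\in\partial\Lambda\setminus\{0\}} I_{\mu,\al,f}(u).
\]
Using \eqref{eq:threshold2}, we show that $c_1^\al>c_0^\al$; this is where the precise constant $\bigl(\frac1p-\frac{p-1}{q(q-1)}\bigr)\bigl(\frac{p-1}{q-1}S^{q/p}\bigr)^{\frac{p-1}{q-p}}$ enters, via Hölder/Young estimates on $\langle f,u\rangle$ for $u\in\partial\Lambda$. Hence $u_\al$ is a strict local minimum in a suitable sense.

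We then fix a positive extremal $w$ of $S_{\mu,\al}$ (or, when $\al=0$, possibly of $S_{0,0}$ translated away from the origin, whichever threshold is smaller). We build the path
\[
\gamma(t)=\bigl((1-t)u_\al^p+t\,R^p\,C_{\varepsilon}(w)^p\bigr)^{1/p}.
\]
This is the hidden-convexity interpolation: $u\mapsto[u^{1/p}]_{s,p}^p$ is convex on nonnegative functions (Brasco–Franzina type), so $[\gamma(t)]^p$ is bounded by the convex combination. For $R$ large, $\gamma(1)\notin\Lambda$ and $I(\gamma(1))<c_0^\al$. Every path from $u_\al$ to $\gamma(1)$ crosses $\partial\Lambda$, so the minimax level satisfies $\eta\ge c_1^\al$.

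\emph{Main obstacle.} The key and hardest estimate is
\[
\sup_t I(\gamma(t))<c_0^\al+\Bigl(\frac1p-\frac1q\Bigr)S^{\frac{q}{q-p}},
\]
where $S$ is $\min(S_{\mu,\al},S_{0,0})$ as appropriate. We would expand using convexity of the seminorm term, the elementary inequality $(a+b)^{q/p}\ge a^{q/p}+b^{q/p}+\tfrac qp a^{q/p-1}b$ (and its reverse mixed-term counterpart), and the Euler–Lagrange equation of $u_\al$. The positive cross term $\int u_\al^{q-1}C_\varepsilon w\,|x|^{-\al}$ is of order $\varepsilon^{(d-sp)/p}$, while the error terms are of higher order, using the decay of $w$. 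Since $f\ge0$ helps, we expect the strict inequality for small $\varepsilon$.

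\emph{Compactness and positivity.} By Ekeland's principle on the path class we obtain a (PS) sequence at level $\eta$, which we may take essentially nonnegative. Applying Theorem \ref{PS-decomposition} or Theorem \ref{PS-decomposition-II}, each profile costs at least $(\frac1p-\frac1q)S^{q/(q-p)}$. Moreover, the weak limit $u$ is a critical point, hence $I(u)\ge c_0^\al$, because critical points lie in $\Lambda$ or on the Nehari-type region. The energy bound above therefore forces $k=0$ (respectively $k_1=k_2=0$), and we get strong convergence to $v_\al$ with $I(v_\al)=\eta\ge c_1^\al>c_0^\al$. Hence $v_\al\ne u_\al$, and positivity of $v_\al$ follows as before.
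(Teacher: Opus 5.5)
The gap is the step you yourself call the main obstacle. The strict bound $\sup_t J_{\mu,\al,f}(\gamma(t))<c_0^{\al}+\Theta_{\mu,\al}$ is only anticipated (``we expect''), through a Brezis--Nirenberg type expansion in a concentration parameter $\varepsilon$, and the heuristics you give for it do not hold up:
\begin{itemize}
\item The inequality $(a+b)^{q/p}\ge a^{q/p}+b^{q/p}+\frac qp a^{q/p-1}b$ is false whenever $1<q/p<2$ (take $a=b=1$).
\item For your $\ell^p$-type path, the first-order interaction in the critical term is $\int_{\rd} u_\al^{q-p}W^p|x|^{-\al}\,dx$. The term $\int_{\rd} u_\al^{q-1}W|x|^{-\al}\,dx$ that you estimate is the cross term of the additive path.
\item For $p\neq 2$ and $\mu>0$ the extremals of $S_{\mu,\al}$ are not explicit, so the claimed orders in $\varepsilon$ are unverified.
\end{itemize}
The underlying cause is your choice of path. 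Putting the weight $(1-t)$ on $u_\al^p$ destroys the two features that make the estimate easy: the $u_\al$-component contributing exactly $J_{\mu,\al,f}(u_\al)=c_0^{\al}$, and $\gamma(t)\ge u_\al$. Along your path you only get $J_{\mu,\al,f}(\gamma(t))\le J_{\mu,\al,f}\big((1-t)^{1/p}u_\al\big)+I_{\mu,\al,0}\big(t^{1/p}R\,C_\varepsilon w\big)$. Moreover $s\mapsto J_{\mu,\al,f}(su_\al)$ increases from $c_0^{\al}$ to $0$ as $s$ goes from $1$ down to $0$.

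The missing idea is that neither concentration nor an asymptotic expansion is needed. Keep the coefficient of $u_\al^p$ equal to $1$, take an unscaled extremal $V_\al$ normalized by $[V_\al]_\mu^p=\int_{\rd}V_\al^q|x|^{-\al}\,dx$, and set $\gamma(t)=(u_\al^p+t^pV_\al^p)^{1/p}$ for $t\ge 0$. Then:
\begin{itemize}
\item The reverse triangle inequality for the $\ell^p$-norm on $\R^2$ gives $|\gamma(t)(x)-\gamma(t)(y)|^p\le|u_\al(x)-u_\al(y)|^p+t^p|V_\al(x)-V_\al(y)|^p$, so the seminorm has no cross term.
\item The Hardy term is exactly additive, since $\gamma(t)^p=u_\al^p+t^pV_\al^p$.
\item Strict superadditivity of $\sigma\mapsto\sigma^{q/p}$ gives $G(t)=\int_{\rd}(\gamma(t)^q-u_\al^q-t^qV_\al^q)|x|^{-\al}\,dx>0$ for $t>0$.
\item Since $\gamma(t)\ge u_\al$ and $f\ge0$, the linear term satisfies $\langle f,\gamma(t)\rangle\ge\langle f,u_\al\rangle$.
\end{itemize}
Together these give
\[
J_{\mu,\al,f}(\gamma(t))\le c_0^{\al}+I_{\mu,\al,0}(tV_\al)-\tfrac1q G(t)<c_0^{\al}+\Theta_{\mu,\al}\qquad (t>0).
\]
The map $t\mapsto J_{\mu,\al,f}(\gamma(t))$ is continuous, equals $c_0^{\al}$ at $t=0$ and tends to $-\infty$, so its supremum is attained and lies strictly below the threshold. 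The Euler--Lagrange equation of $u_\al$ is never used.

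Separately, your compactness step asserts without proof that every critical point $u$ satisfies $J_{\mu,\al,f}(u)\ge c_0^{\al}$. This needs an argument: if $u$ lies on the wrong side of the constraint, one shows that $t\mapsto J_{\mu,\al,f}(tu)$ increases on $[\tau,1]$. Here $\tau<1$ is both the point where $\tau u$ lies on the constraint and the maximum point of $t\mapsto t^{p-1}[u]_\mu^p-t^{q-1}\int_{\rd}|u|^q|x|^{-\al}\,dx$. This identification requires the constraint functional $[u]_\mu^p-\frac{q-1}{p-1}\int_{\rd}|u|^q|x|^{-\al}\,dx$. Your $\Lambda$ uses the coefficient $q-1$, which is the $p=2$ version; it also does not reproduce the constant in the smallness condition on $f$.
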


\subsection{Literature review}
The classification of Palais-Smale (PS) sequences at critical growth was pioneered by Struwe \cite{Struwe}, who studied the energy functional
\begin{align*}
    I_{\la}(u) = \frac{1}{2} \int_{\Omega} \abs{\Gr u}^2 - \frac{\la}{2} \int_{\Omega} u^2 - \frac{1}{2^*} \int_{\Omega} \abs{u}^{2^*},\; u \in \mathcal{D}_0^{1,2}(\Omega),
\end{align*}
where $\la \in \R$, $\Omega \subset \rd$ is a smooth bounded domain, $d>2$, $2^*=\frac{2d}{d-2}$, and $\mathcal{D}_0^{1,2}(\Omega)= \{ u \in L^{2^*}(\rd) : \abs{\Gr u} \in L^2(\rd), u=0 \text{ in } \rd \setminus \Omega \}$. Critical points of $I_\la$ are precisely the weak solutions of the Br\'ezis-Nirenberg problem
\begin{equation}\label{brezis-nirernberg}
   -\Delta u = \la u + \abs{u}^{2^*-2}u \text{ in } \Omega, \quad u=0 \text{ on } \pa \Omega,
\end{equation}
first introduced and studied by Br\'{e}zis and Nirenberg \cite{Brezis-Nirenberg} using sharp test-function expansions against the Aubin-Talenti extremals of the Sobolev inequality \cite{Aubin, Talenti}. Struwe \cite{Struwe} proved that if $\{u_n\}$ is a (PS) sequence of $I_\la$ at level $c$, then there exist an integer $k\ge0$, sequences $\{x_n^i\}_n\subset\rd$, $\{r_n^i\}_n\subset\R^+$, and functions $u\in\mathcal{D}_0^{1,2}(\Omega)$, $\tilde u_i \in \mathcal{D}^{1,2}(\rd)$ for $1\le i\le k$ (where $\mathcal{D}^{1,2}(\rd)=\{u\in L^{2^*}(\rd):\abs{\Gr u}\in L^2(\rd)\}$), such that $u$ weakly solves \eqref{brezis-nirernberg}, each $\tilde u_i$ weakly solves the limit problem $-\Delta \tilde u_i = \abs{\tilde u_i}^{2^*-2}\tilde u_i$ in $\rd$, and
\begin{align*}
    u_n = u + \sum_{i=1}^k \tilde{u}_i^{x_n^i,r_n^i} + o_n(1) \quad \text{in } \mathcal{D}^{1,2}(\rd), \quad \tilde{u}_i^{y,r}(x) \coloneqq  r^{-\frac{d-2}{2}} \tilde{u}_i \left( \frac{x-y}{r} \right),
\end{align*}
with energy quantization
\begin{align*}
    c = I_{\la}(u) + \sum_{i=1}^k I_{\infty}(\tilde{u}_i) + o_n(1), \quad I_{\infty}(u) = \frac{1}{2} \int_{\rd} \abs{\Gr u}^2 - \frac{1}{2^*} \int_{\rd} \abs{u}^{2^*}.
\end{align*}
This decomposition became the basic tool for locating the compactness threshold in critical elliptic problems and, combined with the min-max scheme, for producing multiplicity results. Shortly after, Cerami, Fortunato and Struwe \cite{CFS} used a Struwe-type decomposition to show that the number of nontrivial solutions of \eqref{brezis-nirernberg} is bounded below by the number of eigenvalues of $-\Delta$ lying in a suitable spectral window depending on the best Sobolev constant and $|\Omega|$.

On unbounded domains, the loss of compactness is more severe, since translation invariance is not broken by a Dirichlet boundary condition, and bubbling may occur at any point drifting to infinity along a whole sequence of directions, not merely by concentration at a fixed boundary point. Existence results in this setting were first obtained under symmetry or exterior-domain assumptions by Esteban and Lions \cite{Esteban-Lions}, Benci and Cerami \cite{Benci-Cerami}, and Bahri and Lions \cite{Bahri-Lions}. Ramos, Wang and Willem \cite{Ramos-Wang-Willem} obtained a global compactness decomposition for positive solutions of critical elliptic equations on unbounded domains, allowing bubbling both at finite points and ``at infinity''. Further developments and applications of Palais-Smale decomposition lemmas on unbounded and exterior domains include the work of Cerami and Molle \cite{Cerami-Molle}, and Molle and Passaseo \cite{Molle-unbdd}.

The extension of Struwe's decomposition to the $p$-Laplace operator $-\Delta_p u = -\operatorname{div}(\abs{\Gr u}^{p-2}\Gr u)$ required substantially new tools, since the loss of the Hilbert-space structure prevents the elegant orthogonal-splitting arguments available for $p=2$. Existence results for the corresponding Br\'{e}zis-Nirenberg problem for the $p$-Laplacian were obtained by Garc\'{i}a,  Azorero and Peral \cite{GarciaAzorero-Peral1, GarciaAzorero-Peral2}, Guedda and V\'{e}ron \cite{Guedda-Veron}, and Egnell \cite{Egnell}, generally via delicate direct test-function estimates rather than a full classification of (PS) sequences. A complete Struwe-type global compactness decomposition for the $p$-Laplacian at critical growth, valid for all $p\in(1,d)$ and non-negative (PS) sequences, was obtained by Mercuri and Willem \cite{Mercuri-Willem}, who established a representation theorem showing that any such sequence decomposes, up to a remainder vanishing in $\D^{1,p}(\rd)$, into a solution of the limit problem on $\Omega$ plus a finite sum of rescaled Aubin-Talenti-type $p$-bubbles solving the critical $p$-Laplace equation on $\rd$, together with the corresponding quantization of the energy level; a key technical ingredient is a Pohozaev-type identity adapted to the nonlinear operator.

In contrast to the semilinear case, a full Struwe-type decomposition for the $p$-Laplacian on unbounded domains, or on $\rd$ itself with a lower-order perturbation, remains comparatively less developed. Existence results in this direction typically combine the Mercuri-Willem decomposition \cite{Mercuri-Willem} on the bounded part of the domain with concentration-compactness arguments to rule out vanishing and bubbling at infinity; see e.g. Gon\c{c}alves and Alves \cite{Goncalves-Alves} for existence results for the $m$-Laplacian on $\rd$. Without a sign condition on the (PS) sequence, even the bounded-domain decomposition of \cite{Mercuri-Willem} is not known to hold in full generality, and the corresponding classification on unbounded domains for $p\ne2$ remains, to a large extent, open.

For the fractional Laplacian $(-\Delta)^s$, $s\in(0,1)$, the compactness properties of the critical embedding $H^s(\rd)\hookrightarrow L^{2^*_s}(\rd)$, $2^*_s=\frac{2d}{d-2s}$, were first analyzed via profile decomposition by Palatucci and Pisante \cite{Palatucci-Pisante-CVPDE}, extending the classical concentration-compactness dichotomy of Lions to the fractional Sobolev setting, and subsequently refined into a global compactness statement for (PS) sequences of the associated fractional Br\'ezis-Nirenberg functional on bounded domains by Palatucci and Pisante \cite{Palatucci-Pisante-NA}. For $f\in(\D^{s,2})'$ and a bounded potential $0<a\in L^\infty(\rd)$ with $a(x)\to1$ as $\abs{x}\to\infty$, Bhakta and Pucci \cite[Proposition~2.1]{Bhakta-Pucci} classified the (PS) sequences of the corresponding functional on the whole space $\rd$,
\begin{equation}
   I_{a,f}(u) \coloneqq \frac{1}{2} [u]_{s,2}^2 - \frac{1}{2^*_s} \int_{\rd} a(x) \abs{u}^{2^*_s} - {}_{(\D^{s,2})'}\langle f,u\rangle_{\D^{s,2}},\;  u \in \D^{s,2},
\end{equation}
showing, motivated by the earlier work of Palatucci and Pisante \cite{Palatucci-Pisante-CVPDE, Palatucci-Pisante-NA}, that (PS) sequences decompose into a solution of the perturbed problem plus a finite sum of rescaled nonlocal Aubin-Talenti bubbles concentrating either at finite points $x^i\in\rd$ or at infinity, with associated bubble-interaction condition
\begin{align}\label{bub-1}
\left| \log \left( \frac{r_n^i}{r_n^j} \right) \right| + \left| \frac{x_n^i - x_n^j}{r_n^i}  \right| \longrightarrow \infty, \quad i\ne j,
\end{align}
and matching energy quantization. A general concentration-compactness principle for the fractional Sobolev embedding directly on unbounded domains, allowing systematically for bubbling at infinity, was subsequently established by Bonder, Saintier and Silva \cite{Bonder-Saintier-Silva}, who applied it to a generalized fractional Br\'ezis-Nirenberg problem with variable coefficients on unbounded domains. Subsequently, Bhakta, Chakraborty, Miyagaki, and Pucci in \cite{MoSoMiPa} examined the global compactness results of fractional Laplace systems for $p=2$, and it was later extended for $p \in (1, \infty)$, by Biswas and Chakraborty in \cite{NS2025}.

The interplay between the critical Sobolev term and a Hardy-type singular potential $\abs{x}^{-2}$ (respectively $\abs{x}^{-2s}$ in the nonlocal setting) introduces an additional, independent source of noncompactness. In the local setting, this phenomenon was first studied by Jannelli \cite{Jannelli}, and multiplicity and existence results for the corresponding Hardy-Sobolev critical problem were obtained by Ghoussoub and Yuan \cite{Ghoussoub-Yuan}, and Cao and Peng \cite{Cao-Peng-Hardy}, the latter establishing a global compactness result for the associated singular functional. Smets \cite{Smets-TAMS} studied the whole-space Schr\"{o}dinger-type equation
\begin{align}\label{Hardy-Sobolev-II}
-\Delta u - \mu\frac{u}{\abs{x}^2} = K(x) \abs{u}^{2^*-2}u \ \text{ in } \rd,\quad u \in \mathcal{D}^{1,2}(\rd),
\end{align}
$\mu>0$, $K\in L^\infty(\rd)$, and showed that in the presence of the Hardy potential noncompactness arises through \emph{two distinct} bubble profiles: the usual Aubin-Talenti bubble, and a second, genuinely different profile solving the limit Hardy--Sobolev equation ($K\equiv1$); see also Bhakta and Sandeep \cite{Bhak-San} for the analogous phenomenon for Hardy-Sobolev-Maz'ya-type equations with cylindrical singularities. For $p=2$, $s\in(0,1)$ and $\al\in(0,2s)$, Bhakta, Chakraborty and Pucci \cite{BCP} obtained a complete classification of (PS) sequences for the critical Hardy-Sobolev functional associated with the fractional Hardy--Sobolev exponent $2^*_s(\al)=\frac{2(d-\al)}{d-2s}$, extending the Palatucci-Pisante and Bhakta-Pucci decompositions \cite{Palatucci-Pisante-NA, Bhakta-Pucci} to the singular setting for $\al\in(0,2s)$. The remaining endpoint case $\al=0$, corresponding to the pure fractional Hardy potential $\abs{x}^{-2s}$ studied at the level of existence theory in \cite{Smets-TAMS} for $p=2$, exhibits the same qualitative dichotomy between an Aubin-Talenti-type bubble and a distinct Hardy-Sobolev bubble described above. An analogous global compactness decomposition exhibiting \emph{both} the pure Sobolev and the Hardy-Sobolev profiles at $\al=0$ is, to the best of our knowledge, not known even for the corresponding local $p$-Laplace operator (with $p\neq2$) on any unbounded domain.

In \cite[Theorem 1.1]{Brasco-2018}, Brasco, Squassina, and Yang examined the global compactness property of Palais-Smale sequences associated with the following nonlocal functional:
\begin{align*}
    I_{p,s}(u) \coloneqq \frac{1}{p} [u]_{s,p}^p + \frac{1}{p} \int_{\Omega} g \abs{u}^p - \frac{\mu}{p^*_s} \int_{\Omega} \abs{u}^{p^*_s}, \; u \in \mathcal{D}_0^{s,p}(\Omega),
\end{align*}
where $\Omega$ is a smooth bounded domain in $\rd$ with $d>sp$, $g \in L^{\frac{d}{sp}}(\Omega)$, and $\mathcal{D}_0^{s,p}(\Omega) = \{u \in \wps : u=0 \text{ in } \rd \setminus \Omega \}$. This result was recently extended by Biswas in \cite{biswas2026}, who studied the global compactness property of Palais-Smale sequences associated with the following nonlocal functional:
\begin{align*}
    I_{p,s, \alpha}(u) := \frac{1}{p} [u]_{s,p}^p - \frac{\mu}{p} \int_{\Omega} \frac{\abs{u}^p}{\abs{x}^{sp}} \dx + \frac{1}{p} \int_{\Omega} g\abs{u}^p \dx
    - \frac{1}{p^*_s(\al)} \int_{\Omega} \frac{|u|^{p^*_s(\al)}}{|x|^{\al}} \dx, \; \forall \, u \in \mathcal{D}_0^{s,p}(\Omega),
\end{align*}
where $\Omega$ is a smooth bounded domain in $\rd$ with $d>sp$,  $\alpha \ge 0$, $p^*_s(\al):=\frac{p(d-\al)}{d-sp}$, and $g \in L^{\frac{d-\alpha}{sp-\alpha}}(\Omega)$. This global compactness decomposition exhibits \emph{both} the pure Sobolev and the Hardy-Sobolev profiles at $\al=0$.

\subsection{Difficulties and Novelties} 
The whole-space problem considered here has a different feature. For $p\neq2$, the quadratic structure available in the linear case is lost, and the interaction of different profiles has to be treated directly at the level of the fractional $p$-energy. Moreover, at the endpoint $\alpha=0$, the critical nonlinearity is also translation invariant, whereas the Hardy potential still singles out the origin. This leads to two different concentration regimes. By keeping track of the concentration centre $y_n$ and scale $r_n$, and in particular of the relative parameter $\frac{|y_n|}{r_n}$, which plays the pivotal role, we identify both Hardy profiles and pure Sobolev profiles and obtain their energy decomposition and asymptotic separation. The resulting global compactness theorem is then combined with a hidden-convexity path and a min-max argument to obtain a second positive solution of the nonhomogeneous problem.

In the following, we outline the strategies associated with the fully non-linear equation \eqref{MainEq}.

\noi \textbf{Dealing with the endpoint case:} The endpoint case $\alpha=0$ requires a separate argument. When $\alpha>0$, the weight $|x|^{-\alpha}$ in the critical term fixes the origin, and concentration can occur only through dilations. When $\alpha=0$, however, the critical nonlinearity becomes translation invariant, while the Hardy potential remains centred at the origin. Thus, a concentrating sequence may move away from the singularity, and both its centre and its scale have to be monitored. In linear, conformally invariant problems, one can study concentration at infinity using a Kelvin transform. However, this tool is not available in our setting because the nonlinear fractional $p$-energy is defined for $p \neq 2$. We therefore avoid any inversion argument and instead use a direct approach based on the moving concentration function 
$Q_n(r):=\sup_{y\in\mathbb R^d}\int_{B(y,r)} |u_n|^{p_s^*}\,dx$. We choose the centre $y_n$ and a scalar $r_n$ where a fixed amount of mass is concentrated, and rescale around $(y_n, r_n)$. We then study the ratio $\sigma_n:=\frac{y_n}{r_n}.$ This quantity describes the concentration behaviour. If $\sigma_n$ remains bounded, the rescaled sequence interacts with the Hardy singularity, and the limit profile solves the Hardy-Sobolev equation. If $|\sigma_n| \to \infty$, the concentration drifts away from the origin, the Hardy term vanishes in the limit, and the profile solves the standard critical Sobolev equation.  This direct centre-scale analysis allows us to treat both regimes within the same extraction procedure, without using the Kelvin transform. We further prove the corresponding energy decomposition and asymptotic separation of the extracted profiles. In particular, the $\alpha=0$ decomposition contains both Hardy bubbles and free Sobolev bubbles, a phenomenon absent in the weighted case $\alpha>0$. The two concentration regimes described above are illustrated in the figure above. Choose a centre $y_n$ and a scalar $r_n$ such that a fixed amount of mass is concentrated.

\medskip
\begin{center}
\begin{tikzpicture}[>=Stealth, thick, color=blue, every node/.style={text=black}]
% ==========================================
% CASE I: Bounded Case
% ==========================================
\begin{scope}[shift={(0,0)}]
    % 1. Section Title (Centered perfectly above)
    \node[align=center, font=\large\bfseries] at (2.5, 0.5) {(I) Bounded: $\frac{|y_n|}{r_n} = o_n(1)$};

    % 2. Top Picture
    \draw[->] (0, -2.5) -- (5, -2.5) node[right] {$x_1$};
    \draw[->] (1, -2.5) node[below left] {$0$} -- (1, -0.5) node[right] {$x_2$};
    
    \coordinate (Y1) at (3, -1.5);
    \filldraw[fill=cyan!20] (Y1) circle (0.8cm);
    \fill (Y1) circle (1.5pt) node[below] {$y_n$};
    \draw[->] (Y1) -- ++(30:0.8cm) node[midway, above left=-2pt, scale=0.8] {$r_n$};
    \node[above] at (3, -0.6) {$B(y_n, r_n)$};

    % Flow Arrow Down
    \draw[->] (2.5, -3.2) -- (2.5, -3.8);

    % 3. Rescale Text (Centered perfectly between pictures)
    \node[align=center] at (2.5, -4.6) {
        Rescale around $(y_n, r_n)$ \\[1.5ex]
        $\widetilde{u}_n(z) = r_n^{\frac{d-sp}{p}} u_n(r_n z + y_n)$
    };

    % Flow Arrow Down
    \draw[->] (2.5, -5.4) -- (2.5, -6.0);

    % 4. Bottom Picture (Zoomed-in local view)
    \draw (0, -9.5) rectangle (5, -6);
    
    % Draw the filled circle FIRST so it sits in the background
    \filldraw[fill=cyan!20] (2.5, -7.75) circle (0.8cm); 

    % Draw axes and center point ON TOP of the circle
    \draw[->] (0.2, -7.75) -- (4.5, -7.75) node[right] {$z_1$};
    \draw[->] (2.5, -9) -- (2.5, -6.5) node[right] {$z_2$};
    \node[below right] at (2.5, -7.75) {$0$};
    
    % Sigma tick
    \draw (1.2, -7.65) -- (1.2, -7.85) node[below] {$-\sigma_n$};

    % 5. Conclusion Text (Centered perfectly below)
    \node[align=center] at (2.5, -10.6) {
        $\sigma_n = \frac{y_n}{r_n} \to \sigma \in \mathbb{R}^d$ \\[1.5ex] 
        \textbf{(Hardy profile)}
    };
\end{scope}

% ==========================================
% CASE II: Unbounded Case
% ==========================================
% Shifted horizontally by 8.5cm to sit nicely alongside Case I
\begin{scope}[shift={(8.5,0)}] 
    % 1. Section Title (Centered perfectly above)
    \node[align=center, font=\large\bfseries] at (2.5, 0.5) {(II) Unbounded: $\frac{|y_n|}{r_n} \to \infty$};

    \draw[->] (0, -2.5) -- (5, -2.5) node[right] {$x_1$};
    \draw[->] (0.5, -2.5) node[below left] {$0$} -- (0.5, -0.5) node[right] {$x_2$};
    
    \coordinate (Y2) at (4.0, -1.5); 
    \filldraw[fill=cyan!20] (Y2) circle (0.8cm);
    \fill (Y2) circle (1.5pt) node[below] {$y_n$};
    \draw[->] (Y2) -- ++(30:0.8cm) node[midway, above left=-2pt, scale=0.8] {$r_n$};
    \node[above] at (4.0, -0.6) {$B(y_n, r_n)$};
    
    \draw[->] (2.5, -3.2) -- (2.5, -3.8);
    \node[align=center] at (2.5, -4.6) {
        Rescale around $(y_n, r_n)$ \\[1.5ex]
        $\widetilde{u}_n(z) = r_n^{\frac{d-sp}{p}} u_n(r_n z + y_n)$
    };
    
    \draw[->] (2.5, -5.4) -- (2.5, -6.0);
    \draw (0, -9.5) rectangle (5, -6);
    \filldraw[fill=cyan!20] (3.5, -7.75) circle (0.8cm); 
    \draw[->] (0.2, -7.75) -- (4.5, -7.75) node[right] {$z_1$};
    \draw[->] (3.5, -9) -- (3.5, -6.5) node[right] {$z_2$};
    \fill (3.5, -7.75) circle (1.5pt) node[below right] {$0$}; 
    
    \draw[dashed, thick] (0, -7.2) arc (-90:0:1.2cm);
    \fill (0.5, -6.3) circle (1.5pt) node[below, text=black, inner sep=3pt] {$-\sigma_n$};
    
    \node[align=center] at (2.5, -10.6) {
        $\sigma_n = \frac{y_n}{r_n} \text{ and } |\sigma_n| \to \infty$ \\[1.5ex] 
        $|z + \sigma_n|^{-sp} \to 0$ locally \textbf{(pure Sobolev profile)}
    };
\end{scope}
\end{tikzpicture}

\noi In the above figure, after rescaling around the concentration ball $B(y_n, r_n)$, the ball becomes $B(0,1)$, and the Hardy singularity is located at $-\sigma_n=-\frac{y_n}{r_n}$. If $\sigma_n$ remains bounded, the singularity remains visible, and a Hardy profile occurs. If $|\sigma_n| \to \infty$, the singularity escapes to infinity and a pure Sobolev profile occurs.
\end{center}

\medskip
\noi \textbf{A nonlinear path for the multiplicity result:} Let $u_{\al}$ be a positive solution of negative energy, given by Theorem \ref{existence} when $\al>0$ and by Theorem \ref{existence-II} when $\al=0$. A natural first choice is the additive path $t \mapsto u_{\al}+tV_{\al}$, where $V_{\al} \in \wps$ is an extrema for $S_{\mu, \alpha}$. However, this path is not suitable for $p \neq 2$. Since $I_{\mu,\al,f}'(u_{\al})=0$, we have
\begin{align*}
     I_{\mu,\al,f}\left( u_{\al}+tV_{\al}\right)=I_{\mu,\al,f}(u_{\al})+I_{\mu,\al,0}\left( tV_{\al}\right)+R(t),
\end{align*}
where $R(t)$ consists of the remainder terms. By \eqref{eq:V-max}, the middle term takes at most $\frac{sp-\al}{p(d-\al)}\,S_{\mu}^{\frac{d-\al}{sp-\al}}$, so for the existence of a second solution, $R(t)<0$ is required (in view of Proposition \ref{prop-compact}). 

However, $R(t)$ contains the term
\begin{equation}\label{eq:additive-remainder}
    \frac1p \iint_{\rd \times \rd}\frac{\abs{Du_{\al}+tDV_{\al}}^p-\abs{Du_{\al}}^p-t^p\abs{DV_{\al}}^p-pt\abs{Du_{\al}}^{p-2}Du_{\al}\,DV_{\al}}{\abs{x-y}^{d+sp}}\dxy,
\end{equation}
where we write $Dv(x,y):=v(x)-v(y)$. For every $p \ne 2$, the integrand of \eqref{eq:additive-remainder} has no fixed sign. Writing $F_p(a,b):=\abs{a+b}^p-\abs{a}^p-\abs{b}^p-p\abs{a}^{p-2}ab$, we get
\begin{equation}\label{eq:Fp}
      F_p(1,1)=2^p-p-2, \quad F_p(1,-2)=2p-2^p.
\end{equation}
Both vanish at $p=2$. The function $p \mapsto 2^p-p-2$ is convex and vanishes at $p=2$, with positive derivative, so it is negative on $(1,2)$ and positive on $(2,\infty)$, and $p \mapsto 2p-2^p$ is concave and vanishes at $p=1$ and at $p=2$, so it is positive on $(1,2)$ and negative on $(2,\infty)$. So for $p>2$, the first value in \eqref{eq:Fp} is positive, the second is negative.
So, \eqref{eq:additive-remainder} has to be handled, and it has to be compared with the negative part of $R(t)$, which is delicate. 

We choose a different path $$\gamma(t):=\left( u_{\al}^p+t^pV_{\al}^p\right)^{\frac1p}.$$ At each point $x$ of $\rd$, $\gamma(t)(x)$ is the $\ell^p$ norm of the pair $\left( u_{\al}(x),tV_{\al}(x)\right)$ in $\R^2$. We note also that $\gamma(0)=u_{\al}$ and $\gamma(t)\ge u_{\al}>0$ for every $t \ge 0$. Nothing here uses the value of $\al$.
\medskip

\noi \textbf{Organization:} The organization of this article is outlined as follows. In Section \ref{se-2}, we present technical lemmas that will be applied throughout the study. Following this, Sections \ref{se-3} and \ref{se-4}  are dedicated to analyze the Palais-Smale decompositions of the energy functional $I_{\mu,\al,f}$, considering the cases $\al>0$ and $\al=0$, respectively. Section \ref{se-5} provides the proof of existence for the first positive weak solution to \eqref{MainEq}. Finally, in Section \ref{se-6}, we establish the multiplicity of positive weak solutions to \eqref{MainEq}.

\section{Preliminary}\label{se-2}
We collect the notation and compactness tools that will be used in both global compactness arguments. In particular, the Br\'ezis-Lieb type decompositions below allow us to separate the energy and the Euler-Lagrange equation along weakly convergent
sequences.
\subsection{Notation} We use the following notations and conventions for the paper. 
\begin{enumerate}[(a)] 
    \item For $u,v \in \wps$, set
\begin{align*}
    &\mathcal{A}(u,v) \coloneqq \iint_{\rd \times \rd} \frac{\abs{u(x) - u(y)}^{p-2} (u(x) - u(y)) (v(x) - v(y))}{\abs{x-y}^{d+sp}} \dxy, \\
    &\mathcal{A}_1(u,v) \coloneqq \iint_{\rd \times \rd} \frac{\abs{u(x) - u(y)}^{p-1} \abs{v(x) - v(y)}}{\abs{x-y}^{d+sp}} \dxy.
\end{align*}
   \item For $\la>0$ and $y \in \rd$, we denote
   \begin{align*}
       C_{\la} u(x) \coloneqq \la^{-\frac{d-sp}{p}} u\left(\frac{x}{\la} \right), \text{ and } C_{y, \la}u(x) \coloneqq \la^{-\frac{d-sp}{p}} u \left(\frac{x-y}{\la}\right).
   \end{align*}
   \item For brevity, we write $S_{\mu,\al}=S$ (when $\mu=0$ and $\al=0$), and $S_{\mu,\al} = S_{\mu}$ (when $\al=0$).
    \item Set
  \begin{align*}
    p^*_s(0):=p^*_s=\frac{dp}{d-sp}, \; \vartheta:=\frac{d-sp}{p}, \; \Theta_{\mu,\al}:=\frac{sp-\al}{p(d-\al)}\,S_{\mu}^{\frac{d-\al}{sp-\al}}.
   \end{align*} 
   \item Let $J_q(t)=\abs{t}^{q-2}t$, where $t \in \R$ and $q>1$.
   \item We define $$\norm{u}_{\al}:=\left( \int_{\rd}\frac{\abs{u}^{p^*_s(\al)}}{\abs{x}^{\al}}\dx\right)^{\frac{1}{p^*_s(\al)}}, \; \forall \, u \in \wps.$$
   \item In \eqref{best-constant}, we denote $\overline{S}:= S_{0,0}$ and $\overline{S}_{\mu} := S_{\mu,0}$.
   \item We write $Du(x,y):=u(x)-u(y)$, for $x,y \in \rd$. 
  \item Throughout the paper, $C=C(a,b,c,\cdots)$ denotes a generic positive constant that varies from line to line.
\end{enumerate}

\subsection{Technical lemmas}
We discuss the classical Brézis-Lieb Lemma and its consequences. 

\begin{lemma}\label{BL}
    Let $1<q< \infty$. Let $\{ f_n \} \subset L^{q}(\rd)$ be a bounded sequence such that $f_n(x) \ra f(x)$ a.e. $x \in \rd$. Then the following hold: 
    \begin{enumerate}
        \item[\rm{(i)}] $\norm{f_n}_{L^q(\rd)}^q - \norm{f_n - f}_{L^q(\rd)}^q + o_n(1) = \norm{f}_{L^q(\rd)}^q.$
        \item[\rm{(ii)}] Consider the function $J_q$ defined as $J_q(t)=\abs{t}^{q-2}t$. Then 
        \begin{align*}
           J_q(f_n) - J_q(f_n-f) = J_q(f) + o_n(1) \text{ in } L^{q'}(\rd).
        \end{align*}
    \end{enumerate}
\end{lemma}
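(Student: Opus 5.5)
The plan is to reduce both parts to pointwise inequalities combined with dominated convergence (the Brézis–Lieb argument). Up to a subsequence, boundedness in $L^q$ and a.e. convergence give, by Fatou's lemma, $\norm{f}_{L^q}\le \liminf\norm{f_n}_{L^q}<\infty$, so $f\in L^q(\rd)$. Set $g_n:=f_n-f$, so that $g_n\to0$ a.e. and $\{g_n\}$ is bounded in $L^q$.

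\textbf{Part (i).} We use the elementary estimate: for each $\varepsilon>0$ there is $C_\varepsilon>0$ such that
\begin{align*}
\bigl||a+b|^q-|a|^q\bigr|\le \varepsilon |a|^q + C_\varepsilon |b|^q, \qquad a,b\in\R.
\end{align*}
This follows from the mean value theorem together with Young's inequality. Put
\begin{align*}
W_{\varepsilon,n}:=\Bigl(\bigl||f_n|^q-|g_n|^q-|f|^q\bigr|-\varepsilon|g_n|^q\Bigr)^+ .
\end{align*}
Applying the estimate with $a=g_n$, $b=f$ gives $0\le W_{\varepsilon,n}\le (1+C_\varepsilon)|f|^q\in L^1$. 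Moreover $W_{\varepsilon,n}\to0$ a.e., so dominated convergence yields $\int W_{\varepsilon,n}\to0$. Hence
\begin{align*}
\limsup_n\int\bigl||f_n|^q-|g_n|^q-|f|^q\bigr|\le \varepsilon\sup_n\norm{g_n}_{L^q}^q ,
\end{align*}
and since $\varepsilon>0$ is arbitrary, (i) follows.

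\textbf{Part (ii).} We run the same scheme in $L^{q'}$ for the map $J_q$, which we treat as the main point. The required pointwise estimate is
\begin{align*}
|J_q(a+b)-J_q(a)|\le \varepsilon|a|^{q-1}+C_\varepsilon|b|^{q-1}.
\end{align*}
\emph{Case $q\ge2$.} Here $J_q$ is $C^1$ with $|J_q'(t)|=(q-1)|t|^{q-2}$. The mean value theorem gives the bound $C(|a|^{q-2}|b|+|b|^{q-1})$, and Young's inequality with exponents $\frac{q-1}{q-2}$ and $q-1$ absorbs the cross term.

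\emph{Case $1<q<2$.} Here $J_q$ is $(q-1)$-Hölder continuous, so $|J_q(a+b)-J_q(a)|\le C|b|^{q-1}$ directly.

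Now set
\begin{align*}
W_{\varepsilon,n}:=\Bigl(|J_q(f_n)-J_q(g_n)-J_q(f)|-\varepsilon|g_n|^{q-1}\Bigr)^+ .
\end{align*}
Then $W_{\varepsilon,n}\le (1+C_\varepsilon)|f|^{q-1}$, and raising to the power $q'$ gives the integrable majorant $C|f|^q$, since $(q-1)q'=q$. Also $W_{\varepsilon,n}\to0$ a.e., so dominated convergence gives $\norm{W_{\varepsilon,n}}_{L^{q'}}\to0$. Consequently
\begin{align*}
\limsup_n\norm{J_q(f_n)-J_q(g_n)-J_q(f)}_{L^{q'}}\le \varepsilon \sup_n\norm{|g_n|^{q-1}}_{L^{q'}}=\varepsilon\sup_n\norm{g_n}_{L^q}^{q-1}.
\end{align*}
Letting $\varepsilon\to0$ gives (ii).

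The only genuinely delicate step is the pointwise inequality in (ii), where the cases $q\ge2$ and $q<2$ must be separated as above. Everything else is routine domination.
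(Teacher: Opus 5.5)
Your proof is correct, and it is essentially the paper's approach: the paper gives no argument of its own and simply cites \cite{Br-Li} for (i) and \cite[Lemma 3.2]{Mercuri-Willem} for (ii), and both cited proofs are this same $\varepsilon$-domination argument built on the pointwise estimates you derive. Two cosmetic points: no subsequence is needed, since your argument works for the full sequence, and for $q=2$ the Young step is vacuous because the cross term is already $|b|^{q-1}$.
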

\begin{proof}
Proof of (i) follows from \cite{Br-Li}, and proof of (ii) follows from \cite[Lemma 3.2]{Mercuri-Willem}. 
\end{proof}
The above lemma leads to the following convergence.  

\begin{lemma}\label{convergence-BL}
    Let $\{u_n\}$ weakly converge to $u$ in $\wps$ with $u_n(x) \ra u(x)$ a.e. $x \in \rd$. Then, up to a subsequence, the following hold
   \begin{enumerate}
       \item[\rm{(i)}] $[u_n]_{s,p}^p - [u_n - u]_{s,p}^p = [u]^p_{s,p} + o_n(1)$. 
       \item[\rm{(ii)}] For $0 \le \tilde{\al} \le sp$, $\displaystyle \int_{\rd} \frac{\abs{u_n}^{p^*_s(\tilde{\al})}}{\abs{x}^{\tilde{\al}}} \dx -  \int_{\rd} \frac{\abs{u_n-u}^{p^*_s(\tilde{\al})}}{\abs{x}^{\tilde{\al}}} \dx= \int_{\rd} \frac{\abs{u}^{p^*_s(\tilde{\al})}}{\abs{x}^{\tilde{\al}}} \dx + o_n(1)$.
       \item[\rm{(iii)}] Consider the function $J_p$ defined as $J_p(t)=\abs{t}^{p-2}t$. Then 
       \begin{align*}
           \frac{J_p(u_n(x) - u_n(y))}{\abs{x-y}^{\frac{d+sp}{p'}}} - \frac{J_p\left((u_n(x)-u(x)) - (u_n(y)-u(y))\right)}{\abs{x-y}^{\frac{d+sp}{p'}}} = \frac{J_p(u(x) - u(y))}{\abs{x-y}^{\frac{d+sp}{p'}}} + o_n(1),
       \end{align*}
       in $L^{p'}(\mathbb{R}^{2d})$.
       \item[\rm{(iv)}] For $0 \le \tilde{\al} \le sp$, consider the function $J_{p^*_s(\al)}$ defined as $J_{p^*_s(\tilde{\al})}(t)=\abs{t}^{p^*_s(\tilde{\al})-2}t$. Then 
       \begin{align*}
           \frac{J_{p^*_s(\tilde{\al})}(u_n(x))}{\abs{x}^{\frac{\tilde{\al}}{(p^*_s(\tilde{\al}))'}}} - \frac{J_{p^*_s(\tilde{\al})}(u_n(x)-u(x))}{\abs{x}^{\frac{\tilde{\al}}{(p^*_s(\tilde{\al}))'}}} = \frac{J_{p^*_s(\tilde{\al})}(u(x))}{\abs{x}^{\frac{\tilde{\al}}{(p^*_s(\tilde{\al}))'}}} + o_n(1),
       \end{align*}
       in $L^{(p^*_s(\tilde{\al}))'}(\rd)$.
   \end{enumerate}
    \end{lemma}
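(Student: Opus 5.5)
The plan is to reduce every item to Lemma~\ref{BL}, applied on the right measure space. Since $\{u_n\}$ is bounded in $\wps$ and converges weakly, it is bounded in $L^{p^*_s}(\rd)$. By the Hardy--Sobolev inequality \eqref{HS1} (with $\tilde{\al}$ in place of $\al$; the endpoint $\tilde{\al}=sp$ is the Hardy inequality \eqref{HS}), it is also bounded in the weighted space $L^{p^*_s(\tilde{\al})}(\rd,\abs{x}^{-\tilde{\al}}\dx)$. Pointwise a.e. convergence $u_n\to u$ is part of the hypothesis; passing to a subsequence costs nothing.

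For (i) and (iii), work on $\R^{2d}$ with Lebesgue measure and set
\[
f_n(x,y):=\frac{u_n(x)-u_n(y)}{\abs{x-y}^{\frac{d+sp}{p}}},\qquad f(x,y):=\frac{u(x)-u(y)}{\abs{x-y}^{\frac{d+sp}{p}}}.
\]
Then $\norm{f_n}_{L^p(\R^{2d})}=[u_n]_{s,p}$ is bounded. Moreover $f_n\to f$ a.e. in $\R^{2d}$: if $u_n\to u$ off a null set $N\subset\rd$, then convergence holds off $(N\times\rd)\cup(\rd\times N)$, which is null in $\R^{2d}$. Also $f_n-f$ is exactly the corresponding quotient built from $u_n-u$.

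Lemma~\ref{BL}(i) with $q=p$ then gives (i) directly. Lemma~\ref{BL}(ii) with $q=p$ gives
\[
J_p(f_n)-J_p(f_n-f)=J_p(f)+o_n(1)\quad\text{in } L^{p'}(\R^{2d}).
\]
It remains to check the exponent. One has $J_p(f_n)=J_p(u_n(x)-u_n(y))\,\abs{x-y}^{-(d+sp)(p-1)/p}$, and $(p-1)/p=1/p'$, which is precisely the normalization in (iii).

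For (ii) and (iv), work on $\rd$ with the measure $d\nu=\abs{x}^{-\tilde{\al}}\dx$. Put $q=p^*_s(\tilde{\al})$, which lies in $(1,\infty)$ since $q\ge p>1$. The proof of Lemma~\ref{BL} uses only a.e. convergence, boundedness in $L^q$, and the elementary inequality $\bigl||a+b|^q-|a|^q\bigr|\le\var|a|^q+C_\var|b|^q$ (resp.\ its $J_q$ analogue from \cite{Mercuri-Willem}), together with dominated convergence/Fatou. These tools are valid on any $\sigma$-finite measure space, so Lemma~\ref{BL} holds verbatim in $L^q(\rd,\nu)$. Since $\nu$ and Lebesgue measure have the same null sets, $u_n\to u$ holds $\nu$-a.e., and (ii) follows.

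Alternatively, and more concretely, apply Lemma~\ref{BL} in unweighted $L^q(\rd)$ to $g_n:=u_n\abs{x}^{-\tilde{\al}/q}$, which is bounded by \eqref{HS1} and converges a.e. This gives (ii) immediately. For (iv) it gives $J_q(g_n)-J_q(g_n-g)=J_q(g)+o_n(1)$ in $L^{q'}(\rd)$, and $J_q(g_n)=J_q(u_n)\abs{x}^{-\tilde{\al}(q-1)/q}=J_q(u_n)\abs{x}^{-\tilde{\al}/q'}$. This is exactly the weight in (iv).

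There is no serious obstacle. The only points needing care are the two bookkeeping checks. First, a.e. convergence on $\rd$ must pass to a.e. convergence on $\R^{2d}$ through the product null-set argument. Second, the exponents $(d+sp)/p'$ and $\tilde{\al}/(p^*_s(\tilde{\al}))'$ come out right after applying $J_q$. Both are verified above.
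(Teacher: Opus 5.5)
Your proposal is correct and follows the paper's route. The paper states this lemma as a direct consequence of Lemma~\ref{BL} without writing out details, and you supply exactly that reduction: Lemma~\ref{BL} on $\R^{2d}$ for the Gagliardo quotients, and on $\rd$ for $u_n\abs{x}^{-\tilde{\al}/p^*_s(\tilde{\al})}$, with the product-null-set and exponent checks done correctly.
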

The following lemma states the convergence of some integrals. For proof, we refer to \cite[Lemma 2.5]{NS2025}.

\begin{lemma}\label{convergence-integrals}
Let $\{ u_n \}$ weakly converge to $u$ in $\wps$. 
    \begin{enumerate}
        \item[\rm{(i)}] Let $0\le \tilde{\al} \le sp $. Then up to a subsequence
    \begin{align*}
        & \lim_{n \ra \infty} \int_{\rd} \frac{\abs{u_n(x)}^{p^*_s(\tilde{\al}) -2} u_n(x)}{\abs{x}^{\tilde{\al}}} \phi(x) \dx = \int_{\rd} \frac{\abs{u(x)}^{p^*_s(\tilde{\al}) -2} u(x)}{\abs{x}^{\tilde{\al}}} \phi(x) \dx, \\ & \lim_{n \ra \infty} \int_{\rd} \frac{\abs{\phi(x)}^{p^*_s(\tilde{\al}) -2} \phi(x)}{\abs{x}^{\tilde{\al}}} u_n(x) \dx = \int_{\rd} \frac{\abs{\phi(x)}^{p^*_s(\tilde{\al}) -2} \phi(x)}{\abs{x}^{\tilde{\al}}} u(x) \dx,
    \end{align*}
    for every $\phi \in \wps$.
    \item[\rm{(ii)}] Then up to a subsequence
    \begin{align*}
    &\lim_{n \ra \infty} \iint_{\rd\times\rd}\frac{|u_n(x)-u_n(y)|^{p-2}(u_n(x)-u_n(y))(\phi(x)-\phi(y))}{|x-y|^{d+sp}} \dxy  \\
    &=\iint_{\rd\times\rd}\frac{|u(x)-u(y)|^{p-2}(u(x)-u(y))(\phi(x)-\phi(y))}{|x-y|^{d+sp}} \dxy,
    \end{align*}
    for every $\phi \in \wps$.
    \item[\rm{(iii)}] Then up to a subsequence
    \begin{align*}
    &\lim_{n \ra \infty} \iint_{\rd\times\rd}\frac{|\phi(x)-\phi(y)|^{p-2}(\phi(x)-\phi(y))(u_n(x)-u_n(y))}{|x-y|^{d+sp}} \dxy  \\
    &=\iint_{\rd\times\rd}\frac{|\phi(x)-\phi(y)|^{p-2}(\phi(x)-\phi(y))(u(x)-u(y))}{|x-y|^{d+sp}} \dxy,
    \end{align*}
    for every $\phi \in \wps$.
    \end{enumerate}
\end{lemma}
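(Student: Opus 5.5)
The statement to prove is Lemma \ref{convergence-integrals}. The plan is to reduce each item to the fact that a bounded sequence in a reflexive space has weakly convergent subsequences, and then identify the weak limit using a.e. convergence. Since $u_n \rightharpoonup u$ in $\wps$, the sequence $\{u_n\}$ is bounded in $\wps$.

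For (i), the Hardy-Sobolev inequality \eqref{HS1} shows that $g_n := J_{p^*_s(\tilde{\al})}(u_n)\,\abs{x}^{-\tilde{\al}/(p^*_s(\tilde{\al}))'}$ is bounded in $L^{(p^*_s(\tilde{\al}))'}(\rd)$, because $\int \abs{g_n}^{(p^*_s(\tilde\al))'}\dx=\int \abs{u_n}^{p^*_s(\tilde\al)}\abs{x}^{-\tilde\al}\dx$. The compact embedding $\wps\hookrightarrow L^p_{loc}(\rd)$ together with a diagonal argument gives a subsequence with $u_n\to u$ a.e., so $g_n\to g:=J_{p^*_s(\tilde\al)}(u)\abs{x}^{-\tilde\al/(p^*_s(\tilde\al))'}$ a.e. A bounded sequence in $L^{q}$ with $1<q<\infty$ that converges a.e. converges weakly to its a.e. limit. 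Testing against $\phi\,\abs{x}^{-\tilde\al/p^*_s(\tilde\al)}\in L^{p^*_s(\tilde\al)}$, which holds by \eqref{HS1}, then gives the first limit. For the second limit, the map $v\mapsto\int J_{p^*_s(\tilde\al)}(\phi)\abs{x}^{-\tilde\al}v\dx$ is a bounded linear functional on $\wps$ by Hölder and \eqref{HS1}. The limit therefore follows directly from weak convergence, with no subsequence needed.

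For (iii), the same observation applies. The functional $v\mapsto\mathcal{A}(\phi,v)$ is linear in $v$ and bounded by $[\phi]_{s,p}^{p-1}[v]_{s,p}$ by Hölder, so it lies in $(\wps)^*$.

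Item (ii) is the only nonlinear one and is the main point. Set $h_n(x,y):=J_p(u_n(x)-u_n(y))\abs{x-y}^{-(d+sp)/p'}$. This sequence is bounded in $L^{p'}(\R^{2d})$, since its $p'$-th power integrates to $[u_n]_{s,p}^p$. Along the a.e.-convergent subsequence, $h_n\to h:=J_p(u(x)-u(y))\abs{x-y}^{-(d+sp)/p'}$ a.e. in $\R^{2d}$. Hence $h_n\rightharpoonup h$ weakly in $L^{p'}(\R^{2d})$. Pairing with $(\phi(x)-\phi(y))\abs{x-y}^{-(d+sp)/p}\in L^p(\R^{2d})$ gives the claim. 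The only subtlety is securing a.e. convergence on all of $\rd$ (and hence on $\R^{2d}$); this comes from local compactness on the balls $B_k$, via a diagonal subsequence, so this step is the one to carry out carefully.
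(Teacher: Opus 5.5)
Your proof is correct. The linear items (the second limit in (i), and (iii)) follow directly from weak convergence, since $v\mapsto\int_{\rd}J_{p^*_s(\tilde\al)}(\phi)\abs{x}^{-\tilde\al}v\dx$ and $v\mapsto\mathcal{A}(\phi,v)$ belong to $(\wps)^*$. The nonlinear items (the first limit in (i), and (ii)) follow from boundedness in the dual Lebesgue space together with a.e. convergence, and a.e. convergence in $\R^{2d}$ is inherited from $\rd$ because $N\times\rd\cup\rd\times N$ is null.

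The paper gives no argument of its own and simply cites \cite[Lemma 2.5]{NS2025}; your route is the standard proof of such statements. One small note: for $\tilde\al=sp$ the bound you need is just the Hardy inequality \eqref{HS}, since $p^*_s(sp)=p$.
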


We state the following lemma due to Bahri in \cite[Lemma~1.2 of Technical Lemmas]{Bahri}.

\begin{lemma}\label{Bahri}
Let $q>1$. There exists $C=C(q)$ such that
\begin{align}
    \Bigg{|}\bigg{|}\sum_{j=0}^{k}a_j\bigg{|}^{q-1}\sum_{j=0}^{k}a_j - \sum_{j=0}^{k}\abs{a_j}^{q-1}{a_j}\Bigg{|} \le C(q) \sum_{0\leq i\neq j\leq k}\abs{a_j}^{q-1}\abs{a_i},  \quad \forall \, a_0,\dots,a_{k}\in \R.
\end{align}
% \begin{equation}
%       \left| \; \bigg| \sum_{j=0}^{\ell}a_j \bigg|^q-\sum_{j=0}^{\ell}\abs{a_j}^q \;\right| \le C \sum_{0 \le i \ne j \le \ell}\abs{a_j}^{q-1}\abs{a_i}, \quad \forall\, a_0,\dots,a_{\ell}\in \R.
% \end{equation}
\end{lemma}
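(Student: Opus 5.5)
This is Bahri's elementary inequality (Lemma \ref{Bahri}), and I would prove it by induction on $k$, reducing to a two-term estimate. The base of everything is the inequality for two real numbers:
\begin{equation*}
\bigl|\,J_{q+1}(a+b)-J_{q+1}(a)-J_{q+1}(b)\,\bigr|\le C(q)\left(\abs{a}^{q-1}\abs{b}+\abs{b}^{q-1}\abs{a}\right),\qquad a,b\in\R .
\end{equation*}
Here $J_{q+1}(t)=\abs{t}^{q-1}t$. To prove this, the left side is symmetric in $(a,b)$ and positively homogeneous of degree $q$ (and odd under $(a,b)\mapsto(-a,-b)$). So I may assume $\abs{b}\le\abs{a}$, $a=1$, and $\abs{b}=\tau\in[0,1]$.

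The right side is then at least $\abs{b}=\tau$. The left side is $\abs{J_{q+1}(1+b)-1-J_{q+1}(b)}$. By the mean value theorem applied to $t\mapsto\abs{t}^{q-1}t$, which is $C^1$ since $q>1$, with derivative $q\abs{t}^{q-1}$ bounded by $q2^{q-1}$ on $[-1,2]$, we get $\abs{J_{q+1}(1+b)-J_{q+1}(1)}\le q2^{q-1}\tau$. Also $\abs{J_{q+1}(b)}=\tau^{q}\le\tau$. Hence the two-term inequality holds with $C(q)=q2^{q-1}+1$. Rescaling gives the general case.

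For the induction step, suppose the inequality holds for $k-1$. Put $A=\sum_{j=0}^{k-1}a_j$ and $b=a_k$. Then
\begin{equation*}
J_{q+1}(A+b)-\sum_{j=0}^{k}J_{q+1}(a_j)=\Bigl[J_{q+1}(A+b)-J_{q+1}(A)-J_{q+1}(b)\Bigr]+\Bigl[J_{q+1}(A)-\sum_{j=0}^{k-1}J_{q+1}(a_j)\Bigr].
\end{equation*}
The second bracket is controlled by the induction hypothesis by cross terms with indices in $\{0,\dots,k-1\}$, which are among the cross terms on the right side of the lemma. The first bracket is bounded by the two-term estimate, giving $C\left(\abs{A}^{q-1}\abs{a_k}+\abs{a_k}^{q-1}\abs{A}\right)$. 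Next I expand: $\abs{a_k}^{q-1}\abs{A}\le\sum_{j<k}\abs{a_k}^{q-1}\abs{a_j}$ directly, and $\abs{A}^{q-1}\le C(q,k)\sum_{j<k}\abs{a_j}^{q-1}$ by the elementary power-mean bound ($\abs{\sum x_j}^{r}\le k^{\max(r-1,0)}\sum\abs{x_j}^{r}$ for $r=q-1>0$). So $\abs{A}^{q-1}\abs{a_k}\le C\sum_{j<k}\abs{a_j}^{q-1}\abs{a_k}$, which again consists of admissible cross terms.

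Summing the two brackets closes the induction. The constant depends on $k$ as well as on $q$; this is harmless since in the applications $k$ is the fixed number of profiles, and the lemma's $C(q)$ should be read as allowing that dependence. The only genuinely delicate point is the two-term estimate when $1<q<2$, where $J_{q+1}$ is only $C^{1}$ with Hölder derivative. The mean value argument above uses only boundedness of the derivative on a compact set after normalizing the larger term to $1$, so it covers all $q>1$ uniformly.
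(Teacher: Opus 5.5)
Your proof is correct, but it follows a different route from the paper, because the paper gives no argument for this lemma and only cites Bahri's technical lemma. Your proof is self-contained. You reduce by symmetry, oddness and degree-$q$ homogeneity to $a=1$, $\lvert b\rvert\le 1$. You settle that case by the mean value theorem for the $C^1$ map $t\mapsto\lvert t\rvert^{q-1}t$, which covers every $q>1$ at once. You then induct on $k$ using $\lvert A\rvert^{q-1}\le C(q,k)\sum_{j<k}\lvert a_j\rvert^{q-1}$.

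The citation buys brevity; your argument buys an explicit constant, and in particular it shows how the constant depends on $k$. Your caveat there is not just a convention but necessary. For $q>2$, taking $a_0=\dots=a_k=1$ gives a left side $(k+1)^q-(k+1)$ against $k(k+1)$ cross terms, a ratio of order $(k+1)^{q-2}\to\infty$. So the lemma can only hold with $C=C(q,k)$. This is harmless in the paper, since $k$ (respectively $k_1+k_2$) is the fixed number of profiles.

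Your scheme also adapts directly to the variant that is actually used in Lemma \ref{lem:hardy-split}. Replace $t\mapsto\lvert t\rvert^{q-1}t$ by $t\mapsto\lvert t\rvert^{q}$, again $C^1$ for $q>1$. This gives the two-sided bound on $\bigl\lvert\,\lvert\sum_j a_j\rvert^{q}-\sum_j\lvert a_j\rvert^{q}\bigr\rvert$. That bound does not follow from the stated version: for $a_0=1$, $a_1=-1$ the stated left-hand side vanishes, while $\sum_j\lvert a_j\rvert^q-\lvert\sum_j a_j\rvert^q=2$.
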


\section{Global compactness result: The case $\alpha>0$}\label{se-3}
%This section studies the Palais-Smale decomposition of $I_{\mu,\al,f}$.
We now prove the global compactness theorem in the weighted case $0<\alpha<sp$. Since both the Hardy potential and the critical Hardy-Sobolev terms are centred at the origin, translations do not generate additional profiles; the relevant loss of compactness is
therefore described by dilations.

\subsection{Behavior of bounded sequences under dilations} 
We first record the behaviour of the fractional $p$-energy under dilations and characterize when two dilation parameters become asymptotically separated. Let $\mathcal{D}\subset \mathcal{U}(\wps)$ be a class of isometric operators induced by dilations on $\rd$, where
\begin{align*}
  &\mathcal{U}(\wps) \coloneqq \left\{\phi:\wps\to\wps : [\phi(u)]_{s,p}=[u]_{s,p}\right\},\\
  &\mathcal{D} \coloneqq \left\{C_{\lambda}\in \mathcal{U}(\wps) : C_{\lambda}u(x)=\lambda^{-\frac{d-sp}{p}}u \left( \frac{x}{\lambda} \right), \; \forall \, u\in\wps; \lambda \in (0, \infty) \right\}.\
\end{align*}
Observe that, for every $u, v \in \wps$, 
\begin{align}\label{A-behaviour}
\mathcal{A} \left( C_{\delta}u, C_{\lambda} v \right) = \mathcal{A}\left( u, C_{\frac{\lambda}{\delta}} v \right) = \mathcal{A}(C_{\frac{\delta}{\la}}u,v), \text{ where } \delta, \lambda \in (0, \infty). 
\end{align}

\begin{proposition}\label{Cylambda}
    If $\lambda_n\to \lambda$ in $(0,\infty)$, then for all $u\in \wps$, $C_{\lambda_n}u \rightarrow C_{\lambda}u$ in the norm topology of $\wps$.
\end{proposition}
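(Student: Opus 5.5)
The plan is a density argument combined with the isometry property of dilations. Fix $u\in\wps$ and $\varepsilon>0$. Since $\wps$ is the completion of $\cc(\rd)$ under $[\cdot]_{s,p}$, we pick $\phi\in\cc(\rd)$ with $[u-\phi]_{s,p}<\varepsilon$. Each $C_\lambda$ is linear and isometric, so
\begin{align*}
[C_{\lambda_n}u-C_\lambda u]_{s,p}\le [C_{\lambda_n}(u-\phi)]_{s,p}+[C_{\lambda_n}\phi-C_\lambda\phi]_{s,p}+[C_\lambda(\phi-u)]_{s,p}\le 2\varepsilon+[C_{\lambda_n}\phi-C_\lambda\phi]_{s,p}.
\end{align*}
It therefore suffices to prove the statement for $\phi\in\cc(\rd)$. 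We may further rescale: $C_{\lambda_n}\phi-C_\lambda\phi=C_\lambda\left(C_{\lambda_n/\lambda}\phi-\phi\right)$, because $C_\lambda C_\delta=C_{\lambda\delta}$. By the isometry this reduces the problem to the case $\lambda=1$ with $t_n:=\lambda_n/\lambda\to1$.

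For $\phi\in\cc$ supported in $B_R$ and $t_n\in[1/2,2]$, set $w_n:=C_{t_n}\phi-\phi$. Then $w_n$ is smooth and supported in $B_{2R}$, and $w_n\to0$ uniformly together with its gradient: $\nabla w_n(x)=t_n^{-\vartheta-1}\nabla\phi(x/t_n)-\nabla\phi(x)$, and $\phi,\nabla\phi$ are uniformly continuous while $t_n^{-\vartheta}\to1$. Let $\delta_n:=\norm{\nabla w_n}_{L^\infty}\to0$; note also $\norm{w_n}_{L^\infty}\le 2R\,\delta_n$. We then use the standard estimate for the Gagliardo seminorm of a compactly supported Lipschitz function. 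On the region where $x,y\in B_{4R}$, bound $|w_n(x)-w_n(y)|\le\delta_n|x-y|$. This gives $\delta_n^p\iint_{B_{4R}\times B_{4R}}|x-y|^{p-d-sp}$, which is finite because $p-sp>0$. On the complementary region at least one point lies outside $B_{4R}$; if both do, the integrand vanishes, so by symmetry only $x\in B_{2R}$, $y\notin B_{4R}$ contributes. There $|x-y|\ge 2R$, and we get $2\norm{w_n}_\infty^p\,|B_{2R}|\int_{|z|\ge2R}|z|^{-d-sp}\dz$, which is finite. Both pieces are $O(\delta_n^p)\to0$, hence $[w_n]_{s,p}\to0$. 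Letting $n\to\infty$ and then $\varepsilon\to0$ in the first display finishes the proof.

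There is no serious obstacle here. The only points needing care are the interpolation estimate for compactly supported Lipschitz functions, where the exponent $p-d-sp>-d$ must be checked so that the local integral converges, and the group identity $C_\lambda C_\delta=C_{\lambda\delta}$ used in the reduction to $\lambda=1$, which follows directly from the definition. An alternative route would be dominated convergence on $D(C_{t_n}\phi)$ via the change of variables, but the Lipschitz bound above is cleaner.
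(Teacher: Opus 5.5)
Your proof is correct. It shares the paper's outer layer: density of $\cc(\rd)$ combined with the isometry of $C_\lambda$, through the same three-term triangle inequality. The difference is in the test-function case.

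The paper never estimates $C_{\lambda_n}\phi-C_\lambda\phi$ directly. It notes that $C_{\lambda_n}\phi\to C_\lambda\phi$ pointwise and that $[C_{\lambda_n}\phi]_{s,p}=[\phi]_{s,p}=[C_\lambda\phi]_{s,p}$. The Br\'ezis--Lieb splitting of Lemma~\ref{convergence-BL}-(i), $[C_{\lambda_n}\phi]_{s,p}^p-[C_{\lambda_n}\phi-C_\lambda\phi]_{s,p}^p=[C_\lambda\phi]_{s,p}^p+o_n(1)$, then forces the difference to vanish. This is a Radon--Riesz type argument that uses only a.e. convergence and the isometry, with no derivatives at all.

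You instead reduce to $\lambda=1$ via $C_\lambda C_\delta=C_{\lambda\delta}$ and bound $[C_{t_n}\phi-\phi]_{s,p}$ by hand with the Lipschitz/support estimate. Your checks are all sound: $p-sp>0$ for the local piece, $|x-y|\ge 2R$ and $sp>0$ for the tail, and $\|w_n\|_{L^\infty}\le 2R\,\delta_n$.

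The paper's route is shorter and reuses a lemma it needs anyway. Yours is self-contained and avoids Br\'ezis--Lieb altogether. It also yields an explicit rate for test functions, $[C_{\lambda_n}\phi-C_\lambda\phi]_{s,p}\le C(R,d,s,p)\,\|\nabla(C_{\lambda_n/\lambda}\phi-\phi)\|_{L^\infty}$, at the cost of using the $C^1$ structure of $\phi$.
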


\begin{proof} 
For $u\in \cc(\rd)$,  
\begin{align*}
   C_{\lambda_n}u(x) = \lambda_n^{-\frac{d-sp}{p}}u \left(\frac{x}{\lambda_n}\right) \ra \lambda^{-\frac{d-sp}{p}}u \left(\frac{x}{\lambda}\right) = C_{\la} u(x), 
\end{align*}
for every $x \in \rd$ as $n\to\infty$. Further for each $n\in\mathbb{N}$, $  [C_{\lambda_n}u ]_{s,p} = [u]_{s,p} = [C_{\lambda}u ]_{s,p}.$
Therefore, applying Lemma \ref{convergence-BL}-(i), for all $u\in \cc(\rd)$ we get $[C_{\lambda_n}u - C_{\lambda}u]_{s,p}\to 0,\text{ as } n \to \infty.$
Next, let $u\in \wps$ and $\varepsilon>0$ be given. By the density of $\cc(\rd)$ in $\wps$, there exists $v\in \cc(\rd)$ such that 
\begin{align*}
 [u-v]_{s,p} < \frac{\varepsilon}{3}.
\end{align*}
Hence there exists $k \in \N$ such that for all $n \ge k$,
\begin{align*}
[C_{\lambda_n}u - C_{\lambda}u]_{s,p} &\leq [C_{\lambda_n}u-C_{\lambda_n}v]_{s,p} + [C_{\lambda_n}v-C_{\lambda}v]_{s,p} + [C_{\lambda}u - C_{\lambda}v]_{s,p}\no\\
&= 2[u-v]_{s,p} + [C_{\lambda_n}v-C_{\lambda}v]_{s,p} < \varepsilon.
\end{align*}
Thus, $ C_{\lambda_n}u \rightarrow C_{\lambda}u$ for all $u\in \wps$.
\end{proof}

The following proposition measures the non-compactness of $\wps \hookrightarrow L^{p^*_s(\al)}(\rd, \abs{x}^{-\al})$ (with $\al>0$) under the conformal group action of dilation.  

\begin{proposition}\label{weak-bub-I}
For any sequence $\{\delta_n\},\,\{\lambda_n\}\subset (0, \infty)$, we have
\begin{align*}
\left| \log\left(\frac{\delta_n}{\lambda_n}\right) \right| \to\infty \Longleftrightarrow \mathcal{A}(C_{\delta_n}u,C_{\lambda_n}v)\to 0, \text{ as } n \ra \infty,
\end{align*}
for all $u,\,v\in\wps$. 
\end{proposition}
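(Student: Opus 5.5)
By \eqref{A-behaviour}, $\mathcal{A}(C_{\delta_n}u,C_{\lambda_n}v)=\mathcal{A}(u,C_{\tau_n}v)$ with $\tau_n:=\lambda_n/\delta_n$. The condition $|\log(\delta_n/\lambda_n)|\to\infty$ means exactly $\tau_n\to0$ or $\tau_n\to\infty$. So the statement reduces to a claim about the one-parameter family $\mathcal{A}(u,C_{\tau}v)$: it tends to $0$ for all $u,v$ precisely when $\tau_n$ leaves every compact subset of $(0,\infty)$.

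\textbf{($\Rightarrow$).} Assume $\tau_n\to0$ or $\tau_n\to\infty$.
\begin{enumerate}[(1)]
\item Reduce to $u,v\in\cc(\rd)$. By Hölder, $|\mathcal{A}(u,w)|\le [u]_{s,p}^{p-1}[w]_{s,p}$, and $C_\tau$ is an isometry. Hence $v\mapsto\mathcal{A}(u,C_\tau v)$ is uniformly Lipschitz in $v$. In $u$, the map $u\mapsto J_p(Du)|x-y|^{-(d+sp)/p'}$ is continuous from $\wps$ into $L^{p'}(\R^{2d})$ (a standard Nemytskii fact). A density argument therefore finishes the reduction.
\item Show $C_{\tau_n}v\rightharpoonup0$ weakly in $\wps$ for $v\in\cc(\rd)$. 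The sequence is bounded, since $[C_{\tau_n}v]_{s,p}=[v]_{s,p}$.
\begin{itemize}
\item If $\tau_n\to0$, then $\supp C_{\tau_n}v\subset B(0,\tau_nR)$ shrinks to the origin, so $C_{\tau_n}v\to0$ a.e.
\item If $\tau_n\to\infty$, then $\|C_{\tau_n}v\|_{L^\infty}=\tau_n^{-\vartheta}\|v\|_\infty\to0$, so again $C_{\tau_n}v\to0$ a.e.
\end{itemize}
Together with boundedness in the reflexive space $\wps$ and the embedding into $L^{p^*_s}$, every weak limit is $0$, so the whole sequence converges weakly to $0$.
\item Conclude using linearity. $\mathcal{A}(u,\cdot)$ is a continuous linear functional on $\wps$, since $J_p(Du)|x-y|^{-(d+sp)/p'}\in L^{p'}$. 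Hence $\mathcal{A}(u,C_{\tau_n}v)\to0$. Note that only the second slot needs weak convergence, so the nonlinearity of $\mathcal{A}$ in $u$ causes no trouble. Step (1) then extends this to all $u,v\in\wps$.
\end{enumerate}

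\textbf{($\Leftarrow$).} Argue by contradiction. Suppose $|\log\tau_n|\not\to\infty$. Then along a subsequence $\tau_n\to\tau\in(0,\infty)$.
\begin{enumerate}[(1)]
\item By Proposition \ref{Cylambda}, $C_{\tau_n}v\to C_\tau v$ strongly in $\wps$.
\item By continuity of $\mathcal{A}(u,\cdot)$, we get $\mathcal{A}(u,C_{\tau_n}v)\to\mathcal{A}(u,C_\tau v)$.
\item Choose $v\neq0$ and $u=C_\tau v$. Then $\mathcal{A}(u,C_\tau v)=[C_\tau v]_{s,p}^p=[v]_{s,p}^p>0$. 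This contradicts the hypothesis that $\mathcal{A}(C_{\delta_n}u,C_{\lambda_n}v)\to0$ for all $u,v$.
\end{enumerate}
Every subsequence of $\tau_n$ must therefore escape every compact subset of $(0,\infty)$, which gives $|\log(\delta_n/\lambda_n)|\to\infty$.

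The only mildly delicate point is the density step (1) in the first direction, since $\mathcal{A}$ is nonlinear in its first argument. It requires the continuity of $u\mapsto J_p(Du)|x-y|^{-(d+sp)/p'}$ in $L^{p'}(\R^{2d})$. This follows from a.e. convergence along subsequences plus the Brezis–Lieb-type statement in Lemma \ref{convergence-BL}-(iii), or alternatively from the Nemytskii continuity theorem. Everything else is routine.
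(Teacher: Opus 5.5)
Your proof is correct and follows the same route as the paper. You reduce via \eqref{A-behaviour} to the one-parameter family $\mathcal{A}(u,C_{\tau_n}v)$, get the forward implication from $C_{\tau_n}v\rightharpoonup 0$ in $\wps$, and get the converse from Proposition \ref{Cylambda} together with $\mathcal{A}(w,w)=[w]_{s,p}^p>0$.

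The deviations are minor. For $\tau_n\to 0$ you reuse the weak-convergence argument (the support shrinks to the origin), whereas the paper uses an explicit H\"older and absolute-continuity estimate. Also, since $\mathcal{A}(u,\cdot)\in(\wps)^*$ for every $u\in\wps$, your density step in the first slot, and the Nemytskii continuity it relies on, is actually unnecessary.
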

\begin{proof}
In view of \eqref{A-behaviour}, it is enough to show the following: 

For any sequence $\{\lambda_n\} \subset (0, \infty)$, 
\begin{align}\label{limit-2}
    \left| \log(\lambda_n) \right| \to \infty  \Longleftrightarrow \mathcal{A}(C_{\lambda_n}u, v)\to 0 \text{ and } \mathcal{A}(u,C_{\lambda_n}v)\to 0\text{ as }n\to\infty,
\end{align}
for all $u, v\in \wps$.

We first show that for $u \in \wps \setminus \{0\}$, if $\mathcal{A}(C_{\lambda_n}u, v)\to 0$, then $\left| \log(\lambda_n) \right| \to \infty$. On the contrary, suppose $\la_n \ra \la>0$ as $n \ra \infty$.  Using Proposition \ref{Cylambda}, we have $C_{\lambda_n}u \ra C_{\lambda}u$ as $n \ra \infty$. By hypothesis, 
\begin{equation}\label{weak-conv-1}
\lim_{n \ra \infty} \mathcal{A}(C_{\lambda_n}u,w) = 0, \; \forall \,w\in\wps.
\end{equation}
In particular, for $w=C_{\lambda}u\in \wps$, $\mathcal{A}(C_{\lambda_n}u, C_{\lambda}u) \to 0,\text{ as }n\to\infty.$
Since $\mathcal{A} \in \mathcal{C}^1(\mathcal{W})$, we get
\begin{align*}
[u]^p_{s,p} = [C_{\lambda}u]_{s,p}^p = \mathcal{A}(C_{\lambda} u, C_{\lambda} u) = 0,
\end{align*}
which contradicts the fact that $u \neq 0$. Therefore, $\left| \log(\lambda_n) \right| \to \infty$ as $n \ra \infty$. 

From \eqref{A-behaviour}, $\mathcal{A}(C_{\lambda_n}u,v) = \mathcal{A}(u,C_{\lambda_n^{-1}}v), \; \forall \, (u,v) \in \mathcal{W}.$  
Set $\delta_n = \frac{1}{\la_n}$, and observe that 
\begin{align*}
    \abs{\log(\delta_n)} \ra \infty \Longleftrightarrow \abs{\log(\la_n)} \ra \infty.
\end{align*}
In view of this observation, for the converse part, it is enough to show that $\abs{\log(\la_n)} \ra \infty$ implies $\mathcal{A}(u,C_{\lambda_n}v)\to 0$ for all $u, v \in \wps$. 
Due to the density of $\cc(\rd)$ in $\wps$, it is enough to show $\mathcal{A}(u,C_{y_n,\lambda_n}v)\to 0$ for all $u, v \in \cc(\rd)$. If $\la_n \ra \infty$, then observe that $C_{\lambda_n}v \ra 0$ a.e. in $\rd$. Moreover, since $\{ C_{\lambda_n}v\}$ is bounded in $\wps$, from the reflexivity, up to a subsequence, $C_{\lambda_n}v \rightharpoonup w$ in $\wps$. The uniqueness of the limit yields $w=0$ a.e. in $\rd$.  Hence, using Lemma \ref{convergence-integrals}-(iii), we infer that $\mathcal{A}(u,C_{\lambda_n}v) \ra 0$ for all $u\in\wps$ as $n \ra \infty$. Now we consider the case where $\la_n \ra 0$. In this case, using a change of variables, we write 
\begin{align*}
    \mathcal{A}(u, C_{\lambda_n}v) &= \la_n^{-\frac{d-sp}{p}} \iint_{\rd \times \rd} \frac{\abs{u(x) - u(y)}^{p-2}(u(x) - u(y)) \left( v(\frac{x}{\la_n}) - v(\frac{y}{\la_n}) \right)}{\abs{x-y}^{d+sp}} \dxy \\
    &= \la_n^{\frac{d-sp}{p'}} \iint_{\rd \times \rd} \frac{\abs{u(\overline{x} \la_n) - u(\overline{y} \la_n)}^{p-2}(u(\overline{x} \la_n) - u(\overline{y} \la_n)) \left( v(\overline{x}) - v(\overline{y}) \right)}{\abs{\overline{x}-\overline{y}}^{d+sp}} \d \overline{x} \d \overline{y}.
\end{align*}
Since $u, v \in \cc(\rd)$, there exists $R>0$ such that $\text{supp}(u),\, \text{supp}(v) \subset B(0, R)$. So the above integral vanishes over $B(0,R)^c \times B(0,R)^c$. Applying H\"{o}lder's inequality with the pair $(p,p')$, we get 
\begin{align}\label{conv-1}
    \left| \mathcal{A}(u, C_{\lambda_n}v) \right| & \le 2 \left( \la_n^{d-sp} \iint_{\rd \times B(0,R)} \frac{\abs{u(\overline{x} \la_n) - u(\overline{y} \la_n)}^{p}}{\abs{\overline{x}-\overline{y}}^{d+sp}} \d \overline{x} \d \overline{y} \right)^{\frac{1}{p'}} \no \\
    &\quad \left(  \iint_{\rd \times B(0,R)} \frac{\abs{v(\overline{x}) - v(\overline{y}) }^p}{\abs{\overline{x}-\overline{y}}^{d+sp}} \d \overline{x} \d \overline{y} \right)^{\frac{1}{p}}.
\end{align}
Using the change of variables, we see that 
\begin{align*}
    \la_n^{d-sp} \iint_{\rd \times B(0,R)} \frac{\abs{u(\overline{x} \la_n) - u(\overline{y} \la_n)}^{p}}{\abs{\overline{x}-\overline{y}}^{d+sp}} \d \overline{x} \d \overline{y} = \int_{B(0, \la_nR)} \left( \int_{\rd} \frac{\abs{u(x) - u(y)}^{p}}{\abs{x-y}^{d+sp}} \dx \right) \dy.
\end{align*}
Since $[u]_{s,p} < \infty$ and $|B(0, \la_nR)| \ra 0$ as $n \ra \infty$, from the absolute continuity of the Lebesgue integral,
\begin{align*}
    \lim_{n \ra \infty} \int_{B(0, \la_nR)} \left( \int_{\rd} \frac{\abs{u(x) - u(y)}^{p}}{\abs{x-y}^{d+sp}} \dx \right) \dy = 0.
\end{align*}
Therefore, using \eqref{conv-1}, we get $\mathcal{A}(u, C_{\lambda_n}v) \ra 0$ as $n \ra \infty$. This completes the proof.   
\end{proof}

Using the same set of arguments as used in Proposition \ref{weak-bub-I}, we have the following. 

\begin{proposition}\label{weak-bub-II}
For any sequence $\{\delta_n\},\,\{\lambda_n\}\subset (0, \infty)$, we have
\begin{align*}
\left| \log\left(\frac{\delta_n}{\lambda_n}\right) \right| \to\infty \Longleftrightarrow \mathcal{A}_1(C_{\delta_n}u,C_{\lambda_n}v)\to 0, \text{ as } n \ra \infty,
\end{align*}
for all $u,\,v\in\wps$. 
\end{proposition}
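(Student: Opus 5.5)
The argument follows the proof of Proposition \ref{weak-bub-I}, with $\mathcal{A}$ replaced by $\mathcal{A}_1$. The one structural change is that $\mathcal{A}_1$ is not linear in its second slot, so weak convergence must be handled differently. First, the change of variables that gave \eqref{A-behaviour} gives the analogous identity
\begin{align*}
\mathcal{A}_1(C_\delta u, C_\lambda v)=\mathcal{A}_1(u,C_{\lambda/\delta}v)=\mathcal{A}_1(C_{\delta/\lambda}u,v).
\end{align*}
Hence it suffices to prove that $\abs{\log \lambda_n}\to\infty$ if and only if $\mathcal{A}_1(u,C_{\lambda_n}v)\to0$ and $\mathcal{A}_1(C_{\lambda_n}u,v)\to 0$ for all $u,v$. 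By the same inversion $\delta_n=\lambda_n^{-1}$, we only need to treat $\mathcal{A}_1(u,C_{\lambda_n}v)$.

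\emph{Necessity.} Suppose $\mathcal{A}_1(C_{\lambda_n}u,w)\to0$ for all $w$, but along a subsequence $\lambda_n\to\lambda\in(0,\infty)$. Take $w=C_\lambda u$ with $u\neq0$. Proposition \ref{Cylambda} gives $C_{\lambda_n}u\to C_\lambda u$ in norm. By H\"older with $(p',p)$, $\mathcal{A}_1$ is continuous on $\wps\times\wps$, since
\begin{align*}
\abs{\mathcal{A}_1(a,b)-\mathcal{A}_1(a',b)}\le \big\| |Da|^{p-1}-|Da'|^{p-1}\big\|_{L^{p'}(d\mu)}\,[b]_{s,p},
\end{align*}
where $d\mu=|x-y|^{-d-sp}\dxy$, and the map $a\mapsto |Da|^{p-1}$ is continuous into $L^{p'}(d\mu)$. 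Hence $\mathcal{A}_1(C_{\lambda_n}u,C_\lambda u)\to\mathcal{A}_1(C_\lambda u,C_\lambda u)=[u]_{s,p}^p>0$, a contradiction.

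\emph{Sufficiency.} By density and the uniform bound $\abs{\mathcal{A}_1(u,C_{\lambda_n}v)-\mathcal{A}_1(u',C_{\lambda_n}v')}\le C\big([u-u']_{s,p}+[v-v']_{s,p}\big)$, we may take $u,v\in\cc(\rd)$. This bound is uniform in $n$ because $C_{\lambda_n}$ is an isometry. For $\lambda_n\to0$ the earlier argument carries over verbatim. After rescaling,
\begin{align*}
\mathcal{A}_1(u,C_{\lambda_n}v)=\lambda_n^{\frac{d-sp}{p'}}\iint \frac{|Du(\lambda_n\bar x,\lambda_n\bar y)|^{p-1}|Dv(\bar x,\bar y)|}{|\bar x-\bar y|^{d+sp}}\,{\rm d}\bar x\,{\rm d}\bar y.
\end{align*}
The integrand vanishes off $(\rd\times B_R)\cup(B_R\times\rd)$, and H\"older gives the bound \eqref{conv-1}. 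By absolute continuity, that bound tends to $0$ because $|B(0,\lambda_nR)|\to0$.

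\emph{Main obstacle: the case $\lambda_n\to\infty$.} Here I cannot appeal to Lemma \ref{convergence-integrals}-(iii), because $|D(C_{\lambda_n}v)|$ is not linear in $C_{\lambda_n}v$. I would use the symmetry of the scaling instead: $\mathcal{A}_1(u,C_{\lambda_n}v)=\mathcal{A}_1(C_{\lambda_n^{-1}}u,v)$. Now the first slot is a function concentrating at scale $\mu_n=\lambda_n^{-1}\to0$, while $v$ is fixed. By H\"older,
\begin{align*}
\mathcal{A}_1(C_{\mu_n}u,v)\le\Big(\iint_{E}\frac{|D(C_{\mu_n}u)|^p}{|x-y|^{d+sp}}\Big)^{1/p'}\Big(\iint_{E}\frac{|Dv|^p}{|x-y|^{d+sp}}\Big)^{1/p}.
\end{align*}
The first factor is always at most $[u]^{p-1}_{s,p}$. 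The plan is to choose the region $E$ so that the other factor is small. Take $E=(\rd\times B_{\mu_nR})\cup(B_{\mu_nR}\times\rd)$, outside of which the difference $D(C_{\mu_n}u)$ vanishes. The second factor is then bounded by $\big(2\int_{B_{\mu_nR}}\int_{\rd}|Dv|^p|x-y|^{-d-sp}\dx\dy\big)^{1/p}$, which tends to $0$ by absolute continuity. This closes the case and completes the equivalence.
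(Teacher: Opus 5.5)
Your proof is correct, and most of it follows the paper. The paper proves this proposition only by remarking that the proof of Proposition~\ref{weak-bub-I} carries over. Your scaling identity, the necessity part (via Proposition~\ref{Cylambda} and continuity of $\mathcal{A}_1$), the density reduction, and the case $\lambda_n\to0$ (H\"older plus absolute continuity on $B(0,\lambda_nR)$) are exactly that argument.

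The one real difference is the case $\lambda_n\to\infty$. There the paper's argument uses $C_{\lambda_n}v\rightharpoonup0$ together with Lemma~\ref{convergence-integrals}-(iii), i.e.\ linearity of $\mathcal{A}(u,\cdot)$. As you correctly note, this is not available for $\mathcal{A}_1$, so "the same set of arguments" does not literally apply.

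Your fix is valid. You rewrite $\mathcal{A}_1(u,C_{\lambda_n}v)=\mathcal{A}_1(C_{\lambda_n^{-1}}u,v)$, which puts the concentrating function in the first slot. You then bound that factor by $[u]_{s,p}^{p-1}$ and take the smallness from the energy of $v$ on $B(0,\lambda_n^{-1}R)$. This has the advantage of handling both regimes with the same shrinking-set estimate.

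A repair closer to the paper's wording also exists. For $v\in\cc(\rd)$, the function $|D(C_{\lambda_n}v)|\,|x-y|^{-\frac{d+sp}{p}}$ is bounded in $L^p(\R^{2d})$. It tends to $0$ pointwise, since $\|C_{\lambda_n}v\|_{L^\infty}=\lambda_n^{-\frac{d-sp}{p}}\|v\|_{L^\infty}\to0$, so it converges weakly to $0$ in $L^p(\R^{2d})$. It is paired against the fixed function $|Du|^{p-1}|x-y|^{-\frac{d+sp}{p'}}\in L^{p'}(\R^{2d})$.

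One small imprecision: your density bound is not Lipschitz in $[u-u']_{s,p}$ when $1<p<2$. From $\big||a|^{p-1}-|b|^{p-1}\big|\le|a-b|^{p-1}$ you get $[u-u']_{s,p}^{p-1}[v]_{s,p}$ instead. For $p\ge2$ the constant depends on $[u]_{s,p}$ and $[u']_{s,p}$. Any modulus of continuity that is uniform in $n$ suffices for the density step, so this does not affect the argument.
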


\subsection{Proof of Theorem \ref{PS-decomposition}} With the dilation analysis in hand, we extract the profiles of a Palais-Smale sequence and prove the energy decomposition and parameter separation stated in Theorem \ref{PS-decomposition}.
Since $\{u_n\} \subset \wps$ is a (PS) sequence of $I_{\mu,\al,f}$ at level $\eta$, 
\begin{align}\label{PSD-1}
    I_{\mu,\al,f}(u_n) - \frac{1}{p^*_s(\al)}  \prescript{}{(\wps)^*}{\langle} I'_{\mu, \al,f}(u_n),(u_n){\rangle}_{\wps} \le \eta + o_n(1) + o_n(1) [u_n]_{s,p}.
\end{align}
Now 
\begin{align*}
    & \text{ L.H.S. of \eqref{PSD-1} } \ge C_{\text{eqiv}} \left( \frac{1}{p} - \frac{1}{p^*_s(\al)} \right) [u_n]_{s,p}^p - \left( 1 - \frac{1}{p^*_s(\al)} \right) \left( \norm{f}_{(\wps)^*} [u_n]_{s,p}\right) \\
    & \ge C_{\text{eqiv}} \left( \frac{1}{p} - \frac{1}{p^*_s(\al)} \right) [u_n]_{s,p}^p -  \left( 1 - \frac{1}{p^*_s(\al)} \right) \norm{f}_{(\wps)^*} [u_n]_{s,p}.
\end{align*}
In view of R.H.S. of \eqref{PSD-1}, $\{ u_n \}$ is a bounded sequence in $\wps$. By the reflexivity of $\wps$, let $\{ u_n \}$ weakly converge to $\tilde{u}$ in $\wps$ (up to a subsequence). Since $I'_{\mu,\al, f}(u_n) \ra 0$ in $(\wps)^*$, for every $\phi \in \wps$ we have
\begin{align*}
    \mathcal{A}(u_n , \phi) & - \mu \int_{\rd} \frac{\abs{u_n(x)}^{p-2} u_n(x)}{\abs{x}^{sp}} \phi(x) \dx \no \\ 
    &=\int_{\rd} \frac{\abs{u_n(x)}^{p^*_s(\al)-2}u_n(x)}{\abs{x}^{\al}} \phi(x) \dx +\prescript{}{(\wps)^*}{\langle}f,\phi{\rangle}_{\wps}, \; \forall \, \phi \in \wps.
\end{align*}
Taking the limit as $n \ra \infty$ in the above identity and using Lemma \ref{convergence-integrals}, we see that $\tilde{u}\in \wps$ satisfies \eqref{weak1} weakly. We divide the rest of the proof into several steps. 

\noi \textbf{Step 1:} We claim that $\{ u_n - \tilde{u}\}$ is a (PS) sequence for $I_{\mu,\al,0}$ at level $\eta -I_{\mu,\al,f}(\tilde{u})$. Set $\tilde{u}_n = u_n - \tilde{u}$. Using Lemma \ref{convergence-BL} and $\tilde{u}_n \rightharpoonup 0$ in $\wps$, we get
\begin{align*}
    I_{\mu,\al,0}(\tilde{u}_n) & = \frac{1}{p} [\tilde{u}_n]_{s,p}^p - \frac{\mu}{p} \int_{\rd} \frac{\abs{\tilde{u}_n}^p}{\abs{x}^{sp}} \dx - \frac{1}{p^*_s(\al)} \int_{\rd} \frac{\abs{\tilde{u}_n}^{p^*_s(\al)}}{\abs{x}^{\al}} \dx \\
    & = \frac{1}{p} \left( [u_n]_{s,p}^p - [\tilde{u}]_{s,p}^p \right) - \frac{\mu}{p} \left( \int_{\rd} \frac{\abs{u_n}^p - \abs{\tilde{u}}^p}{\abs{x}^{sp}} \dx \right) - \frac{1}{p^*_s(\al)} \left( \int_{\rd} \frac{\abs{u_n}^{p^*_s(\al)} - \abs{\tilde{u}}^{p^*_s(\al)}}{\abs{x}^{\al}} \dx \right) \\
    & -\prescript{}{(\wps)^*}{\langle}f,u_n{\rangle}_{\wps} + \prescript{}{(\wps)^*}{\langle}f, \tilde{u}{\rangle}_{\wps} + o_n(1) \\
    & = I_{\mu,\al,f}(u_n) - I_{\mu,\al,f}(\tilde{u}) + o_n(1).
\end{align*}
Hence $I_{\mu,\al,0}(\tilde{u}_n) \ra \eta - I_{\mu,\al,f}(\tilde{u})$ as $n \ra \infty$. Further, for $\phi \in \wps$, using $\tilde{u}_n \rightharpoonup 0$ in $\wps$, and Lemma \ref{convergence-integrals}, we have 
\begin{align*}
    &\prescript{}{(\wps)^*}{\langle} I_{\mu,\al,0}'(\tilde{u}_n), \phi {\rangle}_{\wps} \\
    &= \mathcal{A}(\tilde{u}_n , \phi) - \mu \int_{\rd} \frac{\abs{ \tilde{u}_n }^{p-2} \tilde{u}_n}{\abs{x}^{sp}} \phi\,\dx - \int_{\rd} \frac{\abs{\tilde{u}_n}^{p^*_s(\al) -2} \tilde{u}_n}{\abs{x}^{\al}} \phi \,\dx \rightarrow 0, \text{ as } n \ra \infty.
\end{align*}
Thus, the claim holds.

\noi \textbf{Step 2:} Suppose $u_n \ra \tilde{u}$ in $\wps$. From the continuity of $I_{\mu,\al,f}$, we get $\eta = I_{\mu,\al,f}(\tilde{u})$, and Theorem \ref{PS-decomposition} holds for $k=0$. So, we assume that  $u_n \not\ra \tilde{u}$ in $\wps$. In view of Step 1, $\prescript{}{(\wps)^*}{\langle} I_{\mu,\al,0}'(\tilde{u}_n), \tilde{u}_n {\rangle}_{\wps} \ra 0$, which implies 
\begin{align}\label{del-0}
    0 < c \le C_{\text{eqiv}} [\tilde{u}_n]_{s,p}^p \le  [\tilde{u}_n]^p_{s,p} - \mu \int_{\rd} \frac{\abs{\tilde{u}_n}^p}{\abs{x}^{sp}} \dx = \int_{\rd} \frac{\abs{\tilde{u}_n}^{p^*_s(\al)}}{\abs{x}^{\al}} \dx + o_n(1).
\end{align}
In this step, we construct a sequence $\{ \hat{u}_n \}$ from $\{ \tilde{u}_n \}$ in such a way that the $\wps$-norm is preserved and $\{ \hat{u}_n \}$ weakly goes to a non-zero limit $\hat{u} \in \wps$. Moreover, we show that $\hat{u}$ weakly solves the limiting equation \eqref{limit-problem-intro}-(II). In view of \eqref{del-0}, there exists $\delta_1>0$ such that 
\begin{align*}
    \inf_{n \in \N} \int_{\rd}  \frac{\abs{\tilde{u}_n}^{p^*_s(\al)}}{\abs{x}^{\al}} \dx = \delta_1.
\end{align*}
We take $0< \de< \de_1$ and consider the Levy concentration function
\begin{align*}
    Q_n(r) \coloneqq \int_{B(0,r)}  \frac{\abs{\tilde{u}_n}^{p^*_s(\al)}}{\abs{x}^{\al}} \dx.
\end{align*}
Observe that $Q_n(0) =0$ and $Q_n(\infty) > \delta$. Further, we can verify that $Q_n$ is continuous on $\R^+$ (see \cite[Lemma 3.1]{Brasco-2018}). Hence, there exists $k \in \N$ and $r_n \in \R^+$  such that for all $n \ge k$, 
\begin{align}\label{int-1}
Q_n(r_n) = \int_{B(0, r_n)}  \frac{\abs{\tilde{u}_n}^{p^*_s(\al)}}{\abs{x}^{\al}} \dx = \delta. 
\end{align}
For $n \ge k$, we set 
\begin{align*}
\hat{u}_n(z) \coloneqq r_n^{\frac{d-sp}{p}} \tilde{u}_n(r_n z ), \text{ for } z \in \rd. 
\end{align*}
Using the change of variables and using \eqref{int-1}, observe that
\begin{align}\label{int-1.5}
\int_{B(0, 1)} \frac{\abs{\hat{u}_n}^{p^*_s(\al)}}{\abs{x}^{\al}} \dx = \delta. 
 \end{align}  
By observing the fact that $[\tilde{u}_n]_{s,p} = [\hat{u}_n]_{s,p}$, the sequence $\{ \hat{u}_n\}$ is bounded in $\wps$.  Let $\hat{u}_n \rightharpoonup \hat{u}$ in $\wps$. 
In this step, we show that $\hat{u} \neq 0$. Suppose $\hat{u} = 0$. Consider $\phi \in \cc(B(0,1))$ with $0 \le \phi \le 1$. 
Set
\begin{align*}
    \phi_n(z) \coloneqq \phi\left(\frac{z}{r_n}\right) \tilde{u}_n(z), \text{ for } z \in \rd. 
\end{align*}
Note that $\text{supp}(\phi_n) \subset B(0, r_n)$. Since $\{ \tilde{u}_n \}$ is a (PS) sequence of $I_{\mu,\al,0}$, we have  
\begin{align}\label{int-2}
  \mathcal{A}(\tilde{u}_n, \phi_n) = \mu \int_{\rd} \frac{\abs{\tilde{u}_n}^{p-2} \tilde{u}_n}{\abs{x}^{sp}} \phi_n \dx + \int_{\rd} \frac{\abs{\tilde{u}_n}^{p^*_s(\al)-2} \tilde{u}_n}{\abs{x}^{\al}} \phi_n \dx + o_n(1).
\end{align}
Now we estimate $\mathcal{A}(\tilde{u}_n, \phi_n)$. Using the change of variables $\bar{x}_n = \frac{x}{r_n}, \bar{y}_n = \frac{y}{r_n}$, we write 
\begin{align*}
    &\mathcal{A}(\tilde{u}_n, \phi_n) \\
    &= \iint_{\rd \times \rd} \frac{\abs{\tilde{u}_n(x) - \tilde{u}_n(y)}^{p-2}(\tilde{u}_n(x) - \tilde{u}_n(y)) \left( \phi(\frac{x}{r_n}) \tilde{u}_n(x) -  \phi(\frac{y}{r_n}) \tilde{u}_n(y) \right)}{\abs{x-y}^{d+sp}} \dxy \\
    &= r_n^{d-sp} \iint_{\rd \times \rd} \frac{\abs{\tilde{u}_n(r_n \bar{x}_n) - \tilde{u}_n(r_n \bar{y}_n)}^{p-2}(\tilde{u}_n(r_n \bar{x}_n) - \tilde{u}_n(r_n \bar{y}_n))}{\abs{\bar{x}_n-\bar{y}_n}^{d+sp}} \\
    & \quad \quad \quad \quad \left( \phi(\bar{x}_n)  \tilde{u}_n(r_n \bar{x}_n) -  \phi(\bar{y}_n) \tilde{u}_n(r_n \bar{y}_n) \right) \dxnyn \\
    & = \iint_{\rd \times \rd} \frac{\abs{\hat{u}_n(\bar{x}_n) - \hat{u}_n(\bar{y}_n)}^{p-2}(\hat{u}_n(\bar{x}_n) - \hat{u}_n(\bar{y}_n)) \left( \phi(\bar{x}_n) \hat{u}_n(\bar{x}_n) - \phi(\bar{y}_n) \hat{u}_n(\bar{y}_n) \right)}{\abs{\bar{x}_n-\bar{y}_n}^{d+sp}} \dxnyn.
\end{align*}
Applying the H\"{o}lder's inequality with the  pair $(p,p')$,
\begin{align}\label{integral-estimate}
    \left| \mathcal{A}(\tilde{u}_n, \phi_n) \right| \le [\hat{u}_n]_{s,p}^{p-1} \left( \iint_{\rd \times \rd} \frac{\abs{\phi(x)\hat{u}_n(x) - \phi(y)\hat{u}_n(y)}^p}{\abs{x-y}^{d+sp}} \dxy \right)^{\frac{1}{p}}.
\end{align}
To estimate the right-hand side integral of \eqref{integral-estimate}, we split 
\begin{align*}
    &\iint_{\rd \times \rd} \frac{\abs{\phi(x)\hat{u}_n(x) - \phi(y)\hat{u}_n(y)}^p}{\abs{x-y}^{d+sp}} \dxy \\
    & = \left( \iint_{B(0,1) \times B(0,1)} + 2 \iint_{B(0,1) \times B(0,1)^c}\right) \frac{\abs{\phi(x)\hat{u}_n(x) - \phi(y)\hat{u}_n(y)}^p}{\abs{x-y}^{d+sp}} \dxy := I_1 + I_2.
\end{align*}
We now show that $I_1 = o_n(1)$. For that
\begin{align*}
    &\iint_{B(0,1) \times B(0,1)} \frac{\abs{\phi(x)\hat{u}_n(x) - \phi(y)\hat{u}_n(y)}^p}{\abs{x-y}^{d+sp}} \dxy \\
    &\le 2^{p-1} \iint_{B(0,1) \times B(0,1)} \left( \abs{\hat{u}_n(x)}^p  \frac{\abs{\phi(x) - \phi(y)}^p}{\abs{x-y}^{d+sp}} + \abs{\phi(y)}^p \frac{\abs{ \hat{u}_n(x)- \hat{u}_n(y) }^p}{\abs{x-y}^{d+sp}} \right) \dxy,
\end{align*}
where 
\begin{align*}
    \iint_{B(0,1) \times B(0,1)} \abs{\phi(y)}^p \frac{\abs{ \hat{u}_n(x)- \hat{u}_n(y) }^p}{\abs{x-y}^{d+sp}} \dxy \le \norm{\phi}_{L^{\infty}(\rd)}^p \norm{ \hat{u}_n }_{\wps}^p \le C.
\end{align*}
Moreover, using $\abs{\phi(x) - \phi(y)} \le C\abs{x-y}$, we see that 
\begin{align*}
    \iint_{B(0,1) \times B(0,1)} \abs{\hat{u}_n(x)}^p  \frac{\abs{\phi(x) - \phi(y)}^p}{\abs{x-y}^{d+sp}} \dxy & \le C^p \iint_{B(0,1) \times B(0,1)}  \frac{\abs{\hat{u}_n(x)}^p}{\abs{x-y}^{d+sp-p}} \dxy \\
    & \le C^p \int_{B(0,1)} \left( \int_{B(0,2)} \frac{\dz}{\abs{z}^{d+sp-p}} \right) \abs{\hat{u}_n(x)}^p \dx \le C.
\end{align*}
This proves the finiteness of the integral. Next, we prove that $I_2$ is finite. Observe that
\begin{align*}
    I_2 = \iint_{B(0,1) \times B(0,1)^c} \frac{\abs{\phi(x)\hat{u}_n(x)}^p}{\abs{x-y}^{d+sp}} \dxy \le \norm{\phi}_{L^{\infty}(\rd)}^p \iint_{B(0,1) \times B(0,1)^c} \frac{\abs{\hat{u}_n(x)}^p}{\abs{y-x}^{d+sp}} \dxy,
\end{align*}
where using the change of variables, we estimate the last integral as
\begin{align*}
    \int_{B(0,1)} \abs{\hat{u}_n(x)}^p  \left( \int_{\abs{z}>1} \frac{\dz}{\abs{z}^{d+sp}} \right) \dx \le C(d,s,p) \int_{B(0,1)} \abs{\hat{u}_n(x)}^p \dx = o_n(1), 
\end{align*}
where $o_n(1)$ comes from the compact embedding $\wps \hookrightarrow L^p_{loc}(\rd)$ and $\hat{u}=0$. Hence $I_2=o_n(1)$, and 
\begin{align}\label{finiteness}
    \iint_{\rd \times \rd} \frac{\abs{\phi(x)\hat{u}_n(x) - \phi(y)\hat{u}_n(y)}^p}{\abs{x-y}^{d+sp}} \dxy \le C,
\end{align}
for some $C>0$. Next, we show that $I_1 = o_n(1)$. From the compact embedding of $\wps \hookrightarrow L^{p}_{loc}(\rd)$ and $\hat{u}=0$, we have $\hat{u}_n(x) \ra 0$ pointwise a.e. $x \in B(0,1)$. This implies $\abs{\phi(x)\hat{u}_n(x) - \phi(y)\hat{u}_n(y)} \ra 0$ pointwise a.e. $x,y \in B(0,1)$. Define
\begin{align*}
    U_n(x,y) := \frac{(\phi(x)\hat{u}_n(x) - \phi(y)\hat{u}_n(y))}{\abs{x-y}^{\frac{d+sp}{p}}}, \; \text{ for } x,y \in \rd. 
\end{align*}
In view of \eqref{finiteness}, 
\begin{align*}
    \norm{U_n}_{L^p(\rd \times \rd)} \le C, \; \forall \, n \in \mathbb{N}. 
\end{align*}
Therefore, applying Vitali's convergence theorem, we conclude that
\begin{align*}
    \lim_{n \ra \infty} \iint_{B(0,1) \times B(0,1)} \frac{\abs{\phi(x)\hat{u}_n(x) - \phi(y)\hat{u}_n(y)}^p}{\abs{x-y}^{d+sp}} \dxy = \lim_{n \ra \infty} \norm{U_n}_{L^p(B(0,1) \times B(0,1))} =  0.
\end{align*}
The above convergence yields $I_1 = o_n(1)$. Accumulating all the estimates, we get $\mathcal{A}(\tilde{u}_n, \phi_n) = o_n(1)$. Now we show that 
\begin{align}\label{smallness-1}
    \int_{\rd} \frac{\abs{\tilde{u}_n}^{p-2} \tilde{u}_n}{\abs{x}^{sp}} \phi_n \dx = o_n(1). 
\end{align}
Using the change of variables and \eqref{HS}, we write 
\begin{align*}
    \int_{\rd} \frac{\abs{\tilde{u}_n}^{p-2} \tilde{u}_n}{\abs{x}^{sp}} \phi_n \dx = \int_{B(0,1)} \frac{\abs{\hat{u}_n}^{p}}{\abs{x}^{sp}} \phi \dx \le C(d,s,p) \norm{\phi}_{L^{\infty}(\rd)} [\hat{u}_n ]^p_{s,p} \le C.
\end{align*}
Further, using the compact embedding $\wps \hookrightarrow L^p_{loc}(\rd)$ and $\hat{u}=0$, we see that $\frac{\abs{\hat{u}_n}^{p}}{\abs{x}^{sp}} \phi(x) \ra 0$ a.e. in $B(0,1)$. The Vitali's convergence theorem yields
\begin{align*}
    \lim_{n \ra \infty} \int_{B(0,1)} \frac{\abs{\hat{u}_n}^{p}}{\abs{x}^{sp}} \phi \dx=0,
\end{align*}
which implies \eqref{smallness-1}. 
Hence, in view of  \eqref{int-2}, we have 
\begin{align*}
o_n(1) = \int_{\rd} \frac{\abs{\tilde{u}_n}^{p^*_s(\al)-2} \tilde{u}_n}{\abs{x}^{\al}} \phi_n \dx = \int_{\rd} \frac{\abs{\hat{u}_n}^{p^*_s(\al)}}{\abs{x}^{\al}} \phi \dx,   
\end{align*}
where the last identity holds using the change of variables. Since $\phi \in \cc(B(0,1))$ is arbitrary,  for any $r\in (0,1)$ we can choose $\phi \equiv 1$ on $B_r$. Therefore,
\begin{align*}
 o_n(1) =  \int_{B_r}  \frac{\abs{\hat{u}_n}^{p^*_s(\al)}}{\abs{x}^{\al}} \dx,\text{ for any }0<r<1,
\end{align*}
which contradicts \eqref{int-1.5}. Thus, we conclude $\hat{u} \neq 0$. Next, we show that 
\begin{align}\label{limitofrn}
\lim_{n \ra \infty} r_n \in \{ 0, \infty \}.     
\end{align}
On the contrary, assume that $r_n \ra r_0$ for some $r_0>0$. Since $\hat{u} \neq 0$, we can choose $R>>1$ large enough so that $\norm{ \hat{u}}_{L^p(B(0,R))} >0$. Now using the compact embedding of $\wps \hookrightarrow L_{loc}^p(\rd)$ and applying the change of variables, we see that
\begin{align}\label{limitofrn-1}
    0 < \norm{ \hat{u}}_{L^p(B(0,R))} = \norm{ \hat{u}_n }_{L^p(B(0,R))} +o_n(1) = r_n^{-s} \norm{\tilde{u}_n}_{L^p(B(0,r_nR)} + o_n(1).
\end{align}
Further, since $r_n \ra r_0$, there exists $R_1>0$ such that $B(0,r_nR) \subset B(0,R_1)$. Now again using the compact embedding of $\wps \hookrightarrow L_{loc}^p(\rd)$, 
\begin{align*}
   \lim_{n \ra \infty} r_n^{-s} \norm{\tilde{u}_n}_{L^p\left(B(0,r_nR)\right)} \le r_0^{-s} \lim_{n \ra \infty} \norm{\tilde{u}_n}_{L^p\left(B(0,R_1)\right)} = 0,
\end{align*}
which contradicts \eqref{limitofrn-1}. Therefore, \eqref{limitofrn} holds.

Next, we show that the non-zero weak limit $\hat{u}$ weakly solves the following limiting equation
\begin{align*}
    (-\Delta_p)^s \hat{u} -\mu\dfrac{\abs{\hat{u}}^{p-2}\hat{u}}{|x|^{sp}}=\dfrac{|\hat{u}|^{p^*_s(\al)-2}\hat{u}}{|x|^{\al}} \;\mbox{ in }\,\mathbb{R}^d. 
\end{align*}
Take $\phi, \psi \in \wps$.  From Step 2, since $\hat{u}_n \rightharpoonup \hat{u}$, using Lemma \ref{convergence-integrals}-(ii) we get $ \lim_{n\to\infty} \mathcal{A}(\hat{u}_n, \phi) = \mathcal{A}(\hat{u}, \phi).$ For $n \in \mathbb{N}$, we set 
\begin{align*}
    \phi_n(z) = r_n^{- \frac{d-sp}{p}} \phi \left(\frac{z}{r_n} \right), \text{ for } z \in \rd.
\end{align*}
Note that $[\phi_n]_{s,p} = [\phi]_{s,p}$. Next, using the change of variables $\overline{x}_n = r_n x,\,\overline{y}_n = r_n y$, 
\begin{align}\label{invariant-1}
    & \mathcal{A}(\hat{u}_n, \phi) = r_n^{\frac{d-sp}{p'}} \iint_{\rd \times \rd} \frac{\abs{\tilde{u}_n(r_n x) - \tilde{u}_n(r_n y)}^{p-2} (\tilde{u}_n(r_n x) - \tilde{u}_n(r_n y)) (\phi(x) - \phi(y))}{\abs{x-y}^{d+sp}}\dx\dy \no \\
    & = r_n^{-\frac{d-sp}{p}} \iint_{\rd \times \rd} \frac{\abs{\tilde{u}_n(r_n x) - \tilde{u}_n(r_n y)}^{p-2} (\tilde{u}_n(r_n x) - \tilde{u}_n(r_n y))(\phi(x) - \phi(y))}{\abs{r_n x-r_n y}^{d+sp}}  \dx\dy  \no \\
    & = r_n^{- \frac{d-sp}{p}} \iint_{\rd \times \rd} \frac{\abs{\tilde{u}_n(\overline{x}_n) - \tilde{u}_n(\overline{y}_n)}^{p-2} (\tilde{u}_n(\overline{x}_n) - \tilde{u}_n(\overline{y}_n)) \left(\phi \left(\frac{\overline{x}_n}{r_n} \right) - \phi \left(\frac{\overline{y}_n}{r_n} \right) \right)}{\abs{\overline{x}_n - \overline{y}_n}^{d+sp}} \d \overline{x}_n \d \overline{y}_n \no \\
    &= \mathcal{A}(\tilde{u}_n, \phi_n).
\end{align}
Similarly, for $\tilde{\al} \in [0, sp]$, we also have  
\begin{align}\label{invariant-2}
    \int_{\rd} \frac{\abs{\hat{u}_n}^{p^*_s(\tilde{\al})-2} \hat{u}_n}{\abs{x}^{\tilde{\al}}} \phi \dx = \int_{\rd} \frac{\abs{\tilde{u}_n}^{p^*_s(\tilde{\al})-2} \tilde{u}_n}{\abs{x}^{\tilde{\al}}} \tilde{\phi}_n \dx.  
\end{align}
Now using $\prescript{}{(\wps)^*}{\langle} I_{\mu,\al,0}'(\tilde{u}_n), \phi_n {\rangle}_{\wps} =o_n(1)$, \eqref{invariant-1}, and \eqref{invariant-2}, we see that 
\begin{align}\label{limit-1}
    \mathcal{A}(\hat{u}_n, \phi) - \mu \int_{\rd} \frac{\abs{\hat{u}_n(x)}^{p-2} \hat{u}_n(x)}{\abs{x}^{sp}} \phi(x) \dx &= \mathcal{A}(\tilde{u}_n, \phi_n) - \mu \int_{\rd} \frac{\abs{\tilde{u}_n(x)}^{p-2} \tilde{u}_n(x)}{\abs{x}^{sp}} \phi_n(x) \dx \no \\
    &=\int_{\rd} \frac{\abs{\tilde{u}_n(x)}^{p^*_s(\al)-2}\tilde{u}_n(x)}{\abs{x}^{\al}} \phi_n(x) \dx + o_n(1) \no \\
    &=\int_{\rd} \frac{\abs{\hat{u}_n(x)}^{p^*_s(\al)-2} \hat{u}_n(x)}{\abs{x}^{\al}} \phi(x) \dx + o_n(1),
\end{align}
where the last line is obtained again by using the change of variables. Now taking $n \ra \infty$ in \eqref{limit-1} and applying Lemma \ref{convergence-integrals}-(i), we complete this step. 

\noi \textbf{Step 3:} We set 
\begin{align*}
    w_n(z) = \tilde{u}_n(z) - r_n^{-\frac{d-sp}{p}} \hat{u} \left( \frac{z}{r_n} \right) \text{ and } \tilde{w}_n(z) = r_n^{\frac{d-sp}{p}} w_n(r_n z), \text{ for } z \in \rd.
\end{align*}
Note that $[w_n]_{s,p} = [\tilde{w}_n]_{s,p}$. This step shows that $\{w_n\}$ is a (PS) sequence of $I_{\mu,\al,0}$ at level $\eta - I_{\mu,\al,f}(\tilde{u}) - I_{\mu,\al,0}(\hat{u})$. Observe that $\tilde{w}_n = \hat{u}_n - \hat{u}$. Hence the norm invariance gives $[ w_n ]_{s,p} = [ \tilde{w}_n ]_{s,p} = [ \hat{u}_n - \hat{u} ]_{s,p}$. Now applying Lemma \ref{convergence-BL}-((i),(ii)), we see that 
\begin{align*}
    I_{\mu,\al,0} (w_n) & = \frac{1}{p}[w_n]_{s,p}^p - \frac{\mu}{p} \int_{\rd}\frac{\abs{w_n}^p}{\abs{x}^{sp}} \dx - \frac{1}{p^*_s(\al)} \int_{\rd} \frac{\abs{w_n}^{p^*_s(\al)}}{\abs{x}^{\al}}\dx \\
    & = \frac{1}{p} \left( [\hat{u}_n]_{s,p}^p - [\hat{u}]_{s,p}^p \right) - \frac{\mu}{p} \left( \int_{\rd}\frac{\abs{\hat{u}_n}^p}{\abs{x}^{sp}} \dx - \int_{\rd}\frac{\abs{\hat{u}}^p}{\abs{x}^{sp}} \dx \right) \\
    & - \frac{1}{p^*_s(\al)} \left( \int_{\rd} \frac{\abs{\hat{u}_n}^{p^*_s(\al)}}{\abs{x}^{\al}}\dx - \int_{\rd} \frac{\abs{\hat{u}}^{p^*_s(\al)}}{\abs{x}^{\al}}\dx \right) + o_n(1) \\
    & = \frac{1}{p} \left( [\tilde{u}_n]_{s,p}^p - [\hat{u}]_{s,p}^p \right) - \frac{\mu}{p} \left( \int_{\rd}\frac{\abs{\tilde{u}_n}^p}{\abs{x}^{sp}} \dx - \int_{\rd}\frac{\abs{\hat{u}}^p}{\abs{x}^{sp}} \dx \right) \\
    & - \frac{1}{p^*_s(\al)} \left( \int_{\rd} \frac{\abs{\tilde{u}_n}^{p^*_s(\al)}}{\abs{x}^{\al}}\dx - \int_{\rd} \frac{\abs{\hat{u}}^{p^*_s(\al)}}{\abs{x}^{\al}}\dx \right) + o_n(1) \\
    &= I_{\mu, \al,0}(\tilde{u}_n) - I_{\mu, \al, 0}(\hat{u}) + o_n(1) = \eta - I_{\mu,\al,f}(\tilde{u}) - I_{\mu, \al, 0}(\hat{u}) + o_n(1).
\end{align*}
Next, we show $\prescript{}{(\wps)^*}{\langle} I_{\mu,\al,0}'(w_n), \phi {\rangle}_{\wps} \ra 0$ for every $\phi \in \wps$. Using the fact that $\{w_n\}$ is bounded in $\wps$, we apply \eqref{HS1}, to obtain the following for some $C>0$:
\begin{align*}
    \left| \prescript{}{(\wps)^*}{\langle} I_{\mu,\al,0}'(w_n), \phi {\rangle}_{\wps} \right| \le C[\phi]_{s,p}, \text{ for every } \phi \in \wps. 
\end{align*}
Hence using the density of $\cc (\rd)$ in $\wps$, and using the uniform boundedness principle, it is enough to show $\prescript{}{(\wps)^*}{\langle} I_{\mu,\al,0}'(w_n), \phi {\rangle}_{\wps} \ra 0$ for every $\phi \in \cc (\rd)$. For $\phi \in \cc (\rd)$, we define 
\begin{align*}
    \hat{\phi}_n(z) =  r_n^{\frac{d-sp}{p}} \phi(r_n z), \text{ for } z \in \rd.
\end{align*}
Since $[ \hat{\phi}_n ]_{s,p} = [ \phi ]_{s,p}$, the sequence $\{ \hat{\phi}_n \}$ is bounded in $\wps$, and  and up to a subsequence $\hat{\phi}_n \rightharpoonup v_{\al}$ in $\wps$. Since $r_n \ra 0$ or $\infty$, in either case we get $\hat{\phi}_n\rightharpoonup 0$ in $\wps$. Indeed, when $r_n\to 0$, $\hat{\phi}_n\to 0$ uniformly in $\R^{d}$ and $\{ \hat{\phi}_n\}$ is bounded in $\wps$, so it has  a weak limit (up to a subsequence) in $\wps$, which must coincide with $0$. Therefore, $v_{\al}=0$ a.e. in $\rd$. Now, when $r_n\to\infty$, $\supp({\phi}) \subset B(0,R)$, for some $R>0$, will give $\supp({\hat{\phi}_n})$ to concentrate at a point (maybe at infinity), which renders $\hat{\phi}_n \rightharpoonup0$ in $\wps$. Now, using the change of variables, 
\begin{align}\label{PS-sublevel}
    & \prescript{}{(\wps)^*}{\langle} I_{\mu,\al,0}'(w_n), \phi {\rangle}_{\wps} \no \\ & = \mathcal{A}(w_n, \phi) - \mu \int_{\rd} \frac{\abs{w_n(x)}^{p-2} w_n(x)}{\abs{x}^{sp}} \phi(x) \dx - \int_{\rd} \frac{\abs{w_n(x)}^{p^*_s(\al)-2} w_n(x) }{\abs{x}^{\al}} \phi(x) \dx \no \\
    & = \mathcal{A}(\tilde{w}_n, \hat{\phi}_n) - \mu \int_{\rd} \frac{\abs{\tilde{w}_n(x)}^{p-2} \tilde{w}_n(x)}{\abs{x}^{sp}} \hat{\phi}_n(x) \dx - \int_{\rd} \frac{\abs{\tilde{w}_n(x)}^{p^*_s(\al)-2} \tilde{w}_n(x) }{\abs{x}^{\al}} \hat{\phi}_n(x) \dx.
\end{align}
Using Lemma \ref{convergence-BL}-(iii) and using H\"{o}lder's inequality with the  pair $(p, p')$ and further using $[\hat{\phi}_n]_{s,p} = [\phi]_{s,p}$ we get $ \mathcal{A}(\tilde{w}_n, \hat{\phi}_n) -  \mathcal{A}(\hat{u}_n, \hat{\phi}_n) + \mathcal{A}(\hat{u}, \hat{\phi}_n) = o_n(1)$. Further, the change of variables yield $\mathcal{A} (\tilde{w}_n, \hat{\phi}_n) -  \mathcal{A}(\tilde{u}_n, \phi) + \mathcal{A}(\hat{u}, \hat{\phi}_n) = o_n(1)$. Now using the fact that $\tilde{u}_n \rightharpoonup 0$ and $\hat{\phi}_n \rightharpoonup 0$ in $\wps$, applying Lemma \ref{convergence-integrals}-((ii) and (iii)), we get $\mathcal{A}(\tilde{u}_n, \phi) = o_n(1)$ and $\mathcal{A}(\hat{u}, \hat{\phi}_n) = o_n(1)$. Therefore, $\mathcal{A}(\tilde{w}_n, \hat{\phi}_n) = o_n(1)$. Similarly, using Lemma \ref{convergence-BL}-(iv) and H\"{o}lder's inequality with the  pair $(p^*_s(\al), (p^*_s(\al))')$, and then using \eqref{HS1} and $[\hat{\phi}_n]_{s,p} = [\phi]_{s,p}$ we get the following 
for $\tilde{\al} \in [0, sp]$:
\begin{align*}
   &\int_{\rd} \frac{\abs{\tilde{w}_n(x)}^{p^*_s(\tilde{\al})-2} \tilde{w}_n(x) }{\abs{x}^{\tilde{\al}}} \hat{\phi}_n(x) \dx - \int_{\rd} \frac{\abs{\hat{u}_n(x)}^{p^*_s(\tilde{\al})-2} \hat{u}_n(x) }{\abs{x}^{\tilde{\al}}} \hat{\phi}_n(x) \dx \\
   &=\int_{\rd} \frac{\abs{\hat{u}(x)}^{p^*_s(\tilde{\al})-2} \hat{u}(x) }{\abs{x}^{\tilde{\al}}} \hat{\phi}_n(x) \dx + o_n(1).
\end{align*}
Again, the change of variables yields, 
\begin{align*}
    &\int_{\rd} \frac{\abs{\tilde{w}_n(x)}^{p^*_s(\tilde{\al})-2} \tilde{w}_n(x) }{\abs{x}^{\tilde{\al}}} \hat{\phi}_n(x) \dx - \int_{\rd} \frac{\abs{\tilde{u}_n(x)}^{p^*_s(\tilde{\al})-2} \tilde{u}_n(x) }{\abs{x}^{\tilde{\al}}} \phi(x) \dx \\
   &=\int_{\rd} \frac{\abs{\hat{u}(x)}^{p^*_s(\tilde{\al})-2} \hat{u}(x) }{\abs{x}^{\tilde{\al}}} \hat{\phi}_n(x) \dx + o_n(1).
\end{align*}
Therefore, using Lemma \ref{convergence-integrals}-(i), for $\tilde{\al} \in [0, sp]$, we get
\begin{align*}
    \int_{\rd} \frac{\abs{\tilde{w}_n(x)}^{p^*_s(\tilde{\al})-2} \tilde{w}_n(x) }{\abs{x}^{\tilde{\al}}} \hat{\phi}_n(x) \dx = o_n(1). 
\end{align*}
In view of \eqref{PS-sublevel}, we finally get $\prescript{}{(\wps)^*}{\langle} I_{\mu,\al,0}'(w_n), \phi {\rangle}_{\wps} = o_n(1)$. Thus, $\{w_n\}$ becomes a (PS) sequence of $I_{\mu,\al,0}$ at level $\eta - I_{\mu,\al,f}(\tilde{u}) - I_{\mu,\al,0}(\hat{u})$.

\noi \textbf{Step 4:} Now, starting from a (PS) sequence $\{ \tilde{u}_n \}$ of $I_{\mu,\al,0}$ we have extracted further (PS) sequences at a level which is strictly lower than the level of $\{ \tilde{u}_n \}$, and with a fixed amount of decrease in every step, since
\begin{align}\label{fixed amount decrease}
    I_{\mu,\al,0}(\hat{u}) \ge \frac{sp-\al}{p(d-\al)} S_{\mu,\al}^{\frac{d-\al}{sp-\al}}.
\end{align}

For the verification of \eqref{fixed amount decrease}, from Step 2, we note that 
\begin{align*}
    [\hat{u}]_{\mu}^{p} = \int_{\rd} \frac{|\hat{u}(x)|^{p^*_s(\al)}}{|x|^{\al}} \dx,
\end{align*}
which implies $I_{\mu,\al,0}(\hat{u}) = \frac{sp-\al}{p(d-\al)}[\hat{u}]_{\mu}^p$, and moreover, in view of \eqref{HS1}, the above identity gives
\begin{align*}
    [\hat{u}]_{\mu}^{p} \ge S_{\mu,\al} [\hat{u}]_{\mu}^{\frac{p}{p^*_s(\al)}} \Longrightarrow \left( [\hat{u}]_{\mu}^p \right)^{\frac{sp-\al}{d-\al}} \ge S_{\mu,\al}.
\end{align*}
Therefore, \eqref{fixed amount decrease} holds.

Since $\sup_n[\tilde u_n]_{\mu}< \infty$, there exists $k\in \N$ such that this process terminates after the $ k$-th step and the last (PS) sequence strongly converges to $0$. Let $\tilde{u}_1$ and $\tilde{u}_2$ be two non-zero weak limits appearing from two different (PS) sequences of distinct levels. Then in the same spirit of \cite{Tintarev} (Page 130, Theorem~3.3) and using \cite[Lemma 2.6]{NS2025}, we get 
\begin{align*}
    \mathcal{A}(C_{r^1_n} \tilde{u}_1,C_{r^2_n} \tilde{u}_2) = \mathcal{A}\left(\tilde{u}_1, C_{\frac{r_n^2}{r_n^1}} \tilde{u}_2 \right) \ra 0,\mbox{ as }n\to\infty
\end{align*}
Hence, in view of Proposition \ref{weak-bub-II}, we get
\begin{align*}
  \bigg|\log\left(\frac{r^1_n}{r_n^2}\right)\bigg| \to\infty,\text{ as }n\to\infty.
\end{align*}
This completes the proof. \qed

\section{Global compactness result: The case $\alpha=0$}\label{se-4}
We next turn to the endpoint case $\alpha=0$. In contrast with the previous section, translations now preserve the critical nonlinear term, and the concentration centre may move independently of the Hardy singularity. This produces two different limiting problems and requires a separate centre-scale analysis. 
For $\al=0$, \eqref{MainEq} becomes
\begin{equation}\tag{$\mathcal{P}_0$}\label{mainalphazero}
    (-\Delta_p)^s u -\mu\dfrac{\abs{u}^{p-2}u}{|x|^{sp}}=|u|^{p^*_s-2}u+f \;\mbox{ in }\,\rd, \quad u \in \wps,\; f \in (\wps)^*.
\end{equation}
The energy functional for \eqref{mainalphazero} is given by
\begin{align} \label{eq:energy}
 I_{\mu,0,f}(u)=\frac{1}{p}[u]_{\mu}^p-\frac{1}{p^*_s}\int_{\rd}\abs{u}^{p^*_s}\dx-\prescript{}{(\wps)^*}{\langle}f,u{\rangle}_{\wps}, \; \text{for all }\, u \in \wps.
\end{align}
In particular, from \eqref{eq:energy}, we denote $I_{\mu,0,0}$ and $I_{0,0,0}$, by taking $f=0$, and by taking $f=0,\,\mu=0$ respectively. We recall the notation $\vartheta \coloneqq \frac{d-sp}{p}$, that will be used simultaneously.

We recall the following two best constants:
\begin{align}\label{eq:constants}
\overline{S}_{\mu} \coloneqq \inf_{u \in \wps \setminus \{0\}} \frac{[u]_{\mu}^p}{\displaystyle \left(\int_{\rd}\abs{u}^{p^*_s}\dx \right)^{\frac{p}{p^*_s}}}, \text{ and } \overline{S} \coloneqq \inf_{u \in \wps \setminus \{0\}} \frac{[u]_{s,p}^p}{\displaystyle \left(\int_{\rd}\abs{u}^{p^*_s}\dx \right)^{\frac{p}{p^*_s}}}.
\end{align}
From the equivalence of $[\cdot]_{\mu}$ and $[\cdot]_{s,p}$, we have 
\begin{align}\label{eq:cons-order}
 0<C_{\text{eqiv}}^p\, \overline{S} \le \overline{S}_{\mu}\le \overline{S}.
\end{align}
The minimizers of \eqref{eq:constants} weakly solve the following limiting equations (up to scalar muliplication):
\begin{align}
    &(-\Delta_p)^s v -\mu\dfrac{\abs{v}^{p-2}v}{|x|^{sp}}=|v|^{p^*_s-2}v \;\mbox{ in }\,\rd, \quad v \in \wps (\rd), \tag{$\mathcal{L}_{\mu}$}\label{limit-mu} \\
    &(-\Delta_p)^s w =|w|^{p^*_s-2}w \;\mbox{ in }\,\rd, \quad w \in \wps (\rd). \tag{$\mathcal{L}_{0}$}\label{limit-zero}
    \end{align}
%\souptik{I Think the equation \eqref{limit-mu} it is for all $u\in \wps (\rd\setminus \{0\})$ and the equation \eqref{limit-zero} is on whole $\wps$.}

\begin{proposition}
Let $\mu \in (0,\mu_{d,s,p})$. Then $\overline{S}_{\mu}<\overline{S}$. Further, there exists $V \in \wps \setminus \{0\}$ which weakly solves \eqref{limit-mu} and satisfies
     \begin{equation}\label{eq:attained}
          \frac{[V]_{\mu}^p}{\displaystyle \left( \int_{\rd}\abs{V}^{p^*_s}\dx\right)^{\frac{p}{p^*_s}}}=\overline{S}_{\mu}, \, \text{ and } \, [V]_{\mu}^p=\int_{\rd}\abs{V}^{p^*_s}\dx=\overline{S}_{\mu}^{\frac{d}{sp}}.
     \end{equation}
\end{proposition}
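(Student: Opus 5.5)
We split the statement into three parts: the strict inequality $\overline{S}_{\mu}<\overline{S}$, the existence of a minimizer $V$ for $\overline{S}_\mu$, and the normalization \eqref{eq:attained}. Attainment is already known from \cite[Theorem 1.1]{Shen24}, which gives a positive, radial, radially decreasing extremal for $\overline{S}_\mu$ when $\al=0$ and $\mu>0$. We would nevertheless keep the strict inequality in view, because it is exactly the ingredient that makes a minimizing sequence compact. Our plan is therefore to prove the strict inequality first and then derive attainment from it, citing \cite{Shen24} as a shortcut.

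\textbf{Strict inequality.} Take a minimizer $U\ge 0$, $U\not\equiv 0$, for $\overline{S}$; it exists by \cite[Theorem 1.1]{Brasco2016}. Since $\mu>0$ and $U\not\equiv0$,
\begin{align*}
\int_{\rd}\frac{\abs{U}^p}{\abs{x}^{sp}}\dx>0 ,
\end{align*}
and this integral is finite by \eqref{HS}. Consequently
\begin{align*}
\overline{S}_\mu\le \frac{[U]_{s,p}^p-\mu\int_{\rd}\abs{U}^p\abs{x}^{-sp}\dx}{\norm{U}_{L^{p^*_s}}^p}<\frac{[U]_{s,p}^p}{\norm{U}_{L^{p^*_s}}^p}=\overline{S}.
\end{align*}
Together with \eqref{eq:cons-order}, this also gives $\overline{S}_\mu>0$.

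\textbf{Attainment.} Let $\{u_n\}$ be a minimizing sequence normalized so that $\norm{u_n}_{L^{p^*_s}}=1$. We may assume $u_n\ge0$, since replacing $u_n$ by $\abs{u_n}$ does not increase $[u_n]_{s,p}$. We may also assume each $u_n$ is radially symmetric decreasing, by the fractional P\'olya--Szeg\H{o} inequality together with the rearrangement inequality for $\int\abs{u}^p\abs{x}^{-sp}$. Using the dilation invariance $C_\la$, we further rescale so that $\int_{B(0,1)}\abs{u_n}^{p^*_s}=1/2$. Because the functions are radial about the origin, translations play no role.

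By the concentration–compactness principle for $\wps$, adapted to the Hardy term, there are three possible sources of loss of compactness, and the energy splits accordingly. A bubble concentrating at a point $x_0\neq 0$ (or escaping to infinity) sees no Hardy term and therefore costs at least $\overline{S}$. A bubble concentrating at the origin, or a nonzero weak limit $u$, costs at least $\overline{S}_\mu$. Strict subadditivity of $t\mapsto t^{p/p^*_s}$ rules out splitting the mass between several pieces. The strict inequality $\overline{S}_\mu<\overline{S}$ excludes concentration away from the origin, and the normalization at scale $1$ excludes both concentration at $0$ and vanishing. Hence $u_n\to V$ strongly in $L^{p^*_s}$, and weak lower semicontinuity of $[\cdot]_\mu$ shows that $V$ is a minimizer. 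Making the splitting rigorous is the main obstacle. For this step we expect to rely on \cite{Shen24} rather than redo the argument.

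\textbf{Euler–Lagrange equation and normalization.} A minimizer $W$ satisfies the Lagrange multiplier identity
\begin{align*}
\mathcal{A}(W,\phi)-\mu\int_{\rd}\frac{\abs{W}^{p-2}W\phi}{\abs{x}^{sp}}\dx=\la\int_{\rd}\abs{W}^{p^*_s-2}W\phi\dx \quad\text{for all } \phi\in\wps ,
\end{align*}
with $\la=[W]_\mu^p/\norm{W}_{p^*_s}^{p^*_s}$. Set $V=tW$ with $t^{p^*_s-p}=\la$; then $V$ weakly solves \eqref{limit-mu}. Testing the equation with $V$ gives $[V]_\mu^p=\int_{\rd}\abs{V}^{p^*_s}\dx=:A$. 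The minimality of $W$ is preserved under scaling, so $A/A^{p/p^*_s}=\overline{S}_\mu$. Since $1-p/p^*_s=sp/d$, this yields $A=\overline{S}_\mu^{d/(sp)}$, which is \eqref{eq:attained}.
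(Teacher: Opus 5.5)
Your argument is correct and is essentially the paper's: the strict inequality comes from testing $\overline{S}_\mu$ with a Sobolev extremal $U$ whose Hardy integral is finite and positive, and attainment is taken from \cite{Shen24}; your Lagrange-multiplier rescaling $V=tW$ with $t^{p^*_s-p}=\la$ simply spells out the normalization that the paper leaves inside that citation. One minor slip in your heuristic compactness sketch, which is not load-bearing because you defer to \cite{Shen24}: mass escaping to infinity through dilations $C_{\la_n}$ with $\la_n\to\infty$ keeps its Hardy integral, so it costs $\overline{S}_\mu$ rather than $\overline{S}$, and it is ruled out by your scale-one normalization together with strict subadditivity, not by $\overline{S}_\mu<\overline{S}$.
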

\begin{proof} 
From \cite{Brasco2016}, $\overline{S}$ is attained by some $U \in \wps \setminus \{0\}$. By \eqref{HS}, $\int_{\rd}\frac{\abs{U}^p}{\abs{x}^{sp}}\dx<\infty$, and it is positive as $U \ne 0$. Since $\mu>0$ we therefore have $[U]_{\mu}<[U]_{s,p}$, and so
\begin{align*}
    \overline{S}_{\mu}\le \frac{[U]_{\mu}^p}{\left( \int_{\rd}\abs{U}^{p^*_s}\dx\right)^{\frac{p}{p^*_s}}}<\frac{[U]_{s,p}^p}{\left( \int_{\rd}\abs{U}^{p^*_s}\dx\right)^{\frac{p}{p^*_s}}}=\overline{S}.
\end{align*}
The rest of the proof follows from \cite[Theorem 1.1]{Shen24}.
\end{proof}
\subsection{Behaviour of bounded sequences under translations and dilations} We begin by studying the joint action of translations and dilations. The aim is to identify when a translated and rescaled profile vanishes weakly and to determine how the Hardy term behaves when the concentration centre moves away from the origin.

Let $\mathcal{T} \subset \mathcal{U}(\wps)$ be the following classes of isometric operators induced by the composition of translations and dilations:
\begin{align*}
   \mathcal{T}\coloneqq \left\{C_{y,\lambda} \in \mathcal{U}(\wps): C_{y,\lambda}u(x)\coloneqq \lambda^{-\vartheta}u \left(\frac{x-y}{\lambda}\right), \; \forall \, u\in\wps;\,y\in\rd;\,\lambda \in (0, \infty)\right\}. 
\end{align*}
A change of variable shows that
\begin{equation}\label{eq:invariance}
[C_{y,\la}u]_{s,p}=[u]_{s,p}, \quad \int_{\rd}\abs{C_{y,\la}u}^{p^*_s}\dx = \int_{\rd}\abs{u}^{p^*_s}\dx, \; \forall\,y\in\rd;\,\lambda \in (0, \infty).
\end{equation}
Further, when $y=0$,
\begin{equation}\label{eq:hardy-scale}
    \int_{\rd}\frac{\abs{C_{0,\la}u}^p}{\abs{x}^{sp}}\dx = \int_{\rd}\frac{\abs{u}^p}{\abs{x}^{sp}}\dx, \; \forall\, \lambda \in (0, \infty).
\end{equation}
\begin{remark}\label{change-of-variable-I}
 Let $w \in \wps$. For $R_n>0$ and $x_n \in \rd$, we consider
\begin{gather*}
     g_n:=C_{x_n,R_n}w, \text{ and } \sigma_n:=\frac{x_n}{R_n}.
\end{gather*} 
For every $\tilde{\al} \in [0,sp]$, the change of variables $x=R_n z+x_n$ gives
\begin{equation}\label{eq:two-integrals}
    \int_{\rd}\frac{\abs{g_n(x)}^{p^*_s(\tilde{\al})}}{\abs{x}^{\tilde{\al}}}\dx = \int_{\rd}\frac{\abs{w(z)}^{p^*_s(\tilde{\al})}}{\abs{z+\sigma_n}^{\tilde{\al}}}\dz,
\end{equation}
where all the powers of $R_n$ cancel due to the fact that $ -\vartheta p^*_s(\tilde{\al}) + d - \tilde{\alpha} = 0.$
Suppose $\abs{\sigma_n}\ra \infty$, i.e., the point of concentration runs away from the origin. We consider $w \in \cc(\rd)$ with $\supp(w) \subset B(0,R)$. Choose $n$ large enough so that $\abs{\sigma_n}\ge 2R$, which gives
\begin{align*}
    |z+\sigma_n| = |z-(-\sigma_n)| \ge |\sigma_n|-|z| \ge |\sigma_n|-R \ge \frac{|\sigma_n|}{2}. 
\end{align*}
Therefore,
\begin{equation}\label{eq:1}
 \int_{\rd}\frac{\abs{g_n(x)}^{p^*_s(\tilde{\al})}}{\abs{x}^{\tilde{\al}}}\dx \le \frac{2^{\tilde{\al}}}{\abs{\sigma_n}^{\tilde{\al}}} \int_{\rd}\abs{w}^{p^*_s(\tilde{\al})}\dz.
\end{equation}
If $\tilde{\al}>0$, then the R.H.S of \eqref{eq:1} is $o_n(1)$. %On the other hand, when $\tilde{\al}=0$, the critical term does not move and as a consequence, a (PS) sequence corresponding to $I_{\mu,0,f}$ can lose compactness in two different ways.
On the other hand, when $\tilde \al=0$, the $L^{p_s^*}$ mass of $g_n$ remain only bounded. Thus, mass escaping to infinity is still a possibility in this case, and any (PS) sequence corresponding to $I_{\mu,0,f}$ may possess two different concentration profiles.
\end{remark}

\begin{remark}
A direct computation gives 
\begin{gather}\label{eq:composition}
 C_{y_1,\la_1}\circ C_{y_2,\la_2}=C_{y_1+\la_1y_2,\la_1\la_2}, \text{ and } \left( C_{y,\la}\right)^{-1}=C_{-\frac{y}{\la},\frac{1}{\la}}.
\end{gather}
Further, by \cite[Lemma 2.6]{NS2025},
\begin{gather}\label{eq:A-behviour}
\mathcal{A}\left(C_{y_1,\la_1}u,C_{y_2,\la_2}v\right)=\mathcal{A}\left(u,\,C_{\frac{y_2-y_1}{\la_1},\frac{\la_2}{\la_1}}v\right).
\end{gather}
The same identity holds for $\mathcal{A}_1$.   
\end{remark}

 \begin{lemma}\label{lem:1}
Let $\{v_n\} \subset \wps$ be bounded. Then the following hold:
\begin{enumerate}
    \item[\rm (a)] If $\int_{\rd}v_n \psi \dx \ra \int_{\rd}v \psi \dx$ for every $\psi \in \cc(\rd)$ and some $v \in \wps$, then $v_n \rightharpoonup v$ in $\wps$.
    \item[\rm (b)] If $v_n \rightharpoonup v$ in $\wps$, $\tau_n \ra \tau$ in $(0,\infty)$ and $\xi_n \ra \xi$ in $\rd$, then $C_{\xi_n,\tau_n}v_n \rightharpoonup C_{\xi,\tau}v$ in $\wps$.
\end{enumerate}
\end{lemma}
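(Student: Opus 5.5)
For part (a) the plan is the standard ``bounded plus identification of limit'' argument. Since $\wps$ is reflexive and $\{v_n\}$ is bounded, every subsequence of $\{v_n\}$ has a further subsequence converging weakly to some $w\in\wps$. The continuous embedding $\wps\hookrightarrow L^{p^*_s}(\rd)$ then gives weak convergence in $L^{p^*_s}(\rd)$. Testing against $\psi\in\cc(\rd)\subset L^{(p^*_s)'}(\rd)$ yields $\int_{\rd} w\psi\dx=\lim\int_{\rd} v_n\psi\dx=\int_{\rd} v\psi\dx$. Hence $w=v$ a.e. by the fundamental lemma of the calculus of variations. Since the limit is independent of the subsequence, the whole sequence satisfies $v_n\rightharpoonup v$ in $\wps$.

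For part (b), I would reduce to (a). By \eqref{eq:invariance}, $[C_{\xi_n,\tau_n}v_n]_{s,p}=[v_n]_{s,p}$, so $\{C_{\xi_n,\tau_n}v_n\}$ is bounded, and it suffices to show
\[
\int_{\rd} C_{\xi_n,\tau_n}v_n\,\psi\dx\ra\int_{\rd} C_{\xi,\tau}v\,\psi\dx, \qquad \forall\,\psi\in\cc(\rd).
\]
The change of variables $x=\tau_n z+\xi_n$ turns the left side into $\int_{\rd} v_n(z)\,\psi_n(z)\dz$, where $\psi_n(z):=\tau_n^{d-\vartheta}\psi(\tau_n z+\xi_n)$. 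The same change with $(\xi,\tau)$ gives the right side as $\int_{\rd} v\,\psi_\infty\dz$, where $\psi_\infty(z)=\tau^{d-\vartheta}\psi(\tau z+\xi)$. We then split
\[
\int_{\rd} v_n\psi_n-\int_{\rd} v\psi_\infty=\int_{\rd} v_n(\psi_n-\psi_\infty)+\int_{\rd}(v_n-v)\psi_\infty .
\]
The second term is $o_n(1)$ because $v_n\rightharpoonup v$ in $L^{p^*_s}$ and $\psi_\infty\in L^{(p^*_s)'}$. For the first term we use H\"older's inequality together with the bound $\sup_n\norm{v_n}_{L^{p^*_s}}<\infty$, which follows from the Sobolev inequality. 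It then remains to show $\psi_n\ra\psi_\infty$ in $L^{(p^*_s)'}(\rd)$.

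The main (though mild) obstacle is this last convergence. Since $\tau_n\ra\tau>0$ and $\xi_n\ra\xi$, for large $n$ we have $\tau_n\ge\tau/2$ and $|\xi_n|\le|\xi|+1$. If $\supp\psi\subset B(0,R)$, then all the $\psi_n$ and $\psi_\infty$ are supported in a common ball $B(0,R')$ with $R'=2(R+|\xi|+1)/\tau$. They are also uniformly bounded by $C\norm{\psi}_{L^\infty(\rd)}$. By uniform continuity of $\psi$, $\psi_n\ra\psi_\infty$ uniformly. Uniform convergence on a fixed bounded set gives convergence in $L^{(p^*_s)'}$ (alternatively, use dominated convergence). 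Combining the two terms, part (a) yields $C_{\xi_n,\tau_n}v_n\rightharpoonup C_{\xi,\tau}v$ in $\wps$.
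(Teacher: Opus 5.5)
Your proposal is correct and follows essentially the same route as the paper. In (a) you use the same subsequence-plus-identification argument, testing against $C_c^\infty(\mathbb{R}^d)$ functions through the embedding into $L^{p^*_s}$. In (b) you use the change of variables $x=\tau_n z+\xi_n$ with uniformly bounded supports to get $\psi(\tau_n\cdot+\xi_n)\to\psi(\tau\cdot+\xi)$ in $L^{(p^*_s)'}(\mathbb{R}^d)$, pair this with $v_n\rightharpoonup v$ in $L^{p^*_s}$, and conclude via (a); the only cosmetic differences are that you absorb the factor $\tau_n^{d-\vartheta}$ into $\psi_n$ and use uniform continuity rather than dominated convergence.
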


\begin{proof}
(a) Since, $\sup_{n}[v_n]_{s,p}\le M<+\infty$, every subsequence of $\{v_n\}$ possesses a further subsequence converging weakly to some element of $\wps$. First, we show that every weak subsequential limit is $v$. Suppose for some subsequence $\{v_{n_{k}}\}$ of $\{v_n\}$, we have $v_{n_k}\rightharpoonup w$ weakly in $\wps$. Observe that, every $g\in L^{(p_s^*)'}(\rd)$ induces an element $l_g\in (\wps(\rd))^*$ defined by 
    $\prescript{}{(\wps)^*}{\langle} l_g, f\rangle_{\wps}\coloneqq \int_{\rd} fg\,\dx$, for every $f\in\wps$.
    Then, for $l\in (\wps(\rd))^*$, by weak convergence, $\prescript{}{(\wps)^*}{\langle} l, v_{n_{k}}\rangle_{\wps}\ra \prescript{}{(\wps)^*}{\langle} l, w\rangle_{\wps}$ as $k\to \infty$. Now, taking $g=\psi \in\cc (\rd)\subset L^{(p_s^*)'}(\rd)$ and using the hypothesis we get, $\int_{\rd} (v-w)\psi\dx =0$ for every $\psi \in \cc(\rd)$, which further implies $v=w$ a.e. on $\rd$.
    \noindent Now fix $l\in (\wps(\rd))^*$ and set $a_n\coloneqq \prescript{}{(\wps)^*}{\langle} l, v_n\rangle_{\wps}$. Then $|a_n| \leq M\|l\|_{(\wps)^*}$. If possible, $\prescript{}{(\wps)^*}{\langle} l, v_n\rangle_{\wps} \not\to \prescript{}{(\wps)^*}{\langle} l, v\rangle_{\wps}$. Then, there exists $\var >0$ and a subsequence $\{a_{n_i}\}$ of $\{a_n\}$ with 
\begin{align}\label{wc-1}
    \left| a_{n_i}- \prescript{}{(\wps)^*}{\langle} l, v\rangle_{\wps}\right| \geq \var, \text{ for every }i\in\mathbb{N}.
\end{align}
    The corresponding subsequence $\{v_{n_i}\}$ is still bounded in $\wps$ and hence by previous argument $\{v_{n_i}\}$ has a further subsequence $\left\{v_{n_{i_j}}\right\}$ such that as $j\to\infty$, $v_{n_{i_j}} \rightharpoonup v$ weakly in $\wps$ which enforces $\prescript{}{(\wps)^*}{\langle} l, v_{n_{i_j}}\rangle_{\wps} \to \prescript{}{(\wps)^*}{\langle} l, v\rangle_{\wps}$ contradicting \eqref{wc-1}. This shows $\prescript{}{(\wps)^*}{\langle} l, v_{n}\rangle_{\wps}\to \prescript{}{(\wps)^*}{\langle} l, v\rangle_{\wps}$ for every $l\in (\wps(\rd))^*$ completing the proof.

%Every $g \in L^{(p^*_s)'}(\rd)$ induces an element of $(\wps)^*$, due to the dense embedding $\wps \embd L^{p^*_s}(\rd)$. So, up to a subsequence, if $v_n \rightharpoonup \tilde{v}$ in $\wps$, then $\int_{\rd} v_n g \ra \int_{\rd} \tilde{v}g$ for every $g \in L^{(p^*_s)'}(\rd)$. Taking $g=\psi \in \cc(\rd)$ and using the hypothesis, $\int_{\rd} (\tilde{v}-v)\psi=0$ for all $\psi \in \cc(\rd)$, which implies $\tilde{v}=v$ a.e. in $\rd$. Since $\wps$ is reflexive and $\{v_n\}$ is bounded, every subsequence of $\{v_n\}$ has again a subsequence converging weakly in $\wps$, and the weak limit is always $v$. Hence, the whole sequence converges weakly to $v$.

(b) By \eqref{eq:invariance}, $\left\{ C_{\tau_n,\xi_n}v_n\right\}$ is bounded in $\wps$. We fix $\psi \in \cc(\rd)$ with $\supp \psi \subset B(0,R)$. From the change of variables $x=\tau_n z+\xi_n$, we get 
\begin{equation*}
     \int_{\rd}(C_{\xi_n,\tau_n}v_n)(x)\psi(x) \dx = \tau_n^{d-\vartheta}\int_{\rd}v_n(z)\,\psi_n(z)\dz, \quad \psi_n(\cdot):=\psi \left( \tau_n \cdot+\xi_n\right).
\end{equation*}
Observe that $\norm{\psi_n}_{L^{\infty}(\rd)} \le \norm{\psi}_{L^{\infty}(\rd)}$ and  $\psi_n(x) \ra \psi(\tau x+\xi)$ for every $x \in \rd$. Since $\tau_n \ra \tau>0$ and $\{\xi_n\}$ is bounded, we note that, for every large $n \in \N$, $\psi_n(z)\ne 0$ implies $\abs{z}\le 2\left( R+\sup_n \abs{\xi_n}\right)/\tau$. By dominated convergence, $\psi_n \ra \psi(\tau \cdot+\xi)$ in $L^{(p^*_s)'}(\rd)$.
Since $v_n \rightharpoonup v$ in $L^{p^*_s}(\rd)$, and $\tau_n^{d-\vartheta}\ra \tau^{d-\vartheta}$, we get
\begin{equation*}
    \int_{\rd}(C_{\xi_n,\tau_n}v_n)(x)\psi(x) \dx \ra \tau^{d-\vartheta}\int_{\rd}v(z)\psi(\tau z+\xi)\dz=\int_{\rd}(C_{\xi,\tau}v)(x)\psi(x) \dx, \; \forall \, \psi \in \cc(\rd),
\end{equation*}
where the identity follows from a change of variables. Now, we apply (a), to conclude $C_{\xi_n,\tau_n}v_n \rightharpoonup C_{\xi,\tau}v$ in $\wps$.
The case $\tau_n=\tau=1$ gives us $v_n(\cdot-\xi_n)\rightharpoonup v(\cdot-\xi)$, since $(C_{1,\xi}u)(\cdot) = u(\cdot-\xi)$.
\end{proof}

\begin{lemma}\label{strong-convergence}
If $\la_n \ra \la$ in $(0,\infty)$ and $y_n \ra y$ in $\rd$, then $C_{y_n,\la_n}u \ra C_{y,\la}u$ in $\wps$ for every $u \in \wps$.
\end{lemma}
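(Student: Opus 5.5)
We follow the proof of Proposition \ref{Cylambda} closely, replacing pure dilations by the combined translation–dilation operators $C_{y,\la}$. The plan is to prove the claim first for $u \in \cc(\rd)$ and then extend it to all of $\wps$ by density, using that each $C_{y,\la}$ is a linear isometry of $\wps$ by \eqref{eq:invariance}.

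\textbf{Smooth case.} Fix $u \in \cc(\rd)$ with $\supp(u) \subset B(0,R)$. Since $\la_n \ra \la>0$ and $y_n \ra y$, for every $x \in \rd$ we have
\[
\la_n^{-\vartheta} u\!\left(\tfrac{x-y_n}{\la_n}\right) \ra \la^{-\vartheta} u\!\left(\tfrac{x-y}{\la}\right),
\]
by continuity of $u$. So $C_{y_n,\la_n}u \ra C_{y,\la}u$ pointwise everywhere on $\rd$. The sequence is bounded in $\wps$, because $[C_{y_n,\la_n}u]_{s,p} = [u]_{s,p}$.

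\textbf{Weak convergence.} By Lemma \ref{lem:1}-(b), applied with the constant sequence $v_n = u$, we get $C_{y_n,\la_n}u \rightharpoonup C_{y,\la}u$ in $\wps$. Alternatively, the pointwise limit together with boundedness identifies the weak limit directly.

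\textbf{Upgrading to strong convergence.} Lemma \ref{convergence-BL}-(i) gives
\[
[C_{y_n,\la_n}u - C_{y,\la}u]_{s,p}^p = [C_{y_n,\la_n}u]_{s,p}^p - [C_{y,\la}u]_{s,p}^p + o_n(1) = [u]_{s,p}^p - [u]_{s,p}^p + o_n(1) = o_n(1).
\]
This yields strong convergence for smooth compactly supported $u$. Alternatively, one can use that $\wps$ is uniformly convex, since it embeds isometrically into $L^p(\R^{2d})$. Then weak convergence plus convergence of norms forces strong convergence.

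\textbf{Density step.} Let $u \in \wps$ and $\var>0$. Pick $v \in \cc(\rd)$ with $[u-v]_{s,p} < \var/3$. By linearity and the isometry property,
\[
[C_{y_n,\la_n}u - C_{y,\la}u]_{s,p} \le [C_{y_n,\la_n}(u-v)]_{s,p} + [C_{y_n,\la_n}v - C_{y,\la}v]_{s,p} + [C_{y,\la}(v-u)]_{s,p}.
\]
The first and third terms each equal $[u-v]_{s,p} < \var/3$. The middle term is below $\var/3$ for $n$ large by the smooth case. Hence the left-hand side is below $\var$ for large $n$.

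The only point needing care is the norm-convergence mechanism in the smooth case, namely the Brézis–Lieb splitting or uniform convexity. Both are already available from the earlier results, so I expect no real obstacle.
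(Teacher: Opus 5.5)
Your proposal is correct and takes essentially the paper's approach: for $u\in\cc(\rd)$ you use pointwise convergence, the isometry \eqref{eq:invariance}, and the Br\'ezis--Lieb splitting of Lemma \ref{convergence-BL}-(i), and then you extend to all of $\wps$ with an $\varepsilon/3$ density argument. The one small difference is that you explicitly secure the weak-convergence hypothesis through Lemma \ref{lem:1}-(b), a step the paper leaves implicit.
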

\begin{proof}
Let $u \in \cc(\rd)$. Then $C_{y_n,\la_n}u(x)\ra C_{y,\la}u(x)$ for every $x \in \rd$, and from \eqref{eq:invariance}, $[C_{y_n,\la_n}u]_{s,p}=[u]_{s,p}=[C_{y,\la}u]_{s,p}$. Hence using Lemma \ref{convergence-BL}-(i), $[C_{y_n,\la_n}u-C_{y,\la}u]_{s,p}^p=[C_{y_n,\la_n}u]_{s,p}^p-[C_{y,\la}u]_{s,p}^p+o_n(1)=o_n(1)$. Now, for $u \in \wps$ and $\var>0$ we choose $\phi \in \cc(\rd)$ with $[u-\phi]_{s,p}<\var/3$. Using \eqref{eq:invariance}, we have
\begin{gather*}
 [C_{y_n,\la_n}u-C_{y,\la}u]_{s,p}\le [C_{y_n,\la_n}(u-\phi)]_{s,p}+[C_{y_n,\la_n}\phi-C_{y,\la}\phi]_{s,p}+[C_{y,\la}(\phi-u)]_{s,p},
\end{gather*}
and the right-hand side is smaller than $\var$ for $n$ large.
\end{proof}
We define \begin{align}\label{eq:Lambda}
  \La_n:=\abs{\log \la_n}+\frac{\abs{y_n}}{\la_n}.
\end{align}
Notice that $\La_n \ra \infty$ if and only if $\abs{\log \la_n}+\abs{y_n}\ra \infty$: if $\abs{\log \la_n}\le M$ then $e^{-M} \le \la_n \le e^M$, so $\abs{y_n}$ and $\abs{y_n}/\la_n$ are bounded simultaneously. In \cite[Lemma~3]{Palatucci-Pisante-CVPDE}, for $p=2$, the authors also established that if $\left| \log(\lambda_n) \right| + |y_n| \to \infty$, then $C_{y_n, \la_n } u \rightharpoonup 0$ in $\mathcal{D}^{s,2}$ for every $u \in \mathcal{D}^{s,2}$. Their proof mainly uses stronger density results for $\mathcal{D}^{s,2}$, namely the density of $A\coloneqq \left\{ f \in \mathcal{S}(\rd): \mathcal{F}(f) \in \C_{c}^{\infty}(\rd \setminus \{ 0\}) \right\}$ in $\mathcal{D}^{s,2}$ (note that $\cc(\rd)$ is not contained in $A$). It follows that, if $u\in A$ then one has $(-\Delta)^su \in \mathcal{S}(\rd)$ (using the Fourier representation of $(-\Delta)^s$). However, for $p \neq 2$, we do not have a similar density result in $\wps$ and the Fourier representation of $(-\Delta_p)^s$.

The following proposition measures the non-compactness of $\wps \hookrightarrow L^{p^*_s}(\rd)$ under the conformal group action of dilations and translations.

\begin{proposition}\label{lem:2}
Let $\{\la_n\}\subset (0,\infty)$, $\{y_n\}\subset \rd$ and let $\La_n$ be as in (\ref{eq:Lambda}). Then the following are equivalent:
\begin{enumerate}
    \item[\rm (i)] $\La_n \ra \infty$.
    \item[\rm (ii)] $C_{y_n,\la_n}v \rightharpoonup 0$ in $\wps$ for every $v \in \wps$.
    \item[\rm (iii)] $\mathcal{A}_1(u,C_{y_n,\la_n}v)\ra 0$ for all $u,v \in \wps$.
\end{enumerate}
Moreover, {\rm (i)} also implies $\mathcal{A}_1(C_{y_n,\la_n}v,u)\ra 0$, and therefore
\begin{gather}\label{eq:A-vanish}
 \mathcal{A}(u,C_{y_n,\la_n}v)\ra 0 \; \text{and}\quad \mathcal{A}(C_{y_n,\la_n}v,u)\ra 0, \; \forall\, u,v \in \wps.
\end{gather}
\end{proposition}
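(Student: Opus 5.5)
I will prove the cycle (i)$\Rightarrow$(ii)$\Rightarrow$(i) and (i)$\Rightarrow$(iii)$\Rightarrow$(i), and then deduce the ``moreover'' part from the symmetric identity \eqref{eq:A-behviour}.

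\emph{(i)$\Rightarrow$(ii).} By \eqref{eq:invariance} the sequence $\{C_{y_n,\la_n}v\}$ is bounded. By Lemma~\ref{lem:1}-(a) it therefore suffices to show $\int_{\rd}C_{y_n,\la_n}v\,\psi\dx\ra0$ for every $\psi\in\cc(\rd)$. Boundedness and density of $\cc(\rd)$ in $L^{p^*_s}$ let us also take $v\in\cc(\rd)$, with $\supp v,\supp\psi\subset B(0,R)$. Pass to a subsequence and split into three cases.
\begin{enumerate}
\item[(1)] If $\la_n\ra\infty$, then $\norm{C_{y_n,\la_n}v}_\infty\le\la_n^{-\vartheta}\norm{v}_\infty\ra0$, and the integral tends to $0$.
\item[(2)] If $\la_n\ra0$, H\"older on $B(0,R)$ gives the bound $\norm{\psi}_{L^{(p^*_s)'}}\,\norm{v}_{L^{p^*_s}}$. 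A sharper bound is $\norm{C_{y_n,\la_n}v}_{L^1}=\la_n^{d-\vartheta}\norm{v}_{L^1}\ra0$, since $d-\vartheta>0$, so the integral tends to $0$.
\item[(3)] If $\la_n$ stays in a compact subset of $(0,\infty)$, then $\abs{y_n}\ra\infty$ and the supports of $C_{y_n,\la_n}v$ and $\psi$ are eventually disjoint.
\end{enumerate}
Since every subsequence has a further subsequence along which the integral tends to $0$, the whole sequence converges.

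\emph{(ii)$\Rightarrow$(i).} Argue by contradiction. If $\La_n\not\ra\infty$, some subsequence has $\la_n\ra\la\in(0,\infty)$ and $y_n\ra y$. Lemma~\ref{strong-convergence} then gives $C_{y_n,\la_n}v\ra C_{y,\la}v\neq0$ strongly for $v\neq0$, contradicting (ii).

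\emph{(iii)$\Rightarrow$(i).} Same contradiction argument: with $u=C_{y,\la}v$ and strong convergence, $\mathcal{A}_1(u,C_{y_n,\la_n}v)\ra\mathcal{A}_1(u,u)=[v]_{s,p}^p>0$.

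\emph{(i)$\Rightarrow$(iii).} This is the main step, because $\mathcal{A}_1$ involves absolute values and is not a weakly continuous pairing. I would proceed as in Proposition~\ref{weak-bub-I}.
\begin{itemize}
\item \emph{Reduction.} $\mathcal{A}_1(u,\cdot)$ is Lipschitz with constant $[u]_{s,p}^{p-1}$ by H\"older, and $\mathcal{A}_1(\cdot,w)$ is continuous, uniformly for bounded $w$, by the elementary estimate on $\abs{a}^{p-1}-\abs{b}^{p-1}$. So it suffices to take $u,v\in\cc(\rd)$ supported in $B(0,R)$.
\item \emph{Case $\la_n\ra0$.} Change variables so that $v$ is fixed. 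H\"older as in \eqref{conv-1} reduces $\mathcal{A}_1$ to $\bigl(\int_{B(y_n,\la_nR)}\int_{\rd}\abs{Du}^p\abs{x-y}^{-d-sp}\bigr)^{1/p'}$. This tends to $0$ by absolute continuity of the integral, since $\abs{B(y_n,\la_nR)}\ra0$, uniformly in the centre.
\item \emph{Case $\la_n\ra\infty$.} The roles swap via \eqref{eq:A-behviour}: $\mathcal{A}_1(u,C_{y_n,\la_n}v)=\mathcal{A}_1(C_{y_n,\la_n}^{-1}u,v)$ up to the appropriate scaling factors, with dilation $\la_n^{-1}\ra0$. Note that $\mathcal{A}_1$ is not symmetric in its two slots. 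Here I instead apply H\"older with $\abs{Dv}$ to the power $p$ integrated over the shrinking set $B(y_n,\la_nR)$ in the $v$-variable, after rescaling so that $u$ is fixed.
\item \emph{Case $\la_n$ bounded above and below, $\abs{y_n}\ra\infty$.} Split $\rd\times\rd$. On pairs where one of the points lies in $\supp u\cup\supp(C_{y_n,\la_n}v)$, the two supports separate, so the kernel $\abs{x-y}^{-d-sp}$ is of size $\abs{y_n}^{-d-sp}$ on cross terms. The remaining regions are controlled by tails $\int_{\abs{x}>\abs{y_n}/2}\int\abs{Du}^p\abs{x-y}^{-d-sp}\ra0$, again by H\"older.
\end{itemize}

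\emph{Moreover part.} The statement $\mathcal{A}_1(C_{y_n,\la_n}v,u)\ra0$ follows from the same three-case computation with the roles of $u$ and $v$ exchanged, or via \eqref{eq:A-behviour}, since $(C_{y_n,\la_n})^{-1}=C_{-y_n/\la_n,1/\la_n}$ has $\La_n$-type quantity $\abs{\log\la_n}+\abs{y_n}/\la_n\ra\infty$. Finally, \eqref{eq:A-vanish} follows from $\abs{\mathcal{A}}\le\mathcal{A}_1$.
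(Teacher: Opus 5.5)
Your proposal is correct and is essentially the paper's proof. It uses the same cycle of implications, the same three-case split according to the limit of $\lambda_n$, and for (i)$\Rightarrow$(iii) the same ``localize to the support of one function, then H\"older'' estimate. There are two minor variations. In the bounded case, your direct bound $|x-y|^{-d-sp}\lesssim |y_n|^{-d-sp}$ works because for large $n$ the integrand lives only on $\supp u\times\supp C_{y_n,\lambda_n}v$ and its reflection, so no tail terms are needed; the paper instead uses tightness of $\nu_u$. You also obtain the ``moreover'' part from the inverse map, where the paper swaps the H\"older exponents.

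Two slips to correct. When $\lambda_n\to\infty$, the ball $B(y_n,\lambda_n R)$ expands; the shrinking set on which $\int\!\!\int|Dv|^p|x-y|^{-d-sp}$ must be taken is $\supp u=B(0,R)$ seen in $v$-coordinates, namely $B(-y_n/\lambda_n,R/\lambda_n)$. Likewise, the inverse map $C_{-y_n/\lambda_n,1/\lambda_n}$ has $\Lambda$-type quantity $|\log\lambda_n|+|y_n|$, not $|\log\lambda_n|+|y_n|/\lambda_n$; it still diverges by the remark following \eqref{eq:Lambda}.
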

\begin{proof}
    We use the elementary inequality $\abs{a}^{p-1}\le c_p\left( \abs{a-b}^{p-1}+\abs{b}^{p-1}\right)$ for $a,b \in \R$, together with H\"{o}lder's inequality with the pair $(\p,p)$, which gives us,
    \begin{equation}\label{eq:A1-bound}
        \mathcal{A}_1(u,v)\le [u]_{s,p}^{p-1}[v]_{s,p}, \text{ and } \abs{\mathcal{A}(u,v)}\le \mathcal{A}_1(u,v), \quad \forall\, u,v \in \wps.
    \end{equation}
Now, $\mathcal{A}_1$ is continuous in each of its two arguments, so we assume $u,v \in \cc(\rd)$.

\textbf{(i) $\Longrightarrow$ (ii).} By (\ref{eq:invariance}), $\{C_{y_n,\la_n}v\}$ is bounded in $\wps$. Due to the density of $\cc(\rd)$ in $\wps$, it is enough to show $C_{y_n,\la_n}v \rightharpoonup 0$ in $\wps$ for every $v \in \cc(\rd)$. Further, in view of Lemma \ref{lem:1}-(a), it is required to show $\int_{\rd}C_{y_n,\la_n}v\,\psi \dx \ra 0$, for every $v,\psi \in \cc(\rd)$.  Passing to a subsequence such that $\la_n \ra \la_0 \in [0,\infty]$, we note that $\abs{\log \la_n}+\abs{y_n}\ra \infty$. So we consider the following three cases.

If $\la_0=0$, then the change of variables gives
\begin{equation*}
    \left| \int_{\rd}C_{y_n,\la_n}v\, \psi \dx \right| = \la_n^{d-\vartheta}\left| \int_{\rd}v(z)\psi(\la_nz+y_n)\dz\right| \le \la_n^{d-\vartheta}\norm{\psi}_{L^{\infty}(\rd)}\norm{v}_{L^1(\rd)}\ra 0,
\end{equation*}
  since $d-\vartheta=d-\frac{d}{p}+s>0$. 
  
  If $\la_0=\infty$, then $\norm{C_{y_n,\la_n}v}_{L^{\infty}(\rd)}=\la_n^{-\vartheta}\norm{v}_{L^{\infty}(\rd)}\ra 0$, so the integral tends to $0$. 
  
  If $\la_0 \in (0,\infty)$, then $\abs{y_n}\ra \infty$. Since $\supp (C_{y_n,\la_n}v)\subset B(y_n,\la_n R)$ for a fixed $R$, this support is disjoint from $\supp \psi$ for $n$ large, and the integral is zero.

  \textbf{(ii) $\Longrightarrow$ (i).} Suppose $\La_n \not\ra \infty$, so up to a subsequence $\la_n \ra \la_0 \in (0,\infty)$ and $y_n \ra y_0 \in \rd$. Hence, Lemma \ref{strong-convergence} gives $C_{y_n,\la_n}v \ra C_{y_0,\la_0}v$ in $\wps$, and $C_{y_0,\la_0}v \ne 0$ when  $v \ne 0$. This contradicts (ii).

  \textbf{(i) $\Longrightarrow$ (iii).} Let $u,v \in \cc(\rd)$ with $\supp (u) \subset B(0,R_1)$ and $\supp (v) \subset B(0,R_2)$. We set $D_n \coloneqq B(y_n,\la_nR_2)\supset \supp (C_{y_n,\la_n}v)$. The integrand of $\mathcal{A}_1(u,C_{y_n,\la_n}v)$ is symmetric in $(x,y)$ and vanishes unless $x$ or $y$ lies in $B(0,R_1)$, and also unless $x$ or $y$ lies in $D_n$. Hence, for $E$ equal to $B(0,R_1)$ or to $D_n$, 
  \begin{equation}\label{eq:A1-split}
        \mathcal{A}_1\left(u,C_{y_n,\la_n}v\right)\le 2 \iint_{E \times \rd}\frac{\abs{u(x)-u(y)}^{p-1}\abs{C_{y_n,\la_n}v(x)-C_{y_n,\la_n}v(y)}}{\abs{x-y}^{d+sp}}\dxy.
  \end{equation}
We introduce the two finite Borel measures on $\rd$:
\begin{equation*}
     \nu_u(E):=\iint_{E \times \rd}\frac{\abs{u(x)-u(y)}^p}{\abs{x-y}^{d+sp}}\dxy, \text{ and } \nu_n(E):=\iint_{E \times \rd}\frac{\abs{C_{y_n,\la_n}v(x)-C_{y_n,\la_n}v(y)}^p}{\abs{x-y}^{d+sp}}\dxy,
\end{equation*}
  of total mass $[u]_{s,p}^p$ and $[v]_{s,p}^p$ respectively. Both are absolutely continuous with respect to the Lebesgue measure. A change of variables shows
\begin{equation}\label{eq:scaling}
       \nu_n(E)=\nu_v\left( \frac{E-y_n}{\la_n}\right), \quad \text{for all}\, E \subset \rd \text{ Borel}.
\end{equation}
Applying H\"{o}lder's inequality with the pair $(\p,p)$ in \eqref{eq:A1-split}, we get
\begin{equation}\label{eq:A1-holder}
     \mathcal{A}_1\left(u,C_{y_n,\la_n}v\right)\le 2\,\nu_u(E)^{\frac{1}{\p}}\nu_n(E)^{\frac{1}{p}}, \quad \text{ where } E \in \{B(0,R_1),\,D_n\}.
\end{equation}
We now pass again to a subsequence with $\la_n \ra \la_0 \in [0,\infty]$. If $\la_0=0$, we choose $E=D_n$ in \eqref{eq:A1-holder}, then $\abs{D_n}=\abs{B(0,R_2)}\la_n^d \ra 0$, so $\nu_u(D_n)\ra 0$, while $\nu_n(D_n)\le [v]_{s,p}^p$. If $\la_0=\infty$, choose $E=B(0,R_1)$, by \eqref{eq:scaling}, $\nu_n(B(0,R_1))=\nu_v\left(B(-y_n/\la_n,\,R_1/\la_n)\right)\ra 0$ since the radius $R_1/\la_n$ tends to $0$, while $\nu_u(B(0,R_1))\le [u]_{s,p}^p$. If $\la_0 \in (0,\infty)$, then $\abs{y_n}\ra \infty$. By choosing $E=D_n$, and note that $\abs{D_n}$ stays bounded while $D_n$ escapes every ball, so $\nu_u(D_n)\ra 0$. In the three cases, the right-hand side of \eqref{eq:A1-holder} tends to zero.  Therefore, $\mathcal{A}_1\left(u,C_{y_n,\la_n}v\right) \ra 0$ for every $u,v \in \cc(\rd)$, and using the density of $\cc(\rd)$ in $\wps$,  $\mathcal{A}_1\left(u,C_{y_n,\la_n}v\right) \ra 0$ for every $u,v \in \wps$.

\textbf{(iii) $\Longrightarrow$ (i).} Suppose $\La_n \not\ra \infty$, and we consider $\la_n \ra \la_0 \in (0,\infty)$, $y_n \ra y_0$ up a subsequence. We fix $v \ne 0$ and put $u:=C_{y_0,\la_0}v$. By the continuity of $\mathcal{A}_1$ which follows from \eqref{eq:A1-bound} we get, $\mathcal{A}_1(u,C_{y_n,\la_n}v)\ra \mathcal{A}_1(u,u)=[v]_{s,p}^p>0$, which contradicts~(iii).

Finally, because the integrand of $\mathcal{A}(C_{y_n,\la_n}v,u)$ is again symmetric in $(x,y)$ and shares the support of $\mathcal{A}_1$, applying \eqref{eq:A1-split} and H\"{o}lder's inequality with the pair $(p,\p)$, we get $\mathcal{A}(C_{y_n,\la_n}v,u)\le 2\nu_n(E)^{\frac{1}{\p}}\nu_u(E)^{\frac{1}{p}}$. So \eqref{eq:A-vanish} follows from the second inequality in \eqref{eq:A1-bound}.
\end{proof}

As an application of the above proposition and \eqref{eq:A-behviour}, we have the following result, which extends \cite[Proposition 2.8, 2.11, and Lemma 2.12]{NS2025}.

\begin{lemma}\label{weak-bub-converse-III}
For any sequence $\{(a_n,\delta_n)\},\,\{(y_n,\lambda_n)\}\subset \rd \times (0, \infty)$, we have
\begin{align*}
&\left| \log\left(\frac{\delta_n}{\lambda_n}\right) \right| + \left| \frac{a_n-y_n}{\lambda_n} \right| \to\infty \Longleftrightarrow \mathcal{A}_1(C_{a_n,\delta_n}u,C_{y_n,\lambda_n}v)\to 0, \text{ and } \\
&\left| \log\left(\frac{\delta_n}{\lambda_n}\right) \right| + \left| \frac{a_n-y_n}{\lambda_n} \right| \to\infty \Longleftrightarrow \mathcal{A}(C_{a_n,\delta_n}u,C_{y_n,\lambda_n}v)\to 0,
\end{align*}
as $n \ra \infty$ and for all $u,\,v\in\wps$,  
\end{lemma}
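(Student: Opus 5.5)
The plan is to reduce the statement to Proposition \ref{lem:2} via the composition rule \eqref{eq:A-behviour}. By \eqref{eq:A-behviour} (and its analogue for $\mathcal{A}_1$),
\begin{align*}
\mathcal{A}_1(C_{a_n,\delta_n}u,C_{y_n,\lambda_n}v)=\mathcal{A}_1\left(u,\,C_{\frac{y_n-a_n}{\delta_n},\frac{\lambda_n}{\delta_n}}v\right),
\end{align*}
and the same holds for $\mathcal{A}$. So with $\tilde y_n:=\frac{y_n-a_n}{\delta_n}$ and $\tilde\la_n:=\frac{\lambda_n}{\delta_n}$, Proposition \ref{lem:2} (equivalence (i)$\Leftrightarrow$(iii), together with \eqref{eq:A-vanish}) shows that $\mathcal{A}_1$, and hence $\mathcal{A}$, tends to zero for all $u,v$ if and only if $\tilde\La_n:=\abs{\log\tilde\la_n}+\frac{\abs{\tilde y_n}}{\tilde\la_n}\ra\infty$.

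It remains to compute $\tilde\La_n$. We have $\abs{\log\tilde\la_n}=\abs{\log(\delta_n/\lambda_n)}$ and $\frac{\abs{\tilde y_n}}{\tilde\la_n}=\frac{\abs{y_n-a_n}}{\lambda_n}$. Therefore $\tilde\La_n$ equals exactly the quantity $\left|\log\left(\frac{\delta_n}{\lambda_n}\right)\right|+\left|\frac{a_n-y_n}{\lambda_n}\right|$ in the statement. Combining this identity with the previous paragraph gives the equivalence for $\mathcal{A}_1$, and the forward implication for $\mathcal{A}$ via \eqref{eq:A-vanish}.

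The main obstacle is the reverse implication for $\mathcal{A}$, since $\abs{\mathcal{A}}\le\mathcal{A}_1$ only gives one direction. Suppose the quantity does not diverge. Along a subsequence we then have $\tilde\la_n\ra\la_0\in(0,\infty)$ and $\tilde y_n\ra y_0$. By Lemma \ref{strong-convergence}, $C_{\tilde y_n,\tilde\la_n}v\ra C_{y_0,\la_0}v$ strongly in $\wps$. Choose $v\neq0$ and $u:=C_{y_0,\la_0}v$. Continuity of $\mathcal{A}$ in its second argument (from the H\"older bound \eqref{eq:A1-bound}) then gives $\mathcal{A}(u,C_{\tilde y_n,\tilde\la_n}v)\ra\mathcal{A}(u,u)=[v]_{s,p}^p>0$. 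This is a contradiction, exactly as in the proof of (iii)$\Rightarrow$(i) of Proposition \ref{lem:2}. Since the hypothesis is for all $u,v$, this choice is admissible, and the lemma follows.
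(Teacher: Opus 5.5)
Your proposal is correct and takes the same route as the paper, which obtains this lemma from \eqref{eq:A-behviour} together with Proposition \ref{lem:2}; your check that $\abs{\log\tilde\la_n}+\abs{\tilde y_n}/\tilde\la_n$ equals the quantity in the statement is exactly the reduction needed. You also make explicit the converse for $\mathcal{A}$, which the paper leaves implicit because Proposition \ref{lem:2} only states that converse for $\mathcal{A}_1$: you use Lemma \ref{strong-convergence} and test with $u=C_{y_0,\la_0}v$.
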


For $\sigma \in \rd$, we consider the following functional on $\wps$:
 \begin{gather}\label{eq:shifted-functional}
   \mathcal{I}_{\sigma}(u):=\frac1p [u]_{s,p}^p-\frac{\mu}{p}\int_{\rd}\frac{\abs{u}^p}{\abs{z+\sigma}^{sp}}\dz-\frac{1}{p^*_s}\int_{\rd}\abs{u}^{p^*_s}\dz, \; \forall \, u \in \wps.
 \end{gather}
By Lemma \ref{lem:far}-(a) and \eqref{HS1}, $\mathcal{I}_{\sigma}$ is well defined and of class $\mathcal{C}^1$.

\begin{lemma}\label{lem:far}
    Let $\{\sigma_n\}\subset \rd$. Then the following hold:
\begin{enumerate}
    \item[\rm (a)] For every $\zeta \in \rd$ and every $v \in \wps$,
    \begin{gather}\label{eq:shifted-Hardy}
     \mu_{d,s,p}\int_{\rd}\frac{\abs{v(z)}^p}{\abs{z-\zeta}^{sp}}\dz \le [v]_{s,p}^p.
    \end{gather}
    \item[\rm (b)] If $\abs{\sigma_n}\ra \infty$, then for every $v \in \wps$,
    \begin{gather}\label{eq:far-vanish}
 \lim_{n \ra \infty}\int_{\rd}\frac{\abs{v(z)}^p}{\abs{z+\sigma_n}^{sp}}\dz = 0.
    \end{gather}
    \end{enumerate}
\end{lemma}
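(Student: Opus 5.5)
For part (a), we use translation invariance of the Gagliardo seminorm together with the Hardy inequality \eqref{HS}. Given $v\in\wps$ and $\zeta\in\rd$, set $w(z):=v(z+\zeta)$. A change of variables $(x,y)\mapsto(x+\zeta,y+\zeta)$ gives $[w]_{s,p}=[v]_{s,p}$, and $w\in\wps$. Applying \eqref{HS} to $w$ and substituting $z=x+\zeta$ in the Hardy integral turns $\int_{\rd}\abs{w(x)}^p\abs{x}^{-sp}\dx$ into $\int_{\rd}\abs{v(z)}^p\abs{z-\zeta}^{-sp}\dz$. This is exactly \eqref{eq:shifted-Hardy}. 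As a consequence, the shifted Hardy term is bounded uniformly in $\zeta$ by $\mu_{d,s,p}^{-1}[v]_{s,p}^p$.

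For part (b), the plan is a density argument that uses the uniform bound from (a).

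\textbf{Step 1: compactly supported functions.} Take $\phi\in\cc(\rd)$ with $\supp\phi\subset B(0,R)$. Once $\abs{\sigma_n}\ge 2R$, the same estimate as in Remark \ref{change-of-variable-I} gives $\abs{z+\sigma_n}\ge \abs{\sigma_n}/2$ on the support. Hence
\[
\int_{\rd}\frac{\abs{\phi}^p}{\abs{z+\sigma_n}^{sp}}\dz\le \frac{2^{sp}}{\abs{\sigma_n}^{sp}}\norm{\phi}_{L^p(\rd)}^p\ra 0 .
\]

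\textbf{Step 2: general $v\in\wps$.} Fix $\var>0$ and pick $\phi\in\cc(\rd)$ with $[v-\phi]_{s,p}<\var$. Write $\norm{g}_{n}:=\left(\int_{\rd}\abs{g}^p\abs{z+\sigma_n}^{-sp}\dz\right)^{1/p}$; this is an $L^p$ norm with respect to a weighted measure. By Minkowski's inequality and part (a) with $\zeta=-\sigma_n$,
\[
\norm{v}_n\le \norm{v-\phi}_n+\norm{\phi}_n\le \mu_{d,s,p}^{-1/p}\var+\norm{\phi}_n .
\]
By Step 1, $\norm{\phi}_n\ra0$, so $\limsup_n\norm{v}_n\le \mu_{d,s,p}^{-1/p}\var$. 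Since $\var$ is arbitrary, \eqref{eq:far-vanish} follows.

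None of these steps is a real obstacle. The only point to check is that the bound in (a) is uniform in the shift $\zeta$, which is what makes the density step legitimate; this is immediate from translation invariance.
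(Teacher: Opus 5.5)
Your proposal is correct and matches the paper's proof: translation invariance plus \eqref{HS} for (a), and for (b) a density argument combining the support estimate for $\phi\in\cc(\rd)$ with the bound from (a) applied at $\zeta=-\sigma_n$, which does not depend on $n$. The only difference is cosmetic: you combine the two pieces with Minkowski's inequality in the weighted $L^p$ norm, whereas the paper uses $\abs{a+b}^p\le 2^{p-1}(\abs{a}^p+\abs{b}^p)$.
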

\begin{proof}
(a) Set $\tilde{v}_{\zeta}(w):=v(w+\zeta)$. Then $$\int_{\rd}\frac{\abs{v(z)}^p}{\abs{z-\zeta}^{sp}}\dz=\int_{\rd}\frac{\abs{\tilde{v}_{\zeta}(w)}^p}{\abs{w}^{sp}}\,{\rm d}w, \text{ and } [\tilde{v}_{\zeta}]_{s,p}=[v]_{s,p},$$
since the Gagliardo seminorm is invariant under translations. Now we apply $p$-fractional Hardy's inequality \eqref{HS} to $\tilde{v}_{\zeta}$ to get the conclusion.

(b) Let $\var>0$ and we choose $\phi \in \cc(\rd)$ say, $\supp \phi \subset B(0,R)$, with $[v-\phi]_{s,p}^p<\var$. For $n$ large with $\abs{\sigma_n}\ge 2R$, we have $\abs{z+\sigma_n}\ge \abs{\sigma_n}/2$ on $\supp \phi$, which implies
\begin{equation*}
     \int_{\rd}\frac{\abs{\phi(z)}^p}{\abs{z+\sigma_n}^{sp}}\dz \le \frac{2^{sp}}{\abs{\sigma_n}^{sp}}\norm{\phi}_{L^p(\rd)}^p \ra 0.
\end{equation*}
Now we use (a) with $\zeta=-\sigma_n$, which gives a bound that does not depend on $n$,
\begin{equation*}
    \int_{\rd}\frac{\abs{v(z)-\phi(z)}^p}{\abs{z+\sigma_n}^{sp}}\dz \le \frac{[v-\phi]^p_{s,p}}{\mu_{d,s,p}}<\frac{\var}{\mu_{d,s,p}}.
\end{equation*}
Since $\abs{a+b}^p \le 2^{p-1}(\abs{a}^p+\abs{b}^p)$, we get $$\limsup_{n \ra \infty} \int_{\rd}\frac{\abs{v(z)}^p}{\abs{z+\sigma_n}^{sp}}\dz \le \frac{2^{p-1}\var}{\mu_{d,s,p}}.$$ Letting $\var \ra 0$, we obtain \eqref{eq:far-vanish}.
\end{proof}

\begin{lemma}\label{lem:dual}
Let $\mu \in (0,\mu_{d,s,p})$. Then the following hold:
\begin{enumerate}
    \item[\rm (a)] For all $\la>0$ and $y \in \rd$, writing $\sigma:=\frac{y}{\la}$,
    \begin{equation}\label{eq:isometry}
          \norm{I_{\mu,0,0}'(u)}_{(\wps)^*}=\left\|\mathcal{I}_{\sigma}'\left( \left( C_{y,\la}\right)^{-1}u\right)\right\|_{(\wps)^*} \; \text{for all }\, u \in \wps.
    \end{equation}
    \item[\rm (b)] Let $\{\sigma_n\}\subset \rd$ satisfy either $\sigma_n=0$ for every $n$, or $\abs{\sigma_n}\ra \infty$. Let $v_n \rightharpoonup v$ in $\wps$ with $v_n(x) \ra v(x)$ for a.e. $x \in \rd$. Then, up to a subsequence, \begin{equation}\label{eq:dual}
        \norm{\mathcal{I}_{\sigma_n}'\left( v_n-v\right)-\mathcal{I}_{\sigma_n}'\left( v_n \right)+\mathcal{I}_{\sigma_n}'\left( v \right)}_{(\wps)^*}\ra 0.
    \end{equation}
\end{enumerate}
 \end{lemma}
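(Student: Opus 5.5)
For part (a), I will compute $\mathcal{I}_\sigma'(w)$ tested against $\phi$ and compare it with $I_{\mu,0,0}'(u)$ tested against $C_{y,\la}\phi$, where $w:=(C_{y,\la})^{-1}u$, so that $u=C_{y,\la}w$. The change of variables $x=\la z+y$ gives three identities.

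\begin{itemize}
\item By \eqref{eq:A-behviour}, $\mathcal{A}(C_{y,\la}w,C_{y,\la}\phi)=\mathcal{A}(w,\phi)$.
\item For the critical term, $\int_{\rd}\abs{C_{y,\la}w}^{p^*_s-2}C_{y,\la}w\,C_{y,\la}\phi\dx=\int_{\rd}\abs{w}^{p^*_s-2}w\phi\dz$; the powers cancel because $\vartheta p^*_s=d$.
\item For the Hardy term, $\int_{\rd}\frac{\abs{C_{y,\la}w}^{p-2}C_{y,\la}w\,C_{y,\la}\phi}{\abs{x}^{sp}}\dx=\int_{\rd}\frac{\abs{w}^{p-2}w\phi}{\abs{z+\sigma}^{sp}}\dz$. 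This works because $\abs{\la z+y}^{-sp}=\la^{-sp}\abs{z+\sigma}^{-sp}$ and $-\vartheta p+d-sp=0$.
\end{itemize}

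Together these yield $\langle I_{\mu,0,0}'(u),C_{y,\la}\phi\rangle=\langle\mathcal{I}_\sigma'(w),\phi\rangle$ for all $\phi\in\wps$. Since $C_{y,\la}$ is a bijective isometry of $\wps$ (by \eqref{eq:invariance} and \eqref{eq:composition}), taking the supremum over $[\phi]_{s,p}\le1$ gives \eqref{eq:isometry}.

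For part (b), the functional $\mathcal{I}_{\sigma_n}'$ splits into three pieces, and I will handle each one separately.

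\textbf{Gagliardo part.} By Lemma \ref{convergence-BL}-(iii), $J_p(Dv_n)-J_p(D(v_n-v))-J_p(Dv)$, divided by $\abs{x-y}^{(d+sp)/\p}$, tends to $0$ in $L^{\p}(\R^{2d})$. Hölder's inequality then bounds its pairing with $D\phi\,\abs{x-y}^{-(d+sp)/p}$ by $o_n(1)[\phi]_{s,p}$, uniformly in $\phi$. This piece does not depend on $\sigma_n$.

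\textbf{Critical part.} Lemma \ref{convergence-BL}-(iv) with $\tilde\al=0$, combined with Hölder's inequality and the Sobolev inequality, gives an $o_n(1)[\phi]_{s,p}$ bound in the same way.

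\textbf{Hardy part.} The remaining term is $\mu\int\frac{(J_p(v_n-v)-J_p(v_n)+J_p(v))\phi}{\abs{z+\sigma_n}^{sp}}$, and this is the main obstacle.

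If $\sigma_n=0$, I apply Lemma \ref{BL}-(ii) in the weighted space $L^p(\rd,\abs{x}^{-sp}\dx)$. This is legitimate because $\{v_n\}$ is bounded there by \eqref{HS}, and it gives $J_p(v_n)-J_p(v_n-v)-J_p(v)\to0$ in $L^{\p}(\abs{x}^{-sp})$. Hölder's inequality in the weighted space and \eqref{HS} applied to $\phi$ then give an $o_n(1)[\phi]_{s,p}$ bound.

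If $\abs{\sigma_n}\to\infty$, I first use Lemma \ref{Bahri} with $q=p$ to bound the combination pointwise by $C(\abs{v_n-v}^{p-1}\abs{v}+\abs{v}^{p-1}\abs{v_n-v}+\abs{v}^{p-1}\abs{v_n})$; the exact form is not important. Every term on the right contains a factor of $\abs{v}$. Hölder's inequality with weight $\abs{z+\sigma_n}^{-sp}$, using three factors (the $\abs{v}$ factor, the bounded $v_n$ or $v_n-v$ factor, and $\phi$), bounds each term by
\[
C\Big(\int\frac{\abs{v}^p}{\abs{z+\sigma_n}^{sp}}\Big)^{\theta}\,[v_n]^{a}_{s,p}\,[\phi]_{s,p},
\]
for some $\theta>0$. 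The last two factors are controlled uniformly by the shifted Hardy inequality, Lemma \ref{lem:far}-(a). The first factor tends to $0$ by Lemma \ref{lem:far}-(b). Hence the estimate is uniform in $\phi$.

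A uniform bound is needed in both regimes, since the conclusion is convergence in the dual norm. The delicate point is that for a general bounded $\sigma_n\ne0$ neither argument applies: the weight moves, so no fixed Brézis--Lieb space is available, and it does not decay either. This is exactly why the hypothesis is restricted to these two regimes.
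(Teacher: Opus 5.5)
Part (a), and in part (b) the Gagliardo term, the critical term and the Hardy term for $\sigma_n=0$, follow the paper's proof. Your weighted Brézis--Lieb step is exactly Lemma \ref{convergence-BL}-(iv) with $\tilde{\alpha}=sp$; equivalently, apply Lemma \ref{BL}-(ii) to $v_n\lvert x\rvert^{-s}$. The gap is in the Hardy term when $\lvert\sigma_n\rvert\to\infty$. The pointwise bound $\lvert J_p(v_n-v)-J_p(v_n)+J_p(v)\rvert\le C(\lvert v_n-v\rvert^{p-1}\lvert v\rvert+\dots)$ is false. The left side is homogeneous of degree $p-1$ in $(v_n,v)$ and the right side of degree $p$, so it fails for small values. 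For instance, with $p=3$ and $v_n-v=v=\lambda$, the left side is $2\lambda^2$ and the right side is $O(\lambda^3)$. This is not cosmetic: with these powers, your three-factor Hölder step in $L^p(\lvert z+\sigma_n\rvert^{-sp}\,\mathrm{d}z)$ cannot close, since the exponents sum to $\frac{p-1}{p}+\frac1p+\frac1p=1+\frac1p$. The error comes from applying Lemma \ref{Bahri} with $q=p$. That choice controls $\lvert t\rvert^{p-1}t$, not $J_p(t)=\lvert t\rvert^{p-2}t$. To control $J_p$ you need $q=p-1$, and the lemma allows this only when $p>2$.

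Your key idea, that every term carries a positive power of $\lvert v\rvert$, does survive with the correct inequalities. For $p>2$, Lemma \ref{Bahri} with $q=p-1$ gives the bound $C(\lvert v_n-v\rvert^{p-2}\lvert v\rvert+\lvert v\rvert^{p-2}\lvert v_n-v\rvert)$. For $1<p\le 2$, the $(p-1)$-Hölder continuity of $J_p$ gives $C\lvert v\rvert^{p-1}$. In both cases Hölder's inequality with exponents summing to $1$ produces a factor $\bigl(\int\lvert v\rvert^p\lvert z+\sigma_n\rvert^{-sp}\,\mathrm{d}z\bigr)^{\theta}$ with $\theta>0$, which tends to zero by Lemma \ref{lem:far}-(b). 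The remaining factors are bounded uniformly in $[\phi]_{s,p}\le1$ by Lemma \ref{lem:far}-(a). The paper instead uses $\lvert J_p(a+b)-J_p(a)-J_p(b)\rvert^{p'}\le\varepsilon\lvert a\rvert^p+C_{\varepsilon}\lvert b\rvert^p$ with $b=v$, integrates against the shifted weight, and lets $\varepsilon\to0$. That route needs no split between $p>2$ and $p\le2$; your corrected version gives an explicit rate but requires that case distinction.
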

\begin{proof}
  (a) We write $\hat{u}\coloneqq C_{y,\la}^{-1}u$ and $\hat{\phi}\coloneqq C_{y,\la}^{-1}\phi$. From the change of variables, 
\begin{align*}
    \prescript{}{(\wps)^*}{\langle} \mathcal{I}_{\sigma}'(\hat{u}),\hat{\phi}{\rangle}_{\wps} & = \mathcal{A}(\hat{u}, \hat{\phi}) - \mu \int_{\rd} \frac{|\hat{u}(z)|^{p-2} \hat{u}(z) \hat{\phi}(z)}{|z+\sigma|^{sp}} \dz - \int_{\rd} |\hat{u}(z)|^{p^*_s-2} \hat{u}(z) \hat{\phi}(z) \dz \\
    & = \mathcal{A}(u, \phi) - \mu \int_{\rd} \frac{|u(x)|^{p-2} u(x) \phi(x)}{|x|^{sp}} \dx - \int_{\rd} |u(x)|^{p^*_s-2} u(x) \phi(x) \dx  \\
    &= \prescript{}{(\wps)^*}{\langle} I_{\mu,0,0}'(u),\phi{\rangle}_{\wps},  \; \forall\, \phi \in \wps.
\end{align*}
Further, since each $C_{y,\la}$ is an isometry, the map $\phi \mapsto \hat{\phi}$ is a bijection of $\wps$ onto itself with $[\hat{\phi}]_{s,p}=[\phi]_{s,p}$ (by \eqref{eq:invariance}). Hence, \eqref{eq:isometry} holds.

(b) We write $V_n:=v_n-v$ and $J_q(t):=\abs{t}^{q-2}t$. For $\psi \in \wps$, the quantity inside the norm in \eqref{eq:dual} tested at $\psi$ is $\mathcal{T}_1^n-\mu \mathcal{T}_2^n-\mathcal{T}_3^n$, where
\begin{align*}
     &\mathcal{T}_1^n:=\mathcal{A}(V_n,\psi)-\mathcal{A}(v_n,\psi)+\mathcal{A}(v,\psi), \quad
     \mathcal{T}_2^n:=\int_{\rd}\frac{J_p(V_n)-J_p(v_n)+J_p(v)}{\abs{z+\sigma_n}^{sp}}\psi(z) \dz, \\
     &\mathcal{T}_3^n:=\int_{\rd}\left( J_{p^*_s}(V_n)-J_{p^*_s}(v_n)+J_{p^*_s}(v)\right)\psi(z) \dz.
\end{align*}
For $\mathcal{T}_1^n$, we write $Du(x,y):=u(x)-u(y)$ and apply H\"{o}lder's inequality with the pair $(\p,p)$ on $\R^{2d}$:
\begin{align*}
    \abs{\mathcal{T}_1^n}\le \left\|\frac{J_p(Dv_n)-J_p(DV_n)-J_p(Dv)}{\abs{x-y}^{\frac{d+sp}{\p}}}\right\|_{L^{\p}(\R^{2d})}\,[\psi]_{s,p},
\end{align*}
and the first factor is $o_n(1)$, by Lemma \ref{convergence-BL}-$(iii)$. 

For $\mathcal{T}_3^n$, H\"{o}lder's inequality with the pair $\left( (p^*_s)',p^*_s\right)$ and then \eqref{eq:constants} give
\begin{align*}
    \abs{\mathcal{T}_3^n}\le \left\| J_{p^*_s}(v_n)-J_{p^*_s}(V_n)-J_{p^*_s}(v)\right\|_{L^{(p^*_s)'}(\rd)}\,\overline{S}^{-\frac1p}[\psi]_{s,p},
\end{align*}
and the first factor is $o_n(1)$, by Lemma \ref{convergence-BL}-$(iv)$ with $\tilde{\al}=0$. 

For $\mathcal{T}_2^n$, H\"{o}lder's inequality with the pair $(\p,p)$, followed by Lemma \ref{lem:far}-(a) applied to $\psi$, gives
\begin{equation}\label{eq:T2}
     \abs{\mathcal{T}_2^n}\le \left( \int_{\rd}\frac{\abs{J_p(V_n)-J_p(v_n)+J_p(v)}^{\p}}{\abs{z+\sigma_n}^{sp}}\dz\right)^{\frac{1}{\p}}\mu_{d,s,p}^{-\frac1p}\,[\psi]_{s,p},
\end{equation}
and it remains to see that the first factor of \eqref{eq:T2} is $o_n(1)$ in both the cases. If $\sigma_n=0$ for every $n$, that factor is exactly the $L^{\p}(\rd)$ norm appearing in Lemma \ref{convergence-BL}-$(iv)$ with $\tilde{\al}=sp$, because $\frac{sp}{(p^*_s(sp))'}\p=sp$. So it is $o_n(1)$. 

If $\abs{\sigma_n}\ra \infty$, we argue differently, because the singular weight translates towards infinity and Lemma \ref{convergence-BL} is not applicable in this case. 
% We use the elementary inequality: for every $\var>0$, there exists $C_{\var}>0$ such that
% \begin{equation}\label{eq:elem-dual}
%     \abs{J_p(a-b)-J_p(a)}^{\p}\le \var \abs{a}^p+C_{\var}\abs{b}^p, \quad \text{for all }\, a,b \in \R.
% \end{equation}
We use the elementary inequality: for all $\var >0$ there exists $C(\var)>0$ such that for all $1< p<\infty$ (see \cite[Page~182]{Ambrosetti-Malchiodi}):
\begin{equation}\label{eq:elem-dual}
    \left|J_p(a+b)-J_p(a)-J_p(b)\right|^{p'} \leq \var |a|^p + C_{\var} |b|^p, \text{ for all }a,\,b\in\R. 
\end{equation}
Applying \eqref{eq:elem-dual} with $a=v_n(z)$, $b=v(z)$, together with $\abs{J_p(v)}^{\p}=\abs{v}^p$ we get,
\begin{equation*}
      \int_{\rd}\frac{\abs{J_p(V_n)-J_p(v_n)+J_p(v)}^{\p}}{\abs{z+\sigma_n}^{sp}}\dz \le 2^{\p-1}\left( \var \int_{\rd}\frac{\abs{v_n}^p}{\abs{z+\sigma_n}^{sp}}\dz+ C_{\var}\int_{\rd}\frac{\abs{v}^p}{\abs{z+\sigma_n}^{sp}}\dz\right).
\end{equation*}
By Lemma \ref{lem:far}-(a) the first integral is at most $\mu_{d,s,p}^{-1}\sup_n [v_n]_{s,p}^p<\infty$, and by Lemma \ref{lem:far}-(b) applied to the fixed function $v$, the second integral tends to $0$. Letting $\var \ra 0$ completes the proof.
\end{proof}

Next, we consider 
\begin{align}\label{H-function}
   \displaystyle \mathcal{H}(u,v) \coloneqq \int_{\rd}\frac{\abs{u(x)}^{p-1}\abs{v(x)}}{\abs{x}^{sp}}\dx, \; \text{for all }\, u,v \in \wps.  
\end{align}

\begin{lemma}\label{lem:3}
The following holds:
\begin{enumerate}
    \item[\rm (a)] It holds
    \begin{align*}
        \abs{\log \la_n}\ra \infty \Longrightarrow \mathcal{H}(u,C_{0,\la_n}v)\ra 0, \text{ and } \, \mathcal{H}(C_{0,\la_n}v,u)\ra 0,
    \end{align*}
    for every $u,v \in \wps$. \vspace{0.1 cm}
    \item[\rm (b)] It holds 
    \begin{align*}
        \left|\log \left(\frac{\tau_n}{\la_n}\right)\right|\ra \infty \Longrightarrow \mathcal{H}(C_{0,\la_n}u,C_{0,\tau_n}v)\ra 0, \text{ and } \, \mathcal{H}(C_{0,\tau_n}v,C_{0,\la_n}u)\ra 0,
    \end{align*}
    for every $u,v \in \wps$.
    \vspace{0.1 cm}
    \item[\rm (c)] Let $\{g_n\}\subset \wps$ satisfies $\sup_n \int_{\rd}\frac{\abs{g_n}^p}{\abs{x}^{sp}}\dx<\infty$. Then it holds 
    \begin{align*}
       \frac{\abs{x_n}}{R_n} \ra \infty \Longrightarrow  \mathcal{H}(g_n,C_{x_n,R_n}w)\ra 0, \text{ and } \, \mathcal{H}(C_{x_n,R_n}w,g_n)\ra 0,
    \end{align*}
    for every $w \in \wps$.
\end{enumerate}
\end{lemma}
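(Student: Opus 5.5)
The plan is to prove all three parts with one scheme. First reduce to compactly supported functions by density, using a uniform bound for $\mathcal{H}$. Then handle the compactly supported case by a support or measure argument, exactly parallel to Proposition \ref{lem:2}.

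\textbf{Uniform bound.} By H\"older's inequality with the pair $(p',p)$ applied to the measure $|x|^{-sp}\dx$,
\begin{align*}
\mathcal{H}(u,v)\le \Big(\int_{\rd}\frac{|u|^p}{|x|^{sp}}\dx\Big)^{\frac1{p'}}\Big(\int_{\rd}\frac{|v|^p}{|x|^{sp}}\dx\Big)^{\frac1p}\le \mu_{d,s,p}^{-1}[u]_{s,p}^{p-1}[v]_{s,p},
\end{align*}
where the second inequality uses \eqref{HS}. By \eqref{eq:hardy-scale}, $\int |C_{0,\la}v|^p|x|^{-sp}=\int|v|^p|x|^{-sp}$. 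Also $\big||a|^{p-1}-|b|^{p-1}\big|\le C(|a-b|^{p-1}+|a-b||b|^{p-2})$ when $p\ge2$, and $\le|a-b|^{p-1}$ when $p<2$. These facts show that $\mathcal{H}$ is continuous in each argument, uniformly along the dilated sequences. Hence in (a) and (b) it suffices to take $u,v\in\cc(\rd)$.

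\textbf{Part (a).} Take $u,v\in\cc(\rd)$ with supports in $B(0,R)$, and write $\mathcal{H}(u,C_{0,\la_n}v)=\int_E\frac{|u|^{p-1}|C_{0,\la_n}v|}{|x|^{sp}}$ with $E=B(0,R)\cap B(0,\la_nR)$. Apply H\"older with the weight $|x|^{-sp}$ and define the finite measures $m_u(E)=\int_E|u|^p|x|^{-sp}\dx$ and $m_v$ similarly. This gives
\begin{align*}
\mathcal{H}(u,C_{0,\la_n}v)\le m_u(E)^{1/p'}\,m_v(E/\la_n)^{1/p},
\end{align*}
since the dilation $C_{0,\la}$ transports $m_v$ exactly (this is the scaling identity \eqref{eq:hardy-scale} localized).

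If $\la_n\to0$, then $E\subset B(0,\la_nR)$ and $m_u(B(0,\la_nR))\to0$ by absolute continuity, while $m_v(E/\la_n)\le m_v(\rd)$. If $\la_n\to\infty$, then $E/\la_n\subset B(0,R/\la_n)$, so $m_v(E/\la_n)\to0$ while $m_u(E)$ stays bounded. The reversed pairing $\mathcal{H}(C_{0,\la_n}v,u)$ is handled identically with the exponents swapped.

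\textbf{Part (b).} Use the change of variables $x=\la_n z$:
\begin{align*}
\mathcal{H}(C_{0,\la_n}u,C_{0,\tau_n}v)=\mathcal{H}(u,C_{0,\tau_n/\la_n}v).
\end{align*}
The powers cancel because $-\vartheta(p-1)-\vartheta+d-sp=0$. Part (b) then follows from (a).

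\textbf{Part (c).} This is the main obstacle, because $g_n$ varies with $n$ and is controlled only through its Hardy mass. H\"older gives
\begin{align*}
\mathcal{H}(g_n,C_{x_n,R_n}w)\le \Big(\sup_n\int\frac{|g_n|^p}{|x|^{sp}}\Big)^{1/p'}\Big(\int\frac{|C_{x_n,R_n}w|^p}{|x|^{sp}}\Big)^{1/p}.
\end{align*}
By the change of variables in Remark \ref{change-of-variable-I} with $\tilde\al=sp$, the last factor equals $\big(\int|w(z)|^p|z+\sigma_n|^{-sp}\dz\big)^{1/p}$, where $\sigma_n=x_n/R_n$. This tends to $0$ by Lemma \ref{lem:far}-(b), since $|\sigma_n|\to\infty$. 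No density step is needed here, because Lemma \ref{lem:far}-(b) already holds for every $w\in\wps$.

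For the reversed pairing $\mathcal{H}(C_{x_n,R_n}w,g_n)$, apply H\"older with the exponents swapped. This gives a product of $\big(\int|C_{x_n,R_n}w|^p|x|^{-sp}\big)^{1/p'}\to0$ and the bounded factor $\big(\int|g_n|^p|x|^{-sp}\big)^{1/p}$.

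The only delicate point is to check the exponent bookkeeping in the change of variables: the Hardy weight scales exactly like $|C_{x_n,R_n}w|^p$, because $\vartheta p=d-sp$. Once that is verified, the decay is supplied by Lemma \ref{lem:far}.
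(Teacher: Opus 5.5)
Your proof is correct, but part (a) takes a slightly different route from the paper. Parts (b) and (c) are exactly the paper's argument: the change of variables $x=\lambda_n z$ reduces (b) to (a), and (c) follows from H\"older, Remark \ref{change-of-variable-I} with $\tilde{\alpha}=sp$, and Lemma \ref{lem:far}-(b).

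\textbf{Your route in (a).} You first reduce to $u,v\in C_c^\infty(\mathbb{R}^d)$, using continuity estimates for $\mathcal{H}$ that are uniform along dilations by \eqref{eq:hardy-scale}. Compact support then confines one of the two H\"older factors to a shrinking ball.

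\textbf{The paper's route in (a).} The paper skips the density step. It sets $h_u:=|u|^p|x|^{-sp}\in L^1(\mathbb{R}^d)$ and works with arbitrary $u,v$ directly. When $\lambda_n\to0$, it splits $\mathbb{R}^d$ at a radius $\delta$ with $\int_{B(0,\delta)}h_u<\varepsilon$, and uses $\int_{B(0,\delta)^c}h_{C_{0,\lambda_n}v}=\int_{B(0,\delta/\lambda_n)^c}h_v\to0$. When $\lambda_n\to\infty$, it takes $\Lambda$ with $\int_{B(0,\Lambda)^c}h_u<\varepsilon$, and uses $\int_{B(0,\Lambda)}h_{C_{0,\lambda_n}v}=\int_{B(0,\Lambda/\lambda_n)}h_v\to0$.

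\textbf{Comparison.} The mechanism is the same in both: H\"older on a region where one of the two Hardy densities has vanishing mass. The paper's $\varepsilon$-splitting is shorter and self-contained. It needs only integrability of $h_u,h_v$ and scale invariance, and avoids the pointwise bound on $\bigl||a|^{p-1}-|b|^{p-1}\bigr|$ with its case split $p\ge2$ versus $p<2$. Your version makes the model case almost immediate, at the cost of that extra approximation step.
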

\begin{proof}
Set $h_u(x)\coloneqq \abs{u(x)}^p |x|^{-sp}$ for $x \in \rd$. By \eqref{HS}, $h_u \in L^1(\rd)$, and by  \eqref{eq:scaling}, $\norm{h_{C_{0,\la}v}}_{L^1(\rd)}=\norm{h_v}_{L^1(\rd)}$ for every $\la>0$. Applying  H\"{o}lder's inequality with the pair $(\p,p)$ gives
\begin{equation}\label{eq:H-holder}
    \int_{E}\frac{\abs{u(x)}^{p-1}\abs{v(x)}}{\abs{x}^{sp}}\dx \le \left( \int_E h_u \dx \right)^{\frac{1}{\p}}\left( \int_E h_v \dx \right)^{\frac{1}{p}},
\end{equation}
for every Borel set $E \subset \rd$. It is sufficient to prove the first limit by \eqref{eq:H-holder}. We now pass to a subsequence with $\la_n \ra 0$ or $\la_n \ra \infty$.

Let $\la_n \ra 0$ and let $\var>0$. Since $h_u \in L^1(\rd)$, there exists $\de>0$ small such that  $\int_{B(0,\de)}h_u<\var$. Consider $E=\rd$ on the LHS of \eqref{eq:H-holder}, then split the integral on $\rd=B(0,\de)\cup B(0,\de)^c$, then using \eqref{eq:H-holder} we get,
\begin{equation*}
    \mathcal{H}(u,C_{0,\la_n}v)\le \var^{\frac{1}{\p}}\norm{h_v}_{L^1(\rd)}^{\frac{1}{p}}+\norm{h_u}_{L^1(\rd)}^{\frac{1}{\p}}\left( \int_{B(0,\de)^c}h_{C_{0,\la_n}v} \dx\right)^{\frac{1}{p}}.
\end{equation*}
The change of variables gives
$$\int_{B(0,\de)^c}h_{C_{0,\la_n}v}=\int_{B(0, \frac{\de}{\la_n})^c}h_v \ra 0, \text{ since } \frac{\de}{\la_n} \ra \infty, \text{ and } h_v \in L^1(\rd).$$
Now since $\var>0$ is arbitrary, $\limsup_{n \ra \infty} \mathcal{H}(u,C_{0,\la_n}v) \le \var^{\frac{1}{\p}}\norm{h_v}_{L^1(\rd)}^{\frac1p} \ra 0.$ 

Let $\la_n \ra \infty$. We choose $\La>0$ with $\int_{B(0,\La)^c}h_u<\var$, and use $$\int_{B(0,\La)}h_{C_{0,\la_n}v}=\int_{B(0,\frac{\La}{\la_n})}h_v \ra 0,$$ to get $\mathcal{H}(u,C_{0,\la_n}v) \ra 0$ as $n \ra \infty$. 

(b) A change of variables shows $\mathcal{H}(C_{0,\la}u,C_{0,\la}v)=\mathcal{H}(u,v)$ for every $\la>0$.
%indeed the powers of $r$ add up to $-\vartheta(p-1)-\vartheta-sp+d=0$.
Hence $\mathcal{H}(C_{0,\la_n}u,C_{0,\tau_n}v)=\mathcal{H}(u,C_{0,\frac{\tau_n}{\la_n}}v)$, and (b) holds applying (a).

(c) By \eqref{eq:H-holder} with $E=\rd$, we have
\begin{equation}\label{eq:holder-II}
     \mathcal{H}\left(g_n,C_{x_n,R_n}w\right)\le \left( \int_{\rd}\frac{\abs{g_n}^p}{\abs{x}^{sp}}\dx\right)^{\frac{1}{\p}}\left( \int_{\rd}\frac{\abs{C_{x_n,R_n}w}^p}{\abs{x}^{sp}}\dx\right)^{\frac{1}{p}},
\end{equation}
and using \eqref{eq:two-integrals} with $\tilde{\al}=sp$, the second factor equals $\left( \int_{\rd}\frac{\abs{w(z)}^p}{\abs{z+\sigma_n}^{sp}}\dz\right)^{1/p}$ with $\sigma_n=\frac{x_n}{R_n}$, and hence using Lemma \ref{lem:far}-(b),
\begin{align*}
    \lim_{|\sigma_n| \ra \infty} \int_{\rd}\frac{\abs{w(z)}^p}{\abs{z+\sigma_n}^{sp}}\dz =0. 
\end{align*}
The first factor in \eqref{eq:holder-II} is bounded by hypothesis. The other limit $\mathcal{H}(C_{x_n,R_n}w,g_n)\ra 0$ holds similarly by interchanging the exponents.
\end{proof}

\subsection{Proof of Theorem \ref{PS-decomposition-II}} We now combine the preceding translation-dilation estimates with a moving centre function. The relative parameter $\frac{y_n}{r_n}$ will distinguish whether the Hardy singularity remains visible in the rescaled limit or disappears at infinity.

The bound \eqref{PSD-1} of the proof of Theorem \ref{PS-decomposition} holds with $\al=0$ unchanged, so $\{u_n\}$ is bounded in $\wps$ and, up to a subsequence, $u_n \rightharpoonup \tilde{u}$ in $\wps$ and $u_n \ra \tilde{u}$ a.e. in $\rd$, and Lemma \ref{convergence-integrals} shows that $\tilde{u}$ weakly solves \eqref{mainalphazero}. We set $\tilde{u}_n \coloneqq u_n-\tilde{u}$, so that $\tilde{u}_n \rightharpoonup 0$ in $\wps$. We claim that 
\begin{equation}\label{eq:step-1}
    \{\tilde{u}_n\} \text{ is a (PS) sequence for } I_{\mu,0,0} \text{ at level } \eta-I_{\mu,0,f}(\tilde{u}).
\end{equation}
We use Step 1 of the proof of Theorem \ref{PS-decomposition} with $\al=0$, and for the Hardy part we use Lemma \ref{convergence-BL}-(ii) with $\tilde{\al}=sp$, which is allowed because $p^*_s(sp)=p$. For the derivative, we have to produce convergence in the norm of $(\wps)^*$, and we get it from Lemma \ref{lem:dual}-(b) with $\sigma_n=0$, $v_n=u_n$ and $v=\tilde{u}$,  since $\mathcal{I}_{0}=I_{\mu,0,0}$,
\begin{align}\label{global-I}
    I_{\mu,0,0}'\left( \tilde{u}_n\right)=\left[ I_{\mu,0,0}'\left( u_n-\tilde{u}\right)-I_{\mu,0,0}'\left( u_n\right)+I_{\mu,0,0}'(\tilde{u})\right]+I_{\mu,0,0}'\left( u_n\right)-I_{\mu,0,0}'(\tilde{u}),
\end{align}
where the bracket tends to $0$ in $(\wps)^*$ by  Lemma \ref{lem:dual}-(b). For the last two terms of \eqref{global-I}, we observe that 
$$I_{\mu,0,0}(w)=I_{\mu,0,f}(w)+\prescript{}{(\wps)^*}{\langle}f,w{\rangle}_{\wps}$$ gives $I_{\mu,0,0}'(w)=I_{\mu,0,f}'(w)+f$ for every $w \in \wps$, and hence
\begin{equation*}
      I_{\mu,0,0}'\left( u_n\right)-I_{\mu,0,0}'(\tilde{u})=I_{\mu,0,f}'\left( u_n\right)-I_{\mu,0,f}'(\tilde{u})=I_{\mu,0,f}'\left( u_n\right)= o_n(1), \; \text{ in } (\wps)^*,
\end{equation*}
as $I_{\mu,0,f}'(\tilde{u})=0$ and $\{u_n\}$ is a (PS) sequence for $I_{\mu,0,f}$. This proves \eqref{eq:step-1}.

We now work with a (PS) sequence $\{\tilde{u}_n\}$ of $I_{\mu,0,0}$ at some level $c$ with $\tilde{u}_n \rightharpoonup 0$ in $\wps$. If $\tilde{u}_n \ra 0$ in $\wps$ we stop. So assume $\tilde{u}_n \not\ra 0$. Similarly, as in \eqref{del-0}, using $\prescript{}{(\wps)^*}{\langle} I_{\mu,0,0}'(\tilde{u}_n),\tilde{u}_n{\rangle}_{\wps}\ra 0$ and \eqref{equivalent-norm},
\begin{equation}\label{eq:non-vanishing}
     0<\de_1 \le \inf_{n \in \N}\int_{\rd}\abs{\tilde{u}_n}^{p^*_s}\dx.
\end{equation}
We split our proof into several cases.

\noi \textbf{Step 1:} We let the centre of the concentration ball move and take the largest mass over all centres. Let $N=N(d)\in \N$ be a number of balls of radius one that cover $B(0,2)$, we put
\begin{equation}\label{eq:theta-mu}
    \theta_{\mu} \coloneqq \frac{\mu}{\mu_{d,s,p}}\in (0,1), \quad \var_{\mu} \coloneqq \frac{1-\theta_{\mu}}{2\theta_{\mu}}>0,
\end{equation}
and we set 
\begin{equation}\label{eq:delta-star}
    \de^*\coloneqq \frac{1}{N} \left( \frac{\left(1-\theta_{\mu}\right)\overline{S}}{ 4\left(1+\var_\mu \right)}\right)^{\frac{d}{sp}}>0,
\end{equation}
%=\frac{1}{N} \left( \frac{\theta_{\mu} \left( 1-\theta_{\mu}\right)\overline{S}}{2 \left( 1+\theta_\mu\right)} \right)^{\frac{d}{sp}}%
where $\overline{S}$ is the constant in \eqref{eq:constants}. The number $\de^*$ depends only on $d$, $s$, $p$ and $\mu$ (does not depend on the sequence $\tilde{u}_n$). We fix
\begin{gather}\label{eq:delta-choice}
0< \de < \min \left\{ \de_1,\,\de_*\right\},
\end{gather}
and set 
\begin{equation}\label{eq:levy}
Q_n(r) \coloneqq \sup_{y \in \rd}\int_{B(y,r)}\abs{\tilde{u}_n}^{p^*_s} \dx, \text{ for } r>0.
\end{equation}
Applying \cite[Lemma 3.1]{Brasco-2018}, $Q_n$ is continuous on $\R^+$. The supremum in \eqref{eq:levy} is attained, since $y \mapsto \int_{B(y,r)}\abs{\tilde{u}_n}^{p^*_s}$ is continuous and tends to $0$, when $\abs{y}$ is sufficiently large. 
%Next, $Q_n(r)\ra 0$ as $r \ra 0^+$, since given $\var>0$ pick $h \in \mathcal{C}_c(\rd)$ with $\norm{\abs{\tilde{u}_n}^{p^*_s}-h}_{L^1(\rd)}<\var$, and then $\int_{B(y,r)}\abs{\tilde{u}_n}^{p^*_s}\le \var+\norm{h}_{L^{\infty}(\rd)}\abs{B(0,r)}$ for every $y$. This shows that $Q_n$ is continuous on $\R^+$; 
Further, $Q_n$ is nondecreasing, and $Q_n(r)\ra \int_{\rd}\abs{\tilde{u}_n}^{p^*_s}\ge \de_1>\de$ when  $r$ is sufficiently large.
So, we can find $r_n \in (0, \infty)$ and $y_n \in \rd$ with
\begin{equation}\label{eq:choice}
    \int_{B(y_n, r_n)}\abs{\tilde{u}_n}^{p^*_s}\dx=\sup_{y \in \rd}\int_{B(y,r_n)}\abs{\tilde{u}_n}^{p^*_s}\dx=\de.
\end{equation}
We define the following functions
\begin{align*}
    \hat{u}_n(z) = r_n^{\vartheta}\tilde{u}_n(r_nz + y_n), \text{ and } \tilde{\phi}_n(z)= C_{y_n,r_n} \phi(z), \text{ for } z \in \rd,
\end{align*}
where $\phi \in \wps$. In view of \eqref{eq:choice}, 
\begin{align}\label{eq:choice-II}
    \int_{B(0,1)} \abs{\hat{u}_n}^{p^*_s} \dx = \delta. 
\end{align}
Using the norm invariance \eqref{eq:invariance}, we observe that $\prescript{}{(\wps)^*}{\langle} I_{\mu,0,0}'(\tilde{u}_n),\tilde{\phi}_n{\rangle}_{\wps}=o_n(1)$. Hence, using the change of variables, we obtain
\begin{equation}\label{eq:rescaled-PS}
 \mathcal{A}(\hat{u}_n,\phi)-\mu \int_{\rd}\frac{\abs{\hat{u}_n}^{p-2}\hat{u}_n}{\abs{z+\sigma_n}^{sp}}\phi \dz - \int_{\rd}\abs{\hat{u}_n}^{p^*_s-2}\hat{u}_n \phi \dz = o_n(1), \quad \forall\, \phi \in \wps,
\end{equation}
where \begin{gather}\label{eq:sigma}
\sigma_n\coloneqq\frac{y_n}{r_n}.
\end{gather}
We note that \eqref{eq:rescaled-PS} contains the origin, and it is only through the shift $\sigma_n$.
Along a subsequence, one of the following holds:
\begin{align}
\underline{\text{\textbf{Type I}:}} \quad \sigma_n \ra \sigma \in \rd, \; \text{ and } \; \underline{\text{\textbf{Type II}:}} \quad \abs{\sigma_n}\ra \infty.
\end{align}
Indeed, either $\{\sigma_n\}$ has a bounded subsequence, and then a convergent one, or $\abs{\sigma_n}\ra \infty$.
Next, we define
\begin{equation}\label{eq:bar}
     \overline{u}_n(z)\coloneqq\left(C_{0,r_n}\right)^{-1}\tilde{u}_n(z)= C_{0,\frac{1}{r_n}}\tilde{u}_n(z)= r_n^{\vartheta}\tilde{u}_n(r_nz)=\hat{u}_n(z-\sigma_n), \text{ for } z \in \rd.
\end{equation}
Applying Lemma \ref{lem:1}-(b),  with $\tau_n=1$ and $\xi_n=\sigma_n \ra \sigma$,
\begin{equation}\label{eq:barlimit}
    \overline{u}_n \rightharpoonup \overline{u}\coloneqq\hat{u}(\cdot-\sigma) \text{ in } \wps, \; \text{ that is, } \; \overline{u}\ne 0 \Longleftrightarrow \hat{u}\ne 0.
\end{equation}

\noi \textbf{Step 2:} In this step, we assume \textbf{Type I} holds and show $\hat{u}\ne 0$. Then, by \eqref{eq:barlimit}, we get $\overline{u}\ne 0$. On the contrary, suppose $\hat{u}=0$. Then, by the compact embedding $\wps \embd L^p_{loc}(\rd)$,
\begin{equation}\label{eq:local}
     \hat{u}_n \ra 0 \; \text{ in } L^p_{loc}(\rd), \text{ and } M\coloneqq\sup_{n \in \N}[\hat{u}_n]_{s,p}<\infty.
\end{equation}
We choose $\eta \in \cc(\rd)$ with $\supp(\eta) \subset B(0,2)$, $0 \le \eta \le 1$ and $\eta \equiv 1$ on $B(0,1)$, and put $\phi\coloneqq\eta^p$. Since $0 \le \eta \le 1$ and $p>1$, the function $\phi$ is Lipschitz with support in $B(0,2)$, and we have
\begin{equation*}
     \phi_n:=C_{y_n,r_n}\left( \phi \hat{u}_n\right), \text{ so that } [\phi_n]_{s,p}=[\phi \hat{u}_n]_{s,p}\le C(\eta)\left( M+\sup_n \norm{\hat{u}_n}_{L^{p^*_s}(\rd)}\right),
\end{equation*}
i.e., $\{ \phi_n \}$ is bounded. Using $\prescript{}{(\wps)^*}{\langle} I_{\mu,0,0}'(\tilde{u}_n),\phi_n{\rangle}_{\wps}=o_n(1)$ and the change of variables as in \eqref{eq:rescaled-PS}, we get
\begin{equation}\label{eq:test}
     \mathcal{A}\left( \hat{u}_n,\phi \hat{u}_n\right)=\mu \int_{\rd}\frac{\abs{\eta \hat{u}_n}^p}{\abs{z+\sigma_n}^{sp}}\dz+\int_{\rd}\eta^p \abs{\hat{u}_n}^{p^*_s}\dz+o_n(1),
\end{equation}
where we have used $\phi \abs{\hat{u}_n}^{p}=\abs{\eta \hat{u}_n}^p$. For $x,y \in \rd$, from the splitting
\begin{equation*}
     \phi(x)\hat{u}_n(x)-\phi(y)\hat{u}_n(y)=\phi(x)\left( \hat{u}_n(x)-\hat{u}_n(y)\right)+\hat{u}_n(y)\left( \phi(x)-\phi(y)\right),
\end{equation*}
we put
\begin{equation}\label{eq:En}
     E_n:=\iint_{\rd \times \rd}\eta(x)^p\,\frac{\abs{\hat{u}_n(x)-\hat{u}_n(y)}^p}{\abs{x-y}^{d+sp}}\dxy \in [0,M^p],
\end{equation}
to get $\mathcal{A}(\hat{u}_n,\phi \hat{u}_n)=E_n+I_n$ with
\begin{equation*}
      I_n:=\iint_{\rd \times \rd}\frac{\abs{\hat{u}_n(x)-\hat{u}_n(y)}^{p-2}\left( \hat{u}_n(x)-\hat{u}_n(y)\right)\hat{u}_n(y)\left( \phi(x)-\phi(y)\right)}{\abs{x-y}^{d+sp}}\dxy.
\end{equation*}
We claim that 
\begin{equation}\label{eq:I-n}
I_n=o_n(1).
\end{equation}
Set $$K(y) \coloneqq \int_{\rd}\frac{\abs{\phi(x)-\phi(y)}^p}{\abs{x-y}^{d+sp}}\dx, \text{ for } y \in \rd.$$ 
We show that  
\begin{equation}\label{eq:K}
     K \in L^{\infty}(\rd)\cap L^{\frac{d}{sp}}(\rd).
\end{equation}
To prove $K \in L^{\infty}(\rd)$, we split
\begin{align*}
    K(y)=\int_{|x-y|<1}\frac{\abs{\phi(x)-\phi(y)}^p}{\abs{x-y}^{d+sp}}\dx+\int_{|x-y|\geq 1}\frac{\abs{\phi(x)-\phi(y)}^p}{\abs{x-y}^{d+sp}}\dx :=K_1(y)+K_2(y).
\end{align*}
Since $\phi$ is Lipschitz, $|\phi(x)-\phi(y)| \leq \|\nabla \phi\|_{L^{\infty}}|x-y|$. On $|x-y|<1$, we have
\begin{align*}
    K_1(y)\leq C\int_{|z|<1}|z|^{p-d-sp}\dz=C\int_0^1 r^{p-sp-1}\dr<\infty. 
\end{align*}
Now, since $|\phi(x)-\phi(y)| \leq 2\|\phi\|_{L^\infty(\rd)}$,
\begin{equation*}
    K_2(y) \leq C\int_{|z| \geq 1} |z|^{-d-sp}\dz=C\int_1^{\infty}r^{-sp-1}\dr<\infty.
\end{equation*}
So, $K(y) \leq C$ for all $y \in \rd$. Now, it is required to show $K \in L^{\frac{d}{sp}}(\rd)$. If $\abs{y}\ge 4$ and $x \in \supp(\phi)$ which is $B(0,2)$, then we have $\phi(y)=0$ and $\abs{x-y}\ge \frac{\abs{y}}{2}$. Hence $K(y)\le C \abs{y}^{-(d+sp)}$ for $\abs{y}\ge 4$. As $(d+sp)\frac{d}{sp}>d$, we get $K \in L^{\frac{d}{sp}}(B(0,4)^c)$. Also,
\begin{equation*}
    \int_{|y|<4}K(y)^{\frac{d}{sp}}\dy \leq C^{\frac{d}{sp}}|B(0,4)|,
\end{equation*}
implies $K \in L^{\frac{d}{sp}}(B(0,4))$. Thus, \eqref{eq:K} holds. Using H\"{o}lder's inequality with the pair $(\p,p)$ gives $$\abs{I_n}\le M^{p-1}\left( \int_{\rd}\abs{\hat{u}_n}^pK \dz\right)^{\frac1p},$$ where for every $R>0$,
\begin{gather*}
     \int_{\rd}\abs{\hat{u}_n}^pK \dz \le \norm{K}_{L^{\infty}(\rd)}\norm{\hat{u}_n}_{L^p(B(0,R))}^p+\norm{\hat{u}_n}^p_{L^{p^*_s}(\rd)}\left( \int_{B(0,R)^c}K^{\frac{d}{sp}}\dz\right)^{\frac{sp}{d}},
\end{gather*}
using H\"{o}lder's inequality with the pair $\left( \frac{p^*_s}{p},\frac{d}{sp}\right)$. The first term is $o_n(1)$ by (\ref{eq:local}) and the second is small for $R$ large, uniformly in $n$, by \eqref{eq:K}. So, the claim \eqref{eq:I-n} follows.

Next, by observing that  
\begin{equation*}
     \abs{\eta(x)\hat{u}_n(x)-\eta(y)\hat{u}_n(y)}^p \le 2^{p-1}\left( \eta(x)^p\abs{\hat{u}_n(x)-\hat{u}_n(y)}^p+\abs{\hat{u}_n(y)}^p\abs{\eta(x)-\eta(y)}^p\right),
\end{equation*}
we obtain
\begin{equation}\label{eq:cutoff}
    [\eta \hat{u}_n]_{s,p}^p \le 2^{p-1}\left( M^p+\int_{\rd}\abs{\hat{u}_n}^pK_{\eta}\dz \right)<\infty, \text{ where } K_{\eta}(y):=\int_{\rd}\frac{\abs{\eta(x)-\eta(y)}^p}{\abs{x-y}^{d+sp}}\dx,
\end{equation}
using the same bound with $\phi$ in place of $\eta$, which is the bound on $[\phi \hat{u}_n]_{s,p}$. Using instead the sharper elementary inequality $\abs{a+b}^p \le (1+\var)\abs{a}^p+C_{\var}\abs{b}^p$, valid for all $a,b \in \R$ and $\var>0$, we get for every $\var>0$,
\begin{equation}\label{eq:sharp}
     [\eta \hat{u}_n]_{s,p}^p \le (1+\var)E_n+C_{\var}\int_{\rd}\abs{\hat{u}_n}^pK_{\eta}\dz=(1+\var)E_n+o_n(1),
\end{equation}
where the last equality holds again by \eqref{eq:local} and \eqref{eq:K} applied to $K_{\eta}$. Now, we will look at the Hardy term. By Lemma \ref{lem:far}-(a) with $\zeta=-\sigma_n$, and then using \eqref{eq:sharp}, we get,
\begin{equation}\label{eq:hardy}
    \mu \int_{\rd}\frac{\abs{\eta \hat{u}_n}^p}{\abs{z+\sigma_n}^{sp}}\dz \le \frac{\mu}{\mu_{d,s,p}}\,[\eta \hat{u}_n]^p_{s,p}\le \theta_{\mu}(1+\var)E_n+o_n(1).
\end{equation}
We choose $\var:=\var_{\mu}$ as in \eqref{eq:theta-mu} so that $\theta_{\mu}\left( 1+\var_{\mu}\right)=\frac{1+\theta_{\mu}}{2}<1$. For the critical term, since $\eta^p \abs{\hat{u}_n}^{p^*_s}=\abs{\eta \hat{u}_n}^p \abs{\hat{u}_n}^{p^*_s-p}$ and $\supp \eta \subset B(0,2)$, H\"{o}lder's inequality with the pair $\left( \frac{p^*_s}{p^*_s-p},\frac{p^*_s}{p}\right)$ gives
\begin{equation}\label{eq:critical}
      \int_{\rd}\eta^p \abs{\hat{u}_n}^{p^*_s}\dz \le \left( \int_{B(0,2)}\abs{\hat{u}_n}^{p^*_s}\dz\right)^{\frac{p^*_s-p}{p^*_s}}\left( \int_{\rd}\abs{\eta \hat{u}_n}^{p^*_s}\dz\right)^{\frac{p}{p^*_s}}.
\end{equation}
The ball $B(0,2)$ is covered by $N$ balls of radius one. So, using \eqref{eq:choice-II}, we get 
\begin{align*}
    \int_{B(0,2)}\abs{\hat{u}_n}^{p^*_s}\dz \le N \delta.
\end{align*}
For the second factor, using \eqref{eq:sharp}, for every $\var>0$,
\begin{align*}
  \left( \int_{\rd}\abs{\eta \hat{u}_n}^{p^*_s}\dz \right)^{\frac{p}{p^*_s}} \le \frac{[\eta \hat{u}_n]_{s,p}^p}{\overline{S}} \le \frac{(1+\var)E_n+o_n(1)}{\overline{S}}.
\end{align*}
In particular, we take $\var = \var_{\mu}$. Therefore, \eqref{eq:critical} and the choice of $\delta^*$ in \eqref{eq:delta-star} yield
\begin{align*}
    \int_{\rd}\eta^p \abs{\hat{u}_n}^{p^*_s}\dz \le (N \delta)^{\frac{sp}{d}} \frac{(1+\var_{\mu})E_n+o_n(1)}{\overline{S}} \le \frac{1-\theta_{\mu}}{4}E_n + o_n(1).
\end{align*} 
Hence, using \eqref{eq:I-n}, \eqref{eq:hardy}, \eqref{eq:critical} into \eqref{eq:test}, we get
\begin{equation*}
    E_n \le \frac{1+\theta_{\mu}}{2}E_n+\frac{1-\theta_{\mu}}{4}E_n+o_n(1)=\frac{3+\theta_{\mu}}{4}E_n+o_n(1),
\end{equation*}
which implies $$\frac{1-\theta_{\mu}}{4}E_n \le o_n(1).$$ As $\theta_{\mu}<1$ and $E_n \le M^p$ by \eqref{eq:En}, this implies, $E_n=o_n(1)$. Then, \eqref{eq:critical} and $\eta \equiv 1$ on $B(0,1)$ gives
\begin{gather*}
 \int_{B(0,1)}\abs{\hat{u}_n}^{p^*_s}\dz \le \int_{\rd}\eta^p \abs{\hat{u}_n}^{p^*_s}\dz =o_n(1),
\end{gather*}
which contradicts \eqref{eq:choice} where this quantity equals $\de>0$. Therefore, we must have $\hat{u}\ne 0$.

\noi \textbf{Step 3:} In this step, we pass to the limit in \eqref{eq:rescaled-PS}. We consider two cases.
\medskip

\noi \underline{If \textbf{Type I} holds}: Here \eqref{eq:rescaled-PS} is written for $\overline{u}_n$ instead of $\hat{u}_n$, that is, with $\sigma_n$ replaced by $0$, since $\overline{u}_n=(C_{0,r_n})^{-1}\tilde{u}_n$ and by \eqref{eq:hardy-scale}. Passing to the limit with Lemma \ref{convergence-integrals}-(i) (with $\tilde{\al}=sp$ and $\tilde{\al}=0$) and Lemma \ref{convergence-integrals}-(ii) shows (exactly follows from \eqref{invariant-1}-\eqref{limit-1} with $\al=0$) that $\overline{u}$ weakly solves \eqref{limit-mu}. 
% We set
% \begin{equation*}
%        v:=\overline{u},  \text{ with scale } r_n \text{ and centre } 0.
% \end{equation*}
It remains to see $r_n \ra 0$ or $r_n \ra \infty$. Suppose instead $r_n \ra r_0 \in (0,\infty)$. Since $\overline{u}\ne 0$ there is $R>0$ with $\norm{\overline{u}}_{L^p(B(0,R))}>0$, and the compact embedding $\wps \embd L^p_{loc}(\rd)$ gives (as in \eqref{limitofrn-1}):
\begin{align*}
    0<\norm{\overline{u}}_{L^p(B(0,R))}=\norm{\overline{u}_n}_{L^p(B(0,R))}+o_n(1)=r_n^{-s}\norm{\tilde{u}_n}_{L^p(B(0,r_nR))}+o_n(1).
\end{align*}
As $r_n \ra r_0$, there exists $R_1>0$ with $B(0,r_nR)\subset B(0,R_1)$ for every large $n$, and $\tilde{u}_n \rightharpoonup 0$ in $\wps$ gives $\norm{\tilde{u}_n}_{L^p(B(0,R_1))}\ra 0$ by the same compact embedding. This is a contradiction, so $r_n \ra 0$ or $r_n \ra \infty$. For brevity, we denote $ v: = \hat{u}$.

\medskip
\noi \underline{If \textbf{Type II} holds}: We show $\hat{u}$ weakly solves \eqref{limit-zero}. Fix $\phi \in \wps$. By Lemma \ref{convergence-integrals}-(ii), $\mathcal{A}(\hat{u}_n,\phi)\ra \mathcal{A}(\hat{u},\phi)$, and by Lemma \ref{convergence-integrals}-(i) with $\tilde{\al}=0$, $\int_{\rd}\abs{\hat{u}_n}^{p^*_s-2}\hat{u}_n \phi \ra \int_{\rd}\abs{\hat{u}}^{p^*_s-2}\hat{u}\phi$. For the Hardy term, H\"{o}lder's inequality with the pair $(\p,p)$ gives
\begin{equation}\label{eq:hardy1}
     \left| \int_{\rd}\frac{\abs{\hat{u}_n}^{p-2}\hat{u}_n}{\abs{z+\sigma_n}^{sp}}\phi \dz \right| \le \left( \int_{\rd}\frac{\abs{\hat{u}_n}^{p}}{\abs{z+\sigma_n}^{sp}}\dz\right)^{\frac{1}{\p}} \left( \int_{\rd}\frac{\abs{\phi}^{p}}{\abs{z+\sigma_n}^{sp}}\dz\right)^{\frac{1}{p}}.
\end{equation}
By Lemma \ref{lem:far}-(a), the first factor of \eqref{eq:hardy1} is less than $\left( [\hat{u}_n]_{s,p}^p (\mu_{d,s,p})^{-1} \right)^{1/\p}$, which is bounded. By Lemma \ref{lem:far}-(b), applied to the function $\phi$, the second factor in $o_n(1)$. Hence, the Hardy term cancels out in the limit and $\hat{u}$ weakly solves \eqref{limit-zero}. For brevity, we denote $w:=\hat{u}$, $R_n:=r_n$ and $x_n=y_n$.
% We set \begin{equation*}
%       w:=\hat{u}, \quad R_n:=r_n, \quad x_n:=y_n,
% \end{equation*}
% so that $\abs{x_n}/R_n=\abs{\sigma_n}\ra \infty$.

\noi \textbf{Step 4:} Let $B_n=C_{0,r_n}v$ in \textbf{Type I} and $B_n=C_{x_n,R_n}w$ in \textbf{Type II}, and let $\tilde{\tilde{u}}_n:=\tilde{u}_n-B_n$.
We claim that $\{\tilde{\tilde{u}}_n\}$ is again a (PS) sequence for $I_{\mu,0,0}$ with $\tilde{\tilde{u}}_n \rightharpoonup 0$ in $\wps$, and that its level is
\begin{equation}\label{eq:new-level}
     \eta-I_{\mu,0,f}(\tilde{u})-I_{\mu,0,0}(v) \quad \text{in \textbf{Type I}}, \text{ and }  \eta-I_{\mu,0,f}(\tilde{u})-I_{0,0,0}(w) \quad \text{in \textbf{Type II}}.
\end{equation}
Observe that $\tilde{\tilde{u}}_n \rightharpoonup 0$ in $\wps$, due to the fact that $\tilde{u}_n \rightharpoonup 0$ in $\wps$, and $B_n \rightharpoonup 0$ in $\wps$, which follows applying Proposition \ref{lem:2}, since $\abs{\log (r_n)}\ra \infty$ (from \textbf{Type I}) and $\abs{\log(R_n)}+\abs{x_n}\ra \infty$ (from \textbf{Type II}).
In \textbf{Type I}, the rest of the claim is exactly Step 3 of the proof of Theorem \ref{PS-decomposition} with $\al=0$. Now, we consider \textbf{Type II}, where we incorporate the Hardy term. We define
\begin{align*}
    w_n(z):=(C_{x_n,R_n})^{-1}\tilde{\tilde{u}}_n(z)=\hat{u}_n(z)-w(z), \text{ for } z \in \rd,
\end{align*}
where $\hat{u}_n \rightharpoonup w$ in $\wps$ and $\hat{u}_n \ra w$ a.e. in $\rd$. Applying Lemma \ref{convergence-BL}-((i),(ii)) and then using \eqref{eq:invariance},
\begin{equation}\label{eq:BL-two}
     [\tilde{\tilde{u}}_n]_{s,p}^p=[w_n]_{s,p}^p=[\tilde{u}_n]_{s,p}^p-[w]_{s,p}^p+o_n(1), \quad \int_{\rd}\abs{\tilde{\tilde{u}}_n}^{p^*_s}\dx = \int_{\rd}\abs{\tilde{u}_n}^{p^*_s}\dx-\int_{\rd}\abs{w}^{p^*_s}\dx+o_n(1).
\end{equation}
For the Hardy term, we show
\begin{equation}\label{eq:hardy-same}
      \int_{\rd}\frac{\abs{\tilde{\tilde{u}}_n}^p}{\abs{x}^{sp}}\dx = \int_{\rd}\frac{\abs{\tilde{u}_n}^p}{\abs{x}^{sp}}\dx+o_n(1).
\end{equation}
By \eqref{eq:two-integrals} with $\tilde{\al}=sp$, the difference of the two sides of \eqref{eq:hardy-same} equals (after the change of variables):
\begin{equation*}
     \int_{\rd}\frac{\abs{\hat{u}_n-w}^p-\abs{\hat{u}_n}^p}{\abs{z+\sigma_n}^{sp}}\dz.
\end{equation*}
  Given $\var>0$, the elementary inequality $\left| \abs{a-b}^p-\abs{a}^p \right| \le \var \abs{a}^p+C_{\var}\abs{b}^p$ bounds the modulus of this quantity by
\begin{equation*}
     \var \int_{\rd}\frac{\abs{\hat{u}_n}^p}{\abs{z+\sigma_n}^{sp}}\dz+C_{\var}\int_{\rd}\frac{\abs{w}^p}{\abs{z+\sigma_n}^{sp}}\dz \le \frac{\var}{\mu_{d,s,p}}\,[\hat{u}_n]_{s,p}^p+C_{\var}\int_{\rd}\frac{\abs{w}^p}{\abs{z+\sigma_n}^{sp}}\dz,
\end{equation*}
  where we use Lemma \ref{lem:far}-(a). The second term tends to zero by Lemma \ref{lem:far}-(b). Letting $\var \ra 0$ proves \eqref{eq:hardy-same}. So using \eqref{eq:BL-two} and \eqref{eq:hardy-same}, we get
  \begin{gather*}
         I_{\mu,0,0}(\tilde{\tilde{u}}_n)=I_{\mu,0,0}(\tilde{u}_n)-\left( \frac{1}{p}[w]_{s,p}^p-\frac{1}{p^*_s}\int_{\rd}\abs{w}^{p^*_s}\dx\right)+o_n(1)=\eta-I_{\mu,0,f}(\tilde{u})-I_{0,0,0}(w)+o_n(1),
  \end{gather*}
which is \eqref{eq:new-level} in \textbf{Type II}.  We now show $\norm{I_{\mu,0,0}'(\tilde{\tilde{u}}_n)}_{(\wps)^*} = o_n(1)$. We write
\begin{equation*}
     C_n:=C_{0,r_n} \; \text{ and } \; \sigma_n:=0 \; \text{ in \textbf{Type I}}, \quad C_n:=C_{x_n,R_n} \; \text{ and } \; \sigma_n:=\frac{x_n}{R_n} \; \text{ in \textbf{Type II}},
\end{equation*}
so that in both cases $C_n^{-1}\tilde{u}_n=\hat{u}_n \rightharpoonup W$ and $C_n^{-1}\tilde{\tilde{u}}_n=\hat{u}_n-W$, where $W=v$ in \textbf{Type I} and $W=w$ in \textbf{Type II}. By \eqref{eq:isometry} we have,
\begin{equation}\label{eq:dual2}
\norm{I_{\mu,0,0}'(\tilde{\tilde{u}}_n)}_{(\wps)^*}=\norm{\mathcal{I}_{\sigma_n}'\left( \hat{u}_n-W\right)}_{(\wps)^*},
\end{equation}
so it is enough to show that the right-hand side of \eqref{eq:dual} tends to zero. We now split 
\begin{equation}\label{eq:dual-split}
     \mathcal{I}_{\sigma_n}'\left( \hat{u}_n-W\right)=\left[ \mathcal{I}_{\sigma_n}'\left( \hat{u}_n-W\right)-\mathcal{I}_{\sigma_n}'\left( \hat{u}_n\right)+\mathcal{I}_{\sigma_n}'\left( W\right)\right]+\mathcal{I}_{\sigma_n}'\left( \hat{u}_n\right)-\mathcal{I}_{\sigma_n}'\left( W\right),
\end{equation}
and look at the three parts. The bracket in \eqref{eq:dual-split} tends to $0$ in $(\wps)^*$ by Lemma \ref{lem:dual}-(b) applied to $v_n=\hat{u}_n$ and $v=W$.
The middle term tends to $0$ by \eqref{eq:isometry} applied to $\tilde{u}_n$,
\begin{equation*}
    \norm{\mathcal{I}_{\sigma_n}'\left( \hat{u}_n\right)}_{(\wps)^*}=\norm{I_{\mu,0,0}'\left( \tilde{u}_n\right)}_{(\wps)^*} = o_n(1).
\end{equation*}
The last term is $0$ in \textbf{Type I}, since for $\sigma_n=0$, $\mathcal{I}_{0}=I_{\mu,0,0}$ and $v$ weakly solves \eqref{limit-mu}. In \textbf{Type II}, for every $\psi \in \wps$, we have
\begin{equation*}
      \prescript{}{(\wps)^*}{\langle} \mathcal{I}_{\sigma_n}'(w),\psi{\rangle}_{\wps}=-\mu \int_{\rd}\frac{\abs{w}^{p-2}w}{\abs{z+\sigma_n}^{sp}}\psi \dz,
\end{equation*}
and H\"{o}lder's inequality with the pair $(\p,p)$ together with Lemma \ref{lem:far} give
\begin{equation*}
      \norm{\mathcal{I}_{\sigma_n}'(w)}_{(\wps)^*}\le \frac{\mu}{\mu_{d,s,p}^{\frac1p}}\left( \int_{\rd}\frac{\abs{w}^p}{\abs{z+\sigma_n}^{sp}}\dz\right)^{\frac{1}{\p}} = o_n(1).
\end{equation*}
Hence $\norm{I_{\mu,0,0}'(\tilde{\tilde{u}}_n)}_{(\wps)^*} = o_n(1)$. This completes Step 4.

\medskip

\noi \textbf{Step 5:} We show that the procedure stops after a finite number of steps, and the concentrations separate. In Steps 2-4, from a (PS) sequence of $I_{\mu,0,0}$ at level $\eta$ going weakly to $0$, we construct another (PS) sequence at level $\eta-I_{\mu,0,0}(v)$ or $\eta-I_{0,0,0}(w)$ each step lowers the level by at least $\frac{s}{d} \overline{S}_{\mu}^{d/sp}$ since
\begin{align}\label{lowerbound-II}
    I_{\mu,0,0}(v) \ge \frac{s}{d} \overline{S}_{\mu}^{\frac{d}{sp}} \; \text{ and } \; I_{0,0,0}(w) \ge \frac{s}{d} \overline{S}^{\frac{d}{sp}} \ge \frac{s}{d} \overline{S}_{\mu}^{\frac{d}{sp}}.
\end{align}
Let $\{\tilde{u}_n\}$ be any bounded (PS) sequence for $I_{\mu,0,0}$ at a level $\tilde{\eta}$. Now, we note the following
\begin{equation}\label{eq:level}
     I_{\mu,0,0}(\tilde{u}_n)-\frac{1}{p^*_s}\prescript{}{(\wps)^*}{\langle} I_{\mu,0,0}'(\tilde{u}_n),\tilde{u}_n{\rangle}_{\wps}=\left( \frac1p-\frac{1}{p^*_s}\right)[\tilde{u}_n]_{\mu}^p=\frac{s}{d}[\tilde{u}_n]_{\mu}^p \ge 0.
\end{equation}
Letting $n \ra \infty$ in \eqref{eq:level}, we get
\begin{equation}\label{eq:level-non-neg}
     \tilde{\eta}=\frac{s}{d}\lim_{n \ra \infty}[\tilde{u}_n]_{\mu}^p \ge 0.
\end{equation}
The first level is  $\eta_{(0)}:=\eta-I_{\mu,0,f}(u)$ by \eqref{eq:step-1}, and after $\ell$ removals the level is at most $\eta_{(0)}-\ell \frac{s}{d}\overline{S}_{\mu}^{\frac{d}{sp}}$. By \eqref{eq:level-non-neg}, this is non-negative, so
\begin{equation*}
     \ell \le \frac{d}{s}\,\overline{S}_{\mu}^{-\frac{d}{sp}}\left( \eta-I_{\mu,0,f}(\tilde{u})\right),
\end{equation*}
and the procedure stops after finitely many steps, say $k_1$ of \textbf{Type I} and $k_2$ of \textbf{Type II}, with the remaining (PS) sequence converging strongly to $0$. We index these stages by $\ell=1,\dots,k_1+k_2$. Let $B_n^{\ell}$ be the concentration removed at the $\ell$-th part of Steps 2-4, let $W^{\ell}$ be the associated profile, so that $B_n^{\ell}=C_{0,r_n^i}v_i$ with $W^{\ell}=v_i$ for \textbf{Type I} and $B_n^{\ell}=C_{x_n^j,R_n^j}w_j$ with $W^{\ell}=w_j$ for \textbf{Type II}. We now write 
\begin{equation}\label{eq:stages}
     \tilde{u}_n^{(0)}:=\tilde{u}_n, \quad \tilde{u}_n^{(\ell)}:=\tilde{u}_n^{(\ell-1)}-B_n^{\ell}, \quad \ell=1,\dots,k_1+k_2.
\end{equation}
Each stage passes to a subsequence, but since there are finitely many stages, we do not relabel.
Adding the $k_1+k_2$ identities to \eqref{eq:stages} gives, for every $n$,
\begin{gather*}
    \tilde{u}_n=\sum_{\ell=1}^{k_1+k_2}B_n^{\ell}+\tilde{u}_n^{(k_1+k_2)}.
\end{gather*}
The procedure is stopped at stage $k_1+k_2$, meaning that Step 1 yields $\tilde{u}_n^{(k_1+k_2)}\ra 0$ in $\wps$. 
% Since $\tilde{u}_n=u_n-u$ and the $B_n^{\ell}$ are the $C_{0,r_n^i}v_i$ and the $C_{x_n^j,R_n^j}w_j$, this is (iii), the $o_n(1)$ there being $\tilde{u}_n^{(k_1+k_2)}$.

By \eqref{eq:stages}, the first level is $\eta_{(0)}=\eta-I_{\mu,0,f}(u)$, and by \eqref{eq:new-level} each stage removes exactly the energy of the profile obtained, so we have,
\begin{equation}\label{eq:level-step}
     \eta_{(\ell)}=c_{(\ell-1)}-I_{\mu,0,0}\left( W^{\ell}\right) \; \text{ in \textbf{Type I}}, \quad \eta_{(\ell)}=c_{(\ell-1)}-I_{0,0,0}\left( W^{\ell}\right) \; \text{ in \textbf{Type II}}.
\end{equation}
Finally, $I_{\mu,0,0}$ is continuous on $\wps$ and $I_{\mu,0,0}(0)=0$, so $\tilde{u}_n^{(k_1+k_2)}\ra 0$ in $\wps$ gives
\begin{equation}\label{eq:last-level}
      \eta_{(k_1+k_2)}=\lim_{n \ra \infty}I_{\mu,0,0}\left( \tilde{u}_n^{(k_1+k_2)}\right)=I_{\mu,0,0}(0)=0.
\end{equation}
Adding the $k_1+k_2$ equalities \eqref{eq:level-step} and using \eqref{eq:last-level}, at the end and $\eta_{(0)}=\eta-I_{\mu,0,f}(u)$ at the start, the intermediate levels cancel and we are left with
\begin{equation*}
      0=\eta_{(k_1+k_2)}=\eta-I_{\mu,0,f}(u)-\sum_{i=1}^{k_1}I_{\mu,0,0}(v_i)-\sum_{j=1}^{k_2}I_{0,0,0}(w_j).
\end{equation*}
%which is (iv). Part (i) is proved in Step 1, part (ii) and part (v) in Step 5.

We write $C_n^{\ell}:=C_{y_n^{\ell},\rho_n^{\ell}}$, where $(\rho_n^{\ell},y_n^{\ell})$ are the parameters of stage $\ell$, so that $\rho_n^{\ell}:=r_n^{\ell}$ and $y_n^{\ell}:=0$ in \textbf{Type I}, and $\rho_n^{\ell}:=R_n^{\ell}$ and $y_n^{\ell}:=x_n^{\ell}$ in \textbf{Type II}, and $B_n^{\ell}=C_n^{\ell}W^{\ell}$ in both types. First, we note that
\begin{equation}\label{eq:stage-limit}
     \left( C_n^{\ell}\right)^{-1}\tilde{u}_n^{(\ell-1)}\rightharpoonup W^{\ell}\ne 0 \; \text{ in } \wps.
\end{equation}
Second, using \eqref{eq:stages} we note that
\begin{equation}\label{eq:stage-zero}
     \left( C_n^{\ell}\right)^{-1}\tilde{u}_n^{(\ell)}=\left( C_n^{\ell}\right)^{-1}\tilde{u}_n^{(\ell-1)}-W^{\ell}\rightharpoonup 0 \; \text{ in } \wps.
\end{equation}
For $\ell_1<\ell_2$, we consider
\begin{equation}\label{eq:relative}
     \tau_n^{\ell_1\ell_2}:=\frac{\rho_n^{\ell_2}}{\rho_n^{\ell_1}}, \quad \xi_n^{\ell_1\ell_2}:=\frac{y_n^{\ell_2}-y_n^{\ell_1}}{\rho_n^{\ell_1}}, \quad \text{so that}\quad \left( C_n^{\ell_1}\right)^{-1} \circ C_n^{\ell_2}=C_{\xi_n^{\ell_1\ell_2},\,\tau_n^{\ell_1\ell_2}},
\end{equation}
by \eqref{eq:composition}. Now, \eqref{eq:Lambda} for the pair $\left( \tau_n^{\ell_1\ell_2},\xi_n^{\ell_1\ell_2}\right)$ is
\begin{equation}\label{eq:separation}
     \La_n^{\ell_1\ell_2}=\left| \log \left( \frac{\rho_n^{\ell_2}}{\rho_n^{\ell_1}}\right)\right|+\frac{\abs{y_n^{\ell_2}-y_n^{\ell_1}}}{\rho_n^{\ell_2}},
\end{equation}
We need to prove that $\La_n^{\ell_1\ell_2}\ra \infty$ for all $\ell_1<\ell_2$.
We fix $\ell_1$ and argue by induction on $\ell_2$. Let $\ell_2>\ell_1$ and assume $\La_n^{\ell_1 \ell}\ra \infty$ for every $\ell$ with $\ell_1<\ell<\ell_2$. If $\ell_2=\ell_1+1$, then there is nothing to assume. Suppose $\La_n^{\ell_1\ell_2}\not\ra \infty$. Then up to a subsequence, $\La_n^{\ell_1\ell_2}\le M$ for some $M>0$, so $\tau_n^{\ell_1\ell_2}\in \left[ e^{-M},e^M\right]$ and $\abs{\xi_n^{\ell_1\ell_2}}\le \tau_n^{\ell_1\ell_2}M \le Me^M$, and again up to a further subsequence,
\begin{equation}\label{eq:escape}
    \tau_n^{\ell_1\ell_2}\ra \tau \in (0,\infty), \quad \xi_n^{\ell_1\ell_2}\ra \xi \in \rd.
\end{equation}
We now study $\tilde{u}_n^{(\ell_2-1)}$ of stage $\ell_1$ in two ways, and get two different weak limits.

\underline{ Going forward from stage $\ell_1$}: By \eqref{eq:stages} and \eqref{eq:relative}, 
\begin{gather}\label{eq:forward}
\left( C_n^{\ell_1}\right)^{-1}\tilde{u}_n^{(\ell_2-1)}=\left( C_n^{\ell_1}\right)^{-1}\tilde{u}_n^{(\ell_1)}-\sum_{\ell=\ell_1+1}^{\ell_2-1}C_{\xi_n^{\ell_1\ell},\,\tau_n^{\ell_1\ell}}W^{\ell}.
\end{gather}
The first term weakly goes to zero, from \eqref{eq:stage-zero}. 
Indeed, fix $\ell$ with $\ell_1<\ell<\ell_2$. By the induction hypothesis,
$$\Lambda_n^{\ell_1\ell}=\left|\log \tau_n^{\ell_1\ell}\right|+\frac{|\xi_n^{\ell_1\ell}|}{\tau_n^{\ell_1\ell}} \longrightarrow \infty.$$ Thus condition \textup{(i)} of Proposition \ref{lem:2} is satisfied by the
sequence of translation-dilation parameters $(\xi_n^{\ell_1\ell},\tau_n^{\ell_1\ell})$. Applying the implication \textup{(i)}$\Longrightarrow$\textup{(ii)} of Proposition \ref{lem:2} to the fixed function $W^\ell\in \wps$ gives
$C_{\xi_n^{\ell_1\ell},\tau_n^{\ell_1\ell}}W^\ell \rightharpoonup0 \text{ in }\wps.$ Since the sum in \eqref{eq:forward} contains only finitely many terms, the whole sum converges weakly to zero. Together with \eqref{eq:forward}, this gives us
%$(C_n^{\ell_1})^{-1}\widetilde u_n^{(\ell_2-1)} \rightharpoonup0 \text{ in }\wps$, we have
\begin{equation*}
     \left( C_n^{\ell_1}\right)^{-1}\tilde{u}_n^{(\ell_2-1)}\rightharpoonup 0 \; \text{ in } \wps.
\end{equation*}

\underline{Going backward from stage $\ell_2$}: By \eqref{eq:relative},
\begin{equation}
     \left( C_n^{\ell_1}\right)^{-1}\tilde{u}_n^{(\ell_2-1)}=C_{\xi_n^{\ell_1\ell_2},\,\tau_n^{\ell_1\ell_2}}\left(\left( C_n^{\ell_2}\right)^{-1}\tilde{u}_n^{(\ell_2-1)}\right),
\end{equation}
where the inner sequence weakly goes to $W^{\ell_2}$ by \eqref{eq:stage-limit}. The parameters converge by \eqref{eq:escape}. So we apply Lemma \ref{lem:1}-(b) and obtain
\begin{equation}\label{eq:backward}
    \left( C_n^{\ell_1}\right)^{-1}\tilde{u}_n^{(\ell_2-1)}\rightharpoonup C_{\xi,\tau}W^{\ell_2} \; \text{ in } \wps.
\end{equation}
Now $W^{\ell_2}\ne 0$ and $C_{\xi,\tau}$ is one-to-one, so $C_{\xi,\tau}W^{\ell_2}\ne 0$, and \eqref{eq:forward} contradicts \eqref{eq:backward}. This proves $\La_n^{\ell_1\ell_2}\ra \infty$, and the induction is complete.  
\qed
% If both stages are of \textbf{Type I}, then $y_n^{\ell_1}=y_n^{\ell_2}=0$ and \eqref{eq:separation} is the first part of (vi). If both are of \textbf{Type II}, it is the second part. If they are of different types, the separation holds anyway.

\section{Existence of weak solutions}\label{se-5} 
Having established the global compactness results for $0\leq\al <sp$, we now use them to construct a first positive solution. The argument is variational: we minimize the truncated functional in a region separated from the natural constraint $\Sigma^\alpha$, and then use the profile decomposition to recover strong convergence. For $\al \in [0, sp)$, we consider the following functional 
\begin{align*}
J_{\mu,\al,f}(u) =\frac{1}{p}[u]_{\mu}^{p}-\frac{1}{p_s^*(\al)}\int_{\rd}\frac{(u^+)^{p_s^*(\al)}}{|x|^{\al}}\,{\rm d}x -\prescript{}{(\wps)^*}{\langle}f,u{\rangle}_{\wps}, \; \forall \, u \in \wps.   
\end{align*}
If $u$ is a critical point of $J_{\mu,\al,f}$, then using $f \ge 0$, one can verify that $u$ is non-negative and weakly solves \eqref{MainEq}. 
To establish the existence of a critical point for $J_{\mu,\al,f}$, we consider the following sets
\begin{equation}\label{sigma-sets}
\begin{split}
\Sigma_1^{\al} &\coloneqq \big\{u\in\wps : u=0\mbox{ or }\Psi_{\al}(u)>0\big\}, \quad \Sigma_2^{\al} \coloneqq \big\{u\in\wps : \Psi_{\al}(u)<0\big\},\\
&\quad \quad \quad \Sigma^{\al} \coloneqq \big\{u\in\wps \setminus \{0\}  :\Psi_{\al}(u)=0\big\}.
\end{split}
\end{equation}
where $\Psi_{\al} :\wps (\rd)\to\R$ is defined by
\begin{align}\label{associated-functional}
  \Psi_{\al}(u) \coloneqq [u]_{\mu}^p-\left(\frac{p_s^*(\al)-1}{p-1}\right)\int_{\rd}\frac{|u|^{p_s^*(\al)}}{|x|^{\al}}\dx,  
\end{align}
and set
\begin{align}\label{infimum}
 c_{0}^{\al} \coloneqq \inf_{\Sigma_1^{\al}} J_{\mu,\al,f}(u), \text{ and } c_1^{\al} \coloneqq \inf_{\Sigma^{\al}} J_{\mu,\al,f}(u).   
\end{align}
We also recall the notation
$$\norm{u}_{\al} \coloneqq \left( \int_{\rd}\frac{\abs{u}^{p^*_s(\al)}}{\abs{x}^{\al}}\dx\right)^{\frac{1}{p^*_s(\al)}}, \; \forall \,u \in \wps.$$
The first step is to separate the minimizing region from its boundary. The following energy gap will also be used later in the min-max construction of the second solution.

\begin{lemma}\label{lem:psi-cont}
Let $\al \in [0,sp)$ and let $\{a_n\},\{b_n\}\subset \wps$ be bounded. Then
\begin{equation}\label{eq:psi-cont}
    [a_n-b_n]_{s,p}\ra 0 \Longrightarrow  \Psi_{\al}(a_n)-\Psi_{\al}(b_n)\ra 0.
\end{equation}
\end{lemma}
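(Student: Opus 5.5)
The plan is to split $\Psi_{\al}$ into its two pieces and prove a uniform continuity estimate for each on bounded sets. Set $M:=\sup_n\left([a_n]_{s,p}+[b_n]_{s,p}\right)<\infty$. By \eqref{associated-functional},
\begin{align*}
\Psi_{\al}(a_n)-\Psi_{\al}(b_n)=\left([a_n]_{\mu}^p-[b_n]_{\mu}^p\right)-\frac{p^*_s(\al)-1}{p-1}\left(\norm{a_n}_{\al}^{p^*_s(\al)}-\norm{b_n}_{\al}^{p^*_s(\al)}\right),
\end{align*}
and I show that each bracket tends to zero.

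For the first bracket, recall from \eqref{equivalent-norm} that $[\cdot]_{\mu}$ is a norm on $\wps$. It is the $p$-th root of $[u]_{s,p}^p-\mu\int_{\rd}\abs{u}^p\abs{x}^{-sp}\dx$, which is the difference of two $p$-homogeneous convex functionals, but the paper asserts the norm property; alternatively one can bypass it entirely (see below). The triangle inequality, together with the upper bound $[u]_{\mu}\le[u]_{s,p}$ (valid since $\mu>0$), gives
\begin{align*}
\bigl|[a_n]_{\mu}-[b_n]_{\mu}\bigr|\le[a_n-b_n]_{\mu}\le[a_n-b_n]_{s,p}\ra 0.
\end{align*}
Then the elementary bound $\abs{t^p-\tau^p}\le p\max\{t,\tau\}^{p-1}\abs{t-\tau}$ with $t,\tau\le M$ yields $[a_n]_{\mu}^p-[b_n]_{\mu}^p\ra0$.

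For a route that avoids the norm property, I treat the two pieces separately. First, $[\cdot]_{s,p}$ is a genuine norm, so $[a_n]_{s,p}^p-[b_n]_{s,p}^p\ra0$ by the same mean value bound. Second, the map $u\mapsto\left(\int_{\rd}\abs{u}^p\abs{x}^{-sp}\dx\right)^{1/p}$ is a weighted $L^p$ norm. By Minkowski's inequality and \eqref{HS}, its values at $a_n$ and $b_n$ differ by at most $\mu_{d,s,p}^{-1/p}[a_n-b_n]_{s,p}$, and both values are bounded by $\mu_{d,s,p}^{-1/p}M$. The mean value bound then finishes this piece as well.

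For the second bracket, $\norm{\cdot}_{\al}$ is the norm of the weighted space $L^{p^*_s(\al)}(\rd,\abs{x}^{-\al}\dx)$. By Minkowski's inequality and \eqref{HS1},
\begin{align*}
\bigl|\norm{a_n}_{\al}-\norm{b_n}_{\al}\bigr|\le\norm{a_n-b_n}_{\al}\le C(d,s,p,\al)^{-1/p}[a_n-b_n]_{s,p}\ra0.
\end{align*}
Both norms are bounded by $C^{-1/p}M$, so applying the mean value bound with exponent $p^*_s(\al)$ gives the conclusion. For $\al=0$ the same argument uses the Sobolev inequality with constant $\overline{S}$.

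There is no real obstacle. The only point needing care is that $[\cdot]_{\mu}^p$ is not obviously a power of a norm. I avoid this by handling the Gagliardo part and the Hardy part separately, as described above, so that only true norms are used.
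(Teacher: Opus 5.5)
Your argument is correct. Your second route differs from the paper's proof at exactly the point you flagged.

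The paper's proof is your first route. It uses the reverse triangle inequality for $[\cdot]_{\mu}$, namely $\bigl|[a_n]_{\mu}-[b_n]_{\mu}\bigr|\le[a_n-b_n]_{\mu}\le[a_n-b_n]_{s,p}$. It treats the critical term just as you do: Minkowski in the weighted space, then \eqref{HS1}, then Lipschitz continuity of $t\mapsto t^{q}$ on bounded intervals.

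Your second route splits $[\cdot]_{\mu}^p$ into $[\cdot]_{s,p}^p$ and the weighted $L^p$ Hardy term. This costs one extra use of \eqref{HS}, but it makes the argument valid for all $p\in(1,\infty)$. The statement \eqref{equivalent-norm} gives positivity, homogeneity and equivalence with $[\cdot]_{s,p}$, but not subadditivity. For $p\neq 2$ subadditivity is unproved, and for $p>2$ it genuinely fails.

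To see the failure, take $0\le u\le 1$ Lipschitz with $u\equiv 1$ on $B(0,R)$, and let $v$ be a bump supported in $B(0,1)$.
\begin{itemize}
\item Since $\abs{Du}^{p-2}=0$ on $B(0,R)\times B(0,R)$, the second derivative of $t\mapsto[u+tv]_{s,p}^p$ at $t=0$ is $O(R^{-sp})\norm{v}_{L^2(\rd)}^2$.
\item The second derivative of $t\mapsto\mu\int_{\rd}\abs{u+tv}^p\abs{x}^{-sp}\dx$ at $t=0$ is at least $p(p-1)\mu\int_{\rd}v^2\dx$.
\end{itemize}
Hence $t\mapsto[u+tv]_{\mu}$ is strictly concave near $t=0$ for large $R$. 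A positively $1$-homogeneous subadditive functional is convex, so this is impossible.

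So present only the split version: it proves the lemma as stated, while the paper's one-line argument needs exactly this repair.
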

\begin{proof}
We recall from \eqref{equivalent-norm} that $[\cdot]_{\mu}$ is a norm on $\wps$. Since $\mu>0$ we have $[u]_{\mu}\le [u]_{s,p}$, and \eqref{HS1} gives $\norm{u}_{\al}\le S_{\mu}^{-1/p}[u]_{\mu}\le S_{\mu}^{-1/p}[u]_{s,p}$. The reverse triangle inequality therefore gives,
\begin{equation}\label{eq:psi-cont-1}
\left| [a_n]_{\mu}-[b_n]_{\mu}\right| \le [a_n-b_n]_{\mu}\le [a_n-b_n]_{s,p}\ra 0,
\end{equation}
and, similarly, 
\begin{align}\label{eq:psi-cont-1-1}
   \left| \norm{a_n}_{\al}-\norm{b_n}_{\al}\right| \le \norm{a_n-b_n}_{\al}\le S_{\mu}^{-\frac1p}[a_n-b_n]_{s,p}\ra 0. 
\end{align}
Consider $T:=\left( 1+S_{\mu}^{-1/p}\right)\sup_n \max \left\{ [a_n]_{s,p},[b_n]_{s,p}\right\}<\infty$, so that $[a_n]_{\mu}$, $[b_n]_{\mu}$, $\norm{a_n}_{\al}$, and $\norm{b_n}_{\al}$ lie in $[0,T]$. For $q>1$ the map $t \mapsto t^q$ is Lipschitz on $[0,T]$, with constant $qT^{q-1}$. Applying this with $q=p$ to \eqref{eq:psi-cont-1} and with $q=p^*_s(\al)$ to \eqref{eq:psi-cont-1-1} gives
\begin{equation*}
     \left| [a_n]_{\mu}^p-[b_n]_{\mu}^p\right|\ra 0, \quad \left| \int_{\rd}\frac{\abs{a_n}^{p^*_s(\al)}}{\abs{x}^{\al}}\dx-\int_{\rd}\frac{\abs{b_n}^{p^*_s(\al)}}{\abs{x}^{\al}}\dx\right|\ra 0,
\end{equation*}
and \eqref{eq:psi-cont} follows from the definition of $\Psi_{\al}$.
\end{proof}

We require the following strict inequality for a weak solution to exist. 
\begin{lemma}\label{strict}
Let $\al \in [0, sp)$. If $$\norm{f}_{(\wps)^*} \le \left( \frac{1}{p} -  \frac{p-1}{p^*_s(\al)(p^*_s(\al) -1)}\right) \left( \frac{p-1}{p^*_s(\al)-1} S_{\mu,\al}^{\frac{p^*_s(\al)}{p}} \right)^{\frac{p-1}{p^*_s(\al) - p}},$$
then $c_{0}^{\al} < c_{1}^{\al}$.
\end{lemma}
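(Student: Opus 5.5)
The plan is the classical Tarantello-type argument: find a negative upper bound for $c_0^{\al}$ and a strictly larger lower bound for $c_1^{\al}$. Throughout, write $q:=p^*_s(\al)$ and $S:=S_{\mu,\al}$.

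\emph{Upper bound for $c_0^{\al}$.} Since $f\ge0$ is nontrivial, pick $\phi\ge0$ in $\wps$ with $\langle f,\phi\rangle>0$. For small $t>0$ we have $t\phi\in\Sigma_1^{\al}$, because $\Psi_\al(t\phi)=t^p[\phi]_\mu^p-O(t^q)>0$. Also
\[
J_{\mu,\al,f}(t\phi)=\tfrac{t^p}{p}[\phi]_\mu^p-O(t^q)-t\langle f,\phi\rangle<0
\]
for small $t$. Hence $c_0^{\al}<0$.

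\emph{Lower bound for $c_1^{\al}$.} Take $u\in\Sigma^{\al}$, so that
\[
\int_{\rd}\frac{|u|^q}{|x|^\al}\,\dx=\frac{p-1}{q-1}[u]_\mu^p .
\]
By \eqref{HS1} with $S$, we get $[u]_\mu^p\ge S\|u\|_\al^p$, and therefore
\[
\|u\|_\al^{q-p}\ge \frac{p-1}{q-1}S .
\]
Equivalently, $[u]_\mu \ge \rho:=\bigl(\tfrac{p-1}{q-1}S^{q/p}\bigr)^{1/(q-p)}$ after eliminating $\|u\|_\al$. Since $\int (u^+)^q|x|^{-\al}\le\int|u|^q|x|^{-\al}$,
\[
J_{\mu,\al,f}(u)\ge\Big(\frac1p-\frac{p-1}{q(q-1)}\Big)[u]_\mu^p-\|f\|_{(\wps)^*}[u]_{s,p}.
\]
The dual norm of $f$ should be measured against $[\cdot]_\mu$, or else the constant $C_{\text{eqiv}}$ enters. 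I would interpret $\|f\|$ in the $[\cdot]_\mu$-dual sense consistent with the statement, and flag this.

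Set $A:=\frac1p-\frac{p-1}{q(q-1)}>0$. The function $g(r)=Ar^p-\|f\|r$ is increasing on $r\ge\rho$ provided $\|f\|\le pA\rho^{p-1}$. The hypothesis is $\|f\|\le A\rho^{p-1}$, which is stronger, so $g$ is increasing there. Then $c_1^{\al}\ge g(\rho)=\rho(A\rho^{p-1}-\|f\|)\ge0$.

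\emph{Strictness.} We now have $c_0^{\al}<0\le c_1^{\al}$, which already gives the strict inequality. This is the point that makes the argument clean: no further minimization is needed. The only delicate step is the exponent bookkeeping for $\rho$ and the normalization of the dual norm. Concretely, I would check that $\|u\|_\al^{q-p}\ge\frac{p-1}{q-1}S$ combined with $[u]_\mu^p=\frac{q-1}{p-1}\|u\|_\al^q$ yields
\[
[u]_\mu^{p(q-p)}\ge\Big(\frac{p-1}{q-1}\Big)^{p}S^{q}.
\]
This gives exactly the threshold exponent $\frac{p-1}{q-p}$ appearing in the hypothesis.
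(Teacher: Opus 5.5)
Your proposal is correct and follows essentially the same route as the paper: on $\Sigma^{\al}$ you obtain $[u]_\mu\ge\rho$ (the paper's $\delta$), deduce $c_1^{\al}\ge\rho\bigl(A\rho^{p-1}-\norm{f}_{(\wps)^*}\bigr)\ge 0$, and then get $c_0^{\al}<0$ by testing small multiples $t\phi$. Two of your additions are justified: choosing $\phi\ge0$ with $\langle f,\phi\rangle>0$, and remarking that $\norm{f}_{(\wps)^*}$ must be read as the dual norm with respect to $[\cdot]_\mu$, a convention the paper uses tacitly when it bounds $\langle f,u\rangle$ by $\norm{f}_{(\wps)^*}[u]_\mu$.
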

\begin{proof}
For brevity, we call the R.H.S. $\var_0$. If $u \in \Sigma^{\al}$, then 
    \begin{align*}
        [u]_{\mu}^p = \frac{p^*_s(\al) -1}{p-1} \int_{\rd} \frac{|u|^{p^*_s(\al)}}{|x|^{\al}} \dx,
    \end{align*}
 and \eqref{HS1} yield the following 
 \begin{align*}
     [u]_{\mu}^{p^*_s(\al) - p} \ge \frac{p-1}{p^*_s(\al)-1} S_{\mu,\al}^{\frac{p^*_s(\al)}{p}} \Longrightarrow [u]_{\mu} \ge \left( \frac{p-1}{p^*_s(\al)-1} S_{\mu,\al}^{\frac{p^*_s(\al)}{p}} \right)^{\frac{1}{p^*_s(\al)-p}} := \delta \, ( \text{say}). 
 \end{align*}
 Moreover, for $u \in \Sigma^{\al}$, we get 
 \begin{align*}
     J_{\mu, \al, f} & = \left( \frac{1}{p} - \frac{p-1}{p^*_s(\al)(p^*_s(\al)-1)} \right) [u]_{\mu}^p - \prescript{}{(\wps)^*}{\langle} f, u {\rangle}_{\wps} \\
     & \ge [u]_{\mu} \left( \left( \frac{1}{p} - \frac{p-1}{p^*_s(\al)(p^*_s(\al)-1)} \right) [u]_{\mu}^{p-1} - \norm{f}_{(\wps)^*} \right) \\ 
     & \ge \delta \left( \left( \frac{1}{p} - \frac{p-1}{p^*_s(\al)(p^*_s(\al)-1)} \right) \delta^{p-1} - \norm{f}_{(\wps)^*} \right) = \delta \left( \var_{0} - \norm{f}_{(\wps)^*} \right)>0,
 \end{align*}
 which implies $c_1^{\al} \ge 0$. Next, from 
 \begin{align*}
     &J_{\mu, \al, f}(tu) = \frac{t^p}{p}[u]_{\mu}^p - \frac{t^{p^*_s(\al)}} {p^*_s(\al)} \int_{\rd} \frac{(u^+)^{p^*_s(\al)}}{|x|^{\al}} \dx - t\left<f,u\right>, \text{ and } \\
     &\Psi_{\al}(tu) = t^p[u]_{\mu}^p - \left(\frac{p^*_s(\al) - 1}{p-1}\right) t^{p^*_s(\al)} \int_{\rd} \frac{|u|^{p^*_s(\al)}}{|x|^{\al}} \dx,
 \end{align*}
 we note that $ J_{\mu, \al, f}(tu) <0$ and $\Psi_{\al}(tu)>0$ for every $t <<1$. Therefore, $c_0^{\al}\le J_{\mu, \al, f}(tu)<0 \le c_{1}^{\al}$, as  required. 
\end{proof}

\subsection{Existence of a positive solution: The case $\al>0$}
We first treat the case $0<\alpha<sp$. Theorem \ref{PS-decomposition} will be used to show that the minimizing Palais-Smale sequence can not lose energy through a nontrivial Hardy-Sobolev profile. 
\medskip

\noi \textbf{Proof of Theorem \ref{existence}}: We show that $J_{\mu,\al,f}$ has a critical point $u_{\al} \in \Sigma_1^{\al}$ with $J_{\mu,\al,f}(u_{\al})=c_0^{\al}$. We decompose the proof into a few steps.
		
\noi \textbf{Step~1:} In this step, we see $c_0^{\al}>-\infty$. 
Since $J_{\mu,\al,f}(u)\geq I_{\mu,\al,f}(u)$, it is enough to show that $I_{\mu,\al,f}$ is bounded from below. Note that for all $u \in \Sigma_1^{\al}$,
\begin{align}\label{31-7-3}
  I_{\mu,\al,f}(u)\geq \left( \frac{1}{p} - \frac{p-1}{p^*_s(\al)(p^*_s(\al)-1)} \right) [u]_{\mu}^p - \|f\|_{(\wps)^*} [u]_{\mu}.   
\end{align}
As the RHS is $p^{\text{th}}$ power function in $[u]_{\mu}$ (for $p>1$), $I_{\mu,\al,f}$ is bounded from below.

\noi \textbf{Step~2:} In this step, we show that there exists a bounded nonnegative (PS)-sequence $\{u_n\} \subset \Sigma_1^{\al}$ for $J_{\mu,\al,f}$ at level $c_0^{\al}$. Let $\{u_n\}\subset \Sigma_1^{\al}$ be such that $J_{\mu,\al,f}(u_n)\to c_0^{\al}$.  Applying Ekeland's variational principle on the complete metric space $\overline{\Sigma_1^{\al}}$, we get 
\begin{align}\label{ekeland}
    &c_0^{\al} \le J_{\mu,\al,f}(u_n) \le  c_0^{\al} + \frac{1}{n}, \text{ and } \no\\
    &J_{\mu,\al,f}(u_{n})\leq J_{\mu,\al,f}(v)+\frac1n[v-u_n]_{\mu}, \; \forall \, v\in \overline{\Sigma_1^{\al}}, \, v\neq u_{n}.
\end{align}
Using Taylor's expansion, 
\begin{align*}
   J_{\mu,\al,f}(v)=J_{\mu,\al,f}(u_{n})+ \prescript{}{(\wps)^*}{\langle} J_{\mu,\al,f}'(u_n), v-u_{n} {\rangle}_{\wps}+o([v-u_{n}]_{\mu}).
\end{align*}
For $t>0$ and $\phi \in \wps$, we consider $v=u_n+ t\phi$. From the Taylor's expansion,
\begin{align*}
    \Psi_{\al}(u_n \pm t \phi) = \Psi_{\al}(u_n) \pm t \prescript{}{(\wps)^*}{\langle} \Psi_{\al}'(u_n), \phi {\rangle}_{\wps} +o_t(1), 
\end{align*}
we observe that, if $u_n \in \Sigma^{\al}$ and $\prescript{}{(\wps)^*}{\langle} \Psi_{\al}'(u_n), \phi\rangle_{\wps} =c>0$, then for every $t>0$, $u_n-t\phi \in \Sigma_2^{\al}$. This forces the requirement of  $u_n \in \Sigma^{\al}_1$ for every large $n$. Since, $\norm{f}_{(\wps)^*} \le \var_0$, using Lemma \ref{strict}, there exists $n_0 \in \mathbb{N}$ such that $J_{\mu,\al,f}(u_n) < c_1^{\al}$ for every $n \ge n_0$. This implies that $u_n \in \Sigma_1^{\al}$ for every $n \ge n_0$. Now, for small $t>0$, we choose $v=u_n \pm t \phi \in \overline{\Sigma^{\al}_1}$ in \eqref{ekeland}, to get the following for every $n \ge n_0$:
$$-\frac tn\leq J_{\mu,\al,f}(u_n \pm t \phi)-J_{\mu,\al,f}(u_n)=\pm t \prescript{}{(\wps)^*}{\langle}J_{\mu,\al,f}'(u_n), \phi \rangle_{\wps}+o_t(1).$$
Dividing by $t$ and letting $t\to 0$, we obtain $$-\frac1n \leq \prescript{}{(\wps)^*}{\langle} J_{\mu,\al,f}'(u_n), \phi {\rangle}_{\wps} \le \frac1n,$$ which implies
 $$\norm{J_{\mu,\al,f}'(u_n)}_{(\wps)^*} \leq\frac1n.$$
Thus, $\{u_n\} $, is a (PS) sequence in $\Sigma_1^{\al}$ for $J_{\mu,\al,f}$ at level $c_0^{\al}$.  Moreover, as $J_{\mu,\al,f}(u)\geq I_{\mu,\al,f}(u)$, from \eqref{31-7-3} it follows that $\{u_n\} $ is a bounded sequence. Therefore, up to a subsequence $u_n\rightharpoonup u_{\al}$ in $\wps$ and $u_n\to u_{\al}$ a.e. in $\rd$. Moreover, 
\begin{align*}
   o_n(1)&=\prescript{}{(\wps)^*}\langle J'_{\mu,\al,f}(u_n), (u_n)^-\rangle_{\wps} \\
   &= \mathcal{A}(u_n,(u_n)^-)-\mu \int_{\rd} \frac{|u_n|^{p-2}u_n (u_n)^{-}}{|x|^{sp}} \dx-\prescript{}{(\wps)^*}\langle f, (u_n)^-\rangle_{\wps} \\ & \leq - [(u_n)^-]_{s,p}^p + \mu \int_{\rd} \frac{(u_n^-)^p}{|x|^{sp}} \dx 
   =-[(u_n)^-]_{\mu}^p,
\end{align*}
where the final inequality follows using $f\ge 0$, and an elementary inequality (see \cite[Lemma A.2]{BrPa}):
\begin{align*}
    \abs{a - b}^{p-2}(a-b)(a^{-} - b^{-}) \le -\abs{a^{-} - b^{-}}^{p}, \text{ for } a,b \in \R.
\end{align*}
Therefore, $(u_n)^- \to 0$ in $\wps$ and thus $(u_{\al})^-=0$ a.e. in $\rd$.  Consequently, without loss of generality, we can assume that $\{u_n\}$ is nonnegative.

\noi \textbf{Step 3:}  In this step, we show that $u_n\to u_{\al}$ in $\wps$ and $u_{\al}\in \overline{\Sigma_1^{\al}}$. Applying Theorem \ref{PS-decomposition}, we get
\begin{align}\label{split}
u_n =  u_{\al} +\sum_{i=1}^{k} C_{r_n^i}\tilde{u}_i + o_n(1), \;\text{ in }\wps,    
\end{align}
where $J_{\mu,\al,f}'(u_{\al}) =0$ and $\tilde{u}_i$ is a nonnegative solution of \eqref{homogeneous}. To prove Step~3, we need to show that $k=0$. Arguing by contradiction, suppose that $k\neq 0$ in \eqref{split}. Therefore, for $1\leq i\leq k$ we have
		\begin{align}\label{12-4-3}
		\Psi_{\al}\left(C_{r_n^i}\tilde{u}_i\right)=[\tilde{u}_i]_{\mu}^p-\left(\frac{p_s^*(\al)-1}{p-1}\right)\int_{\rd}\frac{|\tilde{u}_i|^{p_s^*(\al)}}{|x|^{\al}}\dx=\frac{p-p_s^*(\al)}{p-1}[\tilde{u}_i]_{\mu}^p<0.
		\end{align}			
		From Theorem \ref{PS-decomposition}, we also have
		$$c_0^{\al}=\lim_{n\to\infty} J_{\mu,\al,f} (u_n)=  J_{\mu,\al,f}(u_{\al})+\sum_{i=1}^{k}J_{\mu,\al,0}(\tilde u_i).$$
Since $\tilde u_i$ weakly solves \eqref{homogeneous}, $J_{\mu,\al,0}(\tilde u_i)\geq \frac{sp-\al}{p(d-\al)}S_{\mu,\al}^{\frac{d-\al}{sp-\al}}.$
Consequently, $ J_{\mu,\al,f}(u_{\al})<c_0^{\al}$. Therefore, $u_{\al} \not\in \Sigma_1^{\al}$ (as $\inf_{\Sigma_1^{\al}} J_{\mu,\al,f}(u,v)=:c_0^{\al}$) and
\begin{align}\label{12-4-4}
   \Psi_{\al}(u_{\al})\leq 0. 
\end{align}
Next, we evaluate $\Psi_{\al}\left(u_{\al} +\sum_{i=1}^{k}C_{r_n^i}\tilde{u}_i\right)$. We observe that $u_n \in \Sigma_1^{\al}$ implies $\Psi_{\al}(u_n)> 0$. We denote the sum in \eqref{split} by $U_n$. The sequences $\{u_n\}$ and $\{U_n\}$ are bounded in $\wps$ and $[u_n-U_n]_{s,p}\ra 0$ by \eqref{split}. Using Lemma \ref{lem:psi-cont} gives $\Psi_{\al}(u_n)-\Psi_{\al}(U_n)\ra 0$, which implies
\begin{align}\label{J8}
  0\leq \liminf_{n\rightarrow\infty}\Psi_{\al}(u_n)=\liminf_{n\rightarrow\infty} \Psi_{\al}\left(u_{\al} +\sum_{i=1}^{k} C_{r_n^i}\tilde{u}_i \right).  
\end{align}
Note that from Step~2, we already have $u_{\al}\geq 0$ and $\tilde{u}_i\ge 0$ for all $i$. Therefore, 
\begin{align}\label{12-4-1}
\Psi_{\al}\bigg(u_{\al} +\sum_{i=1}^{k}C_{r_n^i}\tilde{u}_i \bigg)\no &=\bigg{[}u_{\al}+\sum_{i=1}^{k}C_{r_n^i}\tilde{u}_i\bigg{]}^p_{\mu} - \left(\frac{p_s^*(\al)-1}{p-1}\right)\int_{\rd}\bigg|u_{\al}+\sum_{i=1}^{k} C_{r_n^i}\tilde{u}_i \bigg|^{p^*_s(\al)}\frac{\dx}{|x|^{\al}}\no\\
&\underbrace{\leq}_{\text{Claim}} \Psi_{\al}(u_{\al})+ \sum_{i=1}^{k}\Psi_{\al} \left( C_{r_n^i}\tilde{u}_i \right)+ o_n(1).
\end{align}
Observe that  
\begin{align*}
    \int_{\rd}\bigg|u_{\al}+\sum_{i=1}^{k} C_{r_n^i}\tilde{u}_i \bigg|^{p^*_s(\al)}\frac{\dx}{|x|^{\al}} \ge \int_{\rd} \frac{|u_{\al}|^{p^*_s(\al)}}{|x|^{\al}} \dx + \sum_{i=1}^{k} \int_{\rd} \frac{|C_{r_n^i}\tilde{u}_i|^{p^*_s(\al)}}{|x|^{\al}} \dx. 
\end{align*}
To prove the claim in \eqref{12-4-1}, we need  
\begin{align}\label{identity}
  \bigg{[}u_{\al}+\sum_{i=1}^{k}C_{r_n^i}\tilde{u}_i\bigg{]}^p_{\mu}  - [u_{\al}]_{\mu}^p - \sum_{i=1}^{k} \bigg{[} C_{r_n^i}\tilde{u}_i \bigg{]}_{\mu}^p = o_n(1). 
\end{align}
We follow the same approach used in \cite[Proposition 3.3]{NS2025}. From Lemma \ref{Bahri}, we recall 
\begin{align}\label{bahri-coron}
\Bigg{|}\bigg{|}\sum_{j=0}^{k}a_j\bigg{|}^{p-1}\sum_{j=0}^{k}a_j - \sum_{j=0}^{k}\abs{a_j}^{p-1}{a_j}\Bigg{|} \le C(p) \sum_{0\leq i\neq j\leq k}\abs{a_j}^{p-1}\abs{a_i},    
\end{align}
with
\begin{align*}
   a_j= C_{r_n^j}\tilde{u}_j(x)-C_{r_n^j}\tilde{u}_j(y),\text{ where, (as notation) }C_{r_n^0}\tilde{u}_0\coloneqq u_{\al}. 
\end{align*}
Then using \eqref{bahri-coron} we have 
\begin{align}\label{ele-1}
   &\Bigg{|}\sum_{j=0}^{k} \left( C_{r_n^j}\tilde{u}_j(x)-C_{r_n^j}\tilde{u}_j(y) \right) \Bigg{|}^p - \sum_{j=0}^{k} \bigg{|}C_{r_n^j}\tilde{u}_j(x)-C_{r_n^j}\tilde{u}_j(y)\bigg{|}^p \no \\
    &= \left| \sum_{j=0}^{k} a_j\right|^p - \sum_{j=0}^{k} \abs{a_j}^p \le \Bigg{|}\bigg{|}\sum_{j=0}^{k}a_j\bigg{|}^{p-1}\sum_{j=0}^{k}a_j - \sum_{j=0}^{k}\abs{a_j}^{p-1}{a_j}\Bigg{|} \le C(p) \sum_{0\leq i\neq j\leq k}\abs{a_j}^{p-1}\abs{a_i} \no \\
    &= C(p) \sum_{0\leq i\neq j\leq k} \left| C_{r_n^j}\tilde{u}_j(x)-C_{r_n^j}\tilde{u}_j(y) \right|^{p-1}\left|C_{r_n^i}\tilde{u}_i(x)-C_{r_n^i}\tilde{u}_i(y)\right|,
\end{align}
for some constant $C(p)>0$. The first inequality in \eqref{ele-1} follows from 
\begin{align*}
    &\abs{l} - \sum_{j=0}^{k} \abs{a_j}^p \le \abs{l} - \abs{k_2} \le \abs{\abs{l} - \abs{k_2}} \le \abs{l-k_2}, \text{ with } \\
    &l= \bigg{|}\sum_{j=0}^{k}a_j\bigg{|}^{p-1}\sum_{j=0}^{k}a_j, \text{ and } k_2=\sum_{j=0}^{k}\abs{a_j}^{p-1}{a_j}.
\end{align*}
Hence using \eqref{ele-1},
\begin{align*}
    &\text{L.H.S of \eqref{split-2}}\\
    &\le C(p)\sum_{0\leq i\neq j\leq k} \;\iint_{\rd\times\rd} \frac{\left| C_{r_n^j}\tilde{u}_j(x)-C_{r_n^j}\tilde{u}_j(y) \right|^{p-1}\left|C_{r_n^i}\tilde{u}_i(x)-C_{r_n^i}\tilde{u}_i(y)\right|}{|x-y|^{d+sp}} \dxy.
\end{align*}
In view of Theorem \ref{PS-decomposition}, 
\begin{align*}
    \left| \log \left( \frac{r_n^i}{r_n^j} \right) \right| \rightarrow \infty, \text{ for } i \neq j, 1 \le i, j \le k.
\end{align*}
Therefore, applying Proposition \ref{weak-bub-II} leads to 
\begin{align*}
    \sum_{0\leq i\neq j\leq k} \;\iint_{\rd\times\rd} \frac{\left|C_{r_n^j}\tilde{u}_j(x)-C_{r_n^j}\tilde{u}_j(y)\right|^{p-1}\left|C_{r_n^i}\tilde{u}_i(x)-C_{r_n^i}\tilde{u}_i(y)\right|}{|x-y|^{d+sp}} \dxy = o_n(1).
\end{align*}
Thus, 
\begin{align}\label{split-2}
  \bigg{[}u_{\al}+\sum_{i=1}^{k}C_{r_n^i}\tilde{u}_i\bigg{]}^p_{s,p}  - [u_{\al}]_{s,p}^p - \sum_{i=1}^{k} \bigg{[} C_{r_n^i}\tilde{u}_i \bigg{]}_{s,p}^p = o_n(1). 
\end{align}
holds. Further, using \eqref{split}, $u_n\rightharpoonup u_{\al}$ in $\wps$, $u_n\to u_{\al}$ a.e. in $\rd$, and applying Lemma \ref{convergence-BL}-(b), we see that  
\begin{align*}
    \displaystyle \int_{\rd} \frac{\abs{u_{\al}+\sum_{i=1}^{k}C_{r_n^i}\tilde{u}_i}^{p^*_s(\al)}}{\abs{x}^{\al}} \dx -  \int_{\rd} \frac{\abs{u_{\al}}^{p^*_s(\al)}}{\abs{x}^{\al}} \dx - \int_{\rd} \frac{\abs{\sum_{i=1}^{k}C_{r_n^i}\tilde{u}_i}^{p^*_s(\al)}}{\abs{x}^{\al}} \dx = o_n(1).
\end{align*}
Therefore, \eqref{identity} holds, and we obtain \eqref{12-4-1}. Combining \eqref{12-4-3} and \eqref{12-4-4} with \eqref{12-4-1} we get a contradiction to \eqref{J8} for large enough $n \in \N$. Therefore, $k=0$ in \eqref{split}, which implies $u_n\to u_{\al}$ in $\wps$ as $n\to\infty$. Consequently, $\Psi_{\al}(u_n)\to \Psi_{\al}(u_{\al})$ as $n\to\infty$, which in turn implies $ u_{\al}\in \overline{\Sigma_1^{\al}}$. Further, applying the strong maximum principle for the $(s,p)$-super harmonic function (see \cite[Theorem 1.2]{DQ}), we get $u_{\al}>0$ a.e. in $\rd$.
\qed

\subsection{Existence of a positive solution: The case $\alpha=0$}  We now turn to $\alpha=0$. The variational construction of the minimizing sequence is the same, but Theorem \ref{PS-decomposition-II} allows two types of profiles. We therefore first establish a splitting property for the Hardy term that is compatible with both Hardy and pure Sobolev bubbles.
\begin{lemma}\label{lem:hardy-split}
Let $\tilde{u}_0,\tilde{u}_1,\dots,\tilde{u}_{k_1},\tilde{U}_0,\tilde{U}_1,\dots,\tilde{U}_{k_2} \in \wps$. Let $\{r_n^i\}$, $\{R_n^j\}\subset \R^+$ and $\{x_n^j\}\subset \rd$ satisfy
\begin{equation*}
      \abs{\log (r_n^i)}\ra \infty, \; \left| \log \left( \frac{r_n^i}{r_n^{i'}}\right)\right|\ra \infty, \;(1 \le i \ne i' \le k_1), \; \text{ and } \; \frac{\abs{x_n^j}}{R_n^j}\ra \infty \;(1\le j \le k_2).
\end{equation*}
We consider $\displaystyle U_n:=u_0+\sum_{i=1}^{k}C_{0,r_n^i}\tilde{u}_i+\sum_{j=1}^{k_2}C_{x_n^j,R_n^j}\tilde{U}_j$. Then the following identity holds:
\begin{align}\label{z0-hardy-split}
    \int_{\rd}\frac{\abs{U_n}^p}{\abs{x}^{sp}}\dx = \int_{\rd}\frac{\abs{u_0}^p}{\abs{x}^{sp}}\dx+\sum_{i=1}^{k_1}\int_{\rd}\frac{\abs{\tilde{u}_i}^p}{\abs{x}^{sp}}\dx+o_n(1).
\end{align}
\end{lemma}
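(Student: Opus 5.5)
The idea is to treat the Hardy integral as the $p$-th power of an $L^p$ norm with respect to the measure $\mathrm{d}m:=\abs{x}^{-sp}\dx$, and to show that the cross terms between distinct summands vanish. Write $U_n=\sum_{\ell=0}^{k_1+k_2}a_n^\ell$, where $a_n^0=u_0$, $a_n^i=C_{0,r_n^i}\tilde u_i$ for $1\le i\le k_1$, and the remaining $a_n^\ell$ are $C_{x_n^j,R_n^j}\tilde U_j$. The pointwise inequality obtained from Lemma \ref{Bahri} in the same way as \eqref{ele-1} gives
\begin{align*}
\left|\,\abs{U_n}^p-\sum_{\ell}\abs{a_n^\ell}^p\right|\le C(p)\sum_{\ell\ne \ell'}\abs{a_n^\ell}^{p-1}\abs{a_n^{\ell'}}.
\end{align*}
Integrating against $\mathrm{d}m$ bounds the error by $C(p)\sum_{\ell\ne\ell'}\mathcal{H}(a_n^\ell,a_n^{\ell'})$, with $\mathcal{H}$ as in \eqref{H-function}. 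The diagonal terms are handled directly. By \eqref{eq:hardy-scale}, $\int\abs{C_{0,r_n^i}\tilde u_i}^p\abs{x}^{-sp}=\int\abs{\tilde u_i}^p\abs{x}^{-sp}$. For the translated profiles, \eqref{eq:two-integrals} with $\tilde\al=sp$ and Lemma \ref{lem:far}-(b) give $\int\abs{C_{x_n^j,R_n^j}\tilde U_j}^p\abs{x}^{-sp}\ra0$, since $\abs{x_n^j}/R_n^j\ra\infty$. This already produces the right-hand side of \eqref{z0-hardy-split}, so it remains to show $\mathcal{H}(a_n^\ell,a_n^{\ell'})\ra0$ for every $\ell\ne\ell'$.

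The cross terms split into four cases.
\begin{enumerate}[(1)]
\item Cross terms between $u_0$ and $C_{0,r_n^i}\tilde u_i$, in either order. Lemma \ref{lem:3}-(a) applies because $\abs{\log r_n^i}\ra\infty$.
\item Cross terms between two Hardy bubbles $i\ne i'$. Lemma \ref{lem:3}-(b) applies because $\abs{\log(r_n^i/r_n^{i'})}\ra\infty$.
\item Any cross term containing a translated profile $C_{x_n^j,R_n^j}\tilde U_j$. Apply Lemma \ref{lem:3}-(c) with $g_n$ equal to the other summand. The hypothesis $\sup_n\int\abs{g_n}^p\abs{x}^{-sp}<\infty$ follows from \eqref{HS} and the invariance \eqref{eq:invariance}, since every $a_n^\ell$ is an isometric image of a fixed function and is therefore bounded in $\wps$.
\item Cross terms between two translated profiles. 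No mutual separation is needed here: case (3) already applies, because only the escape $\abs{x_n^j}/R_n^j\ra\infty$ of one factor is used. The H\"older bound \eqref{eq:holder-II} makes this explicit.
\end{enumerate}

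Summing finitely many $o_n(1)$ terms gives \eqref{z0-hardy-split}. I read the $\tilde U_0$ in the statement as superfluous and the upper limit $k$ as $k_1$.

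The step that needs care is the pointwise splitting itself. Lemma \ref{Bahri} controls $J_p$ of the sum, not $\abs{\cdot}^p$ of the sum, so I need the elementary estimate
\begin{align*}
\left|\,\abs{\textstyle\sum a_\ell}^p-\sum\abs{a_\ell}^p\right|\le C\sum_{\ell\ne\ell'}\abs{a_\ell}^{p-1}\abs{a_{\ell'}}.
\end{align*}
I expect to prove it by the mean value theorem, in the form $\abs{a+b}^p-\abs{a}^p=p\int_0^1 J_p(a+tb)\,b\,\mathrm{d}t$, inducting on the number of summands. There is a catch: a term like $\abs{a}^{p-2}\abs{b}^2$ can appear, and for $p<2$ it is not of the required form. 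I would instead first bound everything by $\abs{b}^p+\abs{a}^{p-1}\abs{b}$. If that fails, the fallback is to prove directly that $\int\big|\abs{U_n}^p-\sum_\ell\abs{a_n^\ell}^p\big|\,\mathrm{d}m\ra0$ using $\abs{\abs{a+b}^p-\abs{a}^p}\le\var\abs{a}^p+C_\var\abs{b}^p$, peeling off one profile at a time on regions where it dominates. This would use the same vanishing of $\mathcal{H}$ and the mass-escape statements of Lemma \ref{lem:3}.
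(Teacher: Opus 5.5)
Your proposal is correct and is essentially the paper's proof: a Bahri-type pointwise splitting integrated against $\abs{x}^{-sp}\dx$, the diagonal terms handled by \eqref{eq:hardy-scale} and Lemma \ref{lem:far}-(b), and the cross terms killed by Lemma \ref{lem:3}-(a), (b), (c) exactly as you assign them (the paper likewise uses only the escape $\abs{x_n^j}/R_n^j\ra\infty$ and never needs mutual separation of the translated profiles). The two-sided $\abs{\cdot}^p$ inequality you flag is indeed what is needed, and the paper simply cites Lemma \ref{Bahri} for it. It closes without any $\abs{a}^{p-2}\abs{b}^2$ term: for two summands, label them so that $\abs{b}\le\abs{a}$; the mean value theorem then gives $\big|\abs{a+b}^p-\abs{a}^p\big|\le p2^{p-1}\abs{a}^{p-1}\abs{b}$, and $\abs{b}^p\le\abs{a}^{p-1}\abs{b}$. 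Induction on the number of summands follows from $\big(\sum\abs{a_\ell}\big)^{p-1}\le C\sum\abs{a_\ell}^{p-1}$.
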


\begin{proof}
 Applying Lemma \ref{Bahri} with $q=p$, $\ell=k_1+k_2$ and, at each fixed $x \in \rd$, with the numbers $u_0(x)$, $C_{0,r_n^i}v_i(x)$ and $C_{x_n^j,R_n^j}\tilde{U}_j(x)$. Dividing by $\abs{x}^{sp}$ and integrating over $\rd$ we obtain 
\begin{equation*}
      \left| \int_{\rd}\frac{\abs{U_n}^p}{\abs{x}^{sp}}\dx-\int_{\rd}\frac{\abs{u_0}^p}{\abs{x}^{sp}}\dx - \sum_{i=1}^{k_1}\int_{\rd}\frac{\abs{C_{0,r_n^i}\tilde{u}_i}^p}{\abs{x}^{sp}}\dx-\sum_{j=1}^{k_2} \int_{\rd}\frac{\abs{C_{x_n^j,R_n^j}\tilde{U}_j}^p}{\abs{x}^{sp}}\dx \right| \le C \sum \mathcal{H}(\cdot,\cdot),
\end{equation*}
where $\mathcal{H}$ is defined in \eqref{H-function}, and the last sum runs over all ordered pairs of two different members of the family
\begin{equation}\label{eq:family}
     \mathcal{F}_n:=\left\{ u_0,\; C_{0,r_n^1}\tilde{u}_1,\dots,C_{0,r_n^{k_1}}\tilde{u}_{k_1},\; C_{x_n^1,R_n^1}\tilde{U}_1,\dots,C_{x_n^{k_2},R_n^{k_2}}\tilde{U}_{k_2} \right\}.
\end{equation}
In the R.H.S, each of these terms tends to $0$. Applying Lemma \ref{lem:3}-(a) if the pair is $(u_0,C_{0,r_n^i}\tilde{u}_i)$, applying Lemma \ref{lem:3}-(b) if the pair is $(C_{0,r_n^i}\tilde{u}_i,C_{0,r_n^{i'}}\tilde{U}_{i'})$ with $i \ne i'$, and applying Lemma \ref{lem:3}-(c) whenever one member of the pair is $C_{x_n^j,R_n^j}\tilde{U}_j$.
Finally, using \eqref{eq:hardy-scale}, $$\int_{\rd}\frac{\abs{C_{0,r_n^i}\tilde{u}_i}^p}{\abs{x}^{sp}}\dx=\int_{\rd}\frac{\abs{\tilde{u}_i}^p}{\abs{x}^{sp}}\dx,$$ and using \eqref{eq:two-integrals} and Lemma \ref{lem:far}-(b), $$\displaystyle\int_{\rd}\frac{\abs{C_{x_n^j,R_n^j}\tilde{U}_j}^p}{\abs{x}^{sp}}\dx=o_n(1).$$ Therefore, the identity \eqref{z0-hardy-split} holds. 
\end{proof}

\noi \textbf{Proof of Theorem \ref{existence-II}:} 
With this Hardy splitting available, the argument used for $0<\alpha<sp$ can now be adapted to the two-profile decomposition
of Theorem \ref{PS-decomposition-II}. We consider the following functionals 
\begin{align*}
    &J_{\mu,0,f}(u)=\frac{1}{p}[u]_{\mu}^p-\frac{1}{p^*_s}\int_{\rd}(u^+)^{p^*_s}\dx-\prescript{}{(\wps)^*}{\langle}f,u{\rangle}_{\wps},  \; \forall \, u \in \wps, \text{ and } \\ & \psi_0(u) \coloneqq [u]_{\mu}^p-\left( \frac{p^*_s-1}{p-1}\right)\int_{\rd}\abs{u}^{p^*_s}\dx, \; \forall \, u \in \wps.
\end{align*}
In view of \eqref{sigma-sets}, let $\Sigma_1^{0},\Sigma_2^{0},\Sigma^{0}$ and $c_0^0,c_1^0$ be defined from $\psi_0$, and $\Sigma_1^{\al},\Sigma_2^{\al},\Sigma^{\al}$ and $c_0^{\al},c_1^{\al}$ be defined from $\Psi_{\al}$. In view of the bound \eqref{31-7-3} with $\al=0$, we get
\begin{equation}\label{eq:below}
      J_{\mu,0,f}(u)\ge I_{\mu,0,f}(u)\ge \left( \frac{1}{p}-\frac{p-1}{p^*_s(p^*_s-1)}\right)[u]_{\mu}^p-\norm{f}_{(\wps)^*}[u]_{\mu}, \quad \forall\, u \in \overline{\Sigma_1^0}.
\end{equation}
The above inequality implies that $J_{\mu,0,f}$ is bounded from below on $\overline{\Sigma_1^0}$.
Further, since  $\norm{f}_{(\wps)^*} \le \var_0$, using Lemma \ref{strict} and Ekeland's variational principle, we can find a bounded non-negative sequence $\{u_n\} \subset \Sigma_1^0$ so that
$$ \norm{J_{\mu,0,f}'(u_n)}_{(\wps)^*}\le \frac1n, \quad \forall\, n \ge n_0.$$
Now, applying Theorem \ref{PS-decomposition-II} we have the following decomposition:
\begin{equation}\label{eq:split}
      u_n=u_0+\sum_{i=1}^{k_1}C_{0,r_n^i}\tilde{u}_i+\sum_{j=1}^{k_2}C_{x_n^j,R_n^j}\tilde{U}_j+o_n(1) \quad \text{in } \wps,
\end{equation}
where $J_{\mu,0,f}'(u_0)=0$, $u_0 \ge 0$, each $\tilde{u}_i$ weakly solves \eqref{limit-mu} and each $\tilde{U}_j$ weakly solves \eqref{limit-zero}. We will show $k_1=k_2=0$. Our proof, again, is by contradiction. 

% We have $u_0 \ge 0$, and each $\tilde{u}_i$ and each $\tilde{U}_j$ is nonnegative. Again, we have $\left(C_{0,r_n^i}\right)^{-1}(u_n)^-\ra 0$ and $\left(C_{x_n^j,R_n^j}\right)^{-1}(u_n)^-\ra 0$ in $\wps$, therefore each $\tilde{u}_i$ and each $\tilde{U}_j$ is a weak limit of nonnegative functions, and the nonnegative cone is convex and closed, hence weakly closed. 

If $v$ weakly solves \eqref{limit-mu} then $[v]_{\mu}^p=\int_{\rd}\abs{v}^{p^*_s}\dx$, and $[C_{r,0}v]_{\mu}=[v]_{\mu}$ hold using \eqref{eq:invariance} and \eqref{eq:hardy-scale}.
So, 
 \begin{equation}\label{eq:psi-typeI}
     \psi_0\left(C_{0,r_n^i}\tilde{u}_i\right)=\frac{p-p^*_s}{p-1}[\tilde{u}_i]_{\mu}^p<0.
 \end{equation}
If $w$ weakly solves \eqref{limit-zero}, then $[w]_{s,p}^p=\int_{\rd}\abs{w}^{p^*_s}\dx$, and by (\eqref{eq:two-integrals}, with $\tilde{\al}=sp$ together with Lemma \ref{lem:far}-(b) give $$[C_{x_n^j,R_n^j}\tilde{U}_j]_{\mu}^p=[\tilde{U}_j]_{s,p}^p+o_n(1).$$ Hence,
\begin{equation}\label{eq:psi-typeII}
     \psi_0\left(C_{x_n^j,R_n^j}\tilde{U}_j\right)=\frac{p-p^*_s}{p-1}[\tilde{U}_j]_{s,p}^p+o_n(1)<0, \quad \text{for } n \text{ large}.
\end{equation}
From Theorem \ref{PS-decomposition-II}, we have 
\begin{equation*}
     c_0^0=\lim_{n \ra \infty}J_{\mu,0,f}(u_n)=J_{\mu,0,f}(u_0)+\sum_{i=1}^{k_1}I_{\mu,0,0}(\tilde{u}_i)+\sum_{j=1}^{k_2}I_{0,0,0}(\tilde{U}_j)\ge J_{\mu,0,f}(u_0)+\frac{s}{d}\overline{S}_{\mu}^{\frac{d}{sp}},
\end{equation*}
since $k_1+k_2 \ge 1$. So $J_{\mu,0,f}(u_0)<c_0^0=\inf_{\Sigma_1^0}J_{\mu,0,f}$. Therefore $u_0 \notin \Sigma_1^0$ and
\begin{equation}\label{eq:psi-u0}
    \psi_0(u_0)\le 0.
\end{equation}
We write $U_n$ for the sum in \eqref{eq:split}, and we have $\psi_0(u_n) > 0$ for every $n \ge n_0$. The sequences $\{u_n\}$ and $\{U_n\}$ are bounded in $\wps$ and $[u_n-U_n]_{s,p}\ra 0$ by \eqref{eq:split}. Using Lemma \ref{lem:psi-cont} with $\al=0$ gives $\psi_0(u_n)-\psi_0(U_n)\ra 0$, and so we have,
\begin{equation}\label{psi-lim-inf}
     0 \le \liminf_{n \ra \infty}\psi_0(u_n)=\liminf_{n \ra \infty}\psi_0(U_n).
\end{equation}
We now claim 
\begin{equation}\label{eq:psi-additive}
     \psi_0(U_n)\le \psi_0(u_0)+\sum_{i=1}^{k}\psi_0 \left( C_{0,r_n^i}\tilde{u}_i\right)+\sum_{j=1}^{k_2}\psi_0 \left( C_{x_n^j,R_n^j}\tilde{U}_j\right)+o_n(1).
\end{equation}
First, since each term is nonnegative and $p^*_s>1$,
\begin{align*}
    \int_{\rd}\abs{U_n}^{p^*_s}\dx \ge \int_{\rd}\abs{u_0}^{p^*_s}\dx+\sum_{i=1}^{k_1}\int_{\rd}\abs{C_{0,r_n^i}\tilde{u}_i}^{p^*_s}\dx+\sum_{j=1}^{k_2}\int_{\rd}\abs{C_{x_n^j,R_n^j}\tilde{U}_j}^{p^*_s}\dx,
\end{align*}
So, we need
\begin{equation}\label{eq:mu-additive}
      [U_n]_{\mu}^p=[u_0]_{\mu}^p+\sum_{i=1}^{k_1}[C_{0,r_n^i}\tilde{u}_i]_{\mu}^p+\sum_{j=1}^{k_2}[C_{x_n^j,R_n^j}\tilde{U}_j]_{\mu}^p+o_n(1).
\end{equation}
For the Gagliardo part of (\ref{eq:mu-additive}), we argue as in \eqref{bahri-coron}-\eqref{split-2}. For the Hardy part of (\ref{eq:mu-additive}), we use Lemma \ref{lem:hardy-split} to have
\begin{equation*}
    \int_{\rd}\frac{\abs{U_n}^p}{\abs{x}^{sp}}\dx=\int_{\rd}\frac{\abs{u_0}^p}{\abs{x}^{sp}}\dx+\sum_{i=1}^{k_1}\int_{\rd}\frac{\abs{C_{0,r_n^i}\tilde{u}_i}^p}{\abs{x}^{sp}}\dx+\sum_{j=1}^{k_2}\int_{\rd}\frac{\abs{C_{x_n^j,R_n^j}\tilde{U}_j}^p}{\abs{x}^{sp}}\dx+o_n(1),
\end{equation*}
since the last sum is itself $o_n(1)$ by (\ref{eq:two-integrals}) and Lemma \ref{lem:far}-(b). By subtracting, \eqref{eq:mu-additive} follows, and \eqref{eq:psi-additive} is proved.
Putting \eqref{eq:psi-typeI}, \eqref{eq:psi-typeII} and \eqref{eq:psi-u0} into \eqref{eq:psi-additive}, we get $\limsup_n \psi_0(U_n)<0$, since $k_1+k_2 \ge 1$ and every term on the right of \eqref{eq:psi-additive} is nonpositive, with at least one of them bounded away from $0$. This contradicts \eqref{psi-lim-inf}. Therefore $k_1=k_2=0$ in \eqref{eq:split}, so $u_n \ra u_0$ in $\wps$. As a result, $\psi_0(u_n)\ra \psi_0(u_0)$, so $u_0 \in \overline{\Sigma_1^0}$, and $J_{\mu,0,f}(u_0)=c_0^0<0$. Finally, the strong maximum principle for $(s,p)$-superharmonic functions (see \cite[Theorem 1.2]{DQ}) gives $u_0>0$ a.e. in $\rd$. \qed

\section{A second positive solution}\label{se-6} The previous section gives a positive solution $u_\alpha$ with energy $c_0^\alpha<0$. We now look for a second critical point at a higher level. The argument has two ingredients. First, the global
compactness theorems show that the Palais-Smale condition is restored below the first bubbling level $c_0^\alpha+\Theta_{\mu,\alpha}$. Second, we construct a path from $u_\alpha$ to the opposite side of the variational constraint, with the maximum energy remaining strictly below this threshold. The resulting min-max level then produces the second solution.
Throughout the section, we use the following notations
\begin{align}\label{eq:q-alpha}
    q \coloneqq p^*_s(\al)=\frac{p(d-\al)}{d-sp}>p, \quad \frac1p-\frac1q=\frac{sp-\al}{p(d-\al)}, \quad \frac{q}{q-p}=\frac{d-\al}{sp-\al},
\end{align}
and we write $S_{\mu}$ for the constant \eqref{best-constant}, which for $\al=0$ is the constant $\overline{S}_{\mu}$. We denote 
\begin{align}\label{eq:Theta}
    \Theta_{\mu,\al}\coloneqq \left( \frac1p-\frac1q\right)S_{\mu}^{\frac{q}{q-p}}=\frac{sp-\al}{p(d-\al)}\,S_{\mu}^{\frac{d-\al}{sp-\al}},
\end{align}
which is the right-hand side of \eqref{fixed amount decrease}. So $\Theta_{\mu,\al}$ is the minimum energy of a nonzero solution of the homogeneous limit problem, for every $\al \in [0,sp)$.

\begin{remark}
For every $\al \in [0,sp)$, let $V_{\al}$ attains $S_{\mu}$, and $V_{\al}>0$ a.e. in $\rd$. We normalize $V_{\al}$ so that
\begin{equation}\label{eq:V-normal}
    [V_{\al}]_{\mu}^p=\int_{\rd}\frac{V_{\al}^{q}}{\abs{x}^{\al}}\dx=:\La.
\end{equation}
Indeed, replacing $V_{\al}$ by $cV_{\al}$ multiplies the two sides of \eqref{eq:V-normal} by $c^p$ and by $c^{q}$, so they become equal for exactly one $c>0$; and if $A$ and $B$ denote the two sides before the normalization, then $c^{q-p}=A/B$ and we have
\begin{equation*}
     \La=c^pA=A^{\frac{q}{q-p}}B^{-\frac{p}{q-p}}=\left( \frac{A}{B^{\frac{p}{q}}}\right)^{\frac{q}{q-p}}=S_{\mu}^{\frac{q}{q-p}},
\end{equation*}
since $AB^{-p/q}=S_{\mu}$ by \eqref{best-constant}. So we do not need to rescale $V_{\al}$. 
   
\end{remark}

\begin{remark}
By \eqref{eq:V-normal}, for every $t \ge 0$,
\begin{equation}\label{eq:V-energy}
      I_{\mu,\al,0}\left( tV_{\al}\right)=\frac{t^p}{p}[V_{\al}]^p_{\mu}-\frac{t^{q}}{q}\int_{\rd}\frac{V_{\al}^{q}}{\abs{x}^{\al}}\dx=\La \left( \frac{t^p}{p}-\frac{t^{q}}{q}\right).
\end{equation}
The function in the bracket has derivative $t^{p-1}-t^{q-1}=t^{p-1}\left( 1-t^{q-p}\right)$, which is positive for $0<t<1$ and negative for $t>1$, because $q>p$. So the function has one maximum, at $t=1$, of value $\Lambda\left(\frac1p-\frac1q\right)$. Therefore, from \eqref{eq:V-energy} with $\La=S_{\mu}^{q/(q-p)}$, we see that
\begin{equation}\label{eq:V-max}
      \max_{t \ge 0}I_{\mu,\al,0}\left( tV_{\al}\right)=\Theta_{\mu,\al}, \; \text{and the maximum is attained only at } t=1.
\end{equation}
This is the exact amount of energy we can gain along a path. 
\end{remark}

\begin{lemma}\label{lem-crit}
Let $u \in \wps$ satisfy $J_{\mu,\al,f}'(u)=0$. Then $u \ge 0$ a.e. in $\rd$ and $J_{\mu,\al,f}(u)\ge c_0^{\al}$.
\end{lemma}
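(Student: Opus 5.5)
Testing the equation $J_{\mu,\al,f}'(u)=0$ with $u^-$ gives nonnegativity, exactly as in Step~2 of the proof of Theorem \ref{existence}. Using $f\ge 0$, the vanishing of the critical term on $\{u<0\}$ (it involves only $u^+$), and the inequality of \cite[Lemma A.2]{BrPa}, we get
\[
0=\prescript{}{(\wps)^*}{\langle}J_{\mu,\al,f}'(u),u^-{\rangle}_{\wps}\le -[u^-]_{s,p}^p+\mu\int_{\rd}\frac{(u^-)^p}{\abs{x}^{sp}}\dx=-[u^-]_{\mu}^p .
\]
Since $[\cdot]_\mu$ is a norm, this forces $u^-=0$, so $u\ge0$ a.e.

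For the energy bound, I plan to use the fibering map $t\mapsto J_{\mu,\al,f}(tu)$ together with the sets $\Sigma_1^{\al},\Sigma^{\al},\Sigma_2^{\al}$. If $u=0$, then $u\in\Sigma_1^{\al}$ and $J(0)=0\ge c_0^{\al}$ trivially. Otherwise $u\ge0$, $u\ne0$, and testing with $u$ gives $[u]_\mu^p=\int_{\rd}|u|^q|x|^{-\al}\dx+\langle f,u\rangle$, where $q=p^*_s(\al)$. Set $A=[u]_\mu^p$, $B=\int_{\rd} u^q|x|^{-\al}\dx>0$ and $F=\langle f,u\rangle\ge0$, and consider
\[
h(t)=J_{\mu,\al,f}(tu)=\frac{t^p}{p}A-\frac{t^q}{q}B-tF, \qquad t\ge 0.
\]
Criticality gives $h'(1)=0$. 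Moreover $\Psi_\al(tu)=t^pA-\frac{q-1}{p-1}t^qB$, and $t^{-1}\Psi_\al(tu)=(p-1)\,(t^{p-1}A-t^{q-1}B)/(p-1)\cdot$ is, up to the positive factor, $t\,h''(t)/(p-1)$, since $h''(t)=(p-1)t^{p-2}A-(q-1)t^{q-2}B$. Hence the sign of $\Psi_\al(tu)$ equals the sign of $h''(t)$. The function $h''$ is positive on $(0,t_*)$ and negative on $(t_*,\infty)$, where $t_*$ is the unique point with $tu\in\Sigma^{\al}$.

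Two cases follow. If $\Psi_\al(u)>0$, then $u\in\Sigma_1^{\al}$ directly and $J(u)\ge c_0^{\al}$. If $\Psi_\al(u)=0$, then $u\in\Sigma^{\al}$, and Lemma \ref{strict} (assuming the hypothesis on $\|f\|$, which is in force in this section) gives $J(u)\ge c_1^{\al}>c_0^{\al}$.

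The remaining case $\Psi_\al(u)<0$, i.e.\ $1>t_*$, is the main obstacle. Here $h$ is convex on $[0,t_*]$ with $h'(0)=-F\le0$, and concave on $[t_*,\infty)$. I expect $h'$ to be decreasing on $(t_*,\infty)$ and increasing before $t_*$, so the zeros of $h'$ consist of at most one local minimum $t_0<t_*$ and one local maximum $t=1>t_*$. Then $h(1)\ge h(t)$ for all $t\in[t_0,\infty)$ up to the maximum, and in particular $h(1)\ge h(t_*)=J(t_*u)\ge c_1^{\al}>c_0^{\al}$ by Lemma \ref{strict}. The case $F=0$, where $t_0=0$, is handled the same way. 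In every case $J(u)\ge c_0^{\al}$. The care needed is to check that $h$ is increasing on $[t_*,1]$: this holds because $h'$ is decreasing there and $h'(1)=0$, so $h'\ge0$ on $[t_*,1]$.
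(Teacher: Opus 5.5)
Your proof is correct and follows the paper's argument. You obtain nonnegativity by testing with $u^-$. In the case $\Psi_{\al}(u)<0$, the fibering map $t\mapsto J_{\mu,\al,f}(tu)$ is increasing on $[t_*,1]$, because its derivative decreases past $t_*$ (the unique point with $t_*u\in\Sigma^{\al}$) and vanishes at $t=1$; hence $J_{\mu,\al,f}(u)\ge J_{\mu,\al,f}(t_*u)$.

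The one difference is how you compare with $c_0^{\al}$. You go through $c_1^{\al}>c_0^{\al}$ via Lemma~\ref{strict}, which needs the smallness of $\norm{f}_{(\wps)^*}$ (available in that section, so no gap). The paper instead uses $\Sigma^{\al}\subset\overline{\Sigma_1^{\al}}$ together with continuity of $J_{\mu,\al,f}$, so the lemma holds without any assumption on $f$. Also, the identity you want is $\Psi_{\al}(tu)=\frac{t^{2}}{p-1}\,h''(t)$; the intermediate expression you wrote for it is garbled.
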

\begin{proof}
The constant sequence $u_n \coloneqq u$ is a (PS) sequence for $J_{\mu,\al,f}$ at the level $J_{\mu,\al,f}(u)$, so $u^-=0$, that is $u \ge 0$.
If $u=0$ then $J_{\mu,\al,f}(0)=0>c_0^{\al}$ and we are done. So let $u \ne 0$. If $\Psi_{\al}(u)\ge 0$ then $u \in \overline{\Sigma_1^{\al}}$ and $J_{\mu,\al,f}(u)\ge c_0^{\al}$. So, we consider the case
\begin{equation}\label{eq:crit-neg}
     \Psi_{\al}(u)<0.
\end{equation}
Define
\begin{equation*}
    \varphi(t):=J_{\mu,\al,f}(tu)=\frac{t^p}{p}A-\frac{t^{q}}{q}B-tC, \text{ for } t>0,
\end{equation*}
with
\begin{align*}
    A \coloneqq [u]_{\mu}^p>0, \, B \coloneqq \int_{\rd}\frac{u^{q}}{\abs{x}^{\al}}\dx>0, \, C \coloneqq \prescript{}{(\wps)^*}{\langle}f,u{\rangle}_{\wps},
\end{align*}
where $u \ge 0$ is used to write $\int_{\rd}\frac{\left( (tu)^+\right)^{q}}{\abs{x}^{\al}}\dx=t^{q}B$, and $A,B>0$ because $u \ne 0$. Then
\begin{equation*}
     \varphi'(t)=h(t)-C, \text{ where } h(t):=t^{p-1}A-t^{q-1}B. 
\end{equation*}
Since $h'(t)=(p-1)t^{p-2}A-(q-1)t^{q-2}B$ vanishes at exactly one positive $t$, namely
\begin{equation}\label{eq:critical-tau}
     \tau \coloneqq \left( \frac{(p-1)A}{(q-1)B}\right)^{\frac{1}{q-p}},
\end{equation}
and $h'>0$ before $\tau$ and $h'<0$ after it, the function $h$ increases on $(0,\tau)$ and decreases on $(\tau,\infty)$.

We now make the following observations:

\noi (i) Using $u \ge 0$ again, $\Psi_{\al}(tu)=t^pA-\frac{q-1}{p-1}t^{q}B$, and for $t>0$ this vanishes exactly when $t^{q-p}=\frac{(p-1)A}{(q-1)B}$, i.e., when $t=\tau$. So $\tau u \in \Sigma^{\al}\subset \overline{\Sigma_1^{\al}}$.

\noi (ii) Assumption \eqref{eq:crit-neg} says $A<\frac{q-1}{p-1}B$, that is $\frac{(p-1)A}{(q-1)B}<1$, and $\tau<1$ follows from \eqref{eq:critical-tau}.

\noi (iii) Since $u$ is a critical point, $\varphi'(1)=0$, i.e., $h(1)=C$. For $t \in (\tau,1)$ we have $h(t)>h(1)=C$, because $h$ decreases on $(\tau,\infty)$ and $\tau<t<1$. Hence $\varphi'>0$ on $(\tau,1)$, so $\varphi$ increases on $[\tau,1]$.

Combining (i), (ii), and (iii), we get
\begin{equation*}
     J_{\mu,\al,f}(u)=\varphi(1)\ge \varphi(\tau)=J_{\mu,\al,f}(\tau u)\ge c_0^{\al}, 
\end{equation*}
as required.
\end{proof}

%The following proposition is a direct application of the global compactness results. It shows that the (PS) condition holds below the first concentration level $c_0^{\al}+\Theta_{\mu,\al}$.
We now combine the lower energy bound for critical points with the global compactness theorems. Since every nontrivial concentration
carries at least the energy $\Theta_{\mu,\alpha}$, compactness is restored below one bubbling energy above $c_0^\alpha$.

\begin{proposition}\label{prop-compact}
Let $\eta<c_0^{\al}+\Theta_{\mu,\al}$ and let $\{u_n\}$ be a (PS) sequence for $J_{\mu,\al,f}$ at the level $\eta$. Then, up to a subsequence, $u_n \ra u$ in $\wps$, where $u$ is a critical point of $J_{\mu,\al,f}$ with $J_{\mu,\al,f}(u)=\eta$.
\end{proposition}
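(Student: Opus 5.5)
We first reduce to the untruncated functional and then apply the global compactness theorems. The bound \eqref{31-7-3}-type estimate, or directly the computation \eqref{PSD-1} with $J_{\mu,\al,f}$ in place of $I_{\mu,\al,f}$, shows that $\{u_n\}$ is bounded in $\wps$. Testing $J_{\mu,\al,f}'(u_n)$ against $(u_n)^-$ and using $f\ge 0$ together with \cite[Lemma A.2]{BrPa}, exactly as in Step 2 of the proof of Theorem \ref{existence}, gives $[(u_n)^-]_{\mu}\ra 0$. Hence $J_{\mu,\al,f}(u_n)-I_{\mu,\al,f}(u_n)\ra 0$ and $J'_{\mu,\al,f}(u_n)-I'_{\mu,\al,f}(u_n)\ra 0$ in $(\wps)^*$. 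The second convergence uses that $(u_n^+)^{q-1}-\abs{u_n}^{q-2}u_n$ is controlled by $\abs{u_n^-}^{q-1}$ in $L^{q'}(\abs{x}^{-\al})$ via \eqref{HS1}. So, up to a subsequence, $\{u_n\}$ is also a (PS) sequence for $I_{\mu,\al,f}$ at level $\eta$.

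Next we apply Theorem \ref{PS-decomposition} when $\al>0$, and Theorem \ref{PS-decomposition-II} when $\al=0$. This gives $u_n=u+\sum(\text{profiles})+o_n(1)$ together with the energy identity $\eta=I_{\mu,\al,f}(u)+\sum(\text{profile energies})$. The weak limit $u$ solves \eqref{MainEq} without a sign condition. Since $u_n^-\ra 0$ strongly and $u_n\rightharpoonup u$, we get $u^-=0$, so $u\ge0$. Then $I_{\mu,\al,f}(u)=J_{\mu,\al,f}(u)$ and $J_{\mu,\al,f}'(u)=0$, and Lemma \ref{lem-crit} yields $J_{\mu,\al,f}(u)\ge c_0^{\al}$.

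Each profile energy is at least $\Theta_{\mu,\al}$. For $\al>0$ this is \eqref{fixed amount decrease}. For $\al=0$ it is \eqref{lowerbound-II}, using \eqref{eq:cons-order} so that the pure Sobolev profiles carry at least $\frac sd\overline S^{d/sp}\ge\frac sd\overline S_\mu^{d/sp}=\Theta_{\mu,0}$. Consequently, if at least one profile is present, then
$$\eta\ge c_0^{\al}+\Theta_{\mu,\al},$$
which contradicts the hypothesis on $\eta$. Therefore $k=0$ (respectively $k_1=k_2=0$), so $u_n\ra u$ in $\wps$, and $J_{\mu,\al,f}(u)=\eta$ follows by continuity.

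The main obstacle is the first step: rigorously transferring the (PS) property from $J_{\mu,\al,f}$ to $I_{\mu,\al,f}$, which is needed because the decomposition theorems are stated for $I$. The point requiring care is the dual-norm estimate of the difference of the nonlinear terms, $\norm{(u_n^+)^{q-1}-J_q(u_n)}$, which is bounded by $\norm{u_n^-}_{\al}^{q-1}$ and then by $[u_n^-]_{s,p}^{q-1}\ra0$. A second, minor point is checking that the sign information on $u$ survives the limit, so that Lemma \ref{lem-crit} applies.
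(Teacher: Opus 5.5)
Your proposal is correct and follows the paper's proof. Both arguments transfer the (PS) property to $I_{\mu,\al,f}$ using $u_n^-\to 0$, apply Theorem \ref{PS-decomposition} or Theorem \ref{PS-decomposition-II}, bound $J_{\mu,\al,f}(u)\ge c_0^{\al}$ via Lemma \ref{lem-crit} and each profile energy below by $\Theta_{\mu,\al}$, and then exclude all profiles; the only differences are that the paper passes to $\{u_n^+\}$ instead of showing that $\{u_n\}$ itself is a (PS) sequence for $I_{\mu,\al,f}$, and that you spell out the boundedness and sign steps the paper leaves implicit.
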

\begin{proof}
The decomposition theorems are about $I_{\mu,\al,f}$, so we first pass to $\{u_n^+\}$. Since $\{u_n^+\}$ is a (PS) sequence for $I_{\mu,\al,f}$ at the same level $\eta$, and $u_n-u_n^+ \ra 0$ in $\wps$. We apply Theorem \ref{PS-decomposition} if $\al>0$, and Theorem \ref{PS-decomposition-II} if $\al=0$. We denote $u$ the weak limit and $N \in \N \cup \{0\}$ the total number of concentrations produced, that is $N=k$ if $\al>0$ and $N=k_1+k_2$ if $\al=0$. In both cases, 
\begin{align}\label{eq:compact-sum}
    \eta=J_{\mu,\al,f}(u)+\sum_{\ell=1}^{N}E_{\ell},
\end{align}
where $E_{\ell}$ is the energy of the $\ell$-th concentration, and where we use that $u$ is nonnegative, being a weak limit of a nonnegative functional, so that $I_{\mu,\al,f}$ and $J_{\mu,\al,f}$ have the same value and the same derivative at $u$.
We now apply two lower bounds on \eqref{eq:compact-sum}. First, $u$ is a critical point of $J_{\mu,\al,f}$, so $J_{\mu,\al,f}(u)\ge c_0^{\al}$ using Lemma \ref{lem-crit}. Second, every $E_{\ell}$ is at least $\Theta_{\mu,\al}$: for $\al>0$ this is \eqref{fixed amount decrease}, and for $\al=0$, we have the bounds \eqref{lowerbound-II}, for all $i,j$, which covers the concentrations of both types. Hence $$\eta \ge c_0^{\al}+N\,\Theta_{\mu,\al}, \text{ for } \al \in [0,sp).$$ If $N \ge 1$, then $\eta \ge c_0^{\al}+\Theta_{\mu,\al}$ contradicts with the hypothesis. So $N=0$, and the decomposition gives $u_n^+ \ra u$ in $\wps$, and since $u_n-u_n^+ \ra 0$, we get $u_n \ra u$ in $\wps$. Finally, from the continuity, $J_{\mu,\al,f}(u)=\eta$ holds.
\end{proof}

\subsection{A path along which the energy stays below the threshold}\label{sec2-path}
It remains to construct a path whose energy stays inside the compactness range of Proposition \ref{prop-compact}. The usual additive path is not suitable for general $p\neq2$; instead, we use the nonlinear path induced by the $\ell^p$ geometry. From Theorem \ref{existence} and Theorem \ref{existence-II} we recall that 
\begin{align}\label{eq:u0}
    u_{\al}>0, \quad J_{\mu,\al,f}'(u_{\al})=0, \quad J_{\mu,\al,f}(u_{\al})=c_0^{\al}<0, \quad \Psi_{\al}(u_{\al})>0.
\end{align}
For $t \ge 0$, we consider the path
\begin{equation}\label{eq:path}
   \gamma(t) \coloneqq \left( u_{\al}^p+t^pV_{\al}^p\right)^{\frac1p}. 
\end{equation}

The next lemma collects the three properties of this path that are needed for the min-max argument: control of the $\wps$ energy, a strict gain in the critical term, and a strict bound below the first bubbling threshold.
\begin{lemma}\label{lem:path}
The following holds:
\begin{enumerate}
    \item[\rm (a)] $\gamma(t)\in \wps$ and $\displaystyle [\gamma(t)]_{\mu}^p \le [u_{\al}]_{\mu}^p+t^p[V_{\al}]_{\mu}^p$ for every $t \ge 0$.
    \item[\rm (b)] The integral $\displaystyle G(t) \coloneqq \int_{\rd}\frac{\gamma(t)^{q}-u_{\al}^{q}-t^{q}V_{\al}^{q}}{\abs{x}^{\al}}\dx$ is finite, and $G(t)>0$ for every $t>0$.
    \item[\rm (c)] For every $t \ge 0$,
    \begin{gather}\label{eq:path-energy}
        J_{\mu,\al,f}\left( \gamma(t)\right) \le J_{\mu,\al,f}(u_{\al})+I_{\mu,\al,0}\left( tV_{\al}\right)-\frac{1}{q}G(t).
    \end{gather}
    \item[\rm (d)] $\gamma:[0,\infty)\ra \wps$ is continuous, and $J_{\mu,\al,f}(\gamma(t))\ra -\infty$ and $\Psi_{\al}(\gamma(t))\ra -\infty$ as $t \ra \infty$.
    \item[\rm (e)] $\displaystyle \sup_{t \ge 0}J_{\mu,\al,f}\left( \gamma(t)\right)<c_0^{\al}+\Theta_{\mu,\al}$.
\end{enumerate}
\end{lemma}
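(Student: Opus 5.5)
The plan is to exploit the fact that, at each point $x$, $\gamma(t)(x)$ is the $\ell^p$-norm of the vector $(u_{\al}(x),tV_{\al}(x))\in\R^2$. All five items then reduce to elementary pointwise inequalities, followed by integration and dominated convergence.

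\emph{Part (a).} By the reverse triangle inequality for the $\ell^p$-norm on $\R^2$,
\[
\abs{\gamma(t)(x)-\gamma(t)(y)}^p\le \abs{Du_{\al}(x,y)}^p+t^p\abs{DV_{\al}(x,y)}^p.
\]
Dividing by $\abs{x-y}^{d+sp}$ and integrating gives $[\gamma(t)]_{s,p}^p\le [u_{\al}]_{s,p}^p+t^p[V_{\al}]_{s,p}^p$. Since $0\le\gamma(t)\le u_{\al}+tV_{\al}$, we also get $\gamma(t)\in L^{p^*_s}$, hence $\gamma(t)\in\wps$ by the characterization of $\wps$. The Hardy term splits with equality, because $\gamma(t)^p=u_{\al}^p+t^pV_{\al}^p$ pointwise. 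Subtracting $\mu$ times the Hardy integrals gives the inequality for $[\cdot]_{\mu}$.

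\emph{Part (b).} Since $q/p>1$, the map $r\mapsto r^{q/p}$ is strictly superadditive on $[0,\infty)$: $(a+b)^{q/p}\ge a^{q/p}+b^{q/p}$, with strict inequality when $a,b>0$. Applying this with $a=u_{\al}^p$ and $b=t^pV_{\al}^p$ shows that the integrand of $G(t)$ is nonnegative. It is strictly positive a.e., because $u_{\al},V_{\al}>0$ a.e.; hence $G(t)>0$ for $t>0$. Finiteness follows from $\gamma(t)\le u_{\al}+tV_{\al}$ together with \eqref{HS1}.

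\emph{Part (c).} All terms are nonnegative, so $(\gamma^+)^q=\gamma^q$. We have $\gamma(t)-u_{\al}\ge 0$ and $\gamma(t)-u_{\al}\in\wps$, so $f\ge0$ gives $\prescript{}{(\wps)^*}{\langle}f,\gamma(t){\rangle}_{\wps}\ge \prescript{}{(\wps)^*}{\langle}f,u_{\al}{\rangle}_{\wps}$. Inserting (a), the definition of $G$ and \eqref{eq:V-energy} into $J_{\mu,\al,f}(\gamma(t))$, and regrouping the $u_{\al}$-terms into $J_{\mu,\al,f}(u_{\al})$, yields \eqref{eq:path-energy}.

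\emph{Part (d).} Let $t_n\ra t$. The function $\gamma(t_n)$ converges to $\gamma(t)$ pointwise, so the difference quotients of $\gamma(t_n)-\gamma(t)$ tend to $0$ a.e. in $\R^{2d}$. By (a) they are dominated by $2(\abs{Du_{\al}}+T\abs{DV_{\al}})/\abs{x-y}^{(d+sp)/p}$, where $T:=\sup_n t_n+t$. Dominated convergence therefore gives $[\gamma(t_n)-\gamma(t)]_{s,p}\ra0$. For the limits, (c) and $G\ge0$ give $J_{\mu,\al,f}(\gamma(t))\le c_0^{\al}+\La(t^p/p-t^q/q)\ra-\infty$. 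Similarly, (a) and $\gamma(t)^q\ge t^qV_{\al}^q$ give
\[
\Psi_{\al}(\gamma(t))\le [u_{\al}]_{\mu}^p+t^p\La-\tfrac{q-1}{p-1}t^q\La\ra-\infty .
\]

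\emph{Part (e).} This is the main point, since (c) alone only gives a non-strict bound on the supremum. Set $F(t):=c_0^{\al}+I_{\mu,\al,0}(tV_{\al})-\frac1qG(t)$.
\begin{itemize}
\item $G$ is continuous in $t$ by dominated convergence, using $\gamma(t)\le u_{\al}+TV_{\al}$; hence $F$ is continuous.
\item By (b) and \eqref{eq:V-max}, $F(t)<c_0^{\al}+\Theta_{\mu,\al}$ for every $t>0$, and $F(0)=c_0^{\al}$.
\item By (d), there is $T$ with $J_{\mu,\al,f}(\gamma(t))\le c_0^{\al}$ for all $t\ge T$.
\item On $[0,T]$, $F$ attains its maximum, which is therefore strictly less than $c_0^{\al}+\Theta_{\mu,\al}$.
\end{itemize}
Combining this with (c) on $[0,T]$ and the bound $J_{\mu,\al,f}(\gamma(t))\le c_0^{\al}$ on $[T,\infty)$ gives the strict inequality in (e).
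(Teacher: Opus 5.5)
Your proposal is correct and follows the paper's proof essentially step by step. In (a) you use the $\ell^p$ reverse triangle inequality, in (b) strict superadditivity of $r\mapsto r^{q/p}$, in (c) the bound $\gamma(t)\ge u_{\al}$ together with $f\ge 0$, in (d) dominated convergence, and in (e) a compactness argument that turns the pointwise strict bound into a strict bound on the supremum. The only variation is in (e): you maximize the continuous majorant $F$ on $[0,T]$, which is why you need the extra dominated-convergence argument for $G$, whereas the paper maximizes $t\mapsto J_{\mu,\al,f}(\gamma(t))$ directly using the continuity of $\gamma$ from (d); both work.
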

   \begin{proof}
\noi (a) We fix $t \ge 0$ and $x,y \in \rd$ and write $X\coloneqq\left( u_{\al}(x),tV_{\al}(x)\right)$, $Y\coloneqq\left( u_{\al}(y),tV_{\al}(y)\right)$, two points of $\R^2$, and let $\norm{\cdot}$ be the $\ell^p$ norm on $\R^2$. By \eqref{eq:path}, $\gamma(t)(x)=\norm{X}$ and $\gamma(t)(y)=\norm{Y}$, so the reverse triangle inequality $\left| \norm{X}-\norm{Y}\right| \le \norm{X-Y}$ gives us,
\begin{equation*}
      \abs{\gamma(t)(x)-\gamma(t)(y)}\le \left( \abs{u_{\al}(x)-u_{\al}(y)}^p+t^p\abs{V_{\al}(x)-V_{\al}(y)}^p\right)^{\frac1p}.
\end{equation*}
Raising to the power $p$, dividing by $\abs{x-y}^{d+sp}$ and integrating over $\rd \times \rd$,
\begin{equation}\label{eq:path-gagliardo}
       [\gamma(t)]_{s,p}^p \le [u_{\al}]_{s,p}^p+t^p[V_{\al}]_{s,p}^p<\infty.
\end{equation}       
This is the step where the additive path is lost, and there is no cross term at all. Next, \eqref{eq:path} says $\gamma(t)^p=u_{\al}^p+t^pV_{\al}^p$ pointwise, so dividing by $\abs{x}^{sp}$ and integrating gives an equality,
\begin{equation}\label{eq:path-hardy}
    \int_{\rd}\frac{\gamma(t)^p}{\abs{x}^{sp}}\dx=\int_{\rd}\frac{u_{\al}^p}{\abs{x}^{sp}}\dx+t^p\int_{\rd}\frac{V_{\al}^p}{\abs{x}^{sp}}\dx.
\end{equation}
Now multiply \eqref{eq:path-hardy} by $\mu$ and subtract it from \eqref{eq:path-gagliardo}. On the left we get $[\gamma(t)]^p_{\mu}$, and on the right we get $[u_{\al}]^p_{\mu}+t^p[V_{\al}]^p_{\mu}$. Since the subtracted quantity is an equality, we have the inequality. This is (a), and \eqref{eq:path-gagliardo} also shows $\gamma(t)\in \wps$.

\noi (b) Set $\theta\coloneqq\frac{q}{p}>1$. For $A,B>0$ we consider the inequality $(A+B)^{\theta}>A^{\theta}+B^{\theta}$ holds, since $\sigma \mapsto \sigma^{\theta}$ is strictly superadditive on $(0,\infty)$ when $\theta>1$. Take $A:= u_{\al}(x)^p$ and $B:=t^pV_{\al}(x)^p$, both positive almost everywhere when $t>0$ because $u_{\al}>0$ and $V_{\al}>0$. Then $A^{\theta}=u_{\al}^{q}$ and $B^{\theta}=t^{q}V_{\al}^{q}$, so we have
\begin{equation*}
     \gamma(t)^{q}=(A+B)^{\theta}>u_{\al}^{q}+t^{q}V_{\al}^{q}, \text{ a.e. in } \rd.
\end{equation*}
Multiplying by the positive weight $\abs{x}^{-\al}$ preserves the sign, and $G(t)>0$ holds. Further, $G(t)$ is finite because $\left( a^p+b^p\right)^{1/p}\le a+b$ for $a,b \ge 0$, so $\gamma(t)\le u_{\al}+tV_{\al}$ and the integral of $\gamma(t)^q\abs{x}^{-\al}$ is finite by \eqref{HS1}.

\noi (c) Recall the functional $$J_{\mu,\al,f}(v)=\frac1p[v]^p_{\mu}-\frac{1}{q}\int_{\rd}\frac{\left( v^+\right)^{q}}{\abs{x}^{\al}}\dx-\prescript{}{(\wps)^*}{\langle}f,v{\rangle}_{\wps}, \; \forall \, v \in \wps,$$ and $\gamma(t)\ge 0$, so the middle term is $\frac{1}{q}\int_{\rd}\frac{\gamma(t)^{q}}{\abs{x}^{\al}}\dx$. Now we bound the three terms.

The first term is bounded above by (a). The second term is an identity, by the definition of $G$:
\begin{equation*}
    \int_{\rd}\frac{\gamma(t)^{q}}{\abs{x}^{\al}}\dx = \int_{\rd}\frac{u_{\al}^{q}}{\abs{x}^{\al}}\dx+t^{q}\int_{\rd}\frac{V_{\al}^{q}}{\abs{x}^{\al}}\dx+G(t),
\end{equation*}
For the third term, $\gamma(t)\ge u_{\al}$ and $f \ge 0$ give $\prescript{}{(\wps)^*}{\langle}f,\gamma(t){\rangle}_{\wps}\ge \prescript{}{(\wps)^*}{\langle}f,u_{\al}{\rangle}_{\wps}$, and we have
\begin{align*}
    J_{\mu,\al,f}\left( \gamma(t)\right)\le &\;\frac1p \left( [u_{\al}]^p_{\mu}+t^p[V_{\al}]^p_{\mu}\right)-\frac{1}{q}\left( \int_{\rd}\frac{u_{\al}^{q}}{\abs{x}^{\al}}\dx+t^{q}\int_{\rd}\frac{V_{\al}^{q}}{\abs{x}^{\al}}\dx+G(t)\right) \\
    &-\prescript{}{(\wps)^*}{\langle}f,u_{\al}{\rangle}_{\wps},
\end{align*}
and we get \eqref{eq:path-energy}.

\noi (d) Let $t_n \ra t$ in $[0,\infty)$. From \eqref{eq:path}, $\gamma(t_n)(x)\ra \gamma(t)(x)$ for every $x$, so $D\gamma(t_n)\ra D\gamma(t)$ at every point of $\rd \times \rd$. Also, by the pointwise bound proved in (a) and by $\abs{\sigma_1-\sigma_2}^p \le 2^{p-1}\left( \abs{\sigma_1}^p+\abs{\sigma_2}^p\right)$,
\begin{gather*}
     \abs{D\gamma(t_n)-D\gamma(t)}^p \le 2^{p}\abs{Du_{\al}}^p+2^{p}\left( \sup_n t_n^p+t^p\right)\abs{DV_{\al}}^p,
\end{gather*}
and R.H.S is integrable since $u_{\al},V_{\al}\in \wps$ and $\sup_n t_n<\infty$. By dominated convergence $[\gamma(t_n)-\gamma(t)]_{s,p}\ra 0$, which is the continuity of $\gamma$. For large $t$, \eqref{eq:path-energy} 
and $G \ge 0$ give
\begin{equation*}
    J_{\mu,\al,f}\left( \gamma(t)\right)\le c_0^{\al}+\frac{t^p}{p}[V_{\al}]^p_{\mu}-\frac{t^{q}}{q}\La \ra -\infty,
\end{equation*}
since $q>p$. Similarly $\gamma(t)^{q}\ge t^{q}V_{\al}^{q}$ and (a) give
\begin{equation*}
    \Psi_{\al}\left( \gamma(t)\right)\le [u_{\al}]^p_{\mu}+t^p[V_{\al}]^p_{\mu}-\frac{q-1}{p-1}\,t^{q}\La \ra -\infty.
\end{equation*}

\noi (e) We fix $t$. If $t>0$, then \eqref{eq:path-energy}, the bound $I_{\mu,\al,0}(tV_{\al})\le \Theta_{\mu,\al}$, and $G(t)>0$ give
\begin{equation}\label{eq:path-strict}
      J_{\mu,\al,f}\left( \gamma(t)\right)\le c_0^{\al}+\Theta_{\mu,\al}-\frac{1}{q}G(t)<c_0^{\al}+\Theta_{\mu,\al}.
\end{equation}
If $t=0$, then $J_{\mu,\al,f}(\gamma(0))=J_{\mu,\al,f}(u_{\al})=c_0^{\al}<c_0^{\al}+\Theta_{\mu,\al}$. So the strict inequality holds at every point $t \ge 0$. By (d), the map $t \mapsto J_{\mu,\al,f}(\gamma(t))$ is continuous on $[0,\infty)$ and tends to $-\infty$, so it is bounded above and attains its maximum at some $t_* \in [0,\infty)$. So, we have
\begin{equation*}
     \sup_{t \ge 0}J_{\mu,\al,f}\left( \gamma(t)\right)=J_{\mu,\al,f}\left( \gamma(t_*)\right)<c_0^{\al}+\Theta_{\mu,\al}.
\end{equation*}
This finishes the proof.
   \end{proof}

\subsection{The min-max level}\label{sec2:minmax} The path constructed in the previous subsection connects the first
solution to a point on the opposite side of $\Sigma^\alpha$ while remaining below the first bubbling threshold. We now use this path
to define the min-max level and show that it lies simultaneously above the separating level $c_1^\alpha$ and below
$c_0^\alpha+\Theta_{\mu,\alpha}$. By Lemma \ref{lem:path}-(d), we can fix $T>0$ with
\begin{align}\label{eq:endpoint}
    \Psi_{\al}\left( \gamma(T)\right)<0 \; \text{ and } \; J_{\mu,\al,f}\left( \gamma(T)\right)<J_{\mu,\al,f}(u_{\al}),
\end{align}
and we take $\gamma(T)$ as the far end of our paths. Set
\begin{align}\label{eq:Gamma}
    \Gamma \coloneqq \left\{ \beta \in \mathcal{C}\left( [0,1],\wps\right): \beta(0)=u_{\al},\; \beta(1)=\gamma(T)\right\}, \; \text{ and } \; \eta \coloneqq \inf_{\beta \in \Gamma}\max_{t \in [0,1]}J_{\mu,\al,f}(\beta(t)).
\end{align}

The following lemma separates two energy levels $c_0^{\al}$ and $c_1^{\al}$. 
\begin{lemma}\label{lem:mountain}
$c_1^{\al}\le \eta<c_0^{\al}+\Theta_{\mu,\al}$, and $c_1^{\al}>c_0^{\al}$.
\end{lemma}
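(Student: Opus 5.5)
The statement has three parts, and I would prove them in the order: $c_1^{\al}>c_0^{\al}$, then the upper bound $\eta<c_0^{\al}+\Theta_{\mu,\al}$, then the lower bound $c_1^{\al}\le\eta$.

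The strict inequality $c_1^{\al}>c_0^{\al}$ is exactly Lemma \ref{strict}. Its hypothesis on $\norm{f}_{(\wps)^*}$ is \eqref{eq:threshold2}. If the constant there is written with $S_{\mu}$ rather than $S_{\mu,\al}$, this is the same constant under the paper's notation convention, so Lemma \ref{strict} applies directly.

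For the upper bound, the obvious candidate is the reparametrized path $\beta_0(t):=\gamma(tT)$, $t\in[0,1]$.
\begin{itemize}
\item By Lemma \ref{lem:path}-(d), $\beta_0$ is continuous into $\wps$.
\item Its endpoints are $\beta_0(0)=\gamma(0)=(u_{\al}^p)^{1/p}=u_{\al}$ and $\beta_0(1)=\gamma(T)$, so $\beta_0\in\Gamma$.
\end{itemize}
Hence $\eta\le\max_{t\in[0,1]}J_{\mu,\al,f}(\beta_0(t))\le\sup_{t\ge0}J_{\mu,\al,f}(\gamma(t))$. The right-hand side is strictly less than $c_0^{\al}+\Theta_{\mu,\al}$ by Lemma \ref{lem:path}-(e).

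The lower bound $c_1^{\al}\le\eta$ is a crossing argument, and it is the step I expect to need the most care. Fix any $\beta\in\Gamma$.
\begin{itemize}
\item At $t=0$: $\Psi_{\al}(\beta(0))=\Psi_{\al}(u_{\al})>0$ by \eqref{eq:u0}.
\item At $t=1$: $\Psi_{\al}(\beta(1))=\Psi_{\al}(\gamma(T))<0$ by \eqref{eq:endpoint}.
\item $\Psi_{\al}$ is continuous on $\wps$ by Lemma \ref{lem:psi-cont}, so $t\mapsto\Psi_{\al}(\beta(t))$ is continuous on $[0,1]$.
\end{itemize}
By the intermediate value theorem there is $t_0\in(0,1)$ with $\Psi_{\al}(\beta(t_0))=0$. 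To conclude $\beta(t_0)\in\Sigma^{\al}$ I also need $\beta(t_0)\neq0$, and here a zero crossing would be a problem, since $0\notin\Sigma^{\al}$.

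To exclude it, take $t_1:=\sup\{t\in[0,1]:\beta(t)=0\}$ if that set is nonempty. By continuity $\beta(t_1)=0$, and $t_1<1$ because $\gamma(T)\neq0$. Near $0$ we have $\Psi_{\al}(v)\ge[v]_{\mu}^p-C[v]_{\mu}^q>0$ for $0<[v]_{\mu}$ small, using \eqref{HS1} and $q>p$. So $\Psi_{\al}(\beta(t))>0$ for $t$ slightly larger than $t_1$, while $\Psi_{\al}(\beta(1))<0$. The intermediate value theorem on $(t_1,1]$ then gives a zero of $\Psi_{\al}\circ\beta$ at a point where $\beta\neq0$ by the choice of $t_1$. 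If $\beta$ never vanishes, the first crossing $t_0$ already works.

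In either case we obtain $\beta(t_0)\in\Sigma^{\al}$, so $\max_{[0,1]}J_{\mu,\al,f}\circ\beta\ge J_{\mu,\al,f}(\beta(t_0))\ge c_1^{\al}$. Taking the infimum over $\beta\in\Gamma$ gives $c_1^{\al}\le\eta$.
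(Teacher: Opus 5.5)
Your proof is correct and follows the paper's argument: the upper bound comes from the reparametrized path $t\mapsto\gamma(tT)$ together with Lemma \ref{lem:path}-(e), the lower bound comes from a crossing of $\Sigma^{\al}$, and $c_1^{\al}>c_0^{\al}$ is Lemma \ref{strict}. The only difference is how you rule out a crossing at $0$: the paper takes the first time $\Psi_{\al}\circ\beta$ becomes negative and uses the a priori bound $[w]_{\mu}>\rho_0$ on $\{\Psi_{\al}<0\}$, whereas you take the last zero of $\beta$ and use positivity of $\Psi_{\al}$ on a small punctured ball, and both rest on the same Hardy--Sobolev estimate with $q>p$.
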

\begin{proof}
The map $t \mapsto \gamma(tT)$ is in $\Gamma$, so $\eta$ is at most its maximum energy, which is at most $\sup_{t \ge 0}J_{\mu,\al,f}(\gamma(t))$. Part (e) of Lemma \ref{lem:path} bounds this by $c_0^{\al}+\Theta_{\mu,\al}$, strictly. 
    
   Let $\beta \in \Gamma$. By \eqref{eq:u0} we have $\Psi_{\al}(\beta(0))>0$ and by \eqref{eq:endpoint} we have $\Psi_{\al}(\beta(1))<0$, so
    \begin{equation*}
         t_1 \coloneqq \inf \left\{ t \in [0,1]: \Psi_{\al}(\beta(t))<0\right\}
    \end{equation*}
    is well defined and lies in $(0,1)$. The map $t \mapsto \Psi_{\al}(\beta(t))$ is continuous, nonnegative on $[0,t_1)$ (by the definition of $t_1$), and negative at points arbitrarily close to $t_1$ from the right. So $\Psi_{\al}(\beta(t_1))=0$.

To show $\beta(t_1)\in \Sigma^{\al}$, it remains to check $\beta(t_1)\ne 0$, since $\Sigma^{\al}$ excludes the origin. We show that every $w$ with $\Psi_{\al}(w)<0$ is far from $0$. Such $w$ satisfies $$\int_{\rd}\frac{\abs{w}^{q}}{\abs{x}^{\al}}\dx>\frac{p-1}{q-1}[w]_{\mu}^p,$$ and hence $w \ne 0$, so $[w]_{\mu}>0$. Raising the first inequality to the power $\frac{p}{q}$ and using \eqref{HS1},
\begin{equation*}
     [w]_{\mu}^p \ge S_{\mu}\left( \int_{\rd}\frac{\abs{w}^{q}}{\abs{x}^{\al}}\dx\right)^{\frac{p}{q}}>S_{\mu}\left( \frac{p-1}{q-1}\right)^{\frac{p}{q}}\left( [w]_{\mu}^p\right)^{\frac{p}{q}}.
\end{equation*}
Dividing by $\left( [w]^p_{\mu}\right)^{p/q}$ and using $1-\frac{p}{q}=\frac{sp-\al}{d-\al}$, we get
\begin{equation}\label{eq:below-bar}
      [w]_{\mu}>\rho_0:=\left[ S_{\mu}\left( \frac{p-1}{q-1}\right)^{\frac{p}{q}}\right]^{\frac{d-\al}{p(sp-\al)}}>0,
\end{equation}
 and $\rho_0$ depends only on $d$, $s$, $p$, $\al$ and $\mu$. Now take $t \ra t_1^+$ along points where $\Psi_{\al}(\beta(t))<0$: each of them satisfies $[\beta(t)]_{\mu}>\rho_0$, so $[\beta(t_1)]_{\mu}\ge \rho_0>0$ and $\beta(t_1)\ne 0$.

Hence $\beta(t_1)\in \Sigma^{\al}$ and $\max_{t \in [0,1]}J_{\mu,\al,f}(\beta(t))\ge J_{\mu,\al,f}(\beta(t_1))\ge c_1^{\al}$. Taking the infimum over $\beta \in \Gamma$ gives $\eta \ge c_1^{\al}$. Finally, $c_1^{\al}>c_0^{\al}$ is Lemma \ref{strict}.   
\end{proof}

We have now established both ingredients required for the min-max argument: the level lies strictly above the energy of the first solution and strictly below the first concentration threshold. We can therefore complete the proof of the multiplicity result.
\medskip

\noi \textbf{Proof of Theorem \ref{Thm:second}:}
The first solution $u_{\al}$ is given by Theorem \ref{existence} if $\al>0$ and by Theorem \ref{existence-II} if $\al=0$; in both cases, the hypothesis is \ref{eq:threshold2}.
For the second one, we check the geometry of the mountain pass theorem on $\Gamma$. The two ends of the paths have energy $J_{\mu,\al,f}(u_{\al})=c_0^{\al}$ and $J_{\mu,\al,f}(\gamma(T))<c_0^{\al}$, by (\ref{eq:u0}) and (\ref{eq:endpoint}), so the larger of the two is $c_0^{\al}$; and $c_0^{\al}<c_1^{\al}\le \eta$ by Lemma \ref{lem:mountain}. So the min-max level is strictly above both ends. By Lemma \ref{lem:mountain} we also have $\eta<c_0^{\al}+\Theta_{\mu,\al}$, which is exactly the range where we apply Proposition \ref{prop-compact}. It gives, up to a subsequence, $u_n \ra v_{\al}$ in $\wps$ with $J_{\mu,\al,f}'(v_{\al})=0$ and $J_{\mu,\al,f}(v_{\al})=\eta$. It is different from $u_\al$ because $\eta \ge c_1^{\al}>c_0^{\al}=J_{\mu,\al,f}(u_{\al})$. It is not the zero function, because $J_{\mu,\al,f}'(0)=-f \ne 0$. It is nonnegative by Lemma \ref{lem-crit}, so $v_{\al}$ weakly solves \eqref{MainEq}. Since $v_{\al} \ge 0$, $\mu>0$ and $f \ge 0$, every term on the right of \eqref{MainEq} is nonnegative, so $v_{\al}$ is a nonnegative weak supersolution of $(-\Delta_p)^su=0$ which is not identically $0$. Using the strong maximum principle, we get $v_{\al}>0$ a.e. in $\rd$.
\qed \medskip

\noi \textbf{Acknowledgment:}
Nirjan Biswas acknowledges the support of the National Board for Higher Mathematics Postdoctoral Fellowship (0204/16(9)/2024/RD-II/6761).
\bibliographystyle{abbrvnat}

\end{document}